\pgfplotsset{compat=1.14}
\newtheorem{theorem}{Theorem}[section]
\newtheorem{prop}[theorem]{Proposition}
\newtheorem{lemma}[theorem]{Lemma}
\newtheorem{cor}[theorem]{Corollary}
\theoremstyle{definition}
\newtheorem{defn}[theorem]{Definition}
\theoremstyle{remark}
\newtheorem{remark}[theorem]{Remark}
\newtheorem{fact}[theorem]{Fact}
\newtheorem{procedure}[theorem]{Procedure}
\newtheorem*{convention}{Convention}
\newcommand{\cat}{\mathrm{CAT(0)}}
\newcommand{\wt}{\widetilde}
\newcommand{\st}{\mathrm{star}}
\newcommand{\VH}{\mathcal{VH}}
\DeclareMathOperator{\link}{link}
\newlength\mylen
\newlist{mycases}{enumerate}{1}
\setlist[mycases,1]{label=\textit{Case~\arabic*.}, 
 labelwidth=\dimexpr-\mylen-\labelsep\relax,leftmargin=0pt,align=right}
\title{Immersed cycles and the JSJ decomposition}
\author{Suraj Krishna M S}
\address{School of Mathematics, Tata Institute of Fundamental Research, Mumbai 400005, India}
\email{suraj@math.tifr.res.in}
\begin{document}

\maketitle

\begin{abstract}
We present an algorithm to construct the JSJ decomposition of one-ended hyperbolic groups which are fundamental groups of graphs of free groups with cyclic edge groups. 
Our algorithm runs in double exponential time, and is the first algorithm on JSJ decompositions to have an explicit time bound. Our methods are combinatorial/geometric and rely on analysing properties of immersed cycles in certain CAT(0) square complexes. 
\end{abstract}

\section{Introduction}
In \cite{sela_jsj}, Sela showed the existence of a canonical decomposition of a torsion-free one-ended hyperbolic group over its infinite cyclic subgroups. This decomposition, which Sela called a \emph{JSJ decomposition} (see \cref{defn_jsj}), is a generalization to group theory of JSJ decompositions of 3-manifolds (due to Jaco-Shalen \cite{jaco_shalen_jsj} and Johannson \cite{johannson_jsj}). 

In the current article, we present an algorithm (\cref{thm_jsj_general_intro}) to construct the JSJ decomposition of the fundamental group $G$ of a graph of free groups with cyclic edge groups when $G$ is one-ended and hyperbolic. We develop this algorithm by first obtaining one such algorithm in a special case (\cref{thm_main_jsj_intro}): when $G$ is the fundamental group of a compact nonpositively curved square complex called a \emph{tubular graph of graphs} (introduced in \cite{me_grushko}).

A tubular graph of graphs (see \cref{defn_tubular_graph_graphs} for the precise definition) is a square complex obtained by attaching finitely many tubes (a \emph{tube} is a Cartesian product of the unit interval and a circle) to a finite collection of finite graphs. Tubular graphs of graphs are thus nonpositively curved $\VH$-complexes (introduced by Wise in \cite{wisephd}) whose vertical hyperplanes are circles (see \cref{section_setup}). 
A typical example of the fundamental group of a tubular graph of graphs is the amalgamated product of two free groups over cyclic subgroups. 

As an application of \cref{thm_main_jsj_intro}, we obtain an algorithm that takes a finite rank free group $F$ and a finite family of cyclic subgroups $\mathcal{H}$ such that $F$ is freely indecomposable relative to $\mathcal{H}$ as input and constructs the JSJ decomposition of $F$ relative to $H$ in double exponential time (\cref{thm_relative_jsj_free_group}). 

A consequence of our main result is a double-exponential time solution to the isomorphism problem for graphs of free groups with cyclic edge groups in the hyperbolic case (\cref{thm_isomorphism}). Recall that the isomorphism problem is the algorithmic problem of deciding whether two finite presentations of groups present isomorphic groups \cite{dehn}.

\subsection{JSJ decompositions}
We adopt Sela's terminology \cite{sela_jsj}. Let $G$ be a torsion-free hyperbolic group. A \emph{hanging surface subgroup} $G'$ of $G$ is a subgroup isomorphic to the fundamental group of a surface with boundary such that there exists a graph of groups decomposition of $G$ in which $G'$ is a vertex group whose incident edge groups are precisely the peripheral subgroups of $G'$. A \emph{maximal hanging surface subgroup} is a hanging surface subgroup that is not properly contained in any hanging surface subgroup.
A non-cyclic vertex group $G'$ of $G$ is \emph{rigid} if it is elliptic in every cyclic splitting of $G$.
A subgroup is \emph{full} (in the sense of Bowditch \cite{bowditch_jsj}) if it is not properly contained as a finite index subgroup in any subgroup of $G$. 

We can now define JSJ decompositions in the sense of Sela (\cite{sela_jsj}), modified by Bowditch \cite{bowditch_jsj} (see Theorem 0.1 and Theorem 5.28 of \cite{bowditch_jsj} for our formulation).

\begin{defn}[JSJ decomposition] \label{defn_jsj}
\textit{Let $G$ be a torsion-free hyperbolic group. A \emph{JSJ splitting} of $G$ is a finite graph of groups decomposition of $G$ where each edge group is cyclic and each vertex group is full and of one of the following three types: 
\begin{enumerate}
\item a cyclic subgroup,
\item a maximal hanging surface subgroup, or
\item a rigid subgroup.
\end{enumerate}
If a vertex $v$ of type (1) has valence one, then the incident edge group does not surject onto the vertex group $G_v$. Moreover, exactly one endpoint of any edge is of type (1) and the edge groups that connect to any vertex group of type (2) are precisely the peripheral subgroups of that group.}
\end{defn}

\begin{theorem}[\cite{sela_jsj}] 
Let $G$ be a torsion-free one-ended hyperbolic group, which is not the fundamental group of a closed surface. Then a JSJ decomposition of $G$ exists and is unique.
\end{theorem}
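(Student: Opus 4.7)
The plan is to build the JSJ splitting by organising \emph{all} one-edge cyclic splittings of $G$ into a coherent global structure, and then to extract uniqueness from the canonicity of that organisation. First, I would recall that by Bestvina–Feighn's accessibility for small splittings of hyperbolic groups, one has a bound on the complexity of reduced cyclic graph-of-groups decompositions of $G$. So the set of such splittings, ordered by refinement, admits maximal elements. Pick any maximal reduced cyclic splitting $\Gamma_0$: its vertex groups are one-ended hyperbolic subgroups that do not themselves admit a further cyclic splitting relative to the incident edge groups.

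For existence, the real work is to classify these maximal vertex groups as either rigid or hanging surface. The key dichotomy I would use is the Rips–Sela theory: if a vertex group $G_v$ admits a non-trivial isometric action on an $\mathbb{R}$-tree (relative to its incident edge groups) which is not conjugate into a simplicial action, then by the Rips machine one obtains a free, interval-exchange or surface component, and a one-ended hyperbolic group in this situation must be a surface group with the incident edges as peripheral subgroups. Otherwise, every such action has a fixed point, i.e.\ $G_v$ is elliptic in every cyclic splitting of $G$ in which the edges of $\Gamma_0$ remain elliptic — this is the rigidity condition. To obtain the \emph{maximal} hanging surface subgroups as in Definition~\ref{defn_jsj}, I would enlarge neighbouring surface pieces across cyclic edges whenever two boundary curves can be identified into a single essential curve, and replace naked edge groups adjacent to surface vertices by peripheral subgroups; the fullness condition is handled by passing to maximal finite-index overgroups using that $G$ is torsion-free and hyperbolic, so two-ended subgroups are cyclic with well-defined maximal cyclic envelopes.

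For uniqueness, I would argue that the data of hanging surface subgroups, rigid subgroups, and maximal cyclic subgroups is intrinsically defined by the group: a subgroup is hanging surface exactly when it is a maximal subgroup which is elliptic in some cyclic splitting and admits a non-trivial relative action on an $\mathbb{R}$-tree with the prescribed peripheral behaviour; rigid subgroups are characterised by being elliptic in every cyclic splitting; and the edge groups are recovered as the maximal cyclic subgroups whose conjugates' intersection patterns reflect the incidence. Two JSJ splittings must therefore have the same vertex and edge groups up to conjugacy, and the graph structure is reconstructed from the incidence of edge groups in vertex groups. The hypothesis that $G$ is not a closed surface group is used precisely to rule out the degenerate case where the whole of $G$ would itself be a single hanging surface vertex without boundary, which would break the canonical identification.

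The main obstacle I expect is controlling the \emph{relative} Rips machine well enough to identify hanging surface subgroups cleanly and to merge overlapping surface pieces into genuinely maximal ones; a priori the refinement process could produce surface pieces that share boundary components in non-obvious ways, and showing that a canonical maximal amalgamation exists (rather than a non-canonical choice among several) is where the argument is most delicate. A secondary obstacle is ensuring that the fullness and valence-one conditions in Definition~\ref{defn_jsj} can always be arranged simultaneously — this requires a careful clean-up step after the accessibility argument, pushing edge groups to their maximal cyclic envelopes without creating new splittings.
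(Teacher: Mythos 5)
This theorem is not proved in the paper at all: it is a black box imported from \cite{sela_jsj} (as reformulated by Bowditch in \cite{bowditch_jsj}), and the paper never revisits its proof. So there is no ``paper's own proof'' to compare against, and the reviewer's task here is only to assess your sketch on its own merits.

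There is a genuine gap at the very first step. You propose to take a \emph{maximal} reduced cyclic splitting $\Gamma_0$, so that its vertex groups ``do not themselves admit a further cyclic splitting relative to the incident edge groups,'' and then to classify those vertex groups as rigid or hanging surface. But a hanging surface vertex group \emph{does} admit nontrivial cyclic splittings relative to its peripherals (along non-peripheral essential simple closed curves), so it cannot appear as a vertex group of $\Gamma_0$. A maximal cyclic splitting will have already decomposed any surface piece into pants-like pieces, and pants decompositions are highly non-canonical; you therefore start from a non-canonical object and would have to show that the ``reassembly'' you gesture at recovers something canonical, which is precisely the content you have not supplied. Bestvina--Feighn accessibility does give the finiteness you invoke, but accessibility alone says nothing about which maximal splitting to choose, and uniqueness fails exactly at this point.

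What is missing is the organising principle that the JSJ is built not from a maximal splitting but from the \emph{universally elliptic} cyclic subgroups (\cref{def_universally_elliptic_splittings}): edge groups that are elliptic in the Bass--Serre tree of every cyclic splitting of $G$. These are the subgroups that are ``compatible'' with all other splittings and hence can serve canonically as edges; the remaining, mutually crossing splittings are what get packaged into hanging surface vertices. In Sela's and Bowditch's treatments this dichotomy is where the real work lies (Bowditch, for instance, reads it off from the local cut-point structure of $\partial G$), and in Guirardel--Levitt it is phrased via deformation spaces and the tree of cylinders. Your sketch never isolates universal ellipticity, which is why the uniqueness argument in your last paragraph, appealing vaguely to ``intrinsic'' characterisations of the subgroup types, does not close. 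Notice that the article you are reading is in fact built entirely around this concept (see \cref{prop_repetitive_not_jsj}, \cref{thm_finite_list_intro}, \cref{prop_T''_refinement_jsj}): the algorithm works by detecting which splitting cycles are universally elliptic, then blowing them up and excising the surface pieces. Any complete proof of Sela's theorem has to make the same distinction.
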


We are now ready to state our main result.

\begin{theorem}[\cref{thm_algorithm_gen_case}] \label{thm_jsj_general_intro}
There exists an algorithm of double exponential time complexity that takes a graph of free groups with cyclic edge groups with one-ended hyperbolic fundamental group $G$ as input and returns the JSJ decomposition of $G$.
\end{theorem}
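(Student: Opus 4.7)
The plan is to reduce the statement to the tubular graph of graphs case handled by \cref{thm_main_jsj_intro}, by algorithmically converting the input graph of free groups into a compact square complex of the required form.

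First I would construct a geometric realization. Given a graph of free groups $\Gamma$ with cyclic edge groups and fundamental group $G$, for each vertex group $F_v$ I pick a basis and take a rose $R_v$ with $\pi_1(R_v) = F_v$. For each edge $e$ with incident vertex groups $F_{v^\pm}$ and edge monomorphisms $\mathbb{Z} \to F_{v^\pm}$ sending the generator to elements $g_e^\pm$, I attach a tube $T_e \cong [0,1] \times S^1$ whose two boundary circles map into $R_{v^-}$ and $R_{v^+}$ as cyclically reduced immersed loops representing $g_e^\pm$. After subdivision, the resulting complex $X$ is a compact square complex with $\pi_1(X) \cong G$ whose horizontal graph is $\bigsqcup_v R_v$ and whose vertical hyperplanes are circles, that is, a tubular graph of graphs in the sense of \cref{defn_tubular_graph_graphs}.

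Next I feed $X$ into the algorithm of \cref{thm_main_jsj_intro} to obtain the JSJ decomposition of $\pi_1(X) \cong G$. The complex $X$ has size polynomial in the size of the input graph of groups, the dominant contribution being the total length of the boundary words $g_e^\pm$, so the whole procedure still takes double exponential time. Uniqueness of the JSJ decomposition (Sela--Bowditch) guarantees the output depends only on the isomorphism type of $G$.

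The main technical obstacle is verifying that the complex $X$ satisfies the precise hypotheses of \cref{thm_main_jsj_intro}, particularly nonpositive curvature. This will typically fail at vertices of $R_v$ where many tube boundaries meet and produce short cycles in the link. One therefore needs to systematically repair such failures (by further subdivision, by replacing the roses with more elaborate graph models of $F_v$, or by perturbing the attaching loops of the tubes) while keeping the size of $X$ polynomial in the input. The hypotheses that $G$ is one-ended and hyperbolic should rule out the global obstructions (flat planes and free splittings), but turning this into an effective, uniformly bounded construction is the step I expect to require the most care.
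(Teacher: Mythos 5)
The central step of your proposal---taking a rose $R_v$ for each vertex group and a tube $T_e\cong S^1\times[0,1]$ for each edge $e$, with attaching maps realizing $g_e^\pm$---does not in general produce a tubular graph of graphs, and the paper explicitly flags this obstruction. In \cref{defn_tubular_graph_graphs}, the attaching maps of each edge space are required to be simplicial \emph{immersions} from a single circle $C_e$ into the two vertex graphs, so the two boundary cycles of the tube $T_e$ must have the same combinatorial length. Achieving $|C_e\to R_{v^-}|=|C_e\to R_{v^+}|$ simultaneously for all edges $e$, by rescaling (subdividing) the vertex graphs, is a system of multiplicative constraints indexed by the edges of the underlying graph $\Gamma$; as remarked after \cref{defn_tubular_graph_graphs}, this system has a solution when $\Gamma$ is a tree, but \emph{not} in general. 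Thus the object $X$ you feed into \cref{thm_main_jsj_intro} simply fails to exist for a general input. (The nonpositive-curvature worry you raise, by contrast, is not the real difficulty: by \cref{prop_tubular_vh}, a tubular graph of graphs is automatically nonpositively curved precisely because the attaching maps are simplicial immersions, so there are no bigons in vertex links.)

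The paper circumvents this by never attempting a global geometric realization. Instead, it works one vertex group at a time: for each non-cyclic vertex group $F$ with incident edge groups $\mathcal{H}$, it builds an auxiliary tubular graph of graphs $X_{F,\mathcal{H}}$ whose underlying graph is a \emph{star} (hence a tree, so the length constraints are solvable), with a central vertex graph carrying $F$, one cyclic satellite per element of $\mathcal{H}$, and a pair of surface graphs hung off each cyclic satellite to make the resulting group one-ended and hyperbolic. Applying \cref{thm_main_jsj_intro} to $X_{F,\mathcal{H}}$ yields the \emph{relative} JSJ of $(F,\mathcal{H})$ (\cref{thm_relative_jsj_free_group}); one then blows up each vertex of the original graph of groups by its relative JSJ, removes edges joining two cyclic vertices, and glues adjacent surface pieces as in \cref{subsection_construction_X''} and \cref{subsection_construction_X_jsj}. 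So the correct reduction is via relative JSJs of the individual vertex groups, not via a single global tubular model of $G$.
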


As mentioned earlier, the algorithm is a consequence of the theorem below, proving which takes up a major part of the current article.

\begin{theorem}[\cref{thm_main_result_jsj_hyp}] \label{thm_main_jsj_intro}
There exists an algorithm of double exponential time complexity that takes a tubular graph of graphs with one-ended hyperbolic fundamental group $G$ as input and returns a tubular graph of graphs whose graph of groups structure is the JSJ decomposition of $G$.
\end{theorem}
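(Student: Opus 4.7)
The plan is to analyse the universal cover $\wt X$ of the input tubular graph of graphs $X$, which is a CAT(0) square complex, and use its combinatorial structure to detect all cyclic splittings of $G = \pi_1(X)$. The VH-structure on $X$ already exhibits $G$ as a graph of free groups with cyclic edge groups, but this decomposition is typically far from the JSJ and must be refined (to split the free vertex groups further) and partially collapsed (to assemble hanging surface subgroups that may span several of the original pieces).

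First, I would enumerate immersed essential cycles in $X$ up to a combinatorial complexity bound depending on the size of $X$. Each such cycle represents a conjugacy class in $G$ and, when it meets an appropriate separation or non-separation condition in $\wt X$ relative to the existing hyperplanes, gives a candidate cyclic splitting of $G$ refining the input one. Establishing the right complexity bound is central: it should be at most single exponential in the size of $X$, so that the number of candidate cycles—and hence the search space—is at most double exponential.

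Second, I would use these candidate cycles to identify maximal hanging surface subgroups. A hanging surface piece should be certified by a finite family of immersed cycles that intersect in $\wt X$ according to the dual pattern of an ideal arc system on a compact surface with boundary; the peripheral cycles become the incident cyclic edge groups. Maximality is obtained by taking the union of all mutually compatible surface patches. After excising the hanging surface pieces, the remaining vertex groups should admit no further cyclic splitting relative to their incident edge groups (i.e.\ produce no new valid immersed cycle), which certifies them as rigid; the fullness condition from \cref{defn_jsj} is then checked directly from the combinatorics of $X$.

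Third, I would realise the resulting abstract JSJ as a tubular graph of graphs: each free or cyclic vertex group is represented by a graph, each hanging surface vertex group by the graph underlying the surface's spine together with the attaching tubes encoding the boundary cycles, and each edge group by a single tube. The main obstacle will be the detection of maximal hanging surface subgroups: one must recognise when an unorganised family of immersed cycles fits together as the boundary-plus-arc data of a single compact surface, and do so efficiently enough that the enumeration stays within the double exponential budget. Bounding the complexity of minimal representatives of cyclic splittings—and proving that no splitting is missed outside this bound—is where the core technical work of \emph{immersed cycles in CAT(0) square complexes} promised by the abstract has to be carried out.
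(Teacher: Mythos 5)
Your outline identifies the right high-level skeleton---enumerate short cycles, detect surfaces, collapse/refine---but it leaves unaddressed precisely the step you yourself flag as the central difficulty, and the step is genuinely the crux.

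The gap is the length bound. You say the enumeration ``should'' only need cycles of at most single-exponential length, and that proving ``no splitting is missed outside this bound'' is where the core work is, but you give no mechanism. The paper's essential contribution here is the notion of a \emph{$k$-repetitive} cycle (\cref{def_rep_cycle}): a cycle whose fundamental domain runs over the same edge $e$ of $X$ at least $k$ times in a way that produces a consistent partition of the squares containing $e$. A pigeonhole argument (\cref{prop_long_cycles_repetitive}) shows any UC-separating cycle of length greater than roughly $E(k-1)2^{F(F+1)/2}$ is $k$-repetitive, and the hard geometric lemma (\cref{prop_repetitive_not_jsj}) shows that a maximal $3$-repetitive cycle $\wt C$ is crossed by another periodic separating line $L'$ (built explicitly from products of translating elements $h_3 h_1^{-1}$ that stabilise two of the three overlapping translates), hence $\pi_1(C)$ fails universal ellipticity. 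That combination is what turns ``it should be exponential'' into a proof. Without some replacement for this argument your enumeration has no termination guarantee, and the rest of the algorithm has nothing to run on.

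On surface detection you also deviate, and in the harder direction. You propose to recognise hanging surface subgroups by matching families of cycles to the intersection pattern of an ideal arc system; this is considerably more delicate than what is needed and you do not explain how to bound the search or certify maximality. The paper instead first ``opens up'' $X$ along all the short splitting cycles using the space-with-walls machinery (\cref{subsection_construction_X'}), collapses tubes between adjacent circle graphs, and then proves (\cref{prop_T''_refinement_jsj}) that the result $X''$ is a refinement of the JSJ tree in which every JSJ edge group already appears. At that point surface vertices are identified by a purely local criterion---every edge of the double of the vertex graph has thickness exactly two (\cref{lemma_surface_graphs})---and the surfaces are assembled simply by deleting the redundant valence-two cyclic vertices between them (\cref{lemma_surface_edges_T''}). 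That reduction to a thickness check is what makes the surface step cheap; the arc-system matching you describe would need its own combinatorial bound and correctness proof, neither of which you sketch.
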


Other authors have obtained algorithms to compute JSJ decompositions of groups under different conditions. In \cite{dahmani_guirardel_hyp_isomorphism}, Dahmani and Guirardel give an algorithm to compute JSJ decompositions of one-ended hyperbolic groups over maximal virtually cyclic subgroups with infinite centre. In \cite{dahmani_touikan_jsj_algorithm}, Dahmani and Touikan give an algorithm to compute JSJ decompositions of torsion-free hyperbolic groups over its cyclic subgroups. In \cite{barrett_jsj}, Barrett gives an algorithm to compute JSJ decompositions of one-ended hyperbolic groups over virtually cyclic subgroups, while Cashen \cite{cashen_relative_jsj}, along with Manning \cite{cashen_manning}, develops an implementable algorithm to construct the relative JSJ of a free group relative to a family of cyclic subgroups. We remark that the time complexity of these algorithms is not known. 

Our approach is combinatorial/geometric.
We will now describe this approach briefly. 

\subsection{Coarse behaviour and Brady-Meier tubular graphs of graphs}

\begin{defn}
A cube complex is \emph{Brady-Meier} if all its vertex links are connected and moreover each vertex link remains connected after removing any simplex in the link.
\end{defn}

\begin{theorem}[\cite{bradymeier}]
The fundamental group of a finite connected Brady-Meier nonpositively curved cube complex is one-ended.
\end{theorem}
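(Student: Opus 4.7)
The plan is to show that the universal cover $\tilde X$ has exactly one end, which is equivalent to $\pi_1(X)$ being one-ended. The Brady-Meier link condition lifts to $\tilde X$ via the covering map (which is a local isometry preserving links), so $\tilde X$ is a locally finite $\cat$ cube complex in which every link is connected and remains connected after removing any simplex. It suffices to show that for a fixed vertex $v_0 \in \tilde X$ and every $R > 0$, the complement $\tilde X \setminus \overline{B(v_0, R)}$ is nonempty (immediate since $\tilde X$ is infinite and locally finite) and path-connected.

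To show path-connectedness, I would connect any two vertices $p, q$ outside the combinatorial ball $B_R(v_0)$ by inductively homotoping an arbitrary edge path $\alpha$ from $p$ to $q$ into the complement. Let $h(v) = d_{\mathrm{edge}}(v, v_0)$, which on adjacent vertices differs by exactly one, and set $m = \min_{v \in \alpha} h(v)$. If $m > R$ we are done; otherwise, at any vertex $v$ on $\alpha$ attaining $h(v) = m$, the two $\alpha$-edges at $v$ must be ascending, so their link vertices $e_1, e_2 \in \link(v)$ lie outside the descending subcomplex $\sigma_\downarrow(v)$. A key $\cat$ cube-complex fact is that any two descending hyperplanes at $v$ must cross: if they were nested, a single edge at $v$ would cross both, contradicting that each edge crosses a unique hyperplane. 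Combined with the flag property of links in $\cat$ cube complexes, this identifies $\sigma_\downarrow(v)$ as a single simplex of $\link(v)$. The Brady-Meier hypothesis then produces an arc in $\link(v)$ from $e_1$ to $e_2$ avoiding $\sigma_\downarrow(v)$; this arc traces a chain of $2$-cubes at $v$, and detouring from the neighbor of $v$ reached by $e_1$ around the far edges of this chain to the neighbor reached by $e_2$ gives a replacement path whose vertices all lie at height $\geq m+1$. Substituting the detour for the portion of $\alpha$ near $v$ yields a homotopic path with strictly fewer height-$m$ vertices, and iterating raises the minimum above $R$ in finitely many steps.

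I expect the main obstacle to be the verification that the link-level arc lifts to a genuine detour outside the ball. This rests on identifying the descending link as a single simplex, so that the Brady-Meier condition, which is about removing \emph{one} simplex, can be applied in a single stroke. After this identification is in place, the rest is a combinatorial Morse-theoretic iteration in the spirit of Bestvina-Brady, with the number of iterations bounded by the length of $\alpha$ and the radius $R$.
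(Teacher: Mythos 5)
The paper states this result as a direct citation of Brady--Meier and contains no proof of its own, so there is no internal argument to compare against; your Bestvina--Brady-style Morse-theoretic argument (combinatorial distance-to-$v_0$ as Morse function, descending links are single simplices, Brady--Meier hypothesis supplies the detour) is precisely the Brady--Meier strategy and the overall logic is sound.

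Two of your assertions deserve a line or two more of justification before the induction is airtight. First, your reason that any two descending hyperplanes at $v$ cross is compressed to the point of being hard to parse: what is actually true is that if $\mathsf{h}_1,\mathsf{h}_2$ are both dual to edges $e_1,e_2$ at $v$, both separate $v$ from $v_0$, and they do \emph{not} cross, then one of the four half-space intersections is empty, and chasing the cases shows that the far endpoint of one of $e_1,e_2$ lies on the $v_0$-side of \emph{both} hyperplanes, so that edge crosses both --- the contradiction you invoke. Second, and more importantly, you assert that the far-corner vertices $w_i$ of the chain of squares lie at height $\geq m+1$, but this is exactly the point where the induction could fail if $w_i$ were at height $m$. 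In fact $w_i$ is at height $m+2$: both hyperplanes of the square containing $v,v_i,w_i,v_{i+1}$ are ascending at $v$ (they do not separate $v$ from $v_0$), yet both separate $w_i$ from $v_0$; since every hyperplane separating $v$ from $v_0$ also separates $w_i$ from $v_0$, we get $d(w_i,v_0) = m+2$ by counting separating hyperplanes. With that filled in, the detour strictly decreases the number of height-$m$ vertices and the argument closes.
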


\begin{theorem}[\cite{me_grushko}] \label{thm_partial_converse_brady_meier}
There exists an algorithm of polynomial time complexity that takes a tubular graph of graphs with one-ended fundamental group as input and returns a Brady-Meier tubular graph of graphs with isomorphic fundamental group.
\end{theorem}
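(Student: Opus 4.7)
The plan is to detect local obstructions to the Brady-Meier condition in the vertex links of the input complex and eliminate them by a sequence of elementary moves that preserve the fundamental group. The one-endedness hypothesis on $G$ is what will guarantee that every such obstruction is locally removable.

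First I would unpack the structure of vertex links in a tubular graph of graphs $X$. At a vertex $v$, the link $L_v$ is a graph whose vertices are the half-edges at $v$ (some horizontal, coming from the underlying graphs, and some vertical, coming in pairs from the boundary circles of the incident tubes) and whose edges are the corners of squares at $v$. The Brady-Meier condition requires $L_v$ to be connected with neither a cut vertex nor a bridge, for every $v$.

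Second, I would classify the failures and show that each one corresponds to a local free splitting of $\pi_1(X)$. A disconnected $L_v$ yields a free product decomposition of $\pi_1(\st(v))$ and hence a free splitting of $\pi_1(X)$; a cut vertex or a bridge in $L_v$ similarly produces a free splitting of $\pi_1(X)$ by cutting $X$ along a subcomplex locally separating near $v$ at the offending simplex. Since $G$ is one-ended, one factor of every such splitting must be trivial, which translates into a concrete subcomplex $Y \subset X$ that is contractible and attached to the rest of $X$ along either a single point or a single edge. For each failure type I would prescribe an elementary move --- collapsing $Y$ and possibly identifying the two sides of the separating simplex --- that can be verified directly to produce a tubular graph of graphs with the same fundamental group.

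Finally, the algorithm iterates: scan every vertex link, detect a failure, apply the appropriate move, repeat. Each move strictly decreases the total cell count of $X$, so the procedure terminates after at most linearly many iterations, and each iteration is a polynomial-time local computation, giving the claimed bound. The main obstacle will be the classification step: one must verify that every Brady-Meier failure genuinely arises from a local \emph{free} splitting (rather than from a splitting over a nontrivial cyclic subgroup, which one-endedness would not forbid), and that the prescribed moves can be carried out without creating new failures at adjacent vertices. Handling the interplay between the horizontal graph structure and the vertical circle structure at each vertex is the core technical content.
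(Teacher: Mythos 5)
This statement is cited in the paper as a theorem of \cite{me_grushko}, so the present paper contains no proof of it for you to be compared against; what follows is an assessment of your proposal on its own terms.

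Your overall strategy---scan the vertex links for Brady-Meier failures, argue that each failure forces a free splitting or a collapsible piece, and invoke one-endedness to rule out the genuine splittings so that only collapsible structure remains---is a reasonable and plausible route, and it is likely in the same spirit as the cited source. However, two concrete parts of the argument do not hold up as stated.

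First, the termination measure is wrong. You claim ``each move strictly decreases the total cell count.'' But resolving a disconnected link requires blowing up the vertex $v$ into two (or more) vertices and possibly inserting a new edge between them, which \emph{increases} the cell count; only the subsequent collapse of a simply connected component reduces it, and that collapse is a separate move. So ``cell count'' is not monotone under your moves, and you need a more carefully chosen complexity (for example, the number of squares, or a weighted sum that controls both blow-ups and collapses) together with an argument that new failures created at adjacent vertices do not cause the process to cycle.

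Second, and more seriously, the classification step that you yourself flag as the main obstacle is not merely an obstacle but the entire content of the theorem, and the claim as you state it is not correct. A disconnected link does give a free splitting after a vertex blow-up. But a cut vertex $w$ in $L_v$ (corresponding to a half-edge $e$) partitions only the squares through $e$ \emph{at $v$}; the squares through $e$ at the other endpoint of $e$ see a possibly incompatible partition, and the hyperplane dual to $e$ need not be a tree separating the universal cover into pieces with trivial attached group. In general a cut-vertex or bridge failure in a link corresponds to a splitting of $G$ over a (possibly nontrivial) cyclic subgroup carried by a hyperplane, which one-endedness does \emph{not} forbid. The correct dichotomy must instead be that such a failure corresponds either to a genuinely collapsible configuration (an edge of thickness at most one, i.e.\ a free face, or a vertex of small valence) or to a free splitting via a different mechanism, and establishing this is exactly the work that needs to be done. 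You also do not verify that the prescribed moves return a \emph{tubular graph of graphs} (so that one is still in the class under consideration), which is necessary because the blow-up can change both the vertex graph and how the tubes attach. As written, the proposal is therefore an outline with the central lemma unproved, not a complete argument.
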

Thanks to the above result, we work with Brady-Meier tubular graphs of graphs throughout this article. Let $X$ be a Brady-Meier tubular graph of graphs endowed with its $\VH$ structure. Each vertical hyperplane of $X$ is a circle (\cref{prop_tubular_vh}). If $\wt{X}$ denotes the $\cat$ universal cover of $X$, then the vertical hyperplanes of $\wt{X}$ are lines. 
Let $G$ denote the fundamental group of $X$. 
Adopting the terminology of Scott and Wall \cite{scottandwall}, $X$ has a structure of a graph of spaces (see \cref{section_graphs_spaces} for details), where each vertex space is itself a graph. Similarly, $\wt{X}$ has a structure of a tree of spaces, where each vertex space is a (vertical) tree.

In order to construct the JSJ decomposition of $G$, one has to first find cyclic subgroups over which $G$ splits. We address this issue using the Brady-Meier structure of $X$:

A geodesic line $L$ of $\wt{X}$ \emph{separates} $\wt{X}$ if $\wt{X} \setminus L$ is not connected. Two separating geodesic lines $L_1$ and $L_2$ of $\wt{X}$ \emph{cross} if $L_1$ meets two distinct components of $\wt{X} \setminus L_2$ and vice versa.
An \emph{axis} in $\wt{X}$ of an element $g \in G$ is a geodesic line in $\wt{X}$ that is invariant under the action of the cyclic subgroup $<g>$. Given $g \in G$, an axis $L$ of $g$ always exists in $\wt{X}$ \cite{bridsonhaefliger}. 

Suppose that $G$ splits over $<g>$ and that $L$ is contained in a vertical tree of $\wt{X}$. Then an application of a result of Papasoglu \cite{papasoglu_coarse_separation} to
the fact that $\wt{X}$ is a Brady-Meier complex implies that $L$ separates $\wt{X}$ and $L$ does not cross any of its translates (\cref{lemma_algebraic_splittings_and_separating_lines}). Conversely, if $L$ is separating and does not cross any of its translates, then $G$ splits over a subgroup of the stabiliser of $L$ (\cref{prop_cyclic_splittings_induce_separating_lines}). 

The properties of separation and crossing have local characterisations in the Brady-Meier $\wt{X}$. 
Denote by $N_R(L)$ the set of all points in $\wt{X}$ at distance at most $R$ from a point of $L$. 
\begin{lemma}[\cref{lemma_half-spaces_equal_components_reg_sphere_L}]
The line $L$ separates $\wt{X}$ if and only if it separates $N_{\frac{1}{4}}(L)$.
\end{lemma}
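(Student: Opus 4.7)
The plan is to prove both implications separately, with the heart of the argument being the nearest-point projection $\pi_L\colon\wt X\to L$. Since $L$ is a geodesic line in the CAT(0) space $\wt X$, it is a closed convex subset, so $\pi_L$ is well-defined, 1-Lipschitz, and for any $x\notin L$ the geodesic segment $[\pi_L(x),x]$ meets $L$ only at the endpoint $\pi_L(x)$.

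For the forward direction, suppose $\wt X\setminus L = A\sqcup B$ is a separation into nonempty disjoint open sets. I claim that both $A$ and $B$ meet $N_{1/4}(L)\setminus L$. Indeed, since $\wt X$ is connected and $A$ is a proper nonempty open subset of $\wt X\setminus L$, the closure $\overline A$ taken in $\wt X$ cannot be contained in $A$ (else $A$ would be a proper clopen subset of $\wt X$), so $\overline A\cap L\neq\emptyset$. Points of $A$ converging to a point of $L$ eventually lie in $N_{1/4}(L)$, so $A\cap\bigl(N_{1/4}(L)\setminus L\bigr)\neq\emptyset$, and symmetrically for $B$. These two sets are then a separation of $N_{1/4}(L)\setminus L$.

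For the reverse direction, I construct an explicit continuous retraction $\rho\colon \wt X\setminus L\to N_{1/4}(L)\setminus L$. Given $x\in\wt X\setminus L$, set
\[
\rho(x) \ = \ \text{the unique point on } [\pi_L(x),x] \text{ at distance } \min\bigl(d(x,L),\tfrac14\bigr)\text{ from }\pi_L(x).
\]
Continuity of $\pi_L$ together with continuity of geodesics in endpoints in a CAT(0) space gives continuity of $\rho$, including at the threshold $d(x,L)=\tfrac14$. By construction $d(\rho(x),L)\le\tfrac14$, and $\rho(x)\notin L$ because $\rho(x)$ is at positive distance from $\pi_L(x)$ while no other point of $[\pi_L(x),x]$ lies on $L$. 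Moreover $\rho$ fixes $N_{1/4}(L)\setminus L$ pointwise.

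Assuming contrapositively that $\wt X\setminus L$ is connected, any two points $p,q\in N_{1/4}(L)\setminus L$ are joined by a path $\alpha\colon[0,1]\to\wt X\setminus L$, and then $\rho\circ\alpha$ is a path in $N_{1/4}(L)\setminus L$ from $p$ to $q$; hence $N_{1/4}(L)\setminus L$ is path-connected. Notably, this argument uses only the CAT(0) structure of $\wt X$ and convexity of $L$, not the Brady-Meier hypothesis; Brady-Meier will be needed for subsequent finer local characterisations (e.g.\ of when $L$ separates in terms of the link data along $L$), but not for this basic reduction. The only point requiring care is the continuity of $\rho$ where the two defining cases meet, which reduces to the standard fact that geodesics in a CAT(0) space vary continuously with their endpoints.
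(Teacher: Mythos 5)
Your proof is correct and takes a genuinely different route from the paper. The paper proves the stronger statement \cref{lemma_half-spaces_equal_components_reg_sphere_L} (a bijection between half-spaces of $L$ and components of the combinatorial regular sphere $\partial N(L)$) by a topological argument: given two points of $\partial N(L)$ lying in a common half-space, a path inside that half-space and a path passing through $L$ together bound a disk because $\wt{X}$ is simply connected, and the intersection of this disk with $\partial N(L)$ supplies the required connecting path inside $\partial N(L)$. You instead use pure $\cat$ metric geometry: nearest-point projection $\pi_L$ yields an explicit continuous retraction of $\wt{X}\setminus L$ onto $N_{1/4}(L)\setminus L$, which transports connectedness immediately, and combined with your density observation in the forward direction it would give the full bijection on $\pi_0$ as well. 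Your approach sidesteps the somewhat informal ``disk cuts through $\partial N(L)$'' step of the paper, and it makes transparent that the Brady-Meier hypothesis plays no role here. The trade-off is that the paper's formulation is combinatorial and is reused heavily downstream (e.g.\ in \cref{cor_square_meets_halfspace} and \cref{lemma_rep_cycles_lifts}), so your metric statement would need the small additional remark that $N_{1/4}(L)\setminus L$ deformation-retracts onto $\partial N(L)$ to interface with those later arguments. One very minor gap in your forward direction: from ``$\overline{A}\not\subset A$'' you should also record that $\overline{A}\cap B=\emptyset$ (because $B$ is open and disjoint from $A$) before concluding $\overline{A}\cap L\neq\emptyset$; this is immediate but worth stating.
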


\begin{prop}[\cref{prop_local_crossing_lines}]
Two separating lines $L_1$ and $L_2$ cross if and only if 
\begin{enumerate}
\item $L_1 \cap L_2$ is non-empty and compact, and
\item $L_2$ meets two components of $N_{\frac{1}{4}} (L_1 \cap L_2) \setminus L_1$.
\end{enumerate}
\end{prop}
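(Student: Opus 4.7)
The plan is to reduce both directions to the local separation lemma (\cref{lemma_half-spaces_equal_components_reg_sphere_L}) by relating global components of $\wt{X} \setminus L_1$ to components of the local piece $N_{\frac{1}{4}}(L_1 \cap L_2) \setminus L_1$.

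For the forward direction, assume $L_1$ and $L_2$ cross. First, $L_1 \cap L_2$ is non-empty, for otherwise the connected line $L_2$ would lie in a single component of $\wt{X} \setminus L_1$, contradicting crossing. Next, $L_1 \cap L_2$ is convex in each of the two lines; if unbounded, it would contain a ray of $L_2$, so $L_2 \setminus L_1$ would consist of at most a single ray of $L_2$, hence be connected and sit in a single component of $\wt{X} \setminus L_1$, again contradicting crossing. Thus $L_1 \cap L_2$ is compact, giving~(1). For (2), write $L_2 \setminus (L_1 \cap L_2) = R_1 \cup R_2$, the union of two unbounded rays of $L_2$; since each $R_i$ is connected and crossing demands that $L_2$ meets two components of $\wt{X} \setminus L_1$, the rays $R_1$ and $R_2$ must lie in distinct components of $\wt{X} \setminus L_1$. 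Choosing $p_i \in R_i$ within distance $\frac{1}{4}$ of $L_1 \cap L_2$, both $p_i$ lie in $N_{\frac{1}{4}}(L_1 \cap L_2) \setminus L_1$; any path joining them inside this set would also join them in $\wt{X} \setminus L_1$, so $p_1$ and $p_2$ sit in distinct components of $N_{\frac{1}{4}}(L_1 \cap L_2) \setminus L_1$.

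For the backward direction, assume (1) and (2). By (1), $L_2 \setminus (L_1 \cap L_2) = R_1 \cup R_2$ is again the union of two unbounded rays, and because $L_1 \cap L_2$ is a compact subsegment of the line $L_2$, the portion of $L_2$ meeting $N_{\frac{1}{4}}(L_1 \cap L_2) \setminus L_1$ consists of initial sub-segments of $R_1$ and $R_2$. By (2), these two initial segments lie in distinct components of $N_{\frac{1}{4}}(L_1 \cap L_2) \setminus L_1$. The key step is then to promote this local separation to the global statement that $R_1$ and $R_2$ lie in distinct components of $\wt{X} \setminus L_1$: this uses \cref{lemma_half-spaces_equal_components_reg_sphere_L} to identify components of $N_{\frac{1}{4}}(L_1) \setminus L_1$ with components of $\wt{X} \setminus L_1$, together with an injectivity statement for the natural map $\pi_0\bigl(N_{\frac{1}{4}}(L_1 \cap L_2) \setminus L_1\bigr) \to \pi_0\bigl(N_{\frac{1}{4}}(L_1) \setminus L_1\bigr)$, obtained by propagating the local half-space picture along the compact segment $L_1 \cap L_2$. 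Once this is established, $L_2$ meets two components of $\wt{X} \setminus L_1$, and the symmetric assertion that $L_1$ meets two components of $\wt{X} \setminus L_2$ follows by interchanging the roles of $L_1$ and $L_2$ in the same argument.

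The main obstacle will be precisely this injectivity step: distinct local components inside the small neighborhood must not be joined by any global path in $\wt{X} \setminus L_1$ that leaves the $\frac{1}{4}$-neighborhood of $L_1 \cap L_2$. This is where the Brady-Meier hypothesis is essential, since it controls the vertex links at every vertex of $\wt{X}$ and allows the local-to-global correspondence of \cref{lemma_half-spaces_equal_components_reg_sphere_L} to be threaded along the compact intersection $L_1 \cap L_2$; concretely, one covers $L_1 \cap L_2$ by finitely many small balls centered on $L_1 \cap L_2$, applies the local separation lemma at each, and stitches the resulting local half-spaces into globally distinct components.
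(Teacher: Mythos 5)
The forward direction (crossing $\Rightarrow$ (1) and (2)) is correct and in the same spirit as the paper's \cref{lemma_P_non-compact}. But the backward direction has a real gap. You reduce it to an injectivity claim for the map $\pi_0\bigl(N_{\frac{1}{4}}(L_1 \cap L_2) \setminus L_1\bigr) \to \pi_0\bigl(N_{\frac{1}{4}}(L_1) \setminus L_1\bigr)$, which by \cref{lemma_half-spaces_equal_components_reg_sphere_L} is the map to the set of half-spaces of $L_1$, and then propose to establish it by covering $P = L_1 \cap L_2$ with small balls centered on $P$ and ``stitching''. That injectivity is not true in general for a separating line in a Brady-Meier complex: the regular sphere $\partial N(P)$ around a compact subpath of $L_1$ may have strictly more components in the complement of $L_1$ than $\partial N(L_1)$ does, because distinct local components can merge further along $L_1$ outside of $N(P)$. \cref{lemma_half-spaces_equal_components_reg_sphere_L} only gives a bijection for the whole line, not for subpaths, and a ball-by-ball argument does not address why the two specific components containing the two rays of $L_2 \setminus P$ should remain separate globally. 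You identify this as the ``main obstacle'' but the resolution you offer does not close it.

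The paper's argument sidesteps the issue entirely and never attempts to control what happens outside $N(P)$. It uses \cref{prop_regular_sphere_compact_path_bradymeier} to see that $\partial N(P)$ is a connected graph with no cut points, so that $L_1 \cap \partial N(P)$ and $L_2 \cap \partial N(P)$ are cut pairs (\cref{lemma_L_separates_partial_N(P)}). The decisive input is then \cref{cor_cut_pairs_3_components_no_crossings}: if one cut pair separates the other, the separating pair has exactly two half-spaces in $\partial N(P)$. Now each half-space of the relevant line in $\wt{X}$ contains $P$ (\cref{lemma_half-spaces_contain_lines}) and hence occupies at least one component of $\partial N(P)$ minus the line, so the line has exactly two half-spaces in $\wt{X}$, and the two local components fill the two global half-spaces bijectively. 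Crossing then follows at once. The missing ingredient in your proof is precisely this cut-pair dichotomy: it is what forces the local count to equal the global count, and it is not recoverable by the propagation argument you sketch.
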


The quotient of $L$ by $<g>$ is an immersed circle $C$, which we call a \emph{cycle}, in $X$. The \emph{regular neighbourhood} of $C$ is the quotient of $N_{\frac{1}{4}}(L)$ by the action of $<g>$. The fact that $L$ separates $N_{\frac{1}{4}}(L)$, along with a condition that is satisfied since $G$ splits over $<g>$ (\cref{def_splitting_cycle}), implies the following result: 
\begin{lemma}[\cref{lemma_necessary_conditions_splitting_cycle}]
$C$ separates its regular neighbourhood and $\wt{C}$ does not cross any of its translates in $\wt{X}$.
\end{lemma}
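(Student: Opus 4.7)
The plan is to deduce the statement about the quotient cycle $C$ from the corresponding statements about its axis $L$ in $\widetilde{X}$, which are already established in the excerpt, by exploiting the $\langle g \rangle$-equivariance of the regular neighbourhood. The only real work lies in showing that passing to the quotient does not merge the two sides of $L$.

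For the non-crossing conclusion, I would argue as follows. The translates of $\widetilde{C}$ in $\widetilde{X}$ are precisely the $G$-translates of $L$. By \cref{lemma_algebraic_splittings_and_separating_lines}, the hypotheses that $G$ splits over $\langle g \rangle$ and that $L$ lies in a vertical tree of $\widetilde{X}$ imply that $L$ separates $\widetilde{X}$ and that no $G$-translate of $L$ crosses $L$. Translating this statement down by the quotient $L \mapsto C$ gives the second conclusion immediately.

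For the first conclusion, \cref{lemma_half-spaces_equal_components_reg_sphere_L} upgrades separation of $\widetilde{X}$ by $L$ to separation of $N_{\frac{1}{4}}(L)$ by $L$; write $A, B$ for the two sides. The remaining issue is to rule out the possibility that the $\langle g \rangle$-action swaps $A$ and $B$, for in that case the quotient map would identify them and $C$ would fail to separate its regular neighbourhood. This is exactly the content of the "condition satisfied since $G$ splits over $\langle g \rangle$" that is alluded to just before the lemma and which is presumably built into \cref{def_splitting_cycle}. I would verify it by looking at the Bass-Serre tree $T$ of the splitting: the equivariant map $\widetilde{X} \to T$ sends $L$ onto an edge $e$ with stabiliser $\langle g \rangle$, and since $\langle g \rangle$ fixes $e$ pointwise (otherwise the edge stabiliser would be a proper index-two overgroup of $\langle g \rangle$), each endpoint of $e$ is preserved by $\langle g \rangle$, so each of $A, B$ is preserved as well. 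Thus the quotient map $N_{\frac{1}{4}}(L) \to N_{\frac{1}{4}}(L)/\langle g \rangle$ keeps the images of $A$ and $B$ disjoint, and $C$ separates its regular neighbourhood.

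The main obstacle I anticipate is this last non-swapping check, because it depends on the precise formulation of \cref{def_splitting_cycle} and on ensuring that the cyclic splitting is chosen with $\langle g \rangle$ itself (rather than an index-two extension) as the edge stabiliser; everything else in the argument is a direct transfer of earlier lemmas through the $\langle g \rangle$-quotient.
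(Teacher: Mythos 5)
The paper gives no proof because, as stated in the body (\cref{lemma_necessary_conditions_splitting_cycle}), the lemma is a tautology: \cref{def_splitting_cycle} \emph{defines} a splitting cycle to be strongly UC-separating (condition~(1), which is precisely the claim that $C$ separates its regular neighbourhood, since $\partial N(C)$ being disconnected is what that means and $N(C)\setminus C$ deformation retracts onto $\partial N(C)$) and to have no self-crossings (condition~(3), which is precisely the claim that $\wt{C}$ crosses none of its translates). Your proposal instead tries to derive these properties from the assumption that $G$ splits over $\langle g\rangle$ with a vertical axis $L$. That is a genuinely different statement; it is the content of \cref{prop_algebraic_commensurable_geometric_splitting} and \cref{lemma_alg_splittings_commensurable_geom_splittings}, and those results deliberately produce only a subgroup \emph{commensurable} with $\langle g\rangle$, not $\langle g\rangle$ itself, that is a geometric splitting subgroup, precisely because of the issue your ``non-swapping check'' is trying to address.

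That check, as written, does not go through. You assume there are exactly two sides $A,B$, but the paper explicitly warns that a separating line can have more than two half-spaces. You also rely on a $G$-equivariant map $\wt{X}\to T$ sending $L$ onto a single edge, which is not supplied; and even granting such a map and that $\langle g\rangle$ fixes that edge pointwise, preserving the two half-trees of $T$ is strictly weaker than preserving each of the possibly-many half-spaces of $L$. The paper resolves this not by showing $\langle g\rangle$ preserves half-spaces of $L$ — it may fail to — but by replacing $\langle g\rangle$ with the largest subgroup $H'$ of $\mathrm{stab}(L)$ preserving a chosen half-space (see the proof of \cref{prop_algebraic_commensurable_geometric_splitting}), and then by folding the resulting properties into \cref{def_splitting_cycle}, after which \cref{lemma_necessary_conditions_splitting_cycle} is immediate.
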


We need another property to construct the JSJ decomposition. A cyclic subgroup over which $G$ splits is said to be \emph{universally elliptic} if it is elliptic in the Bass-Serre tree of any cyclic splitting of $G$ \cite{guirardel_levitt_jsj}. 
The edge groups of the JSJ decomposition are universally elliptic. 

Let $L_1$ (respectively $L_2$) be an axis of $g_1$ (respectively $g_2$) such that $G$ splits over $<g_1>$ and $<g_2>$. Then $<g_1>$ is elliptic in the Bass-Serre tree of the splitting over $<g_2>$ only if $L_1$ and any translate of $L_2$ don't cross (\cref{lemma_elliptic-elliptic_splittings}).

\subsection{Repetitive cycles and JSJ splittings}
In \cref{section_repetitive}, we introduce an important notion, namely repetitivity, that bounds the length of a cycle that induces a universally elliptic splitting. Let $\wt{C}$ denote a lift of a cycle $C$ in $\wt{X}$.

\begin{defn}[\cref{def_rep_cycle}, \cref{lemma_rep_cycles_lifts}] A cycle $C$ is \emph{$k$-repetitive} if $\wt{C}$ is a separating line and there exists an edge $e$ in $\wt{X}$ and elements $g_1, \cdots, g_k \in G$ such that 
\begin{enumerate}
\item each translate $g_i \wt{C}$ contains $e$,
\item the distance between $e$ and $g_ie$ is strictly less than the length of $C$, and
\item any two squares $\mathsf{s}$ and $\mathsf{s}'$ that contain $e$ are either separated by all translates $g_i\wt{C}$ or by none of them. 
\end{enumerate}
\end{defn}

There are two important reasons for introducing the notion of repetitive cycles. The first reason is that any cycle that is longer than a certain bound is $k$-repetitive (\cref{prop_long_cycles_repetitive}). Here, the bound depends only on $k$ and the number of squares of $X$. The second reason is the following: 
\begin{prop}[\cref{prop_repetitive_not_jsj}]
Let $C$ be a $k$-repetitive cycle with $k \geq 3$. Suppose that $\pi_1(C)$ is a maximal cyclic subgroup of $G$. Then there exists a separating line $L$ in $\wt{X}$ such that $L$ and $\wt{C}$ coarsely cross.
\end{prop}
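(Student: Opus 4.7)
The plan is to find $L$ either as one of the translates $g_i\wt{C}$ itself, or, failing that, as the axis of a carefully chosen hyperbolic element built from $c$ and its conjugates via a ping-pong construction. After replacing $g_i$ with $g_i g_1^{-1}$ I may assume $g_1 = 1$, so $e \subset \wt{C}$, and I let $c$ denote a generator of $\pi_1(C)$, with translation length $|C|$ along $\wt{C}$. A first observation is that the $k$ translates $g_i\wt{C}$ through $e$ are pairwise distinct: since $\pi_1(C)$ is maximal cyclic in the torsion-free hyperbolic group $G$, the setwise stabiliser of $\wt{C}$ equals $\langle c\rangle$; if $g_i\wt{C} = g_j\wt{C}$ then $g_j^{-1}g_i = c^n$ for some $n$, and condition~(2) of $k$-repetitivity gives $|n|\cdot|C| = d(e, c^n e) < |C|$, forcing $g_i = g_j$. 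Thus I obtain at least three distinct separating lines through $e$ that, by condition~(3), are locally parallel at $e$.

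If some $g_i\wt{C}$ with $i\ge 2$ already coarsely crosses $\wt{C}$, I take $L = g_i\wt{C}$ and the conclusion is immediate. Otherwise every $g_i\wt{C}$ fails to coarsely cross $\wt{C}$, so it fellow-travels $\wt{C}$ along one of its two sides outside a compact overlap. Pigeonhole applied to $g_2\wt{C}$ and $g_3\wt{C}$ (this is where $k\ge 3$ enters) lets me assume both lie coarsely on the same side $A$ of $\wt{C}$. Setting $c_i = g_i c g_i^{-1}$, whose axis is $g_i\wt{C}$, I would run a ping-pong argument: since $\wt{C}$ and $g_2\wt{C}$ are distinct axes of conjugate hyperbolic generators with bounded overlap, a sufficiently long word such as $h = c^N c_2^{-N}$ in $c$ and $c_2$ is hyperbolic, and its axis $L$ fellow-travels $\wt{C}$ and $g_2\wt{C}$ alternately along long arcs. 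The transition arcs between these fellow-travelling regions force $L$ to pass from side $A$ across $\wt{C}$ into the opposite side, which gives the coarse crossing of $L$ with $\wt{C}$.

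It remains to verify that $L$ is separating. I would apply \cref{lemma_half-spaces_equal_components_reg_sphere_L}, reducing to the local claim that $L$ disconnects $N_{1/4}(L)$. Along the long subsegments, where $L$ coincides with $\wt{C}$ or $g_2\wt{C}$, the local separation is inherited from the fact that those lines are themselves separating. The main obstacle is verifying the local disconnection at the transition vertices where $L$ switches between arcs of $\wt{C}$ and $g_2\wt{C}$: here the Brady-Meier structure of $\wt{X}$, combined with condition~(3) of $k$-repetitivity (the uniform local separation behaviour of the translates at $e$, transported to the transition vertices by appropriate powers of $c$), should guarantee that the link splits compatibly so $L$ remains locally separating at those points. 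This compatibility check is the delicate technical core of the argument, and if a subtler construction of $h$ is needed at the transitions, the additional translate $g_3\wt{C}$ furnishes the extra flexibility to align the local configurations.
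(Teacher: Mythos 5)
Your overall idea---construct the crossing line as the axis of a hyperbolic element built from translates of a generator of $\pi_1(C)$ via a zigzag/ping-pong, then verify separation locally via regular spheres---is the right one and matches the paper in spirit. However, the execution misses the decisive structural ingredient and contains a genuine error that breaks the argument.

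\emph{The pigeonhole step is invalid.} After normalising $g_1=1$, you are distributing only two translates $g_2\wt{C}$, $g_3\wt{C}$ into two sides of $\wt{C}$; there is no pigeonhole conclusion that forces them onto the same side, and with $k=3$ you cannot simply demand more translates. But the deeper problem is that ``both on the same side'' is not the configuration you actually need.

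\emph{The axis you build cannot cross $\wt{C}$.} Suppose $g_2\wt{C}$ does not cross $\wt{C}$ and lies coarsely on side $A$ of $\wt{C}$. The axis $L$ of $h = c^N c_2^{-N}$ is, for large $N$, a concatenation of long arcs along $\wt{C}$ and along $g_2\wt{C}$, with transitions near the bounded overlap containing $e$. Hence $L \subset \wt{C}\cup g_2\wt{C}\subset \wt{C}\cup\overline{A}$, so $L$ never enters the interior of side $B$ and therefore does not cross $\wt{C}$. Zigzagging between two lines does not make the axis cross \emph{either} of those two lines---what it crosses is a third line sandwiched between them. Your proposal has no such third line.

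\emph{What the paper does instead.} Condition (3) of repetitivity yields (\cref{lemma_partition_squares_e}) a single partition $A\sqcup B$ of the squares at $e$ that is respected simultaneously by $L_1,L_2,L_3$. This forces the three half-spaces $Y_i$ (each containing a fixed square $\mathsf{s}\in A$) to be \emph{nested}, and after reordering $Y_1\subset Y_2\subset Y_3$ one shows $L_1\subset Y_2$ and $L_3\subset \overline{Y}_2$ (\cref{lemma_L_2_separates_L_1_L_3}): the outer two translates lie in \emph{complementary} half-spaces of the middle one $L_2$. The zigzag $L'$ is then the axis of $h'=h_3 h_1^{-1}$ with $h_i$ chosen in $\mathrm{stab}(L_i)$ for the \emph{outer} $i\in\{1,3\}$, so that $h_1(e)\in L_1\setminus L_2$ and $h_3(e)\in L_3\setminus L_2$; hence $L'$ has points on both sides of $L_2$, and once $L'$ is shown to be separating (\cref{lemma_L'_separating}, itself nontrivial: it uses the spliced-graph decomposition of $\partial N(\sigma)$ from \cref{lemma_regular_sphere_at_combinatorial_path} and the same partition of squares) the crossing of $L'$ with $L_2$, a translate of $\wt{C}$, follows. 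Your proposal never uses condition (3) to derive the nestedness, never identifies the middle line $L_2$, and leaves the separation verification as an acknowledged open step; all three are essential, and the first two are what your construction is missing.
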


This implies that $\pi_1(C)$ conjugates into a hanging surface subgroup of the JSJ splitting of $G$, by Proposition 5.30 of \cite{bowditch_jsj}. Hence, $\pi_1(C)$ is not universally elliptic and the length of a cycle which induces a universally elliptic cyclic subgroup is bounded. This leads to the following result: 

\begin{theorem}[\cref{thm_algo_G_hyp}] \label{thm_finite_list_intro}
There exists an algorithm of double exponential time complexity that takes a Brady-Meier tubular graph of graphs with hyperbolic fundamental group $G$ as input and returns a finite list of splitting cycles that contains all universally elliptic subgroups of $G$ up to commensurability.
\end{theorem}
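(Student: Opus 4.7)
The plan is to exploit the length dichotomy established in the previous subsection: \cref{prop_long_cycles_repetitive} supplies a bound $B = B(n)$ depending only on the number $n$ of squares of $X$ (and at most singly exponential in $n$) such that every cycle of length $>B$ is $3$-repetitive, while \cref{prop_repetitive_not_jsj} forces a $3$-repetitive cycle whose fundamental group is maximal cyclic into a hanging surface subgroup of the JSJ of $G$, hence out of the set of universally elliptic subgroups. Together these propositions bound the length of any splitting cycle representing a maximal universally elliptic cyclic subgroup, and the algorithm simply enumerates the splitting cycles of length at most $B$.

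First I would reduce to maximal cyclic subgroups. Given a universally elliptic $H$, let $H'$ denote its unique maximal cyclic overgroup; $H'$ is commensurable with $H$. A splitting cycle $C_H$ and the cycle $C_{H'}$ obtained by quotienting the same axis by $H'$ are both splitting cycles, since the separation and non-crossing conditions characterising splitting cycles are properties of the underlying axis (\cref{lemma_necessary_conditions_splitting_cycle}), not of the particular generator; we have $\pi_1(C_{H'}) = H'$ up to conjugation and $|C_{H'}|$ divides $|C_H|$. Moreover, $G$ splits over $H'$ via $C_{H'}$, and any cyclic splitting in which $H$ is elliptic also has $H'$ elliptic (a cyclic group acting on a tree is either elliptic or hyperbolic along an axis, and a hyperbolic generator cannot have an elliptic power), so $H'$ is itself universally elliptic. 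Hence it is enough to produce one splitting cycle per conjugacy class of maximal cyclic universally elliptic subgroups.

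For such an $H'$, the contrapositive of \cref{prop_repetitive_not_jsj} applied to $C_{H'}$ shows that $C_{H'}$ is not $3$-repetitive, and the contrapositive of \cref{prop_long_cycles_repetitive} with $k=3$ then yields $|C_{H'}| \le B$. The algorithm is therefore: enumerate all combinatorial closed edge-paths in $X$ of length at most $B$; test each against the local separation criterion \cref{lemma_half-spaces_equal_components_reg_sphere_L}, the local crossing criterion \cref{prop_local_crossing_lines}, and the extra condition in \cref{def_splitting_cycle}; output those that pass. By the previous paragraph, every universally elliptic cyclic subgroup of $G$ is commensurable with $\pi_1$ of some cycle in the output.

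The main obstacle is the complexity accounting. Since $B$ is singly exponential in $n$, the number of paths to enumerate is at most $|E(X)|^{B} = 2^{O(B\log n)}$, which is doubly exponential. Per-cycle verification must therefore be polynomial in $|C|$ and $n$: the separation test is a finite check inside $N_{1/4}(\wt C)$, and the crossing test reduces by \cref{prop_local_crossing_lines} to checking $N_{1/4}$ of each compact intersection with a translate, so only finitely many translates of $\wt C$ (bounded in terms of $|C|$ and $n$ by compactness of $X$) need to be considered. Making these cardinality bounds explicit and verifying that the cumulative cost of all per-cycle tests fits within the double-exponential budget, and in particular providing the quantitative form of \cref{prop_long_cycles_repetitive} with a singly-exponential dependence on $n$, is the delicate technical step.
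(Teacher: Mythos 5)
Your overall strategy matches the paper's --- bound the length of universally elliptic cycles via the repetitivity dichotomy, then brute-force enumerate and test --- but your ``reduce to maximal cyclic subgroups'' step contains a genuine error. You assert that $C_{H'}$, the quotient of the axis by the maximal cyclic overgroup $H'$, is automatically a splitting cycle because ``the separation and non-crossing conditions characterising splitting cycles are properties of the underlying axis.'' This is false for the first condition in \cref{def_splitting_cycle}: being \emph{strongly} UC-separating requires, by \cref{lemma_regular_sphere_C_disconnected}, that $\pi_1(C)$ \emph{not} act transitively on the half-spaces of $\wt C$, which is a condition on the particular cyclic subgroup, not on the axis alone. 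It is entirely possible that $H$ does not act transitively (so $C_H$ is strongly UC-separating) while the larger group $H'$ does act transitively (so $C_{H'}$ is merely UC-separating, $\partial N(C_{H'})$ is connected, and $G$ may not split over $H'$ at all). The lemma you cite, \cref{lemma_necessary_conditions_splitting_cycle}, lists necessary conditions for a splitting cycle; it does not say these conditions are invariant under passing to the maximal overgroup.

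The consequence is that your length bound is too small. In the failure case, the shortest splitting cycle commensurable with $H$ is some proper power $C_H$ of $C_{H'}$, and \cref{lemma_power_uc_separatig_is_separating} only bounds the exponent by the maximal edge-thickness $N$ (at most the number of squares $F$). So the correct length cap is $F \cdot B$, not $B$; enumerating only up to $B$ would silently drop exactly those universally elliptic subgroups whose maximal overgroup is ``too symmetric.'' This is precisely the case handled in the paper's proof of \cref{thm_algo_G_hyp} by the remark that ``$C'$ is not universally elliptic because it is not strongly UC-separating'' followed by the appeal to \cref{lemma_power_uc_separatig_is_separating}. The fix is cosmetic (replace $B$ with $F \cdot B$) and does not change the double-exponential complexity, but without it the claimed completeness of the output list fails. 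One further small imprecision: the separation and crossing tests should be carried out on the finite complexes $\partial N(C)$ and $\partial_{orth} N(P_C)$ rather than inside $N_{1/4}(\wt C)$, which is infinite; \cref{lemma_regular_sphere_at_combinatorial_path} and \cref{lemma_orthogonal_sphere_fundamental_domain} reduce these to finite splicing computations.
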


We remark that a similar, but weaker, result is obtained by Cashen and Macura in Proposition 4.11 of \cite{cashen_macura_line_patterns} using a similar idea, where given a free group $F$ and a finite family of cyclic subgroups $\mathcal{H}$, the authors obtain a bound depending on $F$ and $\mathcal{H}$ such that if $F$ admits a cyclic splitting relative to $\mathcal{H}$, then there is a word of length less than the bound over which $F$ splits relative to $\mathcal{H}$.

\subsection{Obtaining a JSJ complex}
In \cref{section_jsj_complex}, we modify the given tubular graph of graphs $X$ to a tubular graph of graphs $X_{jsj}$ such that the fundamental group of the graph of spaces structure of $X_{jsj}$ is the JSJ decomposition of $G$. 

The first step involves a modification of the initial tubular graph of graphs $X$ by cutting along the finite list of cycles supplied by \cref{thm_finite_list_intro}. 
We do this cutting procedure using the machinery of \emph{spaces with walls} (due to Haglund and Paulin \cite{haglund_paulin_walls}). The vertex set of $\wt{X}$ is a space with walls, with walls defined by its vertical and horizontal hyperplanes. We enrich the wall set by adding lifts of cycles supplied by \cref{thm_finite_list_intro} (see \cref{subsection_construction_X'}). 
We then remove tubes which are attached to cyclic vertex graphs on both sides. In \cref{prop_T''_refinement_jsj}, we show that each edge group of the JSJ decomposition of $G$ is a conjugate of an edge group of the underlying graph of groups of the new tubular graph of graphs. Thus, an edge stabiliser of the Bass-Serre tree of the new tubular graph of graphs is either an edge stabiliser of the JSJ tree or a cyclic subgroup that conjugates into a maximal hanging surface subgroup of the JSJ splitting. 
It only remains to identify the maximal surface subgroups that appear as vertex groups in the JSJ decomposition. Once identified, removing tubes corresponding to edge stabilisers which conjugate into maximal hanging surface subgroups gives the JSJ decomposition, proving the main result (\cref{thm_main_jsj_intro}).

\subsection{Identifying surfaces}
We give a criterion to identify surfaces in the Brady-Meier setup. A vertex graph of a tubular graph of graphs is a \emph{surface graph} if the fundamental group of the graph is a surface group whose peripheral subgroups are precisely the incident edge subgroups. Then

\begin{lemma} [\cref{lemma_surface_graphs}]
A vertex graph of a Brady-Meier tubular graph of graphs is a surface graph if and only if every edge of its double is contained in exactly two squares.
\end{lemma}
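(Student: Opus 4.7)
I interpret the double $D\Gamma_v$ of the vertex graph $\Gamma_v$ as the square complex obtained by gluing two copies $\Gamma_v^1, \Gamma_v^2$ of $\Gamma_v$ via cylinders: for each incident tube-end with attaching map $c_i \colon S^1 \to \Gamma_v$, attach a cylinder $S^1 \times [0,1]$ by $c_i$ on $S^1 \times \{0\}$ into $\Gamma_v^1$ and by the same $c_i$ on $S^1 \times \{1\}$ into $\Gamma_v^2$. Each vertical edge of $D\Gamma_v$ lies in exactly two squares automatically, so the hypothesis that every edge of $D\Gamma_v$ is in exactly two squares is equivalent to the purely combinatorial condition $\sum_i |c_i^{-1}(e)| = 2$ for every edge $e$ of $\Gamma_v$, i.e.\ each edge of $\Gamma_v$ is covered with multiplicity exactly twice by the family of incident attaching circles.

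For the forward direction, my plan is to use the fact that if $\Gamma_v$ is a surface graph then it embeds as a spine of a compact surface with boundary $\Sigma$ whose boundary components realize the attaching circles of $\Gamma_v$. Each open edge $e$ of $\Gamma_v$ is two-sided inside the $2$-manifold $\Sigma$, and the two local sides lie in the annular regions of $\Sigma \setminus \Gamma_v$. Tracing these two annular sides back to their inner boundaries, which lie on $\Gamma_v$ and which are, by construction, the attaching circles, produces exactly two preimages of $e$ among the attaching circles and verifies the edge condition on $D\Gamma_v$.

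For the converse, the plan is to show directly that $D\Gamma_v$ is a closed surface. Every vertex of $D\Gamma_v$ lies in $\Gamma_v^1$ or $\Gamma_v^2$, and the local combinatorial data at $v \in \Gamma_v^1$ (horizontal edges at $v$, preimages of $v$ under attaching maps of tube-ends attached to $\Gamma_v$, and square corners at $v$) coincides with the corresponding data at $v$ inside $X$; hence $\link_{D\Gamma_v}(v)$ is isomorphic to $\link_X(v)$. The Brady-Meier hypothesis on $X$ therefore ensures that every vertex link of $D\Gamma_v$ is connected, while the edge condition forces every vertex of every link to have valence $2$. A connected graph in which every vertex has valence $2$ is a single cycle, so every vertex link of $D\Gamma_v$ is a circle and $D\Gamma_v$ is a closed surface $S$. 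Cutting $S$ along the disjoint midcircles of its cylinders recovers two homeomorphic copies of a compact surface $\Sigma$ with boundary the attaching circles of $\Gamma_v$; each half deformation retracts onto $\Gamma_v$, so $\pi_1(\Gamma_v) \cong \pi_1(\Sigma)$ is a surface group whose peripheral subgroups are precisely the incident edge subgroups.

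The main difficulty should lie in the forward direction, specifically in translating the abstract spine structure $\Gamma_v \subset \Sigma$ into the combinatorial language of attaching circles; a cellulation of $\Sigma$ in which $\Gamma_v$ appears as a subcomplex and in which the components of $\Sigma \setminus \Gamma_v$ appear as annular regions should make the $2$-to-$1$ cover of $\Gamma_v$ by $\partial \Sigma$ transparent via a regular-neighbourhood argument. A secondary technical point in the converse is the matching of vertex links between $X$ and $D\Gamma_v$, which requires care when a tube of $X$ has both ends attached to $\Gamma_v$: such a tube contributes two distinct cylinders to $D\Gamma_v$, but both constructions produce exactly one vertical edge germ at $v$ per preimage of $v$ under each individual attaching map, so the link graphs do coincide.
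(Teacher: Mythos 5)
Your proof is correct, and the converse direction (thickness two implies surface graph) follows the same route as the paper: Brady--Meier gives connected links, the thickness hypothesis forces valence $2$, hence every link is a circle and the double is a closed surface. Where you genuinely diverge is in the forward direction (surface graph implies thickness two). The paper argues by contraposition inside $\widetilde{D}_s$: given an edge of thickness at least $3$, pick a line $L$ in the (tree) horizontal hyperplane through a lift of that edge, observe that $L$ fails to separate the regular sphere around the midpoint, apply \cref{lemma_L_separates_partial_N(P)} to conclude $L$ does not separate $\widetilde{D}_s$, and deduce $D_s$ is not a surface. You instead argue directly: realize $\Gamma_v$ as a spine of a compact surface $\Sigma$ with boundary the attaching circles and read off the two-to-one covering of each open edge by $\partial\Sigma$ from the ribbon structure. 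Your approach is more elementary and self-contained (no appeal to separating-line machinery in the universal cover), but it does require unpacking the ``standard fact'' the paper simply cites, and you correctly flag that making the ribbon-graph bookkeeping precise is where the work sits. One small terminological slip: in your final paragraph the edge germs at $v$ coming from the tube attaching maps are \emph{horizontal} in this paper's $\VH$ convention (the vertex-graph edges are the vertical ones), not vertical; the argument itself is unaffected.
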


We refer the reader to \cref{defn_double} for the definition of a double.

\subsection*{Acknowledgements}
I thank my advisors Fr\'{e}d\'{e}ric Haglund and Thomas Delzant for their time and for the many useful discussions. I thank Henry Wilton for the many references. I am grateful to the anonymous referee for the careful reading and helpful comments which improved the article, and especially for motivating \cref{thm_algorithm_gen_case}. Part of this work was done when I was visiting IRMA, Strasbourg. This work was supported by a French public grant for research, as part of the Investissement d’avenir project, reference ANR-11-LABX-0056-LMH, LabEx LMH.

\section{The setup} \label{section_setup}

\subsection{\texorpdfstring{$\VH$}{VH}-complexes}

The notion of $\VH$-complexes was first introduced in \cite{wisephd}.

\begin{defn} A \textit{square complex} is a two dimensional CW complex in which each 2-cell is attached to a combinatorial loop of length 4 and is isometric to the standard Euclidean unit square $I^2 = [0,1]^2$.
\end{defn} 

All our square complexes will be locally finite.

\begin{defn}[Vertex links] Let $v\in X$ be a vertex of a square complex. The link of $v$, denoted by $\link(v)$ is a graph whose vertex set is the set $\{e \mid e \mathrm{\: is \: a \: half\mbox{-}edge \: incident \: to \:} v\}$. The number of edges between two vertices $e, f$ is the number of squares of $X$ in which $e,f$ are adjacent half-edges.
\end{defn}

\begin{defn} A square complex is \emph{nonpositively curved} if the length of a closed path in the link of any of its vertices is at least four.
\end{defn}

By a result of Gromov \cite{gromov_hyperbolic}, a simply connected nonpositively curved square complex is $\cat$ in the metric sense.

\begin{defn}[\cite{sageev}]
Let $X$ be a square complex. A \emph{mid-edge} of a square $\mathsf{s}$ in $X$ is an edge (after subdivision of $\mathsf{s}$) running through the center of $\mathsf{s}$ and parallel to two of the edges of $\mathsf{s}$.
Declare two edges $e$ and $f$ to be equivalent if there exists a sequence $e = e_1, \cdots, e_n = f$ of edges such that $e_i$ and $e_{i+1}$ are opposite edges of some square of $X$.
Given an equivalence class $[e]$ of edges, the \emph{hyperplane dual to $e$}, denoted by $\mathsf{h}_e$, is the collection of mid-edges which intersect edges in $[e]$.
\end{defn}

\begin{defn}[\cite{wisephd}] A \textit{$\VH$-complex} is a square complex in which every 1-cell is labelled as either vertical or horizontal in such a way that each 2-cell is attached to a loop which alternates between horizontal and vertical 1-cells.
\end{defn}

The labelling of the edges of a $\VH$-complex as horizontal and vertical induces a labelling of the vertices in the link of any vertex as horizontal and vertical, thus making the link a bipartite graph. Similarly, the hyperplanes of a $\VH$-complex are also labelled as vertical and horizontal, with a vertical hyperplane being dual to an equivalence class of horizontal edges and a horizontal hyperplane being dual to an equivalence class of vertical edges.

\begin{remark}
Since the link of any vertex of a $\VH$-complex is bipartite, the length of a closed path is even. Thus a $\VH$-complex is nonpositively curved if there exists no bigon in any vertex link. 
\end{remark}

\subsection{Graphs of spaces} \label{section_graphs_spaces}
Graphs of groups are the basic objects of study in Bass-Serre theory \cite{serre}.
They were studied from a topological perspective in \cite{scottandwall} by looking at graphs of spaces instead of graphs of groups. We will adopt this point of view.

\begin{defn}\label{defn_graph_spaces} By a \textit{graph of spaces}, we mean the following data: $\Gamma$ is a connected graph, called the underlying graph. For each vertex $s$ (edge $a$) of $\Gamma$, $X_s$ ($X_a$) is a topological space. Further, whenever $a$ is incident to $s$, $\partial_{a,s} : X_{a} \to X_{s}$ is a $\pi_1$-injective continuous map. The \textit{geometric realisation} of the above graph of spaces is the space $X =( \bigsqcup_{s \in \Gamma^{(0)}} X_{s} \sqcup \bigsqcup_{a \in \Gamma^{(1)}} X_{a} \times [0,1]) /\sim$, where $(x,0)$ and $(x,1)$ are identified respectively with $\partial_{a,s}(x)$ and $\partial_{a,s'}(x)$. Here, $s$ and $s'$ are the two endpoints of $a$.
\end{defn}

Note that the universal cover of $X$ has the structure of a \emph{tree of spaces}, a graph of spaces whose underlying graph is the Bass-Serre tree of the associated graph of groups structure of $X$ \cite{scottandwall}.

\subsection{Tubular graphs of graphs}

\begin{defn} \label{defn_tubular_graph_graphs}
A \textit{tubular graph of graphs} is a finite graph of spaces in which each vertex space is a finite connected simplicial graph and each edge space is a simplicial graph homeomorphic to a circle. Further, the attaching maps are simplicial immersions. We will always assume that the underlying graph is connected.
\end{defn}

As a consequence of the definition, unless the underlying graph is trivial, no vertical graph is a tree. We note that asking for each vertex graph to be simplicial is not a serious restriction as every one dimensional CW complex is a simplicial graph after subdivision. 

We remark that not all graphs of free groups with cyclic edge groups can be realised as tubular graphs of graphs since we require both the images of each edge space to have identical length. This can be achieved, however, if the underlying graph is a tree.

It is easy to see (compare with Theorem 1.18 of \cite{wisephd}) that
\begin{prop} \label{prop_tubular_vh}
The geometric realisation of a tubular graph of graphs is a finite (hence compact), connected nonpositively curved $\VH$-complex whose vertical hyperplanes are circles.
\end{prop}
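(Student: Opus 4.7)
The plan is to put an explicit $\VH$-labelling on $X$ coming from the graph of spaces structure and verify each assertion in turn. Every $2$-cell of $X$ has the form $e \times [0,1]$ for $e$ an edge of some edge space $X_a$. I would declare an edge of $X$ horizontal if it has the form $\{u\} \times [0,1]$ for some vertex $u$ of some $X_a$, and vertical otherwise --- so the vertical edges are precisely the edges lying in the vertex graphs $X_s$, which include the identified boundary edges $e \times \{0\}$ and $e \times \{1\}$. With this labelling each square has alternating horizontal-vertical boundary, giving the $\VH$-structure. Finiteness and connectedness follow from the assumptions that the underlying graph of spaces is finite and connected and that each vertex space and edge space is finite and connected.

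For nonpositive curvature, I would check the link condition at every $0$-cell of $X$; since tubes have no interior vertices, every $0$-cell lies in some vertex graph $X_s$. The link at $v \in X_s$ is bipartite: its vertical vertices are the half-edges of $X_s$ at $v$, while its horizontal vertices are indexed by pairs $(a,u)$ where $u$ is a vertex of some $X_a$ sent to $v$ by an attaching map, each giving the corresponding half-edge of $\{u\} \times [0,1]$. A link edge arises from each corner of a square at $v$: the square $e \times [0,1]$ with $u$ an endpoint of $e$ in $X_a$ contributes a link edge joining its horizontal vertex (at $u$) to the vertical half-edge at $v$ of the image of $e$ in $X_s$. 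A bigon would require two distinct such link edges sharing both endpoints. Two link edges arising from distinct tubes cannot share their horizontal vertex (which remembers its tube); and two arising from the same tube and the same preimage $u$ come from the two edges of $X_a$ incident to $u$, which map to \emph{distinct} edges of $X_s$ precisely because the attaching map is a simplicial immersion, so they give distinct vertical endpoints in the link. Hence no bigon exists and $X$ is nonpositively curved.

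Finally, I would identify the vertical hyperplanes. Within a single tube $X_a \times [0,1]$, the square $e \times [0,1]$ with $e=(u,w)$ makes $\{u\} \times [0,1]$ opposite to $\{w\} \times [0,1]$, so walking around the cycle $X_a$ gathers all horizontal edges of the tube into a single opposite-edge equivalence class; distinct tubes share no square, so their horizontal edges lie in distinct classes. The vertical hyperplane dual to such a class is the union of the mid-edges $e \times \{1/2\}$ as $e$ ranges over the edges of $X_a$, glued along the midpoints $(u, 1/2)$ of the horizontal edges, which is exactly the level cycle $X_a \times \{1/2\}$ --- a topological circle. The only nontrivial step is the link check, and the key input there is the simplicial immersion hypothesis, which rules out folds at vertices of $X_a$ and so prevents any bigon in a link from ever appearing; all other verifications are bookkeeping directly from the construction.
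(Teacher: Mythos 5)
Your proof is correct and follows essentially the same approach as the paper: the natural $\VH$-labelling coming from the graph-of-spaces structure, ruling out bigons at vertex-graph vertices via the simplicial-immersion hypothesis, and identifying the vertical hyperplanes as the level circles $X_a\times\{1/2\}$. You supply considerably more detail than the paper's terse proof (including the explicit link-edge bookkeeping and the opposite-edge equivalence class argument for the hyperplanes), but the underlying ideas are the same.
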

Indeed, the geometric realisation is $\VH$, where vertical edges are the edges of vertex graphs and horizontal edges are the edges induced by vertices of edge graphs. Note that the squares are obtained from the `tubes' (edge graph times the unit interval, for each edge graph). Then the link of any vertex of a vertex graph does not have bigons as attaching maps of edge spaces are simplicial immersions.

\begin{convention} Throughout this article, we will use the same notation for a tubular graph of graphs and the $\VH$-complex which is its geometric realisation. $X$ will always denote a Brady-Meier tubular graph of graphs with fundamental group $G$. $X_s$ will denote a vertex graph (a component of the vertical 1-skeleton) in $X$ and $\wt{X}$ the $\cat$ universal cover of $X$. Unless mentioned otherwise, we work with the $\cat$ metric in $\wt{X}$.
\end{convention}

\begin{defn} [Thickness] \label{def_thickness}
For an edge $e$ in $X$, the \textit{thickness} of $e$ is the number of squares of $X$ which contain $e$.
\end{defn}

Observe that a horizontal edge of $X$ always has thickness equal to two.
Since $X$ is Brady-Meier, we have
\begin{lemma}[\cite{me_grushko}] \label{lemma_bm_thickness_two}
Every edge of of $X$ has thickness at least two.
\end{lemma}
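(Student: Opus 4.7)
The plan is to translate the thickness condition into a degree condition on the link of a vertex and then apply the Brady--Meier hypothesis directly. First I would dispose of horizontal edges: by the discussion after \cref{prop_tubular_vh} (or directly from the fact that tubes are products $S^1 \times [0,1]$ and horizontal edges come from $\{\mathrm{vertex}\} \times [0,1]$), every horizontal edge of $X$ has thickness exactly two. So it remains to handle a vertical edge $e$ with endpoint $v$. Let $e_v \in \link(v)$ be the vertex corresponding to the half-edge of $e$ at $v$. The key combinatorial observation is that the thickness of $e$ equals the degree of $e_v$ in $\link(v)$: each square of $X$ containing $e$ contributes precisely one edge of $\link(v)$ incident to $e_v$, namely the edge recording the corner of that square at $v$; conversely every edge of $\link(v)$ incident to $e_v$ arises this way. (In particular, since $X$ is $\VH$, the other endpoint of such an edge is a horizontal vertex of $\link(v)$.)

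With this in hand, I would argue by contradiction, assuming the thickness of $e$ is at most one. If the thickness is exactly one, let $\epsilon$ be the unique edge of $\link(v)$ incident to $e_v$, with other endpoint $f$; then $f \neq e_v$, so $\link(v) \setminus \{\epsilon\}$ contains the isolated vertex $e_v$ together with the distinct vertex $f$ and hence is disconnected, contradicting the Brady--Meier requirement that $\link(v)$ remain connected after the removal of any simplex. If the thickness is zero, then $e_v$ is already isolated in $\link(v)$; either $\link(v)$ contains another vertex and so is disconnected (contradicting the first Brady--Meier requirement), or $\link(v) = \{e_v\}$ and removing the single vertex $e_v$ leaves an empty link, again contradicting the simplex-removal condition under the standard convention that a non-empty connected link is required.

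The argument is essentially one paragraph once the thickness/degree correspondence is set up, and I do not expect a real obstacle; the only subtle point is the degenerate situation $\link(v) = \{e_v\}$, which I would dismiss using either the convention that ``connected'' excludes the empty graph, or the observation that such a $v$ would be a valence-one leaf of the vertex graph $X_s$ with no tubes attached at $v$, a configuration already excluded by the Brady--Meier hypothesis. Hence every vertical edge has thickness at least two, completing the proof.
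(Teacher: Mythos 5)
Your proof is correct. The paper itself gives no proof of this lemma (it cites it from the author's earlier work \cite{me_grushko}), but the argument you give — identifying the thickness of a vertical edge $e$ at an endpoint $v$ with the degree of the corresponding vertex $e_v$ in $\link(v)$, and then observing that thickness $\le 1$ forces $\link(v)$ to become disconnected after deleting either the isolated vertex $e_v$ or its unique incident edge — is the natural and essentially unavoidable way to deduce the statement from the Brady--Meier link conditions, and your treatment of the degenerate single-vertex link is careful and complete.
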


\begin{defn}[Paths, lines] \label{def_paths_cycles_lines}
Recall that a path in a space $Z$ is a continuous map from a closed interval to $Z$. 

\noindent A \emph{combinatorial path} (see \cite{wise_mccammond_paths} for instance) is a map of graphs $\rho : P \to \Gamma$, where $P$ is a subdivided compact interval and $\Gamma$ is a graph. Further, all our combinatorial paths will be assumed to be immersions of graphs.
$P$ is always assumed to be oriented. When there is no confusion about $\Gamma$, we will refer to $\rho : P \to \Gamma$ as the path $P$. Unless mentioned to the contrary, a path between two vertices of $X$ or $\wt{X}$ is a combinatorial path. Unless mentioned to the contrary, a path between two vertices of $X$ or $\wt{X}$ is a combinatorial path.

\noindent A \emph{segment} is an embedded combinatorial path. Note that any compact graph homeomorphic to an interval is the image of a segment. We will often call such graphs as segments.

\noindent A \emph{cycle} is an immersion of graphs $\phi: C \to \Gamma$, where $C$ is a subdivided circle. We will often denote it by $C$.
 
\noindent A \emph{line} is an isometric embedding $\mathbb{R} \hookrightarrow \wt{X}$ (with the $\cat$ metric), while a ray is an isometric embedding of $[0,\infty)$. A \emph{combinatorial line} is an isometric embedding of graphs $R \to \wt{X}^1$, where $R$ is the real line subdivided at integer intervals. 
We will only consider combinatorial lines that are also lines in the $\cat$ metric (see \cref{rmk_transversal_lines_not_considered} below).
\end{defn}

Since horizontal edges of $\wt{X}$ are of thickness two, vertical hyperplanes of $\wt{X}$ are lines. Further, 

\begin{fact}\label{fact_geodesics_strips}
The first cubical neighbourhood in $\wt{X}$ of a vertical hyperplane $\mathsf{h}$, or the set of all closed squares of $\wt{X}$ that meet $\mathsf{h}$, is convex (\cite{sageev}) and hence isometric to a Euclidean strip $[0,1] \times \mathbb{R}$ with $\mathsf{h} \cong \{\frac{1}{2}\} \times \mathbb{R}$.
Thus maximal geodesics in such a strip are of the form either $\{t_0\} \times \mathbb{R}$ or geodesics from $(0,x)$ to $(1,y)$.
\end{fact}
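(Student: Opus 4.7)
The plan is to prove both parts of the Fact in sequence. For convexity, I would invoke Sageev's theorem \cite{sageev} that the first cubical neighbourhood of any hyperplane in a $\cat$ cube complex is convex. To identify $N(\mathsf{h})$ combinatorially with the standard tiling of $[0,1] \times \mathbb{R}$ by unit squares, I would use the fact that $\mathsf{h}$, being a vertical hyperplane of $\wt{X}$, is a combinatorial line: vertical hyperplanes of $X$ are circles by \cref{prop_tubular_vh}, and lifts of circles to the universal cover are lines. The horizontal edges of $\wt{X}$ dual to $\mathsf{h}$ form a bi-infinite sequence $\{e_n\}_{n \in \mathbb{Z}}$ arranged along $\mathsf{h}$; since each $e_n$ has thickness exactly $2$, it is contained in exactly two squares $\mathsf{s}_{n-1}, \mathsf{s}_n$, both of which lie in $N(\mathsf{h})$. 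Thus $N(\mathsf{h})$ is a single bi-infinite column of squares $\{\mathsf{s}_n\}_{n \in \mathbb{Z}}$, with $\mathsf{s}_n$ and $\mathsf{s}_{n+1}$ sharing the horizontal edge $e_{n+1}$. The vertical edges of each $\mathsf{s}_n$ lie on the boundary of $N(\mathsf{h})$: they may have higher thickness in $\wt{X}$, but the additional squares containing them do not meet $\mathsf{h}$. This combinatorial structure is identical to that of $[0,1] \times \mathbb{R}$ tiled by unit squares, with $\mathsf{h}$ corresponding to the midline $\{\frac{1}{2}\} \times \mathbb{R}$.

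To promote the combinatorial description to an isometry, I would verify that $N(\mathsf{h})$ is flat in its intrinsic metric. Every vertex of $N(\mathsf{h})$ lies on one of its two boundary lines, where exactly two squares of $N(\mathsf{h})$ meet; each contributes a corner of angle $\pi/2$, for a total link angle of $\pi$, so there are no cone points. Hence the intrinsic metric on $N(\mathsf{h})$ is Euclidean. Sageev's convexity ensures that this intrinsic metric agrees with the restriction of the $\cat$ metric from $\wt{X}$, and the isometry with $[0,1] \times \mathbb{R}$ follows.

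Finally, the classification of maximal geodesics is plane Euclidean geometry: every geodesic in $[0,1] \times \mathbb{R}$ is a straight line segment. Either its direction is parallel to $\mathbb{R}$, in which case its maximal extension is $\{t_0\} \times \mathbb{R}$; or its transverse component is nonzero, in which case its maximal extension meets both boundary lines $\{0\} \times \mathbb{R}$ and $\{1\} \times \mathbb{R}$ in finite time, giving a geodesic from $(0,x)$ to $(1,y)$. The main obstacle in the proof is confirming that $N(\mathsf{h})$ is a genuine non-branching strip: this reduces entirely to the thickness-two property of horizontal edges in a tubular graph of graphs, and would fail if any horizontal edge dual to $\mathsf{h}$ had thickness greater than $2$, in which case $N(\mathsf{h})$ would branch along that edge rather than form a flat strip.
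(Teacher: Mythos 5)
Your proof is correct and fills in the details that the paper leaves implicit by stating this as a Fact. The paper's implicit argument is exactly the one you give: Sageev's convexity theorem for hyperplane carriers, plus the thickness-two property of horizontal edges (noted in the paragraph immediately preceding the Fact) forcing $N(\mathsf{h})$ to be an unbranched column of unit squares, hence a flat strip once one checks the link angle at each boundary vertex is $\pi$. Your observation that vertical edges of $N(\mathsf{h})$ may have higher thickness in $\wt{X}$ but contribute nothing further to $N(\mathsf{h})$ is a worthwhile clarification, and the transition from ``combinatorially a column'' to ``isometric to $[0,1]\times\mathbb{R}$'' via convexity (intrinsic metric equals restricted metric) is precisely what is needed and is handled cleanly.
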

We next divide the set of lines in $\wt{X}$ into the following three types.
\begin{defn}
A \emph{vertical} line is a combinatorial line contained in a vertical tree. A \emph{tubular} line is one that is parallel to a vertical hyperplane in the first cubical neighbourhood of the hyperplane.
A \emph{transversal} line is a line that hits at least two vertical trees.
\end{defn}

Observe that a given line can be both vertical and tubular. 
We note that a tubular line that is not vertical is disjoint from the vertical 1-skeleton and hits any horizontal edge at most at one point, while a transversal line hits at least one vertical hyperplane (in exactly one point).
\begin{remark}\label{rmk_transversal_lines_not_considered}
As mentioned in the introduction, edge groups of the JSJ decomposition are universally elliptic. By \cref{lemma_transversal_not_universally_elliptic}, transversal lines are not stabilised by universally elliptic subgroups. Hence, only vertical and tubular lines play a role in the analysis that follows. 
\end{remark}

\section{Regular neighbourhoods and regular spheres} \label{section_regular_neighbourhoods}

Recall that a cell of a square complex is either a vertex, an edge or a square.

\begin{defn}[Cubical neighbourhoods] \label{def_cubical_neighbourhoods}
The \emph{first cubical neighbourhood} $Y^{+1}$ of a subset $Y$ of a square complex $Z$ is a subcomplex of $Z$ given by the union of all cells of $Z$ that meet the closure of $Y$. The \emph{$n^{th}$ cubical neighbourhood} $Y^{+n}$ is defined inductively as $(Y^{+(n-1)})^{+1}$. 
\end{defn}

\begin{defn}[Cubical subdivisions]
The \emph{first cubical subdivision} $Z^{(1)}$ of a square complex $Z$ is a square complex obtained by subdividing $Z$ in the following way: Each edge of $Z$ is subdivided into two edges with the midpoint of the initial edge forming a new vertex. Each square of $Z$ is subdivided into four squares of equal area by taking the centre of the square as a new vertex and taking four new edges between the centre of the square and each of the midpoints of the edges of the square.
The \emph{$n^{th}$ cubical subdivision} $Z^{(n)}$ of $Z$ is the first cubical subdivision of $Z^{(n-1)}$.
\end{defn}

We will now define an abstract neighbourhood for a combinatorial path in a square complex. The path may not embed in the square complex, but it will embed in its abstract neighbourhood.
Fix a combinatorial path $\rho$ from $P$ to the 1-skeleton of a square complex $Z$. Here the path may or may not be a cycle. We allow $P$ to be a combinatorial ray or a combinatorial line. We remind the reader that $\rho$ is an immersion of graphs.
We will consider $\rho$ as a map from $P$ to the 1-skeleton of $Z^{(2)}$, the second cubical subdivision of $Z$.

\begin{defn}
The \emph{regular neighbourhood} $N(P)$ of $P$ in a square complex $Z$ is a square complex constructed as follows. Let $\mathsf{c}$ be a cell of $Z^{(2)}$. We take one copy of $\mathsf{c}$ for each component of $\rho^{-1}(\mathsf{c})$ (see \cref{fig_regular_neighbourhood_construction}). The adjacency of cells is given by the adjacency of arcs of $P$, where each arc is a component of the pre-image of a cell of $Z^{(2)}$. 

\begin{figure}
\begin{center}
\begin{tikzpicture}

\draw [red] (-13,1.5) to (-6,1.5);
\node [above] at (-9.5,1.5) {$P$};
\draw [->] (-5,1.5) to (-3,1.5);

\path [fill=white!60!yellow] (-8.5,-1.25) rectangle (-8,-.75);
\path [fill=white!60!yellow] (-4.5,-1.25) rectangle (-5,-.75);
\draw [red] (-10,-1) to (-3,-1);
\draw (-10.2,-1.25) rectangle (-2.8,-.75);
\node [below] at (-6.5,-1.25) {$N(P)$};

\draw [->] (-10,1) to (-8,0);
\draw [->] (-4.5,0) to (-2.5,.7);

\path [fill=white!60!yellow] (-.5,0) -- (0,-.5) -- (.5,0) -- (0,.5);

\draw [red] plot [smooth] coordinates {(-.8,-.8) (0,0) (1.4,1.5) (0,2.75) (-1.4,1.5) (0,0) (.8,-.8)};

\draw plot [smooth] coordinates {(-1.25,-.75) (-0.2,0.3) (1.1,1.5) (0,2.5) (-1.1,1.5) (0.2,0.3) (1.25,-.75)};
\draw plot [smooth] coordinates {(-.75,-1.25) (0.2,-0.3) (1.7,1.5) (0,3) (-1.7,1.5) (-0.2,-0.3) (.75,-1.25)};
\draw (-1.25,-.75) -- (-.75,-1.25);
\draw (.75,-1.25) -- (1.25,-.75);

\node [below] at (0,-1.25) {$Z$};
\end{tikzpicture}
\end{center}
\caption{Two disjoint subpaths of $P$ are mapped to the yellow square} \label{fig_regular_neighbourhood_construction}
\end{figure}
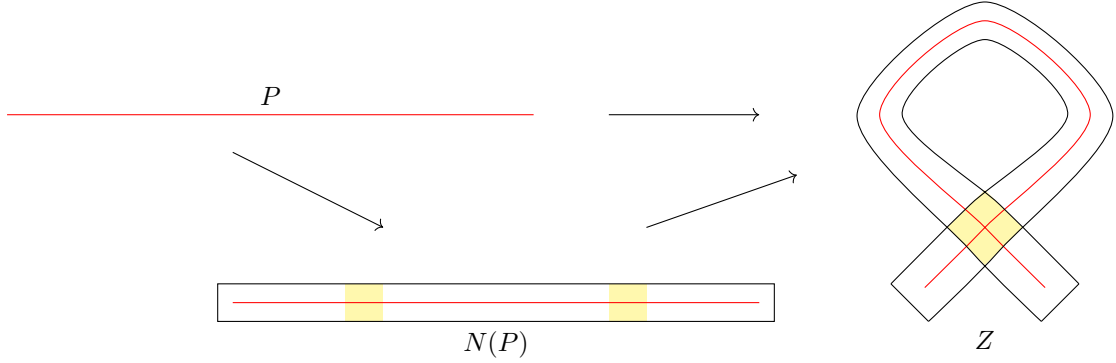
\end{defn}

Since $\rho$ restricted to each arc of the pre-image of a cell of $Z^{(2)}$ is an (isometric) embedding, we observe: 

\begin{fact}\label{fact_N(P)_embedding}
There is a natural embedding of $P$ in $N(P)$ such that $\rho$ factors through this embedding.
\[\begin{tikzcd}
P \arrow{rr}{\rho} \arrow[hookrightarrow]{dr}& &Z \\
& N(P) \arrow{ur} &
\end{tikzcd}
\]

Note that the map $N(P) \to Z$ is an immersion. In particular, if $\rho$ is an embedding, then $N(P)$ embeds in $Z$, since $\rho^{-1}(\mathsf{c})$ of any cell $\mathsf{c}$ contains a single component.
\end{fact}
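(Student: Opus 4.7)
The plan is to construct an embedding $\iota : P \hookrightarrow N(P)$ directly from the cellular data, deduce that the tautological projection $\pi : N(P) \to Z$ satisfies $\rho = \pi \circ \iota$, and then verify separately that $\pi$ is an immersion and is an embedding whenever $\rho$ is. Fix notation: by construction each cell $\mathsf{c}$ of $Z^{(2)}$ contributes one copy $\mathsf{c}_\alpha$ to $N(P)$ per connected component $\alpha$ of $\rho^{-1}(\mathsf{c})$, and two copies are glued along a face exactly when the corresponding arcs of $P$ are adjacent along that face. Given a cell $\mathsf{d}$ of $P$, its image $\rho(\mathsf{d})$ is contained in a unique cell $\mathsf{c}$ of $Z^{(2)}$, and since $\mathsf{d}$ is connected it lies in a unique component $\alpha(\mathsf{d})$ of $\rho^{-1}(\mathsf{c})$. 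I would define $\iota|_{\mathsf{d}}$ as $\rho|_{\mathsf{d}}$ postcomposed with the tautological identification $\mathsf{c} = \mathsf{c}_{\alpha(\mathsf{d})}$. The gluing rule guarantees that these local maps agree on shared subcells of $P$ and so assemble into a well-defined combinatorial map $\iota$, with the factorisation $\rho = \pi \circ \iota$ immediate.

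For injectivity of $\iota$, observe that distinct cells $\mathsf{d}_1 \neq \mathsf{d}_2$ of $P$ mapping to a common cell $\mathsf{c}$ of $Z^{(2)}$ give disjoint open arcs in $\rho^{-1}(\mathsf{c})$, and hence lie in distinct components, so are sent to distinct copies of $\mathsf{c}$ in $N(P)$. If they share a vertex in $P$, the immersion hypothesis on $\rho$ precludes both from mapping to the same edge, so the cellular decomposition of $P$ is recorded faithfully by $\iota$. Combined with the gluing rule, this makes $\iota$ a combinatorial embedding and realises $P$ as a subcomplex of $N(P)$.

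To see that $\pi$ is an immersion, I would examine vertex links. A vertex $v \in N(P)$ corresponds to a component of $\rho^{-1}(v')$ for some vertex $v'$ of $Z^{(2)}$, and by the gluing rule the half-edges and square corners at $v$ are precisely those at $v'$ whose preimages lie in that component. Hence $\pi$ maps $\link(v)$ injectively into $\link(v')$, which is the cube-complex definition of an immersion. Finally, if $\rho$ is itself an embedding, then for every cell $\mathsf{c}$ the preimage $\rho^{-1}(\mathsf{c})$ is empty or a single arc, so $N(P)$ contains at most one copy of each cell of $Z^{(2)}$; thus $\pi$ is injective on cells, and coupled with the immersion property this forces $\pi$ to be an embedding.

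The one step I expect to require the most care is the bookkeeping around self-intersections of $\rho$: when several arcs of $\rho^{-1}(\mathsf{c})$ accumulate at a common vertex $v'$ of $Z^{(2)}$, one must check that they contribute genuinely distinct copies and that the gluing rule records the correct combinatorial adjacencies. Passing to the \emph{second} cubical subdivision $Z^{(2)}$, rather than $Z$ itself, provides just enough extra mid-edge and barycentric vertices to separate such incidences cleanly, after which the remaining verifications are formal.
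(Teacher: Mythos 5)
Your proposal is correct and follows essentially the same route as the paper, which treats this as an immediate consequence of the observation that $\rho$ restricted to each arc of the preimage of a cell of $Z^{(2)}$ is an embedding; you simply spell out the cell-by-cell bookkeeping (uniqueness of the component $\alpha(\mathsf{d})$ containing each cell of $P$, compatibility along shared faces, injectivity of the link map for $\pi$) that the paper leaves implicit.
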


The reason for choosing the second cubical subdivision instead of the first in the definition of $N(P)$ is to make it easier to define certain operations (see \cref{defn_graph_sum}).

\begin{defn}
The \emph{regular sphere} around $P$, denoted by $\partial N(P)$, is the union of all cells of $N(P)$ that are disjoint from $P$.
\end{defn}

\begin{fact}
The regular sphere around a vertex is isomorphic as graphs to the first barycentric subdivision of the vertex link.
\end{fact}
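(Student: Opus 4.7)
The plan is to exhibit a direct combinatorial bijection between the cells of $\partial N(v)$ and those of $\link(v)^{(1)}$. First I would pin down what $N(v)$ actually is: since $\rho$ is just the inclusion of the single point $v$ into $Z^{(2)}$, the preimage $\rho^{-1}(\mathsf{c})$ is either empty or the singleton $\{v\}$, so the adjacency prescription in the definition of $N(P)$ collapses, and $N(v)$ is naturally identified with the subcomplex of $Z^{(2)}$ spanned by the cells whose closure contains $v$, i.e.\ the first cubical neighbourhood of $v$ in $Z^{(2)}$.

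Next I would enumerate the cells of $Z^{(2)}$ containing $v$. After two cubical subdivisions, for each half-edge $h$ at $v$ there is a unique quarter-edge $e_h$ of $Z^{(2)}$ running from $v$ to its first subdivision point $v_h$, and for each corner at $v$ of a square $\mathsf{s}$ of $Z$ there is a unique $\tfrac{1}{4}\times\tfrac{1}{4}$ square $\sigma_{\mathsf{s}}$ based at $v$, whose other three corners are $v_{h_1}$, $v_{h_2}$, and an opposite corner $w_{\mathsf{s}}$, where $h_1,h_2$ are the two half-edges at $v$ lying in $\mathsf{s}$. Then $\partial N(v)$, being the union of cells of $N(v)$ disjoint from $v$, consists of the vertices $\{v_h\}_h \cup \{w_{\mathsf{s}}\}_{\mathsf{s}}$ together with, for each square corner $\mathsf{s}$, the two edges $[v_{h_1},w_{\mathsf{s}}]$ and $[v_{h_2},w_{\mathsf{s}}]$ of $\sigma_{\mathsf{s}}$.

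I would then recall the structure of $\link(v)^{(1)}$: its vertices are the original vertices of $\link(v)$, namely the half-edges $h$ at $v$, together with the midpoints of the edges of $\link(v)$, which correspond bijectively to corner-pairs of half-edges spanning a square at $v$, i.e.\ to square corners $\mathsf{s}$. Its edges are the two halves into which each edge of $\link(v)$ is cut, and each such half joins a half-edge $h_i$ to the midpoint corresponding to $\mathsf{s}$, where $h_i$ is one of the two half-edges bounding the corner of $\mathsf{s}$.

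With these two descriptions in hand, the map $\Phi:\partial N(v)\to\link(v)^{(1)}$ defined by $v_h\mapsto h$, $w_{\mathsf{s}}\mapsto \mathrm{midpoint\ of\ edge\ of\ }\link(v)\mathrm{\ for\ }\mathsf{s}$, and $[v_{h_i},w_{\mathsf{s}}]\mapsto $ the corresponding half-edge of $\link(v)^{(1)}$, is clearly a graph isomorphism: the assignments on vertices are bijective by the enumerations above, and incidence is preserved because an edge $[v_{h_i},w_{\mathsf{s}}]$ lies in $\partial N(v)$ exactly when $h_i$ is one of the two half-edges bounding the corner of $\mathsf{s}$ at $v$, which is precisely the incidence pattern in the barycentric subdivision. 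I do not expect a real obstacle; the only care needed is the bookkeeping that distinct square corners at $v$ give distinct cells $\sigma_{\mathsf{s}}$ in $Z^{(2)}$, which is automatic since we have passed to the second subdivision and each square corner sits in a unique small square based at $v$.
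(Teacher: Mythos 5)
Your argument is correct and complete: the paper states this as a \emph{Fact} without proof, and a direct enumeration of the cells of $N(v)\subset Z^{(2)}$ followed by the evident bijection with $\link(v)^{(1)}$ is exactly the intended verification. You also correctly handle the one real subtlety, namely that a square of $Z$ may have more than one corner at $v$: the second cubical subdivision keeps the corresponding small corner squares (and hence the vertices $w_{\mathsf{s}}$ and edges of $\partial N(v)$) distinct, matching the fact that each corner at $v$ contributes its own edge to $\link(v)$.
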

 
\subsection{The regular sphere around an edge} The goal of this subsection is to show that the regular sphere around an edge of a square complex can be built from the regular spheres around its endpoints.

Let $Y$ be a subset of a simplicial graph $\Gamma$. Recall that the \emph{star of $Y$}, denoted by $\st(Y)$ is the subgraph of $\Gamma$ consisting of all vertices and edges of $\Gamma$ that meet $Y$. The \emph{open star of $Y$}, denoted by $\mathring{\st}(Y)$, is the interior of $\st(Y)$.

\begin{defn}[\cite{manning}] \label{defn_graph_sum}
Let $\Gamma_1$ and $\Gamma_2$ be graphs. Let $ v_1 \in \Gamma_1$ and $ v_2 \in \Gamma_2$ be vertices of equal valence, say $k$. Let $\phi_i : \{1, \cdots, k\} \to \mathrm{adj}(v_i)$ be a labelling of the vertices adjacent to $v_i$, $i = 1,2$. 
Then the \emph{spliced graph} $\Gamma_1 \; {}_{(v_1,\phi_1)} \! \bigoplus_{(v_2,\phi_2)} \Gamma_2$ is defined as a quotient of $\Gamma_1 \setminus \mathring{\st}(v_1) \sqcup \Gamma_2 \setminus \mathring{\st}(v_2)$, where $\phi_1(j)$ is glued to $\phi_2(j)$, for $1 \leq j \leq k$. 
If $v_1 \neq v_2$ are vertices in $\Gamma_1$ as above, then we define the \emph{self-spliced graph} ${}_{(v_1,\phi_1)} \! \bigoplus_{(v_2,\phi_2)} \Gamma_1$ as a quotient of $\Gamma_1 \setminus (\mathring{\st}(v_1) \cup \mathring{\st}(v_2))$, where $\phi_1(j)$ is glued to $\phi_2(j)$, for $1 \leq j \leq k$.
\end{defn} 

Recall that the \emph{dipole graph of order $d$} is a multigraph consisting of two vertices and $d$ edges joining them.

Let $e$ be an edge of a nonpositively curved square complex $Z$ with endpoints $u$ and $v$. We can now state the main result of this subsection.

\begin{lemma}\label{lemma_regular_sphere_at_edge}
The regular sphere around $e$ is homeomorphic to the spliced graph of the regular spheres around $u$ and $v$ with the natural labelling induced by the squares containing $e$.
\end{lemma}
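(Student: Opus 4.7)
The plan is to exhibit $\partial N(e)$ explicitly as a subcomplex of $Z^{(2)}$, observe that it decomposes naturally as the union of the regular sphere contributions near $u$ and $v$ joined by short ``bridge arcs'' running through the tubes in squares containing $e$, and then show that these bridge arcs can be topologically absorbed to recover the spliced graph.

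I would begin with a square-by-square description of $\partial N(e)$. In each square $S_i$ of $Z$ containing $e$, using local coordinates $[0,1]^2$ with $e = [0,1] \times \{0\}$, $u = (0,0)$, $v = (1,0)$, the vertices of $Z^{(2)}$ in $S_i$ form the $5 \times 5$ grid with coordinates in $\{0, 1/4, 1/2, 3/4, 1\}$. A case analysis (checking which faces of the bottom row of small $Z^{(2)}$-squares are themselves disjoint from $e$ as closed cells) shows that $\partial N(e) \cap S_i$ is exactly the horizontal arc at height $1/4$, with vertices $n^u_i = (0, 1/4)$, $m^u_i = (1/4, 1/4)$, $p_i = (1/2, 1/4)$, $m^v_i = (3/4, 1/4)$, $n^v_i = (1, 1/4)$ and the four edges between consecutive vertices. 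For each square $S'$ adjacent to $u$ but not containing $e$, $\partial N(e) \cap S'$ equals $\partial N(u) \cap S'$ (the ``L-shape'' of the barycentric subdivision), and analogous descriptions hold at $v$. Moreover, the cells of $\partial N(u)$ whose closures meet $e$ are precisely the vertex $e_u = (1/4, 0)$ (the half-edge of $e$ at $u$ in the link) together with the open edges $e_u - m^u_i$; that is, exactly $\mathring{\st}(e_u)$. Hence
\[
\partial N(e) = \big(\partial N(u) \setminus \mathring{\st}(e_u)\big) \;\cup\; \big(\partial N(v) \setminus \mathring{\st}(e_v)\big) \;\cup\; \bigcup_{i=1}^{k} B_i,
\]
where $B_i$ is the two-edge bridge arc $m^u_i - p_i - m^v_i$ in $S_i$, joining the first two subcomplexes at the ``midpoint'' vertices $m^u_i$ and $m^v_i$.

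With the natural labelling $\phi_1(i) = m^u_i$, $\phi_2(i) = m^v_i$ induced by the squares containing $e$, the spliced graph is this same disjoint union but with $m^u_i \sim m^v_i$ identified for each $i$. To compare, note that each $m^u_i$ has valence one in $\partial N(u) \setminus \mathring{\st}(e_u)$ (connected only to $n^u_i$ by the remaining half of the L-shape in $S_i$), and analogously each $m^v_i$ has valence one in $\partial N(v) \setminus \mathring{\st}(e_v)$; each is an endpoint of an arc in its side. I would then exhibit a homeomorphism $\Phi : \partial N(e) \to $ spliced graph that is the identity off the $S_i$-contributions, and on each four-edge arc $n^u_i - m^u_i - p_i - m^v_i - n^v_i$ of $\partial N(e)$ is a monotone reparameterization fixing the endpoints $n^u_i, n^v_i$ onto the two-edge arc $n^u_i - [m^u_i = m^v_i] - n^v_i$ of the spliced graph. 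The essential point is that attaching an arc between two valence-one endpoints yields a space homeomorphic to that obtained by identifying those endpoints, since both constructions produce an arc whose topological type as an interval is insensitive to the number of edges used to subdivide it.

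The main obstacle is the first step: correctly enumerating the cells of $\partial N(e) \cap S_i$ in $Z^{(2)}$, being careful that ``disjoint from $e$'' refers to closed cells, so that the earlier identification of $\partial N(u)$ with the barycentric subdivision of $\link(u)$ continues to hold. Once the cellular description is in hand, the comparison with the spliced graph is straightforward: the extra vertex $p_i$ in $\partial N(e)$ and the identifications $m^u_i = m^v_i$ in the spliced graph both correspond, topologically, to the same interval between $n^u_i$ and $n^v_i$, merely subdivided differently. A minor caveat arises when $u = v$, in which case one uses the self-spliced graph with the two distinguished vertices $e_u, e_v$ in the single $\partial N(u)$.
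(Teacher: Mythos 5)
Your proof is correct and follows essentially the same approach as the paper's: both decompose $\partial N(e)$ into the near-$u$ part, the near-$v$ part, and $d$ ``bridge'' arcs through the middle of each square containing $e$, and then absorb the bridges by a homeomorphism that reparametrizes each four-edge arc onto a two-edge arc. The paper reaches the same decomposition slightly more economically by introducing the midpoint $m$ of $e$, noting that $\partial N(m)$ is a dipole of order $d$, and identifying the bridges as $\partial N(m)\setminus(\mathring{\st}(e_a)\cup\mathring{\st}(e_b))$, whereas you compute the cells of $\partial N(e)\cap S_i$ directly from the $5\times 5$ grid coordinates in $Z^{(2)}$; the two descriptions of the bridge arcs coincide. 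Your caveat about $u=v$ is harmless but vacuous in the settings where the lemma is applied (simplicial vertex graphs, or the simply connected $\wt X$), since $e$ is never a loop there.
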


\begin{proof}
Let $m$ be the midpoint of $e$. Then $m$ is a vertex after a subdivision of the square complex. Observe that $\partial N(m)$ is homeomorphic to a dipole graph of order $d$, where $d$ is the thickness of $e$. Let $e_a$ be the initial half-edge of $e$ and $e_b$ its second-half.
Then $e_a$ and $e_b$ meet $\partial N(m)$ at distinct vertices of valence $d$, which we will also call $e_a$ and $e_b$ respectively (see \cref{fig_regular_spheres_adj_vertices}). Thus $\partial N(m) \setminus \mathring{\st}(e_a) \cup \mathring{\st}(e_b)$ is a disjoint union of $d$ segments, one for each square that contains $e$.

\begin{figure}
\begin{center}
\begin{tikzpicture} [scale = 0.5]

\fill [ultra thick, white!80!green] (-1,-1.2) rectangle (1,-0.8);
\fill [ultra thick, white!80!green] (-0.625, -1.05) to (0,-0.8) to (0,-1.2) to (-0.625, -1.45) to (-0.625,-1.05);

\draw [ultra thick] (0,-2) to (0,2);

\draw [fill] (0,-2) circle [radius=0.1];
\draw [fill] (0,2) circle [radius=0.1];

\draw (0,-2) rectangle (4,2);
\draw (0,-2) rectangle (-4,2);

\draw (0,-2) to (-2.5,-3) to (-2.5,1) to (0,2);

\draw [red] (-1,2) to (-1,1) to (1,1) to (1,2);
\draw [red] (-0.625,1.75) to (-0.625, 0.75) to (0,1);

\draw [blue] (-1,-2) to (-1,-1) to (1,-1) to (1,-2);
\draw [blue] (-0.625,-2.25) to (-0.625, -1.25) to (0,-1);

\draw [red, dotted] (-1,2) to (-1.5,2.75);
\draw [red, dotted] (1,2) to (1.5,2.75);
\draw [red, dotted] (-0.625,1.75) to (-1,2.75);
 
\draw [blue, dotted] (-1,-2) to (-1.5,-2.75);
\draw [blue, dotted] (1,-2) to (1.5,-2.75);
\draw [blue, dotted] (-0.625,-2.25) to (-.75,-2.75); 

\node [right] at (0,0) {$e$};
\node [above] at (0,2) {$v$};
\node [below] at (0,-2) {$u$};
\node [right] at (1,1.5) {$\partial N(v)$};
\node [right] at (1,-1.5) {$\partial N(u)$};
\node [left] at (-1,-1) {\small $\mathring{\st}(e_a)$};

\end{tikzpicture}
\end{center}
\caption{Regular spheres around two adjacent vertices. The star of $e_a$ is highlighted in green.} \label{fig_regular_spheres_adj_vertices}
\end{figure}
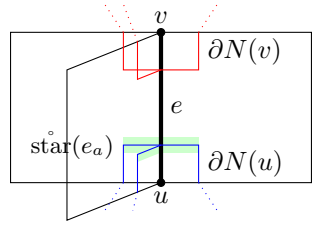
Similarly, $e_a$ ($e_b$) meets $\partial N(u)$ ($\partial N(v)$) at a vertex of valence $d$, see \cref{fig_regular_spheres_adj_vertices}. So $\partial N(u) \setminus \mathring{\st}(e_a)$ ($\partial N(v) \setminus \mathring{\st}(e_b)$) is a graph with $d$ `hanging' edges: edges with one of their endpoints having valence one. We thus see that $\partial N(e) \cong \partial N(u) \setminus \mathring{\st}(e_a) \sqcup \partial N(v) \setminus \mathring{\st}(e_b) / \sim$ with the natural gluing.
\end{proof}

\subsection{The regular sphere around a combinatorial path}

Henceforth, till the end of \cref{section_regular_neighbourhoods}, $Z$ is either $X$ or $\wt{X}$. 
Assume that $P$ is not a vertex. Let $e$ be an edge in $P$. If $P$ is not a cycle, then $P$ is a concatenation of paths $P_1$, $e$ and $P_2$. If $P$ is a cycle, we denote the connected complement of $\mathring{e}$ by $P_1$. 

\begin{lemma}\label{lemma_regular_sphere_at_combinatorial_path}
The regular sphere around $P$ is homeomorphic to a 
\begin{enumerate}
\item spliced graph of the regular spheres around $P_1$ and $P_2$ (with labelling induced by the squares containing $e$) if $P$ is not a cycle, and

\item self-spliced graph of the regular sphere around $P_1$ (with labelling induced by the squares containing $e$) if $P$ is a cycle.
\end{enumerate}
\end{lemma}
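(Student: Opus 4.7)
The plan is to reduce the statement to the single-edge case already established in \cref{lemma_regular_sphere_at_edge} by working locally near the edge $e$. Let $u, v$ denote the endpoints of $e$ (in case (1), $u$ is the end of $P_1$ and $v$ the start of $P_2$; in case (2), both $u$ and $v$ sit in $P_1$). Let $e_a, e_b$ denote the half-edges of $e$ at $u, v$, and let $e_1^{\mathrm{in}}$ denote the half-edge at $u$ coming from the last edge of $P_1$ (when it exists), with an analogous $e_2^{\mathrm{in}}$ at $v$ in case (1). In case (1) I would begin by observing the decomposition $N(P) = N(P_1) \cup N(e) \cup N(P_2)$ with $N(P_1) \cap N(e) = N(u)$ and $N(e) \cap N(P_2) = N(v)$.

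The first step will be to compute $\partial N(P)$ locally near $u$. Since $P$ passes through $u$ via $e_1^{\mathrm{in}}$ and $e_a$, the cells of $N(u)$ disjoint from $P$ form $\partial N(u)\setminus(\mathring{\st}(e_1^{\mathrm{in}})\cup\mathring{\st}(e_a))$, and the analogous description holds near $v$. Next I would observe that the portion of $\partial N(P_1)$ sitting inside $N(u)$ equals $\partial N(u)\setminus\mathring{\st}(e_1^{\mathrm{in}})$, which still contains $e_a$ as a vertex of valence $d$ equal to the thickness of $e$; symmetrically $\partial N(P_2)$ contains $e_b$ as a vertex of valence $d$ near $v$.

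The main step is to verify that the spliced graph $\partial N(P_1)\,{}_{(e_a,\phi_a)}\!\bigoplus_{(e_b,\phi_b)} \partial N(P_2)$, with labelings $\phi_a,\phi_b$ induced by the squares containing $e$, is homeomorphic to $\partial N(P)$. Removing $\mathring{\st}(e_a)$ from $\partial N(P_1)$ and $\mathring{\st}(e_b)$ from $\partial N(P_2)$ produces exactly the local description of $\partial N(P)$ near $u$ and $v$ from the previous step. The gluing of the $d$ vertices adjacent to $e_a$ to the $d$ vertices adjacent to $e_b$ via the labeling by common squares corresponds, up to a homeomorphism that collapses valence-two interior vertices, to the strips of $\partial N(e)$ joining the matching quarter-centers at $u$ and $v$ on the same side of each square containing $e$; this is exactly the content of \cref{lemma_regular_sphere_at_edge} applied inside $N(e)$. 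Finally, the contributions of $\partial N(P_1)$ and $\partial N(P_2)$ away from $u, v$ remain disjoint in $\partial N(P)$ and are unaffected by the splicing, completing case (1).

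For case (2), where $P$ is a cycle and $P_1 = P\setminus \mathring{e}$ is a combinatorial path whose distinct endpoints $u, v$ are glued by $e$, both $e_a$ and $e_b$ appear as distinct vertices of $\partial N(P_1)$ and the identical local analysis at each of $u$ and $v$ shows that the self-spliced graph ${}_{(e_a,\phi_a)}\!\bigoplus_{(e_b,\phi_b)} \partial N(P_1)$ is homeomorphic to $\partial N(P)$. I expect the main technical obstacle to be bookkeeping in the degenerate situation where $P_1$ (or $P_2$) is a single vertex, so that $e_1^{\mathrm{in}}$ does not exist and the local removal at $u$ uses only $\mathring{\st}(e_a)$; this is consistent with the statement since $\partial N(P_i)$ is then literally the regular sphere around a vertex, namely $\link(u)^{(1)}$. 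The other place care will be needed is when $P$ passes through a cell of $Z^{(2)}$ more than once away from $e$, since then the immersion $\rho$ contributes several copies of that cell to $N(P)$; but by \cref{fact_N(P)_embedding} the construction $N(\cdot)$ already accounts for this, so the local analysis above is unaffected.
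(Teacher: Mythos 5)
Your proof is correct and takes essentially the same approach as the paper: the paper's own proof of \cref{lemma_regular_sphere_at_combinatorial_path} simply declares it ``analogous to the proof of \cref{lemma_regular_sphere_at_edge},'' and your argument carries out that analogy faithfully by splicing $\partial N(P_1)$ and $\partial N(P_2)$ (resp.\ self-splicing $\partial N(P_1)$) at the valence-$d$ vertices $e_a, e_b$ with the labelling by squares containing $e$, collapsing the auxiliary arcs over the midpoint of $e$ up to homeomorphism. Your treatment of the degenerate case where a $P_i$ is a single vertex is also consistent with what is needed.
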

The proof is analogous to the proof of \cref{lemma_regular_sphere_at_edge}.

\subsection{Connected regular spheres}

For the rest of the section, $P$ will always be a non-cyclic path. We recall that $Z$ is either $X$ or $\wt{X}$ and $P$ is either a compact interval, a combinatorial ray or a combinatorial line.

Recall that a point $y$ of a topological space $Y$ is said to be a \emph{cut point} of $Y$ if $Y \setminus \{y\}$ is not connected.

\begin{lemma} \label{lemma_no_cuts_regular_spheres_points}
The regular sphere around any vertex or the midpoint of any edge of $Z$ is connected and has no cut points if and only if $Z$ is Brady-Meier. \qed
\end{lemma}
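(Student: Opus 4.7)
The plan is to translate the topological conditions on regular spheres into combinatorial conditions on vertex links and on edge thicknesses, then invoke \cref{lemma_bm_thickness_two} to close the loop.

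First I would identify the two regular spheres explicitly. For a vertex $v$ of $Z$, $\partial N(v)$ is isomorphic (as a graph) to the first barycentric subdivision $\link(v)^{(1)}$, as already noted in the excerpt. For the midpoint $m$ of an edge $e$ of thickness $d$, $\partial N(m)$ is a dipole graph $D_d$ on two vertices (the two half-edges of $e$) joined by $d$ edges (one for each square containing $e$), as observed in the proof of \cref{lemma_regular_sphere_at_edge}. From this one reads off immediately: $\partial N(m)$ is connected iff $d \geq 1$ and has no cut point iff $d \geq 2$.

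Next I would analyse cut points of $\link(v)^{(1)}$. A point $p$ of $\link(v)^{(1)}$ is of exactly one of three types: (a) an original vertex $w$ of $\link(v)$, (b) the midpoint $m_f$ of an edge $f$ of $\link(v)$, or (c) an interior point of a half-edge. A straightforward ``dangling half-edge'' argument shows that in all three cases, $\link(v)^{(1)} \setminus \{p\}$ has the same number of connected components as the graph-theoretic deletion of the corresponding simplex from $\link(v)$: in case (a) that is $\link(v) \setminus \{w\}$; in cases (b) and (c) that is $\link(v) \setminus \{f\}$ (for (c), the dangling half of $f$ containing $m_f$ remains attached to the other endpoint of $f$). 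Consequently, $\partial N(v)$ is connected with no cut points if and only if $\link(v)$ is connected and remains connected after the removal of any vertex or edge, which is precisely the Brady--Meier condition at $v$.

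It remains to combine the two cases. If $Z$ is Brady--Meier, then every vertex link satisfies the condition above, handling the vertex case, and \cref{lemma_bm_thickness_two} gives that every edge has thickness at least $2$, handling the midpoint case. Conversely, if $\partial N(v)$ and $\partial N(m)$ are all connected with no cut points, then each $\link(v)$ is connected with no cut vertex or bridge, and this by definition is the Brady--Meier hypothesis; the midpoint hypothesis supplies the auxiliary fact that every edge has thickness at least $2$ but is not otherwise needed for the vertex-link criterion. The main (and only genuine) point is the dangling half-edge bookkeeping in the second paragraph; the rest is direct translation.
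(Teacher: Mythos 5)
Your proof is correct; the paper itself states this lemma without proof (the $\qed$ after the statement), so what you have done is supply the routine verification that the authors considered obvious. Your two identifications ($\partial N(v)\cong\link(v)^{(1)}$ and $\partial N(m)\cong$ dipole of order $d$) are exactly the facts the paper records immediately before this lemma, and the ``dangling half-edge'' translation between cut points of the subdivided link and removal of a simplex from the link is the expected argument; the only bookkeeping worth double-checking, which you handle correctly, is that an interior point of a half-edge and the midpoint of the edge both reduce to deletion of the same open $1$-simplex, and that the edge-midpoint half of the hypothesis is logically redundant in the converse direction because thickness $\geq 2$ is already forced by the vertex-link condition (\cref{lemma_bm_thickness_two}).
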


We now state the main result of the section.

\begin{prop} \label{prop_regular_sphere_compact_path_bradymeier}
If $P$ is compact, then the regular sphere around $P$ is connected and has no cut points.
\end{prop}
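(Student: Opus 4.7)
The plan is to induct on $n = |P|$, the number of edges in $P$, using the splicing formula of \cref{lemma_regular_sphere_at_combinatorial_path}. For the base case $n = 0$, $P$ is a single vertex $v$ and $\partial N(v)$ is the first barycentric subdivision of $\link(v)$, which is connected and has no cut points by the Brady-Meier hypothesis combined with \cref{lemma_no_cuts_regular_spheres_points}.

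For the inductive step $n \geq 1$, pick an endpoint edge $e$ of $P$ and write $P = P_1 \cdot e \cdot P_2$ with $P_1$ a single vertex, so $|P_2| = n - 1$; by \cref{lemma_regular_sphere_at_combinatorial_path}, $\partial N(P)$ is the splice of $\partial N(P_1)$ and $\partial N(P_2)$ at vertices $v_1, v_2$ of common valence $d$ equal to the thickness of $e$, and $d \geq 2$ by \cref{lemma_bm_thickness_two}. The induction hypothesis provides that each $\partial N(P_i)$ is connected with no cut points. The task thus reduces to a combinatorial sublemma: if $\Gamma_1, \Gamma_2$ are connected graphs with no cut points and we splice them at vertices $v_1, v_2$ of common valence $d \geq 2$, then the splice is connected and has no cut points.

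I would prove this sublemma as follows. Connectedness of the splice is immediate, since each $\Gamma_i \setminus \mathring{\st}(v_i) = \Gamma_i - v_i$ is connected (no cut points) and the identifications $\phi_1(j) \sim \phi_2(j)$ bridge the two sides. For the absence of cut points, given a vertex $x$ of the splice one analyses $\Gamma - x$ in cases according to whether $x$ lies in $\Gamma_1 - v_1$, in $\Gamma_2 - v_2$, or is a glue vertex. The crucial structural observation is: if $\Gamma$ is 2-connected and $u, x$ are distinct vertices, then every component of $\Gamma - u - x$ contains a neighbor of $u$ -- indeed, pick any vertex $y$ in the component, take a path from $y$ to $u$ in the connected graph $\Gamma - x$, and the last vertex before $u$ on that path is such a neighbor. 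Applied to $\Gamma_i$, this forces every component of $\Gamma_i - v_i - x$ to contain some glue vertex $\phi_i(j)$; the component then reaches the gluing boundary and, through the connected $\Gamma_{3-i} - v_{3-i}$, the rest of the splice.

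The main obstacle is the glue-vertex subcase $x = \phi_1(j_0) = \phi_2(j_0)$, in which both sides are perturbed simultaneously. Here the argument hinges on $d \geq 2$ so that at least one surviving identification $\phi_1(j') \sim \phi_2(j')$ with $j' \neq j_0$ remains to bridge, together with the structural observation applied with $u = v_i$ and $x = \phi_i(j_0)$ to ensure each component on each side contains such a $\phi_i(j')$. A subtlety to keep in mind is that the sublemma is not a formal consequence of 2-connectivity for arbitrary graphs (small counterexamples can be constructed), so making the induction go through may require strengthening the inductive hypothesis by also recording, for each ``exterior'' vertex $v_h \in \partial N(P)$ coming from a half-edge at an endpoint of $P$ outside $P$, that $\partial N(P) - v_h - \phi(j)$ is connected for every neighbor $\phi(j)$ of $v_h$ -- a property that in the base case is supplied by the full Brady-Meier axioms on the link, and that I would verify persists through each splicing step.
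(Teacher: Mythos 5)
Your inductive scheme is the same as the paper's: induct on the length of $P$ and reduce, via \cref{lemma_regular_sphere_at_combinatorial_path}, to a splicing sublemma about $2$-connected graphs. Where you depart — and rightly — is in flagging that the abstract splicing sublemma is false without extra hypotheses. This is a genuine observation: the paper's \cref{lemma_connected_sum_graphs_no_cut_points} is stated for arbitrary connected graphs with no cut points, and as stated it is not correct. A concrete counterexample: let $\Gamma_1 = \Gamma_2$ be the graph on $\{v,a,b,c\}$ with edges $va,vb,vc,ab,bc$ (this is $2$-connected), splice at $v_1 = v_2 = v$ with the identity labelling $a\leftrightarrow a$, $b \leftrightarrow b$, $c\leftrightarrow c$. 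Each $\Gamma_i \setminus \mathring{\st}(v_i)$ is the path $a\text{-}b\text{-}c$, and the splice is the multigraph on $\{a,b,c\}$ with doubled edges $ab$ and $bc$, for which $b$ is a cut point. Correspondingly, the paper's proof of \cref{lemma_connected_sum_graphs_no_cut_points} is incomplete: it shows that any $x$ reaches some glue vertex, but silently assumes $\Gamma_2 \setminus \mathring{\st}(v_2)$ stays connected once $v$ is deleted, which fails exactly when $v$ is itself a glue vertex. So your "subtlety to keep in mind" is not a hedge but a real gap you have located.

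Where I would push back is on the fix. Your proposed strengthening of the inductive hypothesis (tracking $2$-point-removal connectivity at exterior vertices and their neighbours, and verifying it persists under splicing) can be made to work, but it is heavier than necessary. The geometric situation gives you, for free, a much cheaper rigidity: in $\partial N(P_i)$, the splice happens at a half-edge vertex $e_a$, and the neighbours of $e_a$ are barycentres of edges of a vertex link, i.e.\ they correspond to squares, and such a vertex always has valence exactly $2$ in $\partial N(P_i)$ — this survives earlier splicings because those occur at other half-edge vertices $e'_b \neq e_a$, and a square-vertex which is adjacent to both $e_a$ and $e'_b$ loses one half-edge from $\mathring{\st}(e'_b)$ and regains one from the glued partner. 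Consequently every glue vertex $\phi_i(j)$ is a \emph{leaf} of $\Gamma_i \setminus \mathring{\st}(v_i)$. With that in hand, the troublesome glue-vertex case $v = \phi_1(j_0) = \phi_2(j_0)$ is immediate: $\Gamma_i \setminus \mathring{\st}(v_i) \setminus \{\phi_i(j_0)\}$ is obtained by deleting a leaf from a connected graph and remains connected, and since the splicing valence $d$ equals the thickness of an edge of $\wt{X}$, which is at least $2$ by \cref{lemma_bm_thickness_two}, at least one identification $\phi_1(j') \sim \phi_2(j')$ with $j' \neq j_0$ survives to bridge the two sides. This replaces your auxiliary $3$-connectivity bookkeeping with a single local valence observation, and it also shows that my counterexample above cannot arise for regular spheres (there $b$ has valence $3$ in $\Gamma_1$, which a square-barycentre never does). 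So: your diagnosis of the paper's gap is correct; your repair is sound but can be streamlined by reading off the valence-$2$ constraint directly from the barycentric-subdivision structure of vertex links.
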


The proof requires the following lemma. It was observed in \cite{cashen_macura_line_patterns}, but they do not give a proof.

\begin{lemma} \label{lemma_connected_sum_graphs_no_cut_points}
Let $\Gamma_1$ and $\Gamma_2$ be connected graphs with no cut points. Suppose that $\Gamma$ is the spliced graph $\Gamma_1 \; {}_{(v_1,\phi_1)} \! \bigoplus_{(v_2, \phi_2)} \Gamma_2$. Then $\Gamma$ has no cut points.
\end{lemma}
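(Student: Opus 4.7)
The plan is to show that $\Gamma \setminus \{p\}$ is connected for every point $p \in \Gamma$. Let $A := \Gamma_1 \setminus \mathring{\st}(v_1)$ and $B := \Gamma_2 \setminus \mathring{\st}(v_2)$, viewed as closed subsets of $\Gamma$, so that $\Gamma = A \cup B$ and $A \cap B = \Phi$, where $\Phi := \{\phi_1(j)\}_{j=1}^{k}$ denotes the set of glue vertices. As a preliminary observation, both $A$ and $B$ are connected: $A$ deformation retracts onto $\Gamma_1 \setminus \{v_1\}$, which is connected since $v_1$ is not a cut point of $\Gamma_1$, and similarly for $B$.

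The key step is a localization claim: for any $p \in A$, every connected component of $A \setminus \{p\}$ contains an element of $\Phi \setminus \{p\}$. To prove this, given $q \in A \setminus \{p\}$, I would invoke the no-cut-point hypothesis on $\Gamma_1$ to pick a continuous path in $\Gamma_1 \setminus \{p\}$ from $q$ to $v_1$. By continuity, this path first enters $\mathring{\st}(v_1)$ at some vertex $\phi_1(j) \in \Phi$, since $\Phi$ is exactly the topological boundary of $\mathring{\st}(v_1)$ in $\Gamma_1$; and $\phi_1(j) \neq p$ because the path avoids $p$. The initial segment of the path, up to $\phi_1(j)$, lies entirely in $A \setminus \{p\}$ and joins $q$ to $\phi_1(j)$. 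The analogous statement for $B$ follows identically.

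With this localization in hand, the case where $p$ is not a glue vertex is routine. Say $p \in A \setminus \Phi$; then $B$ is a connected subset of $\Gamma \setminus \{p\}$ containing all of $\Phi$, and since every component of $A \setminus \{p\}$ meets $\Phi \subset B$, we conclude that $\Gamma \setminus \{p\}$ is connected. The hard part, and the main obstacle, is when $p = \phi_1(j_0) = \phi_2(j_0)$ is itself a glue vertex. Here the localization still yields that every component of $A \setminus \{p\}$ and of $B \setminus \{p\}$ meets $\Phi \setminus \{p\}$, so it suffices to show that all of $\Phi \setminus \{p\}$ lies in a single connected component of $\Gamma \setminus \{p\}$. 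To connect two such glue vertices $\phi_1(i), \phi_1(i')$, the strategy is to take a path in $\Gamma_1 \setminus \{p\}$ between them, decompose it into arcs lying in $A \setminus \{p\}$ and arcs lying in $\mathring{\st}(v_1)$, and replace each arc of the second type with a path in $B \setminus \{p\}$ joining the two glue vertices where the arc enters and exits $\mathring{\st}(v_1)$. The delicate point is to avoid circular dependence between the $A$- and $B$-side rerouting, which I would formalize by introducing a bipartite auxiliary graph whose vertex classes are the components of $A \setminus \{p\}$ and of $B \setminus \{p\}$ and whose edges record shared glue vertices in $\Phi \setminus \{p\}$; the no-cut-point hypotheses on $\Gamma_1$ and $\Gamma_2$ should then translate into connectedness of this auxiliary graph, which is precisely what is needed.
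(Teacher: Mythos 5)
Your treatment of the case where $p$ is not a glue vertex is correct and matches the paper's own argument, and your ``localization claim'' is valid. The real content, which you correctly flag as the hard part, is the case where $p \in \Phi$ --- and here your proof stops short: you propose an auxiliary bipartite graph whose connectedness would finish the argument, but then write only that the no-cut-point hypotheses ``should'' imply that connectedness, without proof. That caution was well placed, because this step cannot be carried out: the lemma is \emph{false} as stated. Take $\Gamma_1 = \Gamma_2$ to be the graph on vertex set $\{a,c,d,e,f,g\}$ with edges $ac,\ ad,\ ae,\ df,\ fc,\ eg,\ gc$ (two vertices $a$, $c$ of valence three, joined by the direct edge $ac$ and by the two disjoint length-three paths through $d,f$ and through $e,g$). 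One checks directly that this graph is connected with no cut points. Splice with $v_1=a$, $v_2=a'$, $\phi_1(1)=c$, $\phi_1(2)=d$, $\phi_1(3)=e$, and the analogous $\phi_2$; then each $\Gamma_i\setminus\mathring{\st}(v_i)$ is the path with consecutive vertices $d,f,c,g,e$, and in the spliced graph $\Gamma$ removing the glue vertex $c$ leaves the two components $\{d,f,f'\}$ and $\{e,g,g'\}$. Your localization claim holds in this example (each component of $A\setminus\{c\}$ and of $B\setminus\{c\}$ meets $\Phi\setminus\{c\}=\{d,e\}$), but your auxiliary graph splits into two disjoint edges.

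In fairness, the paper's proof has the same gap: it shows that every $x\in\Gamma_1\setminus\{v\}$ reaches a glue vertex $u\neq v$ inside $A\setminus\{v\}$ and concludes because $u$ lies in $B=\Gamma_2\setminus\mathring{\st}(v_2)$, which is connected --- but if $v$ is itself a glue vertex then $v\in B$ too and $B\setminus\{v\}$ need not be connected, so the argument does not close. The reason the paper's \emph{application} (the inductive proof that regular spheres of compact paths in a Brady--Meier complex have no cut points) nevertheless survives is that the graphs $\Gamma_i$ arising there carry extra structure: they are assembled from barycentric subdivisions of vertex links, $v_i$ is a vertex of the link, and each glue vertex $\phi_i(j)$ is a barycenter adjacent to $v_i$. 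After deleting $v_i$, each $\phi_i(j)$ becomes a valence-one vertex of $\Gamma_i\setminus\mathring{\st}(v_i)$, so deleting one of them as well cannot disconnect it. That is exactly the hypothesis the lemma is missing --- that $\Gamma_i$ stays connected after removing both $v_i$ and any single $\phi_i(j)$ --- and with it the glue-vertex case is immediate: $A\setminus\{p\}$ and $B\setminus\{p\}$ are each connected and share the nonempty set $\Phi\setminus\{p\}$, so no auxiliary-graph argument is needed at all.
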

\begin{proof}
First observe that $\Gamma_i \setminus \mathring{\st}(v_i)$ is connected by assumption. Let $v \in \Gamma$. We will show that $v$ is not a cut point. Assume that $v \in \Gamma_1$. The important case to consider is of a point $x \neq v \in \Gamma_1$. Since $\Gamma_1 \setminus \{v\}$ is connected, there exists a path from $x$ to $v_1$ in $\Gamma_1$ disjoint from $v$. Let $u$ be a vertex adjacent to $v_1$ on this path. Then $u$ is glued to a vertex of $\Gamma_2$ in $\Gamma$. Thus there exists a path from $x$ to $\Gamma_2$ disjoint from $v$.
\end{proof}

\begin{proof}[Proof of \cref{prop_regular_sphere_compact_path_bradymeier}]
The proof is by induction on the length of $P$. If $P$ is a vertex, then the result is obviously true. Suppose that $P$ is of length at least one. Let $e$ be an edge in $P$ and $P_1$ and $P_2$ be subpaths such that $P$ is the concatenation of $P_1$, $e$ and $P_2$. By induction, the regular sphere around $P_i$ has no cut points. 
\cref{lemma_regular_sphere_at_combinatorial_path}
and \cref{lemma_connected_sum_graphs_no_cut_points} then give the result for $P$.
\end{proof}

\begin{lemma}\label{lemma_connected_regular_spheres}
The regular sphere around a combinatorial ray $P$ of $\wt{X}$ is connected. 
\end{lemma}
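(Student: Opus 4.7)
The plan is to exhaust the ray $P$ by its compact initial subpaths and leverage Proposition~\ref{prop_regular_sphere_compact_path_bradymeier}, which gives connectedness and absence of cut points of the regular sphere for compact paths. I want to realize $\partial N(P)$ as an increasing union of connected subgraphs, each extracted from $\partial N(P_n)$ for some compact $P_n \subset P$.

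Set up: let $v_0, v_1, v_2, \ldots$ be the vertices of $P$ along its orientation and $e_n$ the edge from $v_{n-1}$ to $v_n$. Let $P_n$ denote the initial subpath from $v_0$ to $v_n$, and let $e_{n+1}^-$ denote the half-edge of $e_{n+1}$ at $v_n$, which is a vertex of the regular sphere $\partial N(P_n)$. Iterating Lemma~\ref{lemma_regular_sphere_at_combinatorial_path} writes $\partial N(P_{n+1})$ as the spliced graph $\partial N(P_n) \oplus \partial N(v_{n+1})$, glued along the stars of $e_{n+1}^-$ and $e_{n+1}^+$. Passing to the limit, $\partial N(P)$ is the colimit of these splicings, so the subgraph
\[
C_n := \partial N(P_n) \setminus \mathring{\st}(e_{n+1}^-)
\]
embeds naturally into $\partial N(P)$.

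Three facts then combine to finish the proof. (a) Each $C_n$ is connected: by Proposition~\ref{prop_regular_sphere_compact_path_bradymeier}, $\partial N(P_n)$ is connected with no cut points, so $\partial N(P_n) \setminus \{e_{n+1}^-\}$ is connected by definition, and removing further open half-edges does not disconnect since $\Gamma \setminus \mathring{\st}(v)$ and $\Gamma \setminus \{v\}$ have the same components. (b) The inclusion $C_n \subseteq C_{n+1}$ is immediate: the excluded star $\mathring{\st}(e_{n+2}^-)$ lies at $v_{n+1}$ and thus sits entirely in the $\partial N(v_{n+1})$ side of the splice, disjoint from $C_n$. (c) $\partial N(P) = \bigcup_n C_n$, since every cell of $\partial N(P)$ comes from a cell of $\wt{X}^{(2)}$ meeting only finitely many vertices of $P$, hence appears in some $C_n$. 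Then $\partial N(P)$ is an increasing union of connected subgraphs, so it is connected.

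The only delicate step is (c): one has to verify that the infinite-splicing description matches the direct cell-level definition of $\partial N(P)$. The key point is that a cell of $\partial N(P_n)$ lying in $\mathring{\st}(e_{n+1}^-)$ is exactly one whose representative cell of $\wt{X}^{(2)}$ meets the next edge $e_{n+1}$ of $P$; such a cell is not in $\partial N(P)$ at all, while every other cell of $\partial N(P_n)$ persists. Once this bookkeeping is unwound from the definition of the regular neighbourhood via components of $\rho^{-1}$, the three properties above are immediate and no new ideas beyond Proposition~\ref{prop_regular_sphere_compact_path_bradymeier} are needed.
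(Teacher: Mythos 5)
Your proof is correct and rests on the same essential ingredient as the paper's: \cref{prop_regular_sphere_compact_path_bradymeier} (regular spheres of compact paths have no cut points) together with the splicing description from \cref{lemma_regular_sphere_at_combinatorial_path}. The structuring is slightly different. The paper picks an arbitrary $v \in \partial N(P)$, locates a compact initial segment $P_1$ with $v \in \partial N(P_1)$, and uses the no-cut-point property directly to produce a path inside $\partial N(P_1)$ from $v$ to a fixed basepoint $v_0 \in \partial N(p_0)$ avoiding the vertex toward $P_2$; that path automatically lives in $\partial N(P)$, so connectedness follows in one stroke. You instead realise $\partial N(P)$ as an increasing union of the connected truncations $C_n = \partial N(P_n) \setminus \mathring{\st}(e_{n+1}^-)$ and take a colimit. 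Your version makes the bookkeeping of how cells of $\partial N(P_n)$ persist into $\partial N(P)$ fully explicit (your step (c)), which the paper leaves implicit in the phrase ``disjoint from $P_2$''; the paper's version is shorter but relies on the reader to see that the path from $v$ to $v_0$, constructed inside $\partial N(P_1)$, indeed survives in $\partial N(P)$ once the vertex toward $P_2$ is removed. Both proofs are sound, and the difference is one of exposition rather than mathematical content.
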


\begin{proof}
Let $v \in \partial N(P)$. By \cref{lemma_regular_sphere_at_combinatorial_path}, there exists $p \in P$ such that $v \in \partial N(p)$. If $p_0$ denotes the initial point of $P$, then $P$ is a concatenation of the paths $P_1$ and $P_2$, where $P_1$ is the subpath of $P$ from $p_0$ to $p$ and $P_2$ is its complement. Since $\partial N(P_1)$ has no cut points (by \cref{prop_regular_sphere_compact_path_bradymeier}, there exists a path in $\partial N(P_1)$ from $v$ to a point $v_0$ in $\partial N(p_0)$ disjoint from $P_2$. Hence the result.
\end{proof}

\begin{cor}[Rays don't separate]
A combinatorial ray of $\wt{X}$ does not separate $\wt{X}$.
\end{cor}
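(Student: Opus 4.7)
My plan is to show that $\wt{X}\setminus P$ is path-connected, using \cref{lemma_connected_regular_spheres} as the main ingredient. Since $P$ is an isometric embedding, \cref{fact_N(P)_embedding} identifies $N(P)$ with a subcomplex of $\wt{X}^{(2)}$, and by construction $\partial N(P)\subset N(P)\setminus P\subset\wt{X}\setminus P$; by the previous lemma, $\partial N(P)$ is a connected graph, hence path-connected. So it suffices to join every point of $\wt{X}\setminus P$ to $\partial N(P)$ by a path in $\wt{X}\setminus P$.

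First I would handle a point $x\in N(P)\setminus P$. The strategy is a cell-by-cell deformation retraction of $N(P)\setminus P$ onto $\partial N(P)$: in each cube $\mathsf{c}$ of $N(P)$ one radially retracts $\mathring{\mathsf{c}}\setminus P$ onto the faces of $\mathsf{c}$ disjoint from $P$, which by definition lie in $\partial N(P)$; since $P$ is combinatorial and the cells of $\wt{X}^{(2)}$ are fine enough, such a face always exists, and the cellular retractions glue into a continuous global retraction. Piecing this together gives a path in $N(P)\setminus P$ from $x$ to $\partial N(P)$.

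For a point $x\in\wt{X}\setminus N(P)$, I would pick any continuous path $\gamma$ in $\wt{X}$ from $x$ to a chosen point of $\partial N(P)\subset N(P)$ and truncate at the first instant $t_0$ for which $\gamma(t_0)\in N(P)$; since $N(P)$ is a closed subcomplex, this instant exists and $\gamma(t_0)$ lies in the topological boundary of $N(P)$ in $\wt{X}$. The technical core is the inclusion of this topological boundary in $\partial N(P)$: if a cell $\mathsf{c}$ of $N(P)$ has $\mathring{\mathsf{c}}\cap P\neq\emptyset$, then every cell $\mathsf{c}'$ with $\mathring{\mathsf{c}}\subset\overline{\mathsf{c}'}$ has $\mathsf{c}$ as a face, so $\overline{\mathsf{c}'}\supset\mathring{\mathsf{c}}\cap P\neq\emptyset$ forces $\mathsf{c}'\in N(P)$; hence $\mathring{\mathsf{c}}$ lies in the interior of $N(P)$, and topological boundary points can only come from cells whose interiors miss $P$, i.e., from $\partial N(P)$. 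Thus $\gamma(t_0)\in\partial N(P)$ and $\gamma|_{[0,t_0]}$ avoids $P$.

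Combining the two cases with the path-connectedness of $\partial N(P)$ yields that $\wt{X}\setminus P$ is path-connected, proving the corollary. The main technical obstacle is precisely the boundary-containment claim in the previous paragraph: it is intuitively clear from the cube-complex structure, but must be argued carefully because it requires distinguishing the topological boundary of $N(P)$ from the combinatorially-defined $\partial N(P)$ and then verifying that the two coincide in this setting.
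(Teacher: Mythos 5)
The paper states this corollary without proof, as an immediate consequence of \cref{lemma_connected_regular_spheres}, and your proposal fills in what the paper leaves implicit along the intended lines: retract $N(P)\setminus P$ onto $\partial N(P)$, show any point outside $N(P)$ can reach $\partial N(P)$ without meeting $P$, and then invoke connectedness of $\partial N(P)$. (This direct point-set argument is the right one here; the path-abundance lemma, which the paper uses for the analogous statement about lines in \cref{lemma_L_separates_partial_N(P)}, only appears \emph{after} this corollary.)

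There is, however, a genuine slip in your boundary-containment step. You prove that if $\mathring{\mathsf{c}}\cap P\neq\emptyset$ then $\mathring{\mathsf{c}}$ lies in the interior of $N(P)$, and then conclude that boundary points ``can only come from cells whose interiors miss $P$, i.e.\ from $\partial N(P)$.'' But these two sets are not equal: $\partial N(P)$ is the union of cells of $N(P)$ that are \emph{disjoint} from $P$, whereas a cell $\mathsf{c}$ can have $\mathring{\mathsf{c}}\cap P=\emptyset$ while a proper face of $\mathsf{c}$ lies in $P$, and such a $\mathsf{c}$ is not part of $\partial N(P)$. Your argument as written therefore does not yet establish that the topological boundary is contained in $\partial N(P)$. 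The repair is to run the same argument with the closed cell: if $\mathsf{c}\cap P\neq\emptyset$ and $\mathsf{c}$ is a face of a cell $\mathsf{c}'$, then $\mathsf{c}'\cap P\supseteq\mathsf{c}\cap P\neq\emptyset$, so $\mathsf{c}'\in N(P)$; hence the open star of $\mathsf{c}$ is an open neighbourhood of $\mathring{\mathsf{c}}$ contained in $N(P)$, so $\mathring{\mathsf{c}}$ is interior to $N(P)$. This stronger statement shows the topological boundary meets only interiors of cells \emph{disjoint} from $P$, i.e.\ it lies in $\partial N(P)$, which is exactly what your truncation argument needs. With that correction, and granting the routine verification that the cell-by-cell radial retractions of $\mathsf{c}\setminus P$ onto $\mathsf{c}\cap\partial N(P)$ glue (which is true, since in $\wt{X}^{(2)}$ the path $P$ meets each square of $N(P)$ in at most two adjacent sides), your proof is complete.
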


The following powerful result for $\wt{X}$ will be used repeatedly in later sections.

\begin{lemma}[Path-abundance lemma] \label{lemma_path_abundance} Let $P$ be a combinatorial geodesic in $\wt{X}$ and $x \in \wt{X} \setminus P$. Then given $p \in P$, there exists a path $\alpha$ from $x$ to $p$ such that $\alpha \cap P = \{p\}$.
\end{lemma}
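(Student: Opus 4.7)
The plan is to fix any continuous path $\beta\colon[0,1]\to\wt{X}$ from $x$ to $p$ (which exists by path-connectedness of $\wt{X}$) and surgically modify it so that the resulting path meets $P$ only at $p$. The main input is \cref{prop_regular_sphere_compact_path_bradymeier}: the regular sphere around any compact combinatorial path is connected with no cut points, which is what provides room for detours.

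Set $t_{0}=\inf\{t\in[0,1]:\beta(t)\in P\}$. Because $P$ is closed, $q:=\beta(t_{0})\in P$. If $q=p$ simply take $\alpha=\beta|_{[0,t_{0}]}$. Otherwise, for small $\varepsilon>0$ the point $y:=\beta(t_{0}-\varepsilon)$ lies in $N(q)\setminus P$, and $\beta|_{[0,t_{0}-\varepsilon]}$ is already a piece of $\alpha$ from $x$ to $y$ that avoids $P$ entirely, so it suffices to join $y$ to $p$ by a path meeting $P$ only at $p$.

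Let $P_{qp}\subset P$ be the compact sub-geodesic from $q$ to $p$. By \cref{prop_regular_sphere_compact_path_bradymeier}, $\partial N(P_{qp})$ is connected with no cut points and is disjoint from $P_{qp}$ by definition; moreover it can meet $P$ at all only at the (at most) two vertices $m_{q}\in\partial N(q)$ and $m_{p}\in\partial N(p)$ which are midpoints of the edges of $P$ adjacent to $q,p$ on the side opposite $P_{qp}$. Move $y$ inside the local component of $N(q)\setminus P$ containing it to reach a point $\xi\in\partial N(q)\setminus P$; by the splicing description in \cref{lemma_regular_sphere_at_combinatorial_path}, $\xi$ lies in $\partial N(P_{qp})\setminus\{m_{q},m_{p}\}$. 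The splice structure of $\partial N(P_{qp})$ shows that every component of $\partial N(P_{qp})\setminus\{m_{q},m_{p}\}$ meets both $\partial N(q)\setminus\{m_{q}\}$ and $\partial N(p)\setminus\{m_{p}\}$, so the component containing $\xi$ contains some $\eta\in\partial N(p)\setminus P$. Pick an arc $\sigma$ from $\xi$ to $\eta$ inside this component; it lies in $\wt{X}\setminus P$. Finally concatenate $\beta|_{[0,t_{0}-\varepsilon]}$, a short path from $y$ to $\xi$ inside $N(q)\setminus P$, the arc $\sigma$, and the radial segment from $\eta$ to the cone apex $p$ inside $N(p)$ (which meets $P$ only at $p$, since $\eta$ is not one of the two $P$-directions at $p$). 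This produces the desired $\alpha$.

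The main obstacle is ensuring that the component of $\partial N(P_{qp})\setminus\{m_{q},m_{p}\}$ containing $\xi$ actually reaches $\partial N(p)\setminus\{m_{p}\}$; this is where the no-cut-point property of the endpoint spheres (\cref{lemma_no_cuts_regular_spheres_points}) is essential. Removing the single vertex $m_{q}$ from $\partial N(q)$ keeps it connected, so $\xi$ can be routed to any of the splice endpoints out of the cap at $q$; the interior of $\partial N(P_{qp})$ is free of further points of $P$ and welds these splice endpoints through to those of the cap at $p$, where removing $m_{p}$ again leaves things connected. Together these allow $\sigma$ to be constructed without touching $P$.
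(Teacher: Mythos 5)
Your proof takes essentially the same route as the paper's: truncate an arbitrary path from $x$ to $p$ at its first contact with $P$, invoke \cref{prop_regular_sphere_compact_path_bradymeier} for the regular sphere around the remaining subsegment of $P$, route around it, and then descend to $p$ inside $N(p)$.

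One caveat about the justification in your final paragraph. You argue that because $\partial N(q)\setminus\{m_q\}$ is connected, $\xi$ can be routed to any splice endpoint of the $q$-cap. But the set you actually need to route through is the cap $(\partial N(q)\setminus\mathring{\st}(m_{in}))\setminus\{m_q\}$, which is $\partial N(q)$ with \emph{two} vertices removed, and this may well be disconnected; the local no-cut-point property of $\partial N(q)$ (\cref{lemma_no_cuts_regular_spheres_points}) only covers one-point removal. The correct justification for your key claim is the one you already invoked at the start: $\partial N(P_{qp})$ itself has no cut points by \cref{prop_regular_sphere_compact_path_bradymeier}, and in any connected graph with no cut points, every component of the complement of two deleted vertices has both of them in its closure (else one would be a cut point). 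Hence the component of $\xi$ in $\partial N(P_{qp})\setminus\{m_q,m_p\}$ contains a neighbour of $m_p$, which is the desired $\eta\in\partial N(p)\setminus P$. With that repair the argument goes through and matches the paper's. (The paper instead deletes only the single point on the $q$-side, routes to the point $m_p$ itself, and stops at the last vertex before it — a minor variant of the same idea.)
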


\begin{proof}
First note that $N(P)$ embeds in $\wt{X}$, by \cref{fact_N(P)_embedding}.
Let $\gamma$ be a path from $x$ to $p$. Let $\gamma'$ be the maximal initial subpath of $\gamma$ such that $\mathring{\gamma'} \cap P$ is empty. If $\gamma'$ ends at $p$, then declare $\gamma' = \alpha$. 

Suppose not. Let $p'$ be the endpoint of $\gamma'$. Then $P$ is a concatenation $P_1 \cdot [p',p] \cdot P_2$. By \cref{prop_regular_sphere_compact_path_bradymeier}, the regular sphere around $[p',p]$ has no cut points. In particular, $\partial N([p',p]) \setminus P_1$ is connected. We recall that we denote the point at which $P_i$ meets $\partial N([p',p])$ also as $P_i$. 

Denoting $\gamma' \cap \partial N([p,p'])$ by $\gamma'$, we note that there exists a path $\beta$ between $\gamma'$ and $P_2$ in $\partial N([p,p']) \setminus P_1$. Let $h$ be a vertex adjacent to $P_2$ such that $\beta$ meets $h$.
Note that $h \in \partial N(p) \setminus P_2$. The required path $\alpha$ is a concatenation of $\gamma'$, $\beta$ and a path in $N(p)$ from $h$ to $p$.
\end{proof}

\section{Separating and coarsely separating lines}
Recall that a subspace $Y$ of a topological space $Z$ \emph{separates two points} $z_1$ and $z_2$ in $Z$ if $z_1$ and $z_2$ lie in different components of $Z \setminus Y$. $Y$ \emph{separates $Y' \subset Z$} if $Y$ separates two points of $Y'$.

\begin{defn}[Separating lines] \label{defn_coarse_separation}
A \emph{separating line} in $\wt{X}$ is a line that separates $\wt{X}$. 
\end{defn} 

Given a subspace $Y$ of a metric space $Z$, recall that $N_R(Y)$ denotes the set of points in $Z$ at distance at most $R$ from $Y$.

\begin{defn}[Coarsely separating lines, \cite{papasoglu_coarse_separation_lamplighter}]
A line $L$ \emph{coarsely separates} $\wt{X}$ if there exists $R >0$ such that 
\begin{enumerate}
\item $N_R(L)$ separates $\wt{X}$, and
\item there exist components $Y_1 \neq Y_2$ of $\wt{X} \setminus N_R(L)$ such that for any $R'\geq R$, $Y_i \nsubseteq N_{R'}(L)$.
\end{enumerate}
\end{defn}

Since a line is an embedding in $\wt{X}$, the regular sphere around a combinatorial line embeds in $\wt{X}$, by \cref{fact_N(P)_embedding}.

Let $\mathsf{h}$ be a vertical hyperplane. Note that $\mathsf{h}$ is a combinatorial line in the first cubical subdivision of $\wt{X}$.

\begin{defn}
The regular sphere around a non-vertical tubular line $L$ at distance at most $\frac{1}{2}$ from a vertical hyperplane $\mathsf{h}$ in $\wt{X}$ is defined to be the regular sphere around $\mathsf{h}$ in the first cubical subdivision of $\wt{X}$.
\end{defn}

\begin{lemma} \label{lemma_L_separates_partial_N(P)}
Let $L$ be a combinatorial separating line in $\wt{X}$ and $P \subset L$ be a combinatorial subpath. Then $L$ separates $\partial N(P)$.
\end{lemma}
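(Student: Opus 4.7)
The plan is to exhibit two points of $\partial N(P)$ lying in different components of $\wt{X} \setminus L$; since $\partial N(P)$ embeds in $\wt{X}$ (\cref{fact_N(P)_embedding}), no path in $\partial N(P) \setminus L \subset \wt{X} \setminus L$ can join such a pair, so $\partial N(P) \setminus L$ will be disconnected as required.

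The key step is to use the path-abundance lemma (\cref{lemma_path_abundance}) applied to the combinatorial geodesic $L$ itself to upgrade the global separation of $\wt{X}$ into a closure statement: if $A$ and $B$ are components of $\wt{X} \setminus L$ with $x \in A$ and $y \in B$, then for every $p \in L$, path-abundance produces a path $\alpha$ from $x$ to $p$ with $\alpha \cap L = \{p\}$. The subpath $\alpha \setminus \{p\}$ is connected in $\wt{X} \setminus L$ and contains $x \in A$, so it lies entirely in $A$, whence $p \in \overline{A}$; the symmetric argument gives $p \in \overline{B}$. Thus every point of $L$ lies in the closure of every component of $\wt{X} \setminus L$.

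Now fix an edge $e$ of $P$ and an interior point $p$ of $e$. By Brady-Meier (\cref{lemma_bm_thickness_two}), $e$ sits in at least two squares $\mathsf{s}_1, \ldots, \mathsf{s}_d$ with $d \geq 2$. Each open square $\mathsf{s}_j^\circ$ is connected and disjoint from $L$ (a combinatorial line lying in the $1$-skeleton), so is contained in a single component of $\wt{X} \setminus L$. Since $p \in \overline{A} \cap \overline{B}$ and the points of $A$ and $B$ accumulating at $p$ live in $\bigcup_j \mathsf{s}_j^\circ$, some $\mathsf{s}_j^\circ \subset A$ and some $\mathsf{s}_k^\circ \subset B$. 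In the second cubical subdivision, $\mathsf{s}_j$ splits into $16$ sub-squares, and any sub-edge parallel to $e$ at perpendicular distance $\tfrac{1}{4}$ with both endpoints in the interior of $\mathsf{s}_j$ is a face of a sub-square adjacent to $e$ (so a cell of $N(P)$), is disjoint from $P$ (so a cell of $\partial N(P)$), and is contained in $\mathsf{s}_j^\circ \subset A$. Any point of such a sub-edge yields $q_A \in A \cap \partial N(P)$; the same construction in $\mathsf{s}_k$ gives $q_B \in B \cap \partial N(P)$.

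Since $q_A$ and $q_B$ lie in distinct components of $\wt{X} \setminus L$, they cannot be joined by a path in $\wt{X} \setminus L$ and a fortiori not by a path in $\partial N(P) \setminus L$, so $\partial N(P) \setminus L$ is disconnected. The principal technical point is the first step: without path-abundance one has no handle on which points of $L$ bound a given component of the complement; once that closure statement is secured, the rest reduces to a routine check inside a single square of $\wt{X}$.
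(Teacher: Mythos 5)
Your argument is correct in its essential mechanism and is a valid contrapositive of the paper's contradiction argument. Both proofs hinge on path-abundance applied to the geodesic $L$: the paper supposes $\partial N(P)\setminus L$ is connected, fixes $p\in P$, gets paths from arbitrary $x,y$ to $p$ meeting $L$ only at $p$, and joins them through $\partial N(P)\setminus L$ to contradict separation. You go the other direction, producing explicit points of $\partial N(P)$ in two different components. Your intermediate ``closure statement'' is precisely the content of \cref{lemma_half-spaces_contain_lines}, which the paper states immediately after this lemma and also attributes to path-abundance. This makes your proof a bit more constructive but otherwise parallel in substance.

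There is, however, a gap: you begin the second half of the argument with ``fix an edge $e$ of $P$'', which tacitly assumes $P$ has positive length. The paper does invoke this lemma with $P$ a single vertex: in the proof of \cref{prop_local_crossing_lines} the intersection $L\cap L'$ may be a vertex, and in the proof of \cref{lemma_surface_graphs} the relevant $P$ is $\{m\}$, a midpoint of an edge (a vertex after subdivision). For $P=\{v\}$ your construction must instead work with the set of squares incident to $v$: the points of $A,B$ accumulating at $v$ may lie on open edges at $v$ as well as in open squares, but by Brady-Meier each such edge opens into at least two squares at $v$, so one again finds $\mathsf{s}_j^\circ\subset A$, $\mathsf{s}_k^\circ\subset B$ among squares at $v$ and sub-edges of $\partial N(v)$ in their interiors. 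This is a minor repair but it is genuinely missing from what you wrote, and the paper's contradiction formulation sidesteps the case-split by arguing uniformly at an arbitrary $p\in P$ without ever needing to name an edge of $P$.
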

 
\begin{proof}
Suppose the lemma is not true. Then note that $N(P) \setminus L$ is connected. 
Let $x, y \in \widetilde{X} \setminus L$. Fix $p \in P$. By \cref{lemma_path_abundance}, there exist paths $\alpha$ from $x$ to $p$ and $\beta$ from $y$ to $p$ such that $\alpha \cap L = \beta \cap L = \{p\}$. Since $\partial N(P) \setminus L$ is connected, there exists a path in $\partial N(P) \setminus L$ between $\alpha \cap \partial N(P)$ and $\beta \cap \partial N(P)$. 
Thus $x$ and $y$ are not separated by $L$ for any $x,y \in \wt{X}$, a contradiction.
\end{proof} 

\subsection{Separating implies coarsely separating}
Throughout this subsection, $L$ refers to a vertical or a tubular line.

\begin{defn}[Half-spaces of a line] A \emph{half-space} of $L$ is the closure in $\wt{X}$ of a component of $\wt{X} \setminus L$.
\end{defn}

We warn the reader that there can be more than two half-spaces of a separating line in general.

An easy consequence of \cref{lemma_path_abundance} is the following result:
\begin{lemma} \label{lemma_half-spaces_contain_lines}
Let $Y$ be a half-space of $L$. Then $L \subset Y$. \qed
\end{lemma}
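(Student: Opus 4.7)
The plan is to deduce this directly from the Path-abundance lemma (\cref{lemma_path_abundance}). Fix a component $C$ of $\wt{X} \setminus L$ so that $Y = \overline{C}$. What we need to show is that every point $p \in L$ is a limit of points of $C$, for then $p \in \overline{C} = Y$.

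First I would pick any basepoint $x \in C$ (such a point exists since $C$ is a non-empty component). Given $p \in L$, I would apply the Path-abundance lemma to produce a path $\alpha$ from $x$ to $p$ in $\wt{X}$ such that $\alpha \cap L = \{p\}$. The crucial observation is that the open subpath $\mathring{\alpha} := \alpha \setminus \{p\}$ is entirely contained in $\wt{X} \setminus L$ and is connected, hence lies in a single component of $\wt{X} \setminus L$. Since $\mathring{\alpha}$ contains $x \in C$, we get $\mathring{\alpha} \subseteq C$. Taking points of $\alpha$ approaching $p$ then exhibits $p$ as a limit point of $C$, giving $p \in Y$. As $p \in L$ was arbitrary, $L \subseteq Y$.

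The main (minor) obstacle is checking that \cref{lemma_path_abundance} is applicable here: the lemma is stated for a combinatorial geodesic $P$, while $L$ is a vertical or tubular line, and in the tubular non-vertical case $L$ is only a combinatorial line after passing to the first cubical subdivision. However the argument is purely local near $p$, using only the connectedness of the regular sphere around a compact subpath of $L$ through $p$, so the lemma carries over verbatim; one may simply truncate $L$ to a long compact combinatorial subpath containing $p$ in its interior and apply \cref{lemma_path_abundance} to that subpath, since any path from $x$ meeting this subpath only at $p$ automatically meets all of $L$ only at $p$ after possibly shortening it to its initial arc up to first intersection with $L$.
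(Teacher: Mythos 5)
Your main argument is correct and is exactly the intended application of the Path‑abundance lemma, which the paper invokes with a bare \verb|\qed| for this statement: the open initial arc of the path $\alpha$ from $x \in C$ to $p$ lies in a single component of $\wt X \setminus L$, hence in $C$, exhibiting $p$ as a limit point of $C$. Your closing remark about applicability is handled by the paper itself (the section allows $P$ to be a combinatorial ray or line, and for a non‑vertical tubular line one works with the parallel hyperplane in the first cubical subdivision), whereas the truncation‑plus‑shortening alternative you sketch does not quite work as stated, since shortening $\alpha$ to its first encounter with $L$ may land at a point other than $p$ — but since Path‑abundance applies directly to the line, that alternative is unnecessary.
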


In fact, we can read the number of half-spaces of $L$ off its regular sphere:

\begin{lemma} \label{lemma_half-spaces_equal_components_reg_sphere_L}
There exists a natural map from the set of half-spaces of $L$ to the set of components of the regular sphere around $L$. Further, this map is bijective.
\end{lemma}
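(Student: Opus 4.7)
The plan is to exhibit the bijection via the natural map $Y \mapsto \partial N(L) \cap Y$ from half-spaces to components of $\partial N(L)$. Since the cells of $\partial N(L)$ are by definition disjoint from $L$, we have $\partial N(L) \cap L = \emptyset$, so the half-spaces partition $\partial N(L)$ as $\partial N(L) = \bigsqcup_{Y}\bigl(\partial N(L) \cap Y\bigr)$. Thus bijectivity reduces to showing nonemptiness and connectedness of each piece $\partial N(L) \cap Y$, which forces each piece to be a single component of $\partial N(L)$ and makes the map a bijection.

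For nonemptiness, I would pick any $y \in Y \setminus L$ and any $q \in L$, then apply \cref{lemma_path_abundance} to obtain a path $\alpha$ from $y$ to $q$ with $\alpha \cap L = \{q\}$. The punctured path $\alpha \setminus \{q\}$ is connected and disjoint from $L$, hence lies in the component $C = Y \setminus L$; since $\alpha$ terminates at $q$ in the interior of $N(L)$, it must cross the outer boundary $\partial N(L)$ on the way, producing a point in $\partial N(L) \cap C$. For connectedness, I would construct a continuous retraction $r : \wt{X} \setminus L \to \partial N(L)$ that preserves half-spaces, using the $\cat$ structure of $\wt{X}$. Let $\pi : \wt{X} \to L$ be the nearest-point projection, which is well-defined and continuous because $L$ is convex in the $\cat$ space $\wt{X}$. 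For $x \in \wt{X} \setminus L$, follow the $\cat$ geodesic ray from $\pi(x)$ through $x$; by convexity this ray meets $L$ only at its basepoint and so stays in $\overline{C}$ afterwards, and it exits $N(L)$ at a unique point of $\partial N(L)$, which I define to be $r(x)$. Then $r$ restricts to the identity on $\partial N(L)$ and satisfies $r(C) \subseteq \partial N(L) \cap C$; combined with the identity on $\partial N(L) \cap C$ one has $r(C) = \partial N(L) \cap C$. As the continuous image of the connected (in fact path-connected) set $C$, this intersection is connected.

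The main obstacle is verifying continuity of $r$ as $x$ crosses the cellular structure of $\wt{X}$; intuitively a small perturbation of $x$ moves the ray from $\pi(x)$ through $x$ by a small amount, and the exit point from $N(L)$ should follow, but this requires care at cells of $\wt{X}^{(2)}$ where the ray is tangent to $\partial N(L)$ or passes through vertices. A more combinatorial alternative avoiding $\cat$ geometry would exhaust $L$ by compact subpaths $P_n$ and combine \cref{prop_regular_sphere_compact_path_bradymeier} (each $\partial N(P_n)$ is connected without cut points) with \cref{lemma_L_separates_partial_N(P)} ($L$ separates $\partial N(P_n)$), but this route then requires an additional argument that two locally separated components of $\partial N(P_n) \setminus L$ lying in the same global half-space merge as $n \to \infty$.
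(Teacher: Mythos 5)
Your setup matches the paper's: the natural map is $Y \mapsto Y \cap \partial N(L)$, and since $\partial N(L)$ is disjoint from $L$ these images partition $\partial N(L)$, so it suffices to show each piece is nonempty and connected. The nonemptiness argument via \cref{lemma_path_abundance} is fine. The gap, which you flag yourself, is in the connectedness step, and it is a real one. The $\cat$ retraction you propose has two unresolved problems: the geodesic ray from $\pi(x)$ through $x$ need not extend uniquely past $x$ (nor extend at all, since $\wt{X}$ is not assumed geodesically complete), so $r$ is not clearly well-defined on $C \cap N(L)$; and even for $x$ outside $N(L)$, where $r(x)$ is the exit point of $[\pi(x),x]$ from the convex set $N(L)$, continuity genuinely breaks when this geodesic meets the frontier of $N(L)$ tangentially --- a small perturbation of $x$ can then make the exit point jump to a distant cell of $\partial N(L)$. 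Your combinatorial alternative likewise stops short at precisely the merging step you name.

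The paper's connectedness argument sidesteps all of this. Given $h_1, h_2 \in Y \cap \partial N(L)$, one joins them by a path $\alpha$ inside the connected set $C = Y \setminus L$, and by a second path $\beta$ that passes through $L$ (possible because each $h_i$ lies in a cell of $N(L)$ adjacent to $L$). Simple connectivity of $\wt{X}$ hands you a disk $D$ bounded by $\alpha \cup \beta^{-1}$, and $D \cap \partial N(L)$ then supplies a path from $h_1$ to $h_2$ inside $\partial N(L)$, since $\partial N(L)$ separates $L$ from $\wt{X} \setminus N(L)$ and $D$ meets both sides. No retraction and no analysis of $\cat$ geodesic extensions is needed; the only inputs are $\pi_1(\wt{X}) = 1$ and the separation property of the regular sphere. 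If you want to keep your plan, you would have to either restrict $r$ to a subset where tangencies can be ruled out, or replace the geodesic retraction with this disk-filling step.
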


\begin{proof}
Observe that each component of $\partial N(L)$ lies in a half-space of $L$. 
Let $Y$ be a half-space of $L$, and $h_1, h_2 \in Y \cap \partial N(L)$. Then there exists a path between $h_1$ and $h_2$ in the component $Y \setminus L$. There also exists a path between $h_1$ and $h_2$ through $L$, since $h_i \in \partial N(L)$. These two paths between $h_1$ and $h_2$ bound a disk $D$, as $\wt{X}$ is simply connected, and $D \cap \partial N(L)$ gives a path between $h_1$ and $h_2$ in $\partial N(L)$. The required map is the one that sends a half-space $Y$ of $L$ to $Y \cap \partial N(L)$.
\end{proof}

\begin{cor}\label{cor_square_meets_halfspace}
Given an edge $e$ in $L$, for each component $K$ of $\partial N(L)$, there exists a square $\mathsf{s}$ containing $e$ such that $\mathsf{s} \cap \partial N(L) \subset K$.
\end{cor}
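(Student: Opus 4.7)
The plan is to invoke the bijection from Lemma~\ref{lemma_half-spaces_equal_components_reg_sphere_L}: every component $K$ of $\partial N(L)$ equals $Y \cap \partial N(L)$ for a unique half-space $Y$ of $L$. Fix such $K$ and $Y$. I would first pick any point $x \in Y \setminus L$ (which exists since $Y$ is the closure of a non-empty component of $\wt{X} \setminus L$) and any interior point $p$ of the edge $e$, and then apply the path-abundance lemma (Lemma~\ref{lemma_path_abundance}) to produce a path $\alpha$ from $x$ to $p$ meeting $L$ only at $p$. Since $\alpha \setminus \{p\}$ is connected, disjoint from $L$, and contains $x$, it lies entirely in the component $Y \setminus L$.

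Next I would analyse how $\alpha$ arrives at $p$. Because $p$ lies in the interior of $e$, a small neighbourhood of $p$ in $\wt{X}$ is a ``book'' whose pages are the squares of $\wt{X}$ that contain $e$, glued along $e$. The terminal piece of $\alpha$ must therefore lie inside one such page $\mathsf{s}$; this is the square I claim works. The key geometric observation is that $L \cap \mathsf{s} = e$. Indeed, $\mathsf{s}$ is convex in the CAT(0) complex $\wt{X}$, so $L \cap \mathsf{s}$ is a connected subpath of $L$ containing $e$; and the local geodesic condition prevents $L$ from turning at either endpoint of $e$ into any other edge of $\mathsf{s}$, since the link angle inside a single square is only $\pi/2$ while a local geodesic requires at least $\pi$. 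Hence $\mathsf{s} \setminus L = \mathsf{s} \setminus e$ is connected, and it contains the terminal subarc of $\alpha$, which lies in $Y$. Therefore $\mathsf{s} \setminus e \subset Y$, and in particular $\mathsf{s} \cap \partial N(L) \subset Y \cap \partial N(L) = K$, which is the required conclusion.

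The main bookkeeping obstacle I anticipate is the case where $L$ is a non-vertical tubular line: there $\partial N(L)$ is defined as $\partial N(\mathsf{h})$ in the first cubical subdivision of $\wt{X}$, where $\mathsf{h}$ is the hyperplane paralleled by $L$, and an ``edge $e$ in $L$'' should be interpreted as an edge of $\mathsf{h}$ in the subdivided complex. The argument above transposes verbatim to the subdivided complex, since subdivided squares are still convex, $\mathsf{h}$ is a combinatorial geodesic line there, and the path-abundance lemma applies in $\wt{X}$ regardless of subdivision. Otherwise the argument is entirely local and geometric.
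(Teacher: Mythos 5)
Your proof is correct and takes essentially the same route as the paper: identify the half-space $Y$ corresponding to $K$ via Lemma~\ref{lemma_half-spaces_equal_components_reg_sphere_L}, apply the path-abundance lemma at an interior point of $e$ (the paper uses the midpoint), and conclude that the terminal piece of the resulting path singles out a square $\mathsf{s}$ at $e$ lying in $Y$. You spell out two steps the paper leaves implicit — the local ``book'' picture at $p$ and the geodesic-turn argument giving $L \cap \mathsf{s} = e$, which is what makes $\mathsf{s}\setminus e$ connected — and you also flag the tubular-line case; none of this changes the underlying argument.
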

\begin{proof}
Let $Y$ be the half-space of $L$ corresponding to $K$, by \cref{lemma_half-spaces_equal_components_reg_sphere_L}.
By \cref{lemma_half-spaces_contain_lines}, $Y$ meets $e$. Let $m$ be the midpoint of $e$. By \cref{lemma_path_abundance}, there exists a path between any point in the interior of $Y$ to $m$ that does not meet $L \setminus \{m\}$. Hence $Y$ contains a square $\mathsf{s}$ that contains $e$ and is as required.
\end{proof}

\begin{fact}\label{fact_tubular_lines_separate}
It is easy to see that $L$ is a separating line whenever it is tubular. Clearly, if $L$ is not vertical, then it separates the strip that contains it. Otherwise, any strip that contains $L$ induces a component (line) of the regular sphere around $L$.
\end{fact}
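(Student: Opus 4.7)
The plan is to invoke \cref{lemma_half-spaces_equal_components_reg_sphere_L}, which identifies the half-spaces of $L$ with the connected components of the regular sphere $\partial N(L)$. Showing that $L$ separates $\wt{X}$ thus reduces to exhibiting at least two components in $\partial N(L)$. I split into the two cases of the statement.

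If $L$ is not vertical, then by \cref{fact_geodesics_strips} $L$ sits inside a strip $S \cong [0,1]\times\mathbb{R}$ as $\{t_0\}\times\mathbb{R}$ with $t_0 \in (0,1)$, and by the observation following the definition of tubular lines $L$ is disjoint from the vertical $1$-skeleton. Consequently the regular neighborhood $N(L)$ is contained entirely in $S$, and $\partial N(L)$ is manifestly a disjoint union of two lines parallel to $L$, one on each side, so it has at least two components. An equivalent and cleaner route is to note that, by the definition given just before the statement, the regular sphere of a non-vertical tubular $L$ coincides with $\partial N(\mathsf{h})$ for the associated hyperplane $\mathsf{h}$; since hyperplanes of CAT(0) cube complexes are well known to separate, applying \cref{lemma_half-spaces_equal_components_reg_sphere_L} to $\mathsf{h}$ already delivers the desired components.

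If $L$ is vertical, then $L$ lies in a vertical tree $T$ and simultaneously on the boundary of some strip $S$. The opposite boundary of $S$ is a line at combinatorial distance $1$ from $L$ and disjoint from it, and this parallel line is exactly the ``line component induced by the strip'' referenced in the statement, giving one component of $\partial N(L)$. A second component is produced on the ``tree side'' of $L$: at each vertex $v$ of $L$ the link of $v$ contains vertical half-edges of $T$ pointing off $L$, together with horizontal half-edges of squares whose other vertical side points off $L$; these splice together via \cref{lemma_regular_sphere_at_combinatorial_path} into components of $\partial N(L)$ that lie off the strip.

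The main technical point — and this is what requires a bit of care beyond what the word ``clearly'' suggests — is to check that the strip-induced line does not merge with the tree-side components under the splicings across the edges of $L$. This rests on the local Euclidean product structure of $S$: inside the link at any $v \in L$, the two horizontal half-edges contributing to the strip line are adjacent only to the two vertical half-edges of $L$ pointing along $L$, and these are precisely the half-edges whose open stars are removed when one builds $\partial N(L)$ via \cref{lemma_regular_sphere_at_combinatorial_path}. Hence the strip line acquires no link-edges into the tree-side cells, and the two families of components remain separated. In both cases $\partial N(L)$ has at least two components, so $L$ separates $\wt{X}$ by \cref{lemma_half-spaces_equal_components_reg_sphere_L}.
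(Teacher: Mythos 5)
Your argument is correct and, suitably read, is the same one the paper compresses into the single sentence of the Fact; you have simply spelled out why the strip-induced cells form a genuine connected component rather than merging with the rest of $\partial N(L)$, which is indeed the only nontrivial check. Two small imprecisions are worth flagging: in the non-vertical case your first route refers to ``the regular neighbourhood $N(L)$,'' but $N(P)$ is only defined for combinatorial paths and a non-vertical tubular line hits no vertices, so that route does not literally parse --- your ``cleaner route'' via the paper's ad hoc definition $\partial N(L):=\partial N(\mathsf{h})$ in the first cubical subdivision is the correct one and should be taken as the argument. In the vertical case there is in fact one horizontal half-edge of the strip at each vertex $v\in L$, not two; the point you want (and which your argument actually uses) is that this single half-edge has link-valence exactly two, the two neighbours being precisely the two $L$-directions at $v$, because horizontal edges of a tubular graph of graphs have thickness exactly two. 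With those wordings tightened, the proof is sound.
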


\begin{lemma} \label{lemma_half-spaces_deep}
Let $Y$ be a half-space of $L$. Then for any $R >0$, $Y \nsubseteq N_R(L)$.
\end{lemma}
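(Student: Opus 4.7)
The plan is to construct a geodesic ray inside $Y$ emanating from a point of $L$ whose distance to $L$ grows linearly; the engine will be $\cat$ convexity of the distance function to the convex line $L$. Since $Y$ is the closure of a nonempty component $Y^\circ$ of $\wt{X} \setminus L$, I would first pick $y_0 \in Y^\circ \subseteq Y \setminus L$. Let $p = \pi_L(y_0)$ denote the closest-point projection of $y_0$ onto the closed convex subset $L$ of $\wt{X}$, set $d_0 = d(y_0, L) = d(y_0, p) > 0$, and let $\gamma_0 \colon [0, d_0] \to \wt{X}$ be the unit speed geodesic from $p$ to $y_0$. I then extend $\gamma_0$ to a geodesic ray $\gamma \colon [0, \infty) \to \wt{X}$; such an extension is available because Brady-Meier together with \cref{lemma_bm_thickness_two} forces every vertex link of $\wt{X}$ to be a bipartite graph of girth $\geq 4$ with at least two vertices on each side of the bipartition, so any incoming direction at a vertex has a same-side vertex of the link at angular distance $\geq \pi$, while at non-vertex points extension is immediate. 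Since local geodesics in a $\cat$ space are global, the iterated extension gives a bona fide geodesic ray.

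Next I would verify that $\gamma((0, \infty)) \subseteq Y \setminus L$. If $\gamma(t_1) \in L$ for some $t_1 > 0$, then the unique $\cat$ geodesic between $\gamma(0) = p$ and $\gamma(t_1)$ lies in the convex subset $L$, forcing $\gamma|_{[0, t_1]} \subseteq L$. This would contradict $y_0 \notin L$ (if $t_1 \geq d_0$) or the minimality of $p$ as the closest point of $L$ to $y_0$ (if $t_1 < d_0$, since then $d(y_0, \gamma(t_1)) = d_0 - t_1 < d_0$). Hence $\gamma((0, \infty))$ is connected, disjoint from $L$, and meets $Y \setminus L$ at $y_0$, so it lies entirely in $Y \setminus L$.

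Finally, setting $f(t) = d(\gamma(t), L)$, this function is convex on $[0, \infty)$ as the composition of a convex distance function to a convex set with the geodesic $\gamma$. From $f(0) = 0$ and $f(d_0) = d_0$, convexity yields $f(t) \geq (t/d_0) f(d_0) = t$ for all $t \geq d_0$, so for any $R > 0$, taking any $t > R$ gives $\gamma(t) \in Y$ with $d(\gamma(t), L) > R$, proving $Y \nsubseteq N_R(L)$. The main obstacle will be justifying the geodesic extension in the first paragraph, which rests on the link structure afforded by Brady-Meier and thickness at least two; once this is in hand, the rest is a direct application of $\cat$ convexity.
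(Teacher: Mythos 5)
Your route---extend the geodesic from $\pi_L(y_0)$ through $y_0$ to a ray, then invoke convexity of $d(\cdot,L)$---differs from the paper's, which obtains the needed ray inside a horizontal hyperplane that meets $L$ in a single point. The convexity part of your argument is correct, as is the reduction of $Y\nsubseteq N_R(L)$ to the unboundedness of $t\mapsto d(\gamma(t),L)$. The gap is in the geodesic-extension step.

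Your claim that ``any incoming direction at a vertex has a same-side vertex of the link at angular distance $\geq\pi$'' is false when the incoming direction $p$ lies in the interior of a link edge, which is precisely what happens when the $\cat$ geodesic enters a vertex of $\wt X$ through the interior of a square. Take $\link(x)=K_{2,2}$, the $4$-cycle on vertices $v_1,v_2$ (vertical) and $w_1,w_2$ (horizontal); this is $2$-connected and bipartite with two vertices on each side, hence a perfectly legal Brady--Meier link. For $p\in[v_1,w_1]$ at angular distance $\alpha\in(0,\pi/2)$ from $v_1$, one computes $d(p,v_2)=\pi-\alpha$ and $d(p,w_2)=\pi/2+\alpha$, so every vertex of the link lies at distance strictly less than $\pi$ from $p$. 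The geodesic does in fact extend (the antipode is the interior point of $[v_2,w_2]$ at distance $\alpha$ from $v_2$), but establishing this in general needs the absence of cut vertices and bridges in the link together with an argument that shortcuts cannot beat the $\geq 2\pi$ girth, not merely the existence of a second vertex on each side of the bipartition. As written, the extension step is unjustified and the proof is incomplete. The paper sidesteps the issue entirely: the horizontal hyperplane through a square of $Y$ meeting $L$ is, after subdivision, a convex tree all of whose vertices have valence at least two by \cref{lemma_bm_thickness_two}, so it contains a geodesic ray of $\wt X$ leaving $L$ without any appeal to geodesic extendability of $\wt X$ itself, and the same convexity argument then applies.
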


\begin{proof}
A hyperplane of a $\cat$ cube complex is, after subdivision, a $\cat$ subcomplex \cite{sageev}. Thus every hyperplane of $\wt{X}$ is a tree. But since each edge of $\wt{X}$ is of thickness at least 2 (\cref{lemma_bm_thickness_two}), every hyperplane is an unbounded tree.
Observe that if $L$ meets a hyperplane $\mathsf{h}$ at exactly one point, then $\mathsf{h}$ has points at arbitrarily large distances from $L$. 
It is easy to see that $Y$ contains the interior of at least one square $\mathsf{s}$. Choose $\mathsf{s}$ such that $\mathsf{s}$ meets $L$. Then the horizontal hyperplane through $\mathsf{s}$ meets $L$ at a single point.
\end{proof}

\begin{prop} \label{lemma_separating_lines_coarsely_separate}
$L$ coarsely separates $\wt{X}$. \qed
\end{prop}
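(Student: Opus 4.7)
The plan is to combine \cref{lemma_half-spaces_equal_components_reg_sphere_L} (which, given that $L$ separates, yields at least two distinct half-spaces $H_1, H_2$ of $L$) with \cref{lemma_half-spaces_deep} (each half-space extends unboundedly away from $L$), and to verify both clauses of the definition of coarse separation for any fixed $R>0$.

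Clause (1), that $N_R(L)$ separates $\wt{X}$, follows quickly: by \cref{lemma_half-spaces_deep} both $H_i\setminus N_R(L)$ are non-empty, and they are disjoint because $H_1,H_2$ lie in different components of $\wt{X}\setminus L$. Any path between them in $\wt{X}\setminus N_R(L)$ would avoid $L\subseteq N_R(L)$, giving a path in $\wt{X}\setminus L$ joining $H_1$ to $H_2$, which contradicts that $L$ separates.

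Clause (2) is the substantive part: for each $i$ I must produce a single component $Y_i\subseteq H_i$ of $\wt{X}\setminus N_R(L)$ that contains points arbitrarily far from $L$. The main obstacle is that, a priori, $H_i\setminus N_R(L)$ might splinter into many bounded components whose $L$-diameters merely tend to infinity, with no single unbounded component. To rule this out, I reuse the construction in the proof of \cref{lemma_half-spaces_deep}: choose a square $\mathsf{s}_i$ whose interior lies in $H_i$ and which meets $L$, and let $\mathsf{h}_i$ be the horizontal hyperplane through $\mathsf{s}_i$. Since \cref{lemma_bm_thickness_two} forces $\mathsf{h}_i$ to be a leafless tree and $\mathsf{h}_i\cap L$ is a single vertex $q_i$, the component of $\mathsf{h}_i\setminus\{q_i\}$ containing the edge into $\mathsf{s}_i$ is an infinite subtree $B_i$ contained in $H_i$ (connected, disjoint from $L$, and meeting $H_i$). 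Because $B_i$ is infinite and has at most one leaf (the vertex adjacent to $q_i$), I can build an injective geodesic ray $r\colon [0,\infty)\to B_i\subseteq \mathsf{h}_i$.

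Finally, since $L$ and $\mathsf{h}_i$ are convex $\cat$ subcomplexes of $\wt{X}$ (after subdivision) meeting only at $q_i$, the function $t\mapsto d(r(t),L)$ is convex and nonconstant with $d(r(t),L)\le d(r(t),q_i)$ finite but with $d(r(t),q_i)\to\infty$ in the tree metric; one deduces $d(r(t),L)\to\infty$, so $r\cap N_R(L)$ is bounded and some tail $r([T,\infty))$ lies in $H_i\setminus N_R(L)$. This tail is connected and unbounded relative to $L$, hence contained in a single component $Y_i$ of $\wt{X}\setminus N_R(L)$. The two such components $Y_1,Y_2$ are the required witnesses, and I expect the $\cat$ divergence estimate $d(r(t),L)\to\infty$ to be the trickiest step, relying on the fact that two convex subsets meeting in a single point diverge in a $\cat$ space.
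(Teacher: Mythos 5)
Your proof is correct and matches the argument the paper leaves implicit: the proposition appears with a bare \qed because the proof of \cref{lemma_half-spaces_deep} already produces, for each half-space, a connected branch of a horizontal hyperplane-tree meeting $L$ in a single point and running arbitrarily far from $L$, which is exactly the ``deep component'' you need. You spell out (correctly) the $\cat$ divergence step and the observation that this branch lies in a single component of $\wt{X}\setminus N_R(L)$, a small gap the paper glosses over but whose filling uses no new ideas.
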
 

\subsection{Coarsely separating periodic lines}

\begin{defn}
The \emph{translation length} of an element $g \in G$ is the infimum of $d(x,gx)$ over all $x \in \wt{X}$.
An \emph{axis of $g \in G$} in $\wt{X}$ is a line $L$ in $\wt{X}$ such that $gL \subset L$ and $g$ moves an element of $L$ by its translation length.
A line $L$ in $\wt{X}$ is \emph{periodic} if it is an axis of some element of $G$.
\end{defn}

Given $g \in G$, an axis in $\wt{X}$ of $g$ always exists (see Theorem II.6.8 of \cite{bridsonhaefliger} for details).

\begin{lemma}\label{lemma_vertical_component_periodic_line}
Given a combinatorial periodic line $L$, either $L$ is vertical, or each vertical subpath of $L$ is compact.
\end{lemma}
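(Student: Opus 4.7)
The plan is to argue by contradiction using the periodicity of $L$ together with the fact that distinct vertical trees of $\wt{X}$ are disjoint.

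Assume $L$ is not vertical yet admits some non-compact vertical subpath $P$. Since $L$ is a line, non-compactness forces $P$ to be either a half-infinite ray or the whole of $L$; the latter would immediately make $L$ vertical, contradicting the hypothesis. So $P$ is a proper infinite ray inside $L$, and by relabelling I may assume $P = L|_{[a,\infty)}$ (the symmetric case $P = L|_{(-\infty,b]}$ will be handled by inverting the group element). Being vertical, $P$ lies inside a single vertical tree $T$, i.e.\ a single connected component of the vertical $1$-skeleton of $\wt{X}$ (which, by the tree-of-spaces structure recalled in \cref{section_graphs_spaces}, is the lift of one vertex graph $X_s$).

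Next, let $g \in G$ be an element for which $L$ is an axis, with translation length $\ell > 0$. After replacing $g$ by $g^{-1}$ if necessary, I may assume $g$ translates $L$ in the positive direction, so that $gL(t) = L(t+\ell)$. Then $gP = L|_{[a+\ell,\infty)} \subset P \subset T$. On the other hand, $gP$ is also contained in the vertical tree $gT$. Since any two vertical trees of $\wt{X}$ are either equal or disjoint, the non-emptiness of $T \cap gT$ forces $gT = T$.

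Finally, iterating gives $g^{-n}T = T$ and therefore $g^{-n}P = L|_{[a-n\ell,\infty)} \subset g^{-n}T = T$ for every $n \geq 0$. Taking the union over $n \geq 0$ covers all of $L$, so $L \subset T$, contradicting the assumption that $L$ is not vertical. The symmetric case of a left-infinite vertical ray is handled by swapping the roles of $g$ and $g^{-1}$. I do not expect any real obstacle here: the only point requiring care is justifying that distinct vertical trees are disjoint, which is immediate from the graph-of-spaces structure of $X$. The conceptual heart of the argument is that once an infinite vertical ray of $L$ is trapped in a single vertical tree, periodicity propagates this containment to the entire line.
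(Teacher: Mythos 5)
Your proof is correct. You take a mildly different route from the paper: the paper argues locally, using that the $G$-action on $\wt{X}$ sends vertical edges to vertical edges, so that the edge of $L$ adjacent to the infinite vertical ray $\gamma$, once translated into $\gamma$, must itself be vertical, and iterating extends $\gamma$ to all of $L$. You instead argue globally via the vertical trees: the ray $P$ sits inside a single vertical tree $T$, and since $G$ permutes vertical trees and $gP \subset T \cap gT$, we get $gT = T$; pulling back by $g^{-n}$ then traps all of $L$ inside $T$. Both arguments ultimately rest on the $G$-invariance of the $\VH$-structure and are of comparable length; the paper's is an edge-by-edge induction while yours packages the same fact as invariance of a vertical tree.
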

\begin{proof}
Let $g \in G$ be such that $gL \subset L$.
Suppose that a vertical component of $L$ is not compact, and hence contains a ray $\gamma$. Let $e$ be an edge of $L$ adjacent to $\gamma$. Then either $g$ or $g^{-1}$ sends $e$ into $\gamma$. Since $G$ sends vertical edges to vertical edges, $e$ is vertical. Continuing this way, we conclude that $L$ is vertical. 
\end{proof}

The main result of this subsection is the following:

\begin{prop} \label{lemma_coarsely_separating_lines_separate}
A periodic coarsely separating combinatorial line $L$ of $\widetilde{X}$ separates $\wt{X}$. 
\end{prop}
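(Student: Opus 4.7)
The strategy is to prove the contrapositive via the bijection between half-spaces of $L$ and components of the regular sphere $\partial N(L)$ established in \cref{lemma_half-spaces_equal_components_reg_sphere_L}: $L$ separates $\wt X$ if and only if $\partial N(L)$ is disconnected. So I will assume, towards a contradiction, that $L$ does not separate $\wt X$ (equivalently, $\partial N(L)$ is connected) and is coarsely separating, and derive a contradiction using the periodicity of $L$.

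First, I would exploit periodicity to get cocompactness on $\partial N(L)$. Since $L$ is the axis of some $g\in G$, the quotient $L/\langle g\rangle$ is a cycle $C$ in the compact complex $X$, and $\partial N(L)/\langle g\rangle$ is the regular sphere around $C$, which is a finite graph. Combined with the assumed connectedness of $\partial N(L)$, this yields a $\langle g\rangle$-invariant proper bi-infinite combinatorial path $A\subset\partial N(L)$: choose a vertex $h_0\in\partial N(L)$ and a minimal length path $\alpha_0$ in $\partial N(L)$ from $h_0$ to $gh_0$ (of length at most some $D$ depending only on $X$), and form $A = \bigcup_{n\in\mathbb Z} g^n\alpha_0$. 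By construction $A$ lies at uniformly bounded distance $R_0$ from $L$ and, since $g$ acts on $L$ by non-trivial translation, $A$ is a proper subset of $\wt X$.

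Next, let $R>0$ witness coarse separation with deep components $Y_1\ne Y_2$ of $\wt X\setminus N_R(L)$. Using $A$ and \cref{lemma_half-spaces_deep} applied to the (unique) half-space of $L$, I would construct a $\langle g\rangle$-invariant connected subset $A'$ of $\wt X\setminus N_R(L)$: at each vertex of $A$, use that the half-space is deep to extend outward by a combinatorial ray leaving $N_R(L)$, and use \cref{lemma_path_abundance} together with the Brady--Meier hypothesis (via \cref{prop_regular_sphere_compact_path_bradymeier}) to connect consecutive such rays by paths in $\wt X\setminus L$ pushed to distance $\ge R$ from $L$. The periodicity of $A$ under $\langle g\rangle$ together with the cocompact $\langle g\rangle$-action on $N_R(L)$ allows this construction to be carried out equivariantly, yielding $A'$ proper and connected in $\wt X\setminus N_R(L)$. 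Finally, a $\cat$-projection argument from each $Y_i$ to $L$, combined with \cref{lemma_path_abundance}, shows that $Y_i$ is path-connected in $\wt X\setminus N_R(L)$ to $A'$. Since both $Y_1$ and $Y_2$ connect to the same $A'$, they lie in the same component of $\wt X\setminus N_R(L)$, contradicting $Y_1\ne Y_2$.

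The hardest step will be executing the push-out construction of $A'$: $A$ itself sits inside $N_{R_0}(L)\subset N_R(L)$ for $R\ge R_0$, so one cannot use $A$ directly, and the outward combinatorial rays attached to $A$ might wander back into $N_R(L)$. Handling this cleanly will rely on \cref{fact_geodesics_strips} in the tubular case and on the tree-of-spaces structure of $\wt X$ in the vertical case, using \cref{lemma_vertical_component_periodic_line} to know which regime we are in, and will invoke the $\cat$ convexity of $L$ to control the outward paths.
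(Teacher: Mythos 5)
Your high-level strategy is sound — argue by contrapositive, assume $L$ does not separate (equivalently $\partial N(L)$ is connected), and derive a contradiction with coarse separation — and you correctly identified that periodicity of $L$ forces the question down to a cocompact situation. However, there is a genuine gap at exactly the step you flag as the hardest: the construction of the connected, $\langle g\rangle$-invariant set $A'$ lying in $\wt X \setminus N_R(L)$. The tools you cite for this — \cref{lemma_path_abundance}, \cref{prop_regular_sphere_compact_path_bradymeier}, $\cat$ convexity of $L$ — all control connectivity \emph{near} $L$ (at the level of the regular sphere $\partial N(L)$ or within a half-space minus $L$), but they say nothing about connectivity at distance $\geq R$ from $L$. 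Having a ray leave $N_R(L)$ from each vertex of $A$ does not let you connect consecutive rays \emph{outside} $N_R(L)$: the path-abundance lemma only produces paths in $\wt X \setminus L$, and pushing such a path to distance $\geq R$ is precisely what needs to be shown, not a maneuver that is available for free. The final step also does not work as stated: the closest-point projection in a $\cat$ space to a convex geodesic $L$ moves points \emph{toward} $L$, so it gives no help in joining the deep components $Y_1, Y_2$ to $A'$ while staying far from $L$.

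The paper's proof resolves exactly this difficulty through \cref{lemma_half-space_coarsely_connected_lines}, which shows by induction on $k$ that $\partial L^{+k} \cap Y$ is connected for the (unique, when $L$ does not separate) half-space $Y$. The induction uses the combinatorics of hyperplanes tangent to $L^{+k}$ — in particular \cref{lemma_hyperplane_meets_L_compact}, \cref{lemma_edges_in_L^k}, and \cref{lemma_intersecting_trace_connected_L^k+1} — to propagate connectivity from $\partial N(L)$ outward one cubical layer at a time, with the Brady--Meier hypothesis re-entering at the level of vertex links in each layer. The tubular case (when $L \subset \mathsf h^{+1}$ for some vertical hyperplane) is split off separately via \cref{fact_tubular_lines_separate}, after which the key lemma's hypothesis applies. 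Your proposal would need to either prove something equivalent to this inductive connectivity statement, or find a substitute argument that does not merely assume the conclusion; as written, the ``push-out'' is asserted rather than established.
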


The proof uses the following lemma. Recall that $\mathsf{h}^{+1}$ is the first cubical neighbourhood of a hyperplane $\mathsf{h}$.

\begin{lemma}\label{lemma_half-space_coarsely_connected_lines}
Let $Y$ be a half-space of a periodic combinatorial line $L$ such that for any vertical hyperplane $\mathsf{h}$ in $Y$, $L$ is not contained in $ \mathsf{h}^{+1}$. Then for each $k \in \mathbb{N}$, $Y \setminus L^{+k}$ is connected.
\end{lemma}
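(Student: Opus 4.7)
The plan is to prove path-connectedness of $Y\setminus L^{+k}$ by modifying a path in the connected set $Y\setminus L$ to avoid the subcomplex $L^{+k}$. Since $Y\setminus L$ is, by definition of half-space, a connected component of $\wt X\setminus L$, any two points $x,y\in Y\setminus L^{+k}$ are joined by a combinatorial path $\alpha\subset Y\setminus L$. The intersection $\alpha\cap L^{+k}$ decomposes into finitely many maximal sub-arcs, each of which enters and leaves $L^{+k}$ at points of $\partial L^{+k}\cap Y$; the task then reduces to replacing each sub-arc by a detour in $Y\setminus L^{+k}$ joining its endpoints.

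To construct such detours I would prove the stronger statement that $\partial L^{+k}\cap Y$ is itself path-connected, by induction on $k$. The base case $k=0$ amounts to the observation that $L\subset Y$ is connected. For the inductive step, let $\bar g$ be a power of $g$ stabilising $Y$ (such a power exists when the $\langle g\rangle$-orbit of $Y$ is finite; the infinite-orbit case forces the corresponding component of $\partial N(L)$ to be compact and is handled separately). Then the collar $C_k := (L^{+(k+1)}\setminus L^{+k})\cap Y$ inherits a $\bar g$-action with compact quotient. To connect points of $\partial L^{+(k+1)}\cap Y$ to $\partial L^{+k}\cap Y$ through $C_k$ I would use the path-abundance lemma (\cref{lemma_path_abundance}) for local detours, the Brady--Meier condition (specifically \cref{prop_regular_sphere_compact_path_bradymeier} ensuring no cut points in regular spheres around compact sub-paths), and periodicity to tile $C_k$ by $\bar g$-translates of a fundamental piece.

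The hypothesis on vertical hyperplanes enters exactly at this inductive step. If some vertical hyperplane $\mathsf{h}\subset Y$ satisfied $L\subset \mathsf{h}^{+1}$, then for $k$ large enough $L^{+k}$ would absorb the strip $\mathsf{h}^{+1}$ together with the vertical trees of $\wt X$ bounding it on the $Y$-side, severing the pieces of $\wt X$ attached on opposite sides of those vertical trees; the collar $C_k$ would fail to be connected. The hypothesis precisely excludes this pathology, ensuring that each collar remains a connected annulus around $L$ and that the inductive gluing closes up.

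The main obstacle is executing the inductive step rigorously: showing that each collar $C_k$ is connected requires a careful combination of the tree-of-spaces structure of $\wt X$, the $\bar g$-periodicity to reduce global connectedness of $C_k$ to a compact fundamental slice, and the hypothesis to prevent the collar from fragmenting at a vertical hyperplane parallel to $L$. A secondary technical point is the infinite-orbit case for $Y$, which must be treated by showing that such a half-space meets $L^{+k}$ in only finitely many ``branches'' per compact piece of $L$, reducing its analysis to the compact case already handled.
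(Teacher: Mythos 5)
You identify the correct overall strategy --- inducting on $k$ to show $Y\cap\partial L^{+k}$ is connected, which implies $Y\setminus L^{+k}$ is connected --- and your base case matches the paper's (it is \cref{lemma_half-spaces_equal_components_reg_sphere_L}). But the inductive step, which you yourself call the ``main obstacle,'' is not executed, and the mechanism you sketch does not close the gap.

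The paper's inductive step rests on a local structural lemma (\cref{lemma_intersecting_trace_connected_L^k+1}): if two hyperplanes tangent to $L^{+k}$ have first cubical neighbourhoods whose traces $\sigma_1,\sigma_2$ on $L^{+k}$ intersect, then their traces on $\partial L^{+(k+1)}$ lie in a single component. Combined with the convexity fact that $L^{+(k+1)}$ is covered by $L^{+k}$ and the first cubical neighbourhoods of tangent hyperplanes (\cref{fact_L^k+1_from_L^k}), and with the inductive hypothesis connecting $\sigma_1$ to $\sigma_2$ along $\partial L^{+k}$, this yields the induction. Proving \cref{lemma_intersecting_trace_connected_L^k+1} requires a genuine three-case analysis around a terminal vertex of $\sigma_1\cap\sigma_2$ (via \cref{lemma_edges_in_L^k} and \cref{lemma_terminal_vertex}), using the Brady--Meier property in the vertex link, and --- crucially --- the compactness of $\mathsf{h}\cap L^{+k}$ for vertical hyperplanes $\mathsf{h}$ (\cref{lemma_hyperplane_meets_L_compact}) to terminate the iteration in Case~2. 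Your substitute --- tiling the collar $C_k$ by $\bar g$-translates of a compact fundamental slice --- is not developed: even granting the tiling, you would need the fundamental slice to be connected and consecutive translates to meet in connected (or commonly-connected) sets, and establishing either of these would require essentially the same local combinatorics the paper carries out. Periodicity alone does not avoid this.

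Your account of the role of the vertical-hyperplane hypothesis is also not what the paper uses. It does not serve to prevent the collar from ``fragmenting''; it is used, together with periodicity and freeness of the $G$-action, to prove that $\mathsf{h}\cap L^{+k}$ is compact for every vertical hyperplane $\mathsf{h}$ in $Y$ (\cref{lemma_hyperplane_meets_L_compact}); this compactness is what stops the iteration in \cref{lemma_intersecting_trace_connected_L^k+1}. When the hypothesis fails, $L$ is tubular and the entire lemma is bypassed in the proof of \cref{lemma_coarsely_separating_lines_separate} via \cref{fact_tubular_lines_separate}. Finally, the ``infinite-orbit case'' you set aside never occurs: by \cref{cor_square_meets_halfspace} the number of half-spaces of $L$ is bounded by the thickness of an edge, so the stabiliser of $L$ always has a finite-index subgroup fixing $Y$.
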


\begin{proof}[Proof of \cref{lemma_coarsely_separating_lines_separate}]

If $L$ is contained in $\mathsf{h}^{+1}$ for some vertical hyperplane $\mathsf{h}$, then $L$ is tubular and hence separating (\cref{fact_tubular_lines_separate}.
So assume that $L$ is not contained in $ \mathsf{h}^{+1}$ for any vertical hyperplane $\mathsf{h}$. Suppose that $L$ does not separate. Let $Y = \wt{X}$ be the unique half-space of $L$. By \cref{lemma_half-space_coarsely_connected_lines}, $Y \setminus L^{+k}$ is connected for all $k$, implying that $L$ does not coarsely separate. 
\end{proof}

The proof of \cref{lemma_half-space_coarsely_connected_lines} requires some work. 
For the rest of the subsection, we fix a periodic combinatorial line $L$ and a half-space $Y$ of $L$ such that $L$ is not contained in $\mathsf{h}^{+1}$ for any vertical hyperplane in $Y$. 

\begin{remark}
By Lemma 13.15 of \cite{haglund_wise_special}, $L^{+k}$ is convex for any $k$. 
\end{remark}

\begin{defn}
A hyperplane $\mathsf{h}$ is \emph{tangent to} a subcomplex $Z$ of $\wt{X}$ if $Z$ is disjoint from $\mathsf{h}$ but meets $\mathsf{h}^{+1}$.
\end{defn}

\begin{fact} \label{fact_L^k+1_from_L^k}
As $L^{+K}$ is convex, any element of $L^{+(k+1)}$ is contained either in $L^{+k}$ or in the first cubical neighbourhood of a hyperplane tangent to $L^{+k}$. 
\end{fact}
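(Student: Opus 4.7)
The plan is to exploit the combinatorial gate projection onto the convex subcomplex $L^{+k}$ together with the standard fact that, in a $\cat$ cube complex, the hyperplanes of $\wt{X}$ separating two vertices of a cube $\mathsf{c}$ are precisely those dual to edges of $\mathsf{c}$ (hence $\mathsf{c} \subseteq \mathsf{h}^{+1}$ for each such $\mathsf{h}$).

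First I would fix $x \in L^{+(k+1)}$ with $x \notin L^{+k}$ and use the definition of the first cubical neighbourhood to pick a closed cell $\mathsf{c}$ of $\wt{X}$ with $x \in \mathsf{c}$ and $\mathsf{c} \cap L^{+k} \neq \emptyset$. Since $x \notin L^{+k}$, the cell $\mathsf{c}$ is not contained in $L^{+k}$, so I can choose a vertex $u$ of $\mathsf{c}$ lying in $L^{+k}$ and a vertex $v$ of $\mathsf{c}$ lying outside $L^{+k}$.

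Next I would invoke the combinatorial gate projection onto $L^{+k}$, which exists precisely because $L^{+k}$ is convex; write $g(v) \in L^{+k}$ for the gate of $v$. The defining identity $d(v, y) = d(v, g(v)) + d(g(v), y)$ for every vertex $y \in L^{+k}$, translated via the hyperplane description of combinatorial distance, says that the set of hyperplanes separating $v$ from $u$ decomposes disjointly as those separating $v$ from $g(v)$ together with those separating $g(v)$ from $u$. Since $v \notin L^{+k}$, we have $v \neq g(v)$, so at least one hyperplane $\mathsf{h}$ lies in both groups: it separates $v$ from $u$ and also separates $v$ from $g(v)$.

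To finish, I would verify that $\mathsf{h}$ is a tangent hyperplane of $L^{+k}$ containing $x$ in its $+1$-neighbourhood. Because $\mathsf{h}$ separates two vertices of the cube $\mathsf{c}$, it is dual to an edge of $\mathsf{c}$, so $\mathsf{c} \subseteq \mathsf{h}^{+1}$ and in particular $x \in \mathsf{h}^{+1}$. A standard consequence of the gate property is that any hyperplane separating $v$ from $g(v)$ is disjoint from $L^{+k}$: if such a hyperplane met $L^{+k}$, convexity of $L^{+k}$ would yield a vertex of $L^{+k}$ on $v$'s side of $\mathsf{h}$ strictly closer to $v$ than $g(v)$, contradicting minimality of the gate. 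Combined with $u \in \mathsf{c} \cap L^{+k} \subseteq \mathsf{h}^{+1} \cap L^{+k}$, this shows $\mathsf{h}$ is tangent to $L^{+k}$.

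The main obstacle, and the one the gate argument is designed to resolve, is to locate a tangent hyperplane whose first cubical neighbourhood actually contains $x$: an arbitrary hyperplane lying in the "boundary" of $L^{+k}$ would not suffice, but the hyperplane $\mathsf{h}$ produced by the decomposition above is automatically dual to an edge of the very cube $\mathsf{c}$ witnessing $x \in L^{+(k+1)}$, so $\mathsf{c}$, and therefore $x$, lies in $\mathsf{h}^{+1}$.
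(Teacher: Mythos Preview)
Your argument is correct. The paper records this statement as a ``Fact'' and gives no proof at all, so there is nothing to compare against; your gate-projection argument is exactly the standard way to justify it and would serve as a complete proof.

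One sentence deserves rewording: after noting that the hyperplanes separating $v$ from $u$ decompose \emph{disjointly} into those separating $v$ from $g(v)$ and those separating $g(v)$ from $u$, you write that ``at least one hyperplane $\mathsf{h}$ lies in both groups''. Read literally this contradicts the disjointness you just asserted. What you mean (and what makes the argument work) is: since $v\neq g(v)$ there is some $\mathsf{h}$ separating $v$ from $g(v)$, and because this first piece of the decomposition is a \emph{subset} of the hyperplanes separating $v$ from $u$, this $\mathsf{h}$ also separates $v$ from $u$; hence $\mathsf{h}$ separates two vertices of $\mathsf{c}$ and is dual to an edge of $\mathsf{c}$. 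With that phrasing fixed, every step is sound: the disjointness of $\mathsf{h}$ from $L^{+k}$ is the standard gate consequence you state, and tangency follows since $u\in \mathsf{c}\cap L^{+k}\subseteq \mathsf{h}^{+1}\cap L^{+k}$.

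A minor point you are using implicitly: to pass from ``$\mathsf{c}$ meets $L^{+k}$ but is not contained in it'' to ``$\mathsf{c}$ has a vertex in $L^{+k}$ and a vertex outside $L^{+k}$'', you need that $L^{+k}$ is a \emph{full} subcomplex (so that a cell all of whose vertices lie in $L^{+k}$ is itself in $L^{+k}$). This follows from convexity of $L^{+k}$, which the paper has already recorded via Haglund--Wise.
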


\begin{lemma}\label{lemma_hyperplane_meets_L_compact}
Given a vertical hyperplane $\mathsf{h}$ in $Y$ and $k \in \mathbb{N}$, $\mathsf{h} \cap L^{+k}$ is compact. 
\end{lemma}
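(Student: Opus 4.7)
The plan is to derive a contradiction from the assumption that $\mathsf{h} \cap L^{+k}$ is non-compact. Since $L^{+k}$ is convex and $\mathsf{h}$ is a geodesic line of $\wt{X}$, the intersection is a convex subset of $\mathsf{h}$, hence contains a ray $\rho$ whenever it is non-compact. I will use the periodicity of $L$ to upgrade the existence of $\rho$ into a contradiction with the standing assumption that $L \not\subset \mathsf{h}'^{+1}$ for every vertical hyperplane $\mathsf{h}' \subset Y$.

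The first step is to produce a non-trivial power of the axial isometry $g$ of $L$ which also stabilises $\mathsf{h}$. Pick vertices $x_n \in \rho$ escaping to infinity along $\mathsf{h}$, together with $y_n \in L$ satisfying $d(x_n, y_n) \leq k$, and let $\tau$ be the translation length of $g$. Choosing integers $a_n$ with $d(y_n, g^{a_n} y_0) \leq \tau$ gives $d(g^{-a_n} x_n, y_0) \leq k + \tau$, so the points $g^{-a_n} x_n$ together with the vertical hyperplanes $g^{-a_n} \mathsf{h}$ through them range in a finite set by local finiteness of $\wt{X}$. Pigeonholing on a subsequence produces $n \neq m$ with $g^{a_m - a_n} \mathsf{h} = \mathsf{h}$; setting $j = a_m - a_n$ gives $j \neq 0$ (otherwise $x_n = x_m$) and $g^j$ stabilises both $L$ and $\mathsf{h}$. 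Since $g^j$ has translation length $|j|\tau > 0$ on $L$, its induced action on the geodesic line $\mathsf{h}$ must be a translation of the same length (a reflection is excluded by positivity; a smaller translation would contradict the minimum displacement realised on $L$), so $\mathsf{h}$ is a second axis of $g^j$. The Flat Strip theorem (II.2.14 in \cite{bridsonhaefliger}) then produces a flat strip $S \subset \wt{X}$ of width $d(L, \mathsf{h})$ bounded by $\mathsf{h}$ and $L$.

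The last step exploits the cubical structure. Since $g^j$ is a combinatorial isometry, its min set is a convex subcomplex of $\wt{X}$, and $S$ sits inside it as a subcomplex; the hyperplanes of $\wt{X}$ parallel to $\mathsf{h}$ meeting $S$ appear at consecutive integer spacings, and $L$, being combinatorial, passes through a row of vertices of $S$. Hence $L$ lies at distance exactly $\tfrac12$ from some vertical hyperplane $\mathsf{h}^\star$ of $S$ parallel to $\mathsf{h}$ (possibly $\mathsf{h}^\star = \mathsf{h}$), so that $L \subset (\mathsf{h}^\star)^{+1}$. Since $\mathsf{h}^\star$ lies between $\mathsf{h}$ and $L$ in $S$ (or equals $\mathsf{h}$) and $\mathsf{h} \subset Y$, we get $\mathsf{h}^\star \subset Y$, contradicting the standing hypothesis. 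The step requiring most care is the pigeonhole argument: one must choose a genuinely locally finite discrete set (vertex plus incident vertical hyperplane, after a suitable subdivision) so that the positions of the $x_n$ and the local hyperplane structure at those positions are tracked simultaneously; one must also verify that the flat strip supplied by the CAT(0) theorem is a cubical subcomplex, so that the final parallel-hyperplane geometry can be read off combinatorially.
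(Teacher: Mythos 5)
Your proof takes a genuinely different route from the paper's. The paper leans on the tree-of-spaces structure: it first shows $L$ must be vertical by projecting $\mathsf{h}$ and $L$ to the Bass-Serre tree $T$, then locates the unique vertical hyperplane $\mathsf{h}'$ tangent to $L$ in the direction of $\alpha$ (the $T$-geodesic from $\mathsf{h}$ to $L$), and closes by arguing from periodicity and freeness of the action that $\mathsf{h}'^{+1} \cap L$ containing a ray forces $L \subset \mathsf{h}'^{+1}$. You instead run a self-contained CAT(0)/cubical argument: pigeonhole on the local data $(g^{-a_n}x_n, g^{-a_n}\mathsf{h})$ to produce a nonzero power $g^j$ stabilising $\mathsf{h}$, use $g^j$-periodicity to upgrade the ray in $\mathsf{h}\cap L^{+k}$ to genuine parallelism of $\mathsf{h}$ and $L$, apply the Flat Strip Theorem, and read off the cubical structure of the strip. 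This is a legitimate and essentially correct alternative that avoids the graph-of-groups machinery; the trade-off is that it needs more explicit CAT(0) cube-complex bookkeeping where the paper's proof leans on structure it has already set up.

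Two points should be tightened. First, ``its min set is a convex subcomplex of $\wt{X}$'' is not true in general for a combinatorial hyperbolic isometry of a CAT(0) cube complex: the CAT(0) min set is convex but need not be a subcomplex. Fortunately you don't need that claim; what you need is only the strip $S$ (the convex hull of $L\cup\mathsf{h}$), whose cubical structure you can analyse directly. Second, the sentence ruling out translation lengths rules out $c<|j|\tau$ but not $c>|j|\tau$; the equality $c=|j|\tau$ (``$\mathsf{h}$ is a second axis'') is in fact a \emph{consequence} of the Flat Strip Theorem applied to the parallel pair $(L,\mathsf{h})$, not a prerequisite for it, so the order should be reversed and the parallelism (obtained by applying $g^{nj}$, $n\in\mathbb{Z}$, to the ray $\rho$) stated explicitly before invoking the theorem. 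In the last step it is worth spelling out that every vertical hyperplane entering $S$ must run parallel to $\mathsf{h}$ inside $S$ — distinct vertical hyperplanes cannot cross in a $\VH$-complex, and none can cross $L$ — which both forces $L$ to consist entirely of vertical edges (recovering the paper's observation that $L$ is vertical) and pins down $\mathsf{h}^\star$ as the vertical hyperplane dual to the horizontal edges issuing from $L$ into $S$, with $L$ a boundary line of its carrier.
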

\begin{proof}
Suppose there exists a vertical hyperplane $\mathsf{h}$ such that $\mathsf{h} \cap L^{+k}$ is not compact. 
Let $T$ be the underlying Bass-Serre tree of the tree of spaces structure of $\wt{X}$. By \cref{lemma_vertical_component_periodic_line}, the image of $L$ in $T$ is either a point or a line.
We first claim that $L$ is vertical. This follows from the fact that the image of $\mathsf{h}$ in $T$ is a point and a ray of $\mathsf{h}$ is at finite Hausdorff distance from a ray of $L$. 

Let $\alpha$ be the path in $T$ between the image of $\mathsf{h}$ and the image of $L$. Let $\mathsf{h}'$ be the unique vertical hyperplane tangent to $L$ such that its image in $T$ lies in $\alpha$. 
Then it is easy to see that $\mathsf{h}'^{+1} \cap L$ contains a ray.
It then follows that $L \subset \mathsf{h}'^{+1}$ as both $\mathsf{h}'$ and $L$ are periodic and $G$ acts freely on $\wt{X}$. This is a contradiction.
\end{proof}

We will denote by $\partial L^{+k}$ the set of all cells in $L^{+k}$ disjoint from $L^{+(k-1)}$.

\begin{lemma}\label{lemma_edges_in_L^k}
Let $v$ be a vertex in $\partial L^{+k}$. Then exactly one of the following holds.
\begin{enumerate}
\item One vertical and one horizontal edge incident to $v$ lie in $\partial L^{+k}$.
\item Two vertical edges (and no horizontal edge) incident to $v$ lie in $\partial L^{+k}$.
\item Finitely many horizontal edges (and no vertical edge) incident to $v$ lie in $\partial L^{+k}$.
\end{enumerate}
\end{lemma}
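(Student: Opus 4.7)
Plan: Let $v \in \partial L^{+k}$. I plan to reduce the question to an analysis of the squares at $v$ that meet $L^{+(k-1)}$, and to use convexity of $L^{+(k-1)}$ (Remark preceding the lemma) together with the Brady--Meier structure to rule out unwanted configurations.

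The first step is a local criterion: an edge $f$ at $v$ lies in $\partial L^{+k}$ if and only if the other endpoint $u_f$ of $f$ is not in $L^{+(k-1)}$ and $f$ is a face of some square $\mathsf{s}$ at $v$ having a vertex in $L^{+(k-1)}$. Label the remaining vertices of $\mathsf{s}$ as $D$ (the neighbor of $v$ in $\mathsf{s}$ other than $u_f$) and $C$ (opposite $v$). Since $v, u_f \notin L^{+(k-1)}$, the $L^{+(k-1)}$-vertex of $\mathsf{s}$ is $D$ (the adjacent case, which contributes only $f$ to $\partial L^{+k}$ from $\mathsf{s}$) or $C$ (the diagonal case, which contributes both $f$ and $vD$). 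Because the link of $v$ is bipartite, $f$ and $vD$ are of opposite V/H type, so a diagonal case always pairs one vertical and one horizontal edge.

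The second step exploits convexity: $L^{+(k-1)}$ is convex in $\wt{X}$, and the vertical tree $T_v$ through $v$ is a convex subcomplex, so $T_v \cap L^{+(k-1)}$ is a subtree of $T_v$. As $v \notin L^{+(k-1)}$, this subtree contains at most one vertex adjacent to $v$; call it $u^*$ when present. Combined with \cref{lemma_hyperplane_meets_L_compact}, which provides compactness of $\mathsf{h} \cap L^{+k}$ for vertical hyperplanes $\mathsf{h}$ in $Y$, this tightly constrains both the number of vertical neighbors and the number of horizontal neighbors of $v$ in $L^{+(k-1)}$ that can contribute.

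With these controls I split into three sub-cases. Case 1 arises from the pure diagonal situation: a square $\mathsf{s}$ at $v$ with $C \in L^{+(k-1)}$ but $D, u_f \notin L^{+(k-1)}$, contributing exactly one vertical and one horizontal edge. Case 2 arises when $v$ has a horizontal neighbor $d \in L^{+(k-1)}$ and no $u^*$: by the thickness-two property of horizontal edges, exactly two squares contain $vd$, and each provides a distinct vertical edge at $v$ in $\partial L^{+k}$. Case 3 arises when $u^*$ exists and $v$ has no horizontal neighbor in $L^{+(k-1)}$: the finitely many squares containing the vertical edge $vu^*$ each contribute a horizontal edge at $v$ in $\partial L^{+k}$, with finiteness following from local finiteness of $\wt{X}$.

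The main obstacle is showing exhaustiveness and mutual exclusivity of the three cases, in particular ruling out coexistence like ``two vertical edges together with a horizontal one'' or three or more vertical edges. The former is excluded because a horizontal neighbor of $v$ in $L^{+(k-1)}$ and an opposite vertex in $L^{+(k-1)}$ occurring simultaneously would force, via convexity together with the subtree observation above, that $v \in L^{+(k-1)}$; the latter is excluded because by uniqueness of $u^*$ and the compactness statement of \cref{lemma_hyperplane_meets_L_compact}, more horizontal neighbors of $v$ in $L^{+(k-1)}$ than one cannot arise in the $Y$-half-space. The connectivity and cut-point-free property of vertex links (\cref{lemma_no_cuts_regular_spheres_points}, \cref{prop_regular_sphere_compact_path_bradymeier}) are used to exclude the remaining degenerate patterns, such as a lone vertical edge with no horizontal companion.
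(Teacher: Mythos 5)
Your case breakdown is the same as the paper's (no edge at $v$ meets $L^{+(k-1)}$, one horizontal edge does, or one vertical edge does, corresponding respectively to conclusions (1), (2), (3)), but the one nontrivial step — the observation that \emph{at most one} edge incident to $v$ has its other endpoint in $L^{+(k-1)}$ — is not actually established by your argument, and this is what drives both exhaustiveness and mutual exclusivity.

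Concretely, the subtree observation ($T_v \cap L^{+(k-1)}$ is a subtree not containing $v$) only controls vertical neighbours of $v$. It says nothing about horizontal neighbours, and it says nothing about the mixed situation where one vertical and one horizontal neighbour of $v$ both lie in $L^{+(k-1)}$. Your appeal to \cref{lemma_hyperplane_meets_L_compact} to handle the horizontal case is off target: that lemma asserts compactness of $\mathsf{h}\cap L^{+k}$ for vertical hyperplanes $\mathsf{h}$, a global finiteness statement, and does not constrain how many neighbours of a single vertex can lie in $L^{+(k-1)}$. Likewise, the Brady--Meier connectivity results (\cref{lemma_no_cuts_regular_spheres_points}, \cref{prop_regular_sphere_compact_path_bradymeier}) play no role here and cannot substitute for the missing step. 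The clean argument, which the paper's remark about convexity of $L^{+k}$ (Lemma~13.15 of Haglund--Wise) is there to supply, is: $L^{+(k-1)}$ is a convex subcomplex, hence an intersection of half-spaces. If two distinct edges $e_1 = vw_1$, $e_2 = vw_2$ at $v$ had $w_1, w_2 \in L^{+(k-1)}$ while $v \notin L^{+(k-1)}$, pick a hyperplane $\mathsf{h}$ with $L^{+(k-1)}$ on one side and $v$ on the other; then both $e_1$ and $e_2$ cross $\mathsf{h}$, so $\mathsf{h}$ would be dual to two distinct edges at a common vertex, which is impossible in a CAT(0) cube complex. (Equivalently: by gatedness of $L^{+(k-1)}$, the gate of $v$ is a unique closest vertex, so at most one neighbour of $v$ lies in $L^{+(k-1)}$.) Once this uniqueness is in place, the three cases you identify are immediate and mutually exclusive, and the rest of your local analysis (thickness two for horizontal edges giving two vertical edges in case (2), local finiteness giving finitely many horizontal edges in case (3)) is correct and matches the paper.
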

\begin{proof}
Since $v \notin L^{+(k-1)}$, observe that at most one edge incident to $v$ meets $L^{+(k-1)}$. The three mutually exclusive cases to then consider are that of no edge incident to $v$ meeting $L^{+(k-1)}$, a unique horizontal edge meeting $L^{+(k-1)}$ or a unique vertical edge meeting $L^{+(k-1)}$ (see \cref{fig_trichotomy_v_in_boundary_L^k}).
\end{proof}

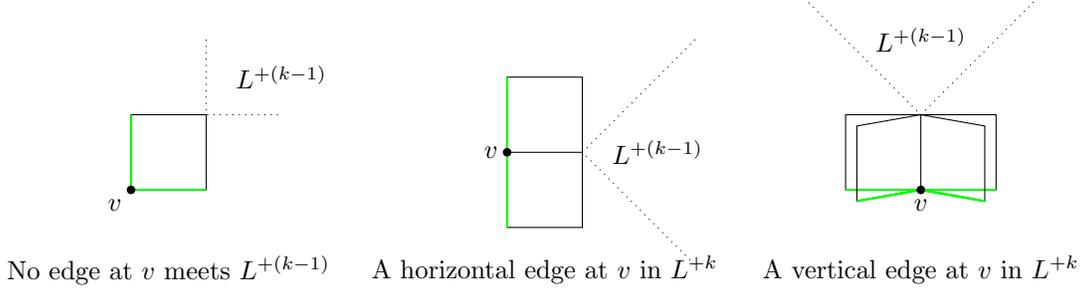
\begin{figure}
\begin{center}
\begin{tikzpicture}[scale = 0.5]
\begin{scope}[xshift = -10cm]
\draw (-1,-1) rectangle (1,1);
\draw [thick, green] (-1,1) to (-1,-1) to (1,-1);
\draw [dotted] (1,3) to (1,1) to (3,1);
\draw [fill] (-1,-1) circle [radius = 0.1];
\node [below left] at (-1,-1) {$v$};
\node at (3,2) {$L^{+(k-1)}$};
\node [below] at (0,-2.5) {No edge at $v$ meets $L^{+(k-1)}$};
\end{scope}

\begin{scope}
\draw (-1,-2) rectangle (1,2);
\draw [thick, green] (-1,2) to (-1,-2);
\draw (-1,0) to (1,0);
\draw [dotted] (4,3) to (1,0) to (4,-3);
\draw [fill] (-1,0) circle [radius = 0.1];
\node [left] at (-1,0) {$v$};
\node at (3,0) {$L^{+(k-1)}$};
\node [below] at (0,-2.5) {A horizontal edge at $v$ in $L^{+k}$};
\end{scope}

\begin{scope}[xshift = 10cm]
\draw (-2,-1) rectangle (2,1);
\draw [thick, green] (-2,-1) to (2,-1);
\draw [thick, green] (-1.7,-1.3) to (0,-1) to (1.7,-1.3);
\draw (0,-1) to (0,1);
\draw (0,-1) to (-1.7,-1.3) to (-1.7,0.7) to (0,1);
\draw (0,-1) to (1.7,-1.3) to (1.7,0.7) to (0,1);
\draw [thick, green] (-1.7,-1.3) to (0,-1) to (1.7,-1.3);
\draw [dotted] (3,4) to (0,1) to (-3,4);
\draw [fill] (0,-1) circle [radius = 0.1];
\node [below] at (0,-1) {$v$};
\node at (0,3) {$L^{+(k-1)}$};
\node [below] at (0,-2.5) {A vertical edge at $v$ in $L^{+k}$};
\end{scope}

\end{tikzpicture}
\end{center}
\caption{The edges of $\partial L^{+k}$ are in green} \label{fig_trichotomy_v_in_boundary_L^k}
\end{figure}

Before proving \cref{lemma_half-space_coarsely_connected_lines}, we will have to prove 
\begin{lemma} \label{lemma_intersecting_trace_connected_L^k+1}
Let $\mathsf{h}_1$ and $\mathsf{h}_2$ be hyperplanes in $Y$ tangent to $L^{+k}$. Suppose that $\mathsf{h}_1^{+1} \cap L^{+k}$ and $\mathsf{h}_2^{+1} \cap L^{+k}$ intersect. Then $\mathsf{h}_1^{+1} \cap \partial L^{+(k+1)}$ and $\mathsf{h}_2^{+1} \cap \partial L^{+(k+1)}$ lie in a component of $\partial L^{+(k+1)}$.
\end{lemma}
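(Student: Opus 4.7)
The plan is to locate a common vertex $v \in \mathsf{h}_1^{+1} \cap \mathsf{h}_2^{+1} \cap L^{+k}$ and then use the Brady-Meier structure of $\widetilde{X}$ at $v$ to stitch together a path in $\partial L^{+(k+1)}$ between cells of $\mathsf{h}_1^{+1}$ and cells of $\mathsf{h}_2^{+1}$. First I would show that $\mathsf{h}_i^{+1} \cap L^{+k}$ contains no edges or squares: any such cell would carry the midline $\mathsf{h}_i \cap \mathsf{c}$ inside $L^{+k}$, contradicting tangency of $\mathsf{h}_i$ to $L^{+k}$. Hence $\mathsf{h}_i^{+1} \cap L^{+k}$ is a nonempty set of vertices, and the hypothesis provides a common vertex $v$. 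At $v$, nonpositive curvature forces a unique edge $e_i$ dual to $\mathsf{h}_i$ for each $i$. Furthermore $v \in \partial L^{+k}$: if $v \in L^{+(k-1)}$, then $e_i$ would meet $L^{+(k-1)}$, so $e_i \in L^{+k}$, and the midpoint of $e_i$ on $\mathsf{h}_i$ would lie in $L^{+k}$, contradicting tangency.

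Next I would construct the bridging path at $v$. Consider first the case where $e_1$ and $e_2$ bound a common square $\mathsf{s}$ at $v$, i.e.\ $\mathsf{h}_1$ and $\mathsf{h}_2$ cross in $\mathsf{s}$. Let $v_i$ denote the far endpoint of $e_i$ and $v_{12}$ the fourth vertex of $\mathsf{s}$. The square $\mathsf{s}$ lies in $L^{+(k+1)} \setminus L^{+k}$ (it meets $L^{+k}$ at $v$, but cannot lie in $L^{+k}$, else $\mathsf{h}_i$ would enter $L^{+k}$). In the generic situation where $v_1, v_2, v_{12}$ all lie outside $L^{+k}$, the edges $[v_1, v_{12}]$ and $[v_2, v_{12}]$ sit in $\partial L^{+(k+1)}$ and the path $v_1 \to v_{12} \to v_2$ witnesses the lemma. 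When $e_1$ and $e_2$ do not share a square at $v$, I would walk around $\link(v)$---which is connected and has no cut simplex by Brady-Meier---to find a chain of squares at $v$ joining the link vertex for $e_1$ to that for $e_2$, and paste together the outer arcs produced by each square of the chain.

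The main obstacle is the degenerate subcase where some ``opposite'' vertex, say $v_{12}$, happens to lie in $L^{+k}$: the edges $[v_1, v_{12}], [v_2, v_{12}]$ then meet $L^{+k}$ and drop out of $\partial L^{+(k+1)}$. The remedy is to observe that such a $v_{12}$ is itself a new vertex of $\mathsf{h}_1^{+1} \cap \mathsf{h}_2^{+1} \cap L^{+k}$, automatically in $\partial L^{+k}$ by the same reasoning applied to $v$, and to iterate the construction starting from $v_{12}$, using \cref{lemma_edges_in_L^k} to constrain how its edges interact with $L^{+(k-1)}$ and the no-cut-simplex property of $\link(v_{12})$ to reroute around offending squares. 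Since $\widetilde{X}$ is locally finite and $\mathsf{h}_1, \mathsf{h}_2$ cross in at most one square, the iteration must terminate at a configuration whose outer vertices sit outside $L^{+k}$, closing the path inside $\partial L^{+(k+1)}$.
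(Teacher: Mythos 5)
Your strategy (pick a common vertex $v$, walk around $\link(v)$ using the Brady-Meier property, and project to $\partial\{v\}^{+1}$) is indeed the core of the paper's argument, but there are two genuine gaps.

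First, the claim that $\mathsf{h}_i^{+1}\cap L^{+k}$ contains no edges or squares is false. Your justification — that such a cell $\mathsf{c}$ would carry $\mathsf{h}_i\cap\mathsf{c}$ into $L^{+k}$ — only rules out cells that $\mathsf{h}_i$ actually passes through. But $\mathsf{h}_i^{+1}$ is the union of \emph{closed} squares that $\mathsf{h}_i$ crosses, and such a square has two edges \emph{parallel} to $\mathsf{h}_i$ that don't meet $\mathsf{h}_i$ at all; either of these edges can lie in $L^{+k}$ without violating tangency. In fact $\sigma_i=\mathsf{h}_i^{+1}\cap L^{+k}$ is typically a segment, not a finite set of vertices, and the paper devotes two preliminary lemmas (\cref{lemma_sigma_horizontal} and \cref{lemma_terminal_vertex}) to understanding $\sigma_1\cap\sigma_2$ precisely because of this; the paper then deliberately chooses a \emph{terminal} vertex of $\sigma_1\cap\sigma_2$. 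Your argument still locates a vertex $v$, but choosing it carelessly loses the structural information (which edges at $v$ belong to $\sigma_1$, which to $\sigma_2$) that drives the paper's case analysis via \cref{lemma_edges_in_L^k}.

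Second, your termination argument for the iteration is not sound. "Since $\widetilde{X}$ is locally finite and $\mathsf{h}_1,\mathsf{h}_2$ cross in at most one square, the iteration must terminate" does not follow: local finiteness gives no global bound, and nothing prevents the sequence of vertices $v, v_{12}, \dots$ from being infinite. The paper's iteration (Case~2 of its proof) is of a different shape — it moves along one vertical edge $f_2$ at a time to the adjacent vertex $v'$, and terminates because this walk proceeds monotonically along the segment $\sigma_2$, which is compact by \cref{lemma_hyperplane_meets_L_compact}. You would need an analogous compactness or monotonicity argument; the "crossing in at most one square" fact is irrelevant (the lemma's hypotheses do not even require $\mathsf{h}_1$ and $\mathsf{h}_2$ to cross). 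Finally, your detailed construction is written for the subcase where $e_1,e_2$ share a square; for the general case you gesture at walking around $\link(v)$ but do not specify which link vertices must be deleted so that the projected path avoids $L^{+k}$ — that is exactly what the trichotomy of \cref{lemma_edges_in_L^k} supplies, and it needs to be spelled out.
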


We will denote $\mathsf{h}_i^{+1} \cap L^{+k}$ by $\sigma_i$.

\begin{lemma}\label{lemma_sigma_horizontal}
Suppose that $\sigma_1$ and $\sigma_2$ are horizontal. Then $\sigma_1 \cap \sigma_2$ is a singleton.
\end{lemma}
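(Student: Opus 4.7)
The plan is a proof by contradiction: I will show that any hypothetical shared horizontal edge in $\sigma_1 \cap \sigma_2$ would be forced to lie on $L$ itself, contradicting tangency. As a first step, I establish the structure of each $\sigma_i$. The subcomplex $\sigma_i = \mathsf{h}_i^{+1} \cap L^{+k}$ is the intersection of two convex subcomplexes of $\wt{X}$ --- $\mathsf{h}_i^{+1}$ is convex by Lemma 13.15 of \cite{haglund_wise_special}, and $L^{+k}$ is convex by the earlier remark --- hence $\sigma_i$ is itself convex, in particular connected, and disjoint from $\mathsf{h}_i$ by tangency. Since every square of $\mathsf{h}_i^{+1}$ is bisected by $\mathsf{h}_i$, and every vertical edge of $\mathsf{h}_i^{+1}$ is bisected at its midpoint by $\mathsf{h}_i$, the only cells of $\mathsf{h}_i^{+1}$ disjoint from $\mathsf{h}_i$ are vertices and horizontal edges. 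So $\sigma_i$ is a one-dimensional subcomplex consisting of vertices and horizontal edges.

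Suppose for contradiction that $\sigma_1 \cap \sigma_2$ contains two distinct points. Being the intersection of convex subsets of $\wt{X}$, it is convex and contains the geodesic segment between these points, hence a $1$-cell $e$, which by the previous step is a horizontal edge with endpoints $a, b$. In the tubular graph of graphs $\wt{X}$, a horizontal edge lies in a unique tube and is contained in exactly two squares $\mathsf{s}_1, \mathsf{s}_2$, and since each square has a unique horizontal hyperplane and $e \in \mathsf{h}_i^{+1}$, we have $\mathsf{s}_i \in \mathsf{h}_i^{+1}$. By tangency, neither $\mathsf{s}_i$ can lie in $L^{+k}$ (else $\emptyset \neq \mathsf{s}_i \cap \mathsf{h}_i \subseteq L^{+k} \cap \mathsf{h}_i$), and consequently the four vertical edges of $\mathsf{s}_1 \cup \mathsf{s}_2$, all dual to $\mathsf{h}_1$ or $\mathsf{h}_2$, are not in $L^{+k}$. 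This forces $a, b \in \partial L^{+k}$: if $a \in L^{+(k-1)}$, every edge at $a$ would lie in $L^{+k}$, contradicting what we just established. Now the only $2$-cells containing $e$ are the excluded $\mathsf{s}_1, \mathsf{s}_2$, so $e \in L^{+k}$ can only be witnessed by $\overline{e} \cap L^{+(k-1)} \neq \emptyset$; with $a, b \notin L^{+(k-1)}$, this forces $e \in L^{+(k-1)}$ as a cell. The same argument iterates down one level at a time, since $L^{+j} \subseteq L^{+k}$ keeps $\mathsf{s}_1, \mathsf{s}_2$ excluded from each $L^{+j}$, yielding $e \subset L$.

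Once $e \subset L$, both $\mathsf{s}_1, \mathsf{s}_2$ meet $L$ and therefore lie in $L^{+1} \subseteq L^{+k}$ (for $k \geq 1$), contradicting the conclusion that $\mathsf{s}_i \notin L^{+k}$. The principal technical obstacle is the iterated descent: one must verify at each level that the only way the edge $e$ can lie in $L^{+j}$ without having its endpoints already in $L^{+(j-1)}$ is to itself lie in $L^{+(j-1)}$, which depends on the two obstructing squares $\mathsf{s}_1, \mathsf{s}_2$ remaining excluded from every $L^{+j}$ --- a property guaranteed uniformly by the tangency hypothesis applied at each level.
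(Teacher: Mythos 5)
Your argument is different from the paper's and, once corrected, cleaner. The paper fixes a vertex $v \in \sigma_1 \cap \sigma_2$ and runs a case analysis via \cref{lemma_edges_in_L^k}, concluding in case (3) by observing that a horizontal edge of $\sigma_1$ lies in the first cubical neighbourhood of exactly two horizontal hyperplanes, only one of which can be tangent. You instead pick a hypothetical edge $e$ of $\sigma_1 \cap \sigma_2$, identify its two squares $\mathsf{s}_1, \mathsf{s}_2$ as the unique squares crossed by $\mathsf{h}_1, \mathsf{h}_2$, use tangency to exclude $\mathsf{s}_1, \mathsf{s}_2$ and all their vertical edges from $L^{+k}$, deduce $a, b \notin L^{+(k-1)}$, and then ask how $e$ can belong to $L^{+k}$ at all. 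This is a sound plan and arrives at a contradiction more directly than the paper's route.

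However, the descent in your last paragraph is both unnecessary and internally inconsistent, and you should delete it. Once you have shown $\mathsf{s}_1, \mathsf{s}_2 \notin L^{+k}$ and $a, b \notin L^{+(k-1)}$, the proof is already finished: $L^{+k}$ is the union of closed cells whose closures meet $L^{+(k-1)}$, and the only closed cells containing $\overline{e}$ are $\overline{e}$, $\overline{\mathsf{s}}_1$, $\overline{\mathsf{s}}_2$; the squares are excluded, so $\overline{e}$ itself must meet $L^{+(k-1)}$, which is impossible since $L^{+(k-1)}$ is a subcomplex and $\overline{e} \cap L^{+(k-1)}$ is then a subcomplex of $\overline{e}$ not containing $a$ or $b$, hence empty. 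In other words, your own intermediate statement ``this forces $e \in L^{+(k-1)}$ as a cell'' is \emph{already} the contradiction: a cell of a subcomplex carries its closure with it, so $e \in L^{+(k-1)}$ would force $a, b \in L^{+(k-1)}$, contradicting what you established one sentence earlier. There is nothing to iterate; the claim ``the same argument iterates down one level at a time, yielding $e \subset L$'' is not a valid chain of deductions, because each step would simultaneously assert $e \in L^{+(j-1)}$ and $a, b \notin L^{+(j-1)}$. Cut the final paragraph and end with the observation that $\overline{e}$ cannot meet $L^{+(k-1)}$, so $e \notin L^{+k}$, contradicting $e \in \sigma_1 \cap \sigma_2$.
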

\begin{proof}
Let $v$ be a vertex in $\sigma_1 \cap \sigma_2$. If (1) or (2) of \cref{lemma_edges_in_L^k} holds at $\{v\}$, we are done. If (3) holds, observe that any horizontal edge $f$ in $\sigma_1$ is in the first cubical neighbourhood of exactly two horizontal hyperplanes, $\mathsf{h}_1$ and $\mathsf{h}$, where $\mathsf{h}$ meets $L^{+k}$. 
\end{proof}

By \cref{lemma_hyperplane_meets_L_compact}, $\sigma_i$ is compact whenever it is vertical. Thus, $\sigma_1 \cap \sigma_2$ is always compact.

\begin{lemma}\label{lemma_terminal_vertex}
Let $v$ be a terminal vertex in $\sigma_1 \cap \sigma_2$. Then either $\sigma_1 \cap \sigma_2 = \{v\}$ or $v$ is a terminal vertex of $\sigma_1$ or $\sigma_2$. 
\end{lemma}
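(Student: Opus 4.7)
The plan is to split by cases on the types (vertical or horizontal) of the hyperplanes $\mathsf{h}_1, \mathsf{h}_2$. A preliminary observation common to all cases is that $\sigma_i$ is convex (being the intersection of the convex subcomplexes $\mathsf{h}_i^{+1}$ and $L^{+k}$), so $\sigma_1 \cap \sigma_2$ is convex and hence connected. Moreover, since $\mathsf{h}_i$ is disjoint from $L^{+k}$, the subcomplex $\sigma_i$ lies entirely in a single side of the carrier $\mathsf{h}_i^{+1} \cong \mathsf{h}_i \times [0,1]$, so all its edges are parallel to $\mathsf{h}_i$. In a $\VH$-complex these parallel edges have the same type as $\mathsf{h}_i$ itself.

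If $\mathsf{h}_1$ and $\mathsf{h}_2$ have opposite types, then every edge of $\sigma_1$ is vertical while every edge of $\sigma_2$ is horizontal, so $\sigma_1 \cap \sigma_2$ has no edges; connectedness together with $v \in \sigma_1 \cap \sigma_2$ forces $\sigma_1 \cap \sigma_2 = \{v\}$. If both hyperplanes are horizontal, the conclusion $\sigma_1 \cap \sigma_2 = \{v\}$ is immediate from \cref{lemma_sigma_horizontal}.

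The substantive case is when both $\mathsf{h}_1, \mathsf{h}_2$ are vertical. I first argue that $v \in \partial L^{+k}$: the dual edge $d_i$ at $v$ is horizontal with its midpoint on $\mathsf{h}_i$, which is disjoint from $L^{+k}$, so $d_i$ is not contained in $L^{+k}$; were $v$ in $L^{+(k-1)}$, every edge at $v$ would lie in $L^{+k}$, contradicting this. Now I invoke \cref{lemma_edges_in_L^k}. The edges of $\sigma_i$ at $v$ are precisely the vertical edges at $v$ that lie in $L^{+k}$ and span a square with $d_i$; in particular, their number is at most the total number of vertical edges at $v$ in $L^{+k}$. In Cases (1) and (3) of \cref{lemma_edges_in_L^k} this count is $1$, so $v$ has at most one edge in $\sigma_i$: either $v$ is terminal in $\sigma_i$ and we are done, or $v$ is isolated, forcing $\sigma_i = \{v\}$ by connectedness and hence $\sigma_1 \cap \sigma_2 = \{v\}$. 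In Case (2) there are exactly two vertical edges $a_1, a_2$ at $v$ in $L^{+k}$; if $v$ were non-terminal in both $\sigma_i$, each would contain both $a_1$ and $a_2$, so both would lie in $\sigma_1 \cap \sigma_2$, giving $v$ degree two there and contradicting terminality.

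The main technical point is the edge-type observation above, which combined with \cref{lemma_sigma_horizontal} reduces the problem to the vertical-vertical case; after that, the counting afforded by \cref{lemma_edges_in_L^k} turns the argument into a short verification. I expect the mildly delicate step to be the Case (2) tally, where one must notice that non-terminality of $v$ in each individual $\sigma_i$ exhausts the same pair of available vertical edges and hence clashes with terminality of $v$ in the intersection.
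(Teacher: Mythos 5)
Your proof is correct and follows essentially the same route as the paper's: reduce to the case that $\sigma_1\cap\sigma_2$ contains a (necessarily vertical) edge via \cref{lemma_sigma_horizontal} and the parallel-edge-type observation, then run a case analysis on \cref{lemma_edges_in_L^k} at $v$. You supply some details the paper leaves implicit — the verification that $v\in\partial L^{+k}$ before invoking \cref{lemma_edges_in_L^k}, the explicit handling of the mixed vertical/horizontal case, and the counting argument replacing the paper's ``it is easy to see'' in Case (2) — but the underlying argument is the same.
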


\begin{proof}
If $\sigma_1 \cap \sigma_2$ contains an edge, then by \cref{lemma_sigma_horizontal}, $\sigma_1$ and $\sigma_2$ are both vertical.
Then either (1) or (2) of \cref{lemma_edges_in_L^k} holds at the terminal vertex $v$. If (1) holds, then $v$ is terminal in both $\sigma_1$ and $\sigma_2$. If (2) holds (see \cref{fig_sigma_1_sigma_2}), then it is easy to see that $v$ is terminal in one of the two segments.
\end{proof}

\begin{figure}
\begin{center}
\begin{tikzpicture}[scale = 0.7]
\draw (-1,-2) rectangle (1,2);
\draw (-1,0) to (1,0);
\draw [thick, red] (-1,-2) to (-1,2); 
\draw [fill] (-1,0) circle [radius = 0.1];

\draw [thick, green] (-1,0) to (-1,-2); 
\draw [green] (-1,-2) to (-1.8,-2.8) to (-1.8,-.8) to (-1,0);
\draw [red] (-1,0) to (-1.8,.8) to (-1.8,2.8) to (-1,2);

\draw [red, dotted] (-1.8,-.7) to (-1.8,.8);
\draw [green, dotted] (-1,0) to (-2,2);
\draw [green, dotted] (-1.8,-.8) to (-2.8,1.2);

\node at (-1.4,-.45) {$e_1$};
\node at (-1.4,.45) {$e_2$};
\node [above right] at (-1,0) {$v$};

\node [right] at (-1,1) {$\sigma_2$};
\node [right] at (-1,-1) {$\sigma_1$};
\end{tikzpicture}
\end{center}
\caption{The edge $e_i$ passes through the vertical hyperplane $\mathsf{h}_i$}\label{fig_sigma_1_sigma_2}
\end{figure}
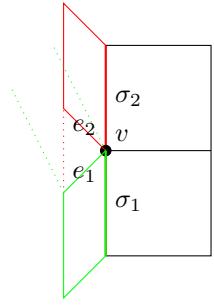

\begin{proof}[Proof of \cref{lemma_intersecting_trace_connected_L^k+1}]
Let $v$ be a terminal vertex of $\sigma_1 \cap \sigma_2$. Let $e_i$ be the edge incident to $v$ such that the hyperplane $\mathsf{h}_i$ passes through $v$.
We have three cases given by \cref{lemma_edges_in_L^k}.

\begin{mycases}
\item Only one vertical edge $f$ incident to $v$ lies in $\partial L^{+k}$. Since $\wt{X}$ is Brady-Meier, there exists a path $\beta$ in $\link(v) \setminus \{f\}$ between $e_1$ and $e_2$. The projection of $\beta$ to $\partial \{v\}^{+1}$ hits the other endpoints of $e_1$ and $e_2$, which lie in $\partial L^{+(k+1)}$. Further, $\beta$ and thus its projection are disjoint from $L^{+k}$. Hence the result.

\item Two vertical edges $f_1$ and $f_2$ incident to $v$ lie in $\partial L^{+k}$.
By \cref{lemma_terminal_vertex}, either $\sigma_1$, say, is horizontal or $v$ is terminal in $\sigma_1$. Thus one of the edges, say $f_2$, does not lie in $\sigma_1$. Let $\beta$ be a path in $\link(v) \setminus \{f_1\}$ between $e_1$ and $e_2$. Since $f_2$ does not lie in $\sigma_1$, $\beta$ and its projection to $\partial \{v\}^{+1}$ is disjoint from $\sigma_1$. If $f_2$ does not lie in $\sigma_2$ or $\beta$ is disjoint from $f_2$, then we are done as the projection of $\beta$ gives the required path in $ \partial L^{+(k+1)}$. If not, then we repeat the procedure at $v'$, the other endpoint of $f_2$. We continue until the path no longer meets the compact $\sigma_2$. Hence the result.

\item Only horizontal edges incident to $v$ lie in $\partial L^{+k}$.
Let $f$ be the vertical edge incident to $v$ and contained in $L^{+k}$. Let $\beta$ be a path between $e_1$ and $e_2$ in $\link(v) \setminus \{f\}$. Then $\beta$ is disjoint from $L^{+k}$ and so is its projection to $\partial \{v\}^{+1}$.
\end{mycases}
\end{proof}

We are now ready to prove \cref{lemma_half-space_coarsely_connected_lines}.

\begin{proof}[Proof of \cref{lemma_half-space_coarsely_connected_lines}]
The proof is by induction. Note that $Y \setminus L^{+k}$ is connected whenever $Y \cap \partial L^{+k}$ is connected.
Since $Y$ is a half-space of $L$, $Y \cap \partial N(L)$ is connected, by \cref{lemma_half-spaces_equal_components_reg_sphere_L}. Thus $Y \cap \partial L^{+1}$ is connected.

Assume that $Y \cap \partial L^{+k}$ is connected, for some $k$. We will now show that $Y \cap \partial L^{+(k+1)}$ is connected.
Indeed, $L^{+(k+1)}$ is contained in the union of $L^{+k}$ and the first cubical neighbourhoods of hyperplanes tangent to $L^{+k}$, by \cref{fact_L^k+1_from_L^k}. Thus given two vertices $u$ and $u'$ in $Y \cap \partial L^{+(k+1)}$, there exist hyperplanes $\mathsf{h}$ and $\mathsf{h}'$ tangent to $L^{+k}$ such that $u \in \mathsf{h}^{+1}$ and $u' \in \mathsf{h}'^{+1}$. Let $\sigma = \mathsf{h}^{+1} \cap L^{+k}$ and $\sigma' = \mathsf{h}'^{+1} \cap L^{+k}$. By the induction assumption, there exists a path between $\sigma$ and $\sigma'$ in $\partial L^{+k}$. This implies that there exists a finite sequence of tangent hyperplanes $\mathsf{h} = \mathsf{h}_1, \cdots, \mathsf{h}_n = \mathsf{h}'$ such that if $\sigma_i = \mathsf{h}_i^{+1} \cap L^{+k}$, then $\sigma_i \cap \sigma_{i+1}$ is nonempty. \cref{lemma_intersecting_trace_connected_L^k+1} then implies that $u$ and $u'$ lie in a component of $\partial L^{k+1}$. Hence the result.
\end{proof}

\section{A crossing criterion for lines}

\begin{defn}[Crossing of lines] \label{def_crossing_lines}
Let $L$ and $L'$ be two separating lines of $\wt{X}$. We say that $L$ \emph{crosses} $L'$ if for every half-space $Y'$ of $L'$, $L \nsubseteq Y'$. $L$ and $L'$ \emph{don't cross} if neither $L$ crosses $L'$ nor $L'$ crosses $L$.
\end{defn}

Note that two disjoint lines don't cross. Thus a vertical line and a tubular line that is not vertical never cross. We will see later that in fact, no vertical line crosses a tubular line.
Two intersecting lines may or may not cross. The main goal of this section is to obtain the following criterion for the crossing of two lines. 

\begin{prop}[Crossing criterion] \label{prop_local_crossing_lines}
Let $L$ and $L'$ be two separating combinatorial lines in $\wt{X}$. $L$ crosses $L'$ if and only if 
\begin{enumerate}
\item $L \cap L' = P$ is non-empty and compact and
\item $L' \cap \partial N(P)$ separates $L \cap \partial N(P)$.
\end{enumerate} 
\end{prop}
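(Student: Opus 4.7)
The plan is to prove the criterion in two directions. Set $P = L \cap L'$; when $P$ is non-empty and compact, let $\{x, y\} = L \cap \partial N(P)$ and $\{x', y'\} = L' \cap \partial N(P)$ denote the two points at which $L$ and $L'$ respectively exit the regular neighborhood of $P$, well-defined since $N(P)$ embeds in $\wt X$ by \cref{fact_N(P)_embedding}.

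For the forward direction, I would assume $L$ crosses $L'$ and deduce (1) and (2). First, $P$ is non-empty: otherwise $L$ is connected and disjoint from $L'$, hence lies in a single half-space of $L'$, contradicting crossing. Next, $P$ is compact: as a convex subset of $L \cong \mathbb{R}$, $P$ is an interval; were it a ray, $L$ would coincide with $L'$ on this ray and extend as a single further ray in one half-space $Y'$, yielding $L \subseteq Y'$ (since $P \subset L' \subseteq Y'$), a contradiction. For (2), crossing forces the two rays of $L \setminus P$ to lie in distinct half-spaces of $L'$ (else $L$ would fit in a single one), so $x$ and $y$ lie in different components of $\wt X \setminus L'$; hence every path in $\partial N(P) \subset \wt X$ from $x$ to $y$ must meet $L' \cap \partial N(P) = \{x', y'\}$.

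For the reverse direction, I would assume (1) and (2) and prove $L \nsubseteq Y'$ for every half-space $Y'$ of $L'$. Suppose for contradiction $L \subseteq Y'$; then both $x, y \in Y' \cap \partial N(P)$ and so sit in the same component of $\wt X \setminus L'$. The plan is to construct a path in $\partial N(P) \setminus \{x', y'\}$ from $x$ to $y$, contradicting (2). I would build this path by first using \cref{lemma_path_abundance} to produce local paths from $x$ and $y$ through $Y' \setminus L'$ toward interior points of $\partial N(P)$, and then stitching them using the no-cut-point property of $\partial N(P)$ (\cref{prop_regular_sphere_compact_path_bradymeier}) combined with the bijection between half-spaces of $L'$ and components of $\partial N(L')$ (\cref{lemma_half-spaces_equal_components_reg_sphere_L}). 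The hard part will be this last stitching step: ruling out that two distinct components of $\partial N(P) \setminus L'$ both lie in the same half-space $Y'$, which will require exploiting the Brady-Meier hypothesis to promote global connectivity within $Y'$ into connectivity inside the local graph $\partial N(P)$.
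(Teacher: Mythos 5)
Your forward direction (crossing $\Rightarrow$ (1) and (2)) is fine and essentially reproduces \cref{lemma_P_non-compact} plus a short half-space argument; this matches the paper's treatment of the converse. The reverse direction is where you have not closed the gap. You argue by contrapositive: assume $L \subseteq Y'$ and try to force $x,y$ into the same component of $\partial N(P)\setminus L'$. As you note, the crux is showing that two distinct components of $\partial N(P)\setminus L'$ cannot both sit in one half-space $Y'$ — equivalently, that the natural surjection from components of $\partial N(P)\setminus L'$ onto the half-spaces of $L'$ is injective. But the tools you propose do not deliver this: \cref{lemma_path_abundance} lets you reach $P$ only through a point of $L'$ (so it cannot be used to stitch paths that must avoid $L'$); the no-cut-point property of $\partial N(P)$ controls single points, not cut pairs; and \cref{lemma_half-spaces_equal_components_reg_sphere_L} is a statement about $\partial N(L')$, which is a different (non-compact) graph from $\partial N(P)$, so it gives no information about the components of $\partial N(P)\setminus L'$.

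The missing ingredient is \cref{cor_cut_pairs_3_components_no_crossings}, applied inside the compact graph $\partial N(P)$ (which has no cut points by \cref{prop_regular_sphere_compact_path_bradymeier}). Hypothesis (2) says the cut pair $\{x',y'\}=L'\cap\partial N(P)$ separates the cut pair $\{x,y\}=L\cap\partial N(P)$; the corollary then forces $\{x',y'\}$ to have exactly two half-spaces in $\partial N(P)$. Since each half-space of $L'$ contains $P$ (\cref{lemma_half-spaces_contain_lines}) and must meet $\partial N(P)\setminus L'$ (by \cref{cor_square_meets_halfspace} applied along $P$), there is a surjection from the two local components onto the at-least-two global half-spaces of $L'$, which is therefore a bijection. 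This is the direct, non-contrapositive route the paper takes: $x$ and $y$ then lie in different half-spaces of $L'$, so $L$ is in none of them and thus crosses $L'$. Without this graph-theoretic counting argument (or an equivalent replacement for \cref{cor_cut_pairs_3_components_no_crossings}), your "stitching" step does not go through.
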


Throughout this section, $L$ and $L'$ are two separating combinatorial lines and $P$ denotes their intersection.
The following three lemmas are consequences of the Brady-Meier property of $\wt{X}$ and the path-abundance lemma (\cref{lemma_path_abundance}), and we omit the proofs.
\begin{lemma} \label{lemma_P_non-compact}
If $P$ is either empty or non-compact, then $L$ and $L'$ don't cross. \qed
\end{lemma}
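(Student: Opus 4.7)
The plan is to exhibit, in each case, a half-space $Y'$ of $L'$ with $L\subseteq Y'$, together with the symmetric inclusion for $L'$, which is exactly what the definition of ``don't cross'' demands. The key geometric input is that $L$ and $L'$ are isometric copies of $\mathbb{R}$ in the $\cat$ space $\wt{X}$, so their intersection $P=L\cap L'$ is convex in each line, hence a (possibly empty) closed subinterval of both; in particular, $P$ is connected.

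First I would handle $P=\emptyset$: then $L$ is itself connected and disjoint from $L'$, so it lies in a single component of $\wt{X}\setminus L'$, whose closure is the desired half-space of $L'$. The symmetric claim with the roles of $L$ and $L'$ swapped is identical.

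Next I would handle $P\ne\emptyset$ non-compact. Being a closed connected unbounded subinterval of $L$, the set $P$ is either all of $L$ or a single closed ray in $L$. If $P=L$, then $L\subseteq L'$; but two isometric copies of $\mathbb{R}$ cannot properly contain one another, so $L=L'$, and then $L\subseteq Y'$ for every half-space $Y'$ of $L'$ by \cref{lemma_half-spaces_contain_lines}. Otherwise $L\setminus P$ is a single connected ray $\gamma$ satisfying $\gamma\cap L'\subseteq (L\cap L')\setminus P=\emptyset$, so $\gamma$ lies in one component of $\wt{X}\setminus L'$; let $Y'$ be its closure. Using \cref{lemma_half-spaces_contain_lines} to put $P\subseteq L'\subseteq Y'$, we conclude $L=P\cup\overline{\gamma}\subseteq Y'$. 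The same argument with roles swapped (using that $P$ is simultaneously a ray in $L'$) produces a half-space of $L$ containing $L'$.

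I do not anticipate a real obstacle. The content is just that connectedness of $P$, coming from $\cat$ convexity, forces its complement in each line to be a connected ray disjoint from the other line, which therefore sits inside a single half-space; the lemma essentially frees the proof of \cref{prop_local_crossing_lines} to assume $P$ is compact, where the substantive work of the section takes place via the local criterion on regular spheres.
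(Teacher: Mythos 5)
Your proof is correct and in the spirit the paper intends: the paper omits the proof but flags it as a consequence of the Brady-Meier property and the path-abundance lemma, and your only substantive input beyond CAT(0) convexity is \cref{lemma_half-spaces_contain_lines}, which the paper itself derives from \cref{lemma_path_abundance}. The key observation—that convexity of geodesic lines in $\wt{X}$ forces $P=L\cap L'$ to be a closed connected sub-interval, so that when $P$ is empty or unbounded the complement $L\setminus P$ is connected and disjoint from $L'$—is exactly what is needed, and the degenerate case $P=L$ is handled cleanly by noting $L=L'$.
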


\begin{lemma}[Crossing is symmetric] \label{lemma_L_L'_others_half-spaces}
$L$ is contained in a half-space of $L'$ if and only if $L'$ is contained in a half-space of $L$. \qed
\end{lemma}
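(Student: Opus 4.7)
The plan is to establish the non-trivial direction of the biconditional: assuming $L \subset Y'$ for some half-space $Y'$ of $L'$, exhibit a half-space $Y$ of $L$ containing $L'$. The converse follows by swapping the roles of $L$ and $L'$.

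First I would reduce to the case where $P := L \cap L'$ is compact and non-empty, since \cref{lemma_P_non-compact} handles the remaining cases directly. Write $L' \setminus P = r'_1 \sqcup r'_2$ as the disjoint union of two open rays. Because $L'$ separates $\wt{X}$, there exists a half-space $Y'' \neq Y'$ of $L'$. The key algebraic observation is that $L \cap Y'' = P$: distinct closed half-spaces of $L'$ meet only along $L'$, so $L \cap Y'' \subset Y' \cap Y'' \subset L'$, giving $L \cap Y'' \subset L \cap L' = P$; conversely $P \subset L' \subset Y''$ gives $P \subset L \cap Y''$. In particular, $Y'' \setminus P \subset \wt{X} \setminus L$.

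Next I would show $Y'' \setminus P$ is path-connected. Using the bijection between half-spaces of $L'$ and components of $\partial N(L')$ from \cref{lemma_half-spaces_equal_components_reg_sphere_L} together with \cref{cor_square_meets_halfspace} applied to $L'$, every edge of $L'$ is incident to a square on the $Y''$ side. Hence $L'$ lies in the closure of the component $C'' := Y'' \setminus L'$, so $Y'' = C'' \cup L'$ and therefore $Y'' \setminus P = C'' \cup r'_1 \cup r'_2$. The set $C''$ is connected, and any $y \in r'_i$ has positive distance to the compact $P$, allowing a short path from $y$ into $C''$ that stays in $Y''$ and avoids $P$; so each $r'_i$ is path-connected to $C''$, yielding path-connectivity of the whole.

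Concluding: the rays $r'_1$ and $r'_2$ both lie in $Y'' \setminus P \subset \wt{X} \setminus L$ and, by path-connectivity, sit in the same component of $\wt{X} \setminus L$, determining a half-space $Y$ of $L$. Applying \cref{cor_square_meets_halfspace} to $L$ itself gives $L \subset \partial Y \subset Y$, so $L' = P \cup r'_1 \cup r'_2 \subset L \cup Y = Y$, as desired.

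The main technical point is the use of \cref{cor_square_meets_halfspace} to guarantee that every half-space of a separating line meets every edge of the line, which in turn forces the clean decomposition $Y'' = C'' \cup L'$ that makes the path-connectivity argument transparent. Without this input one would have to argue directly about local connectivity of the link along $P$, where an asymmetry can in principle appear (2-connectedness of $\partial N(P)$ alone does not imply a symmetric separation property), so this corollary is the essential ingredient that bypasses those combinatorial subtleties.
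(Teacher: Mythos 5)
The paper does not write out a proof for this lemma (it appears with a \(\qed\) and the remark that it follows from the Brady--Meier property and the path-abundance lemma), so there is no written argument to compare against; but your proposal is a correct and well-organised fleshing-out of that omitted argument, built on the right ingredients. The central point --- that distinct closed half-spaces of \(L'\) meet only along \(L'\), so \(Y'' \cap L = P\), hence \(Y'' \setminus P \subset \wt{X} \setminus L\), and its connectivity then places both rays \(r'_1,r'_2\) in a single half-space of \(L\) --- is sound. The one step that is stated too loosely is the sentence ``any \(y \in r'_i\) has positive distance to the compact \(P\), allowing a short path from \(y\) into \(C''\) that stays in \(Y''\) and avoids \(P\).'' The positive distance to \(P\) only controls avoiding \(P\); it does not on its own explain why a short path from a point of \(L'\) to a nearby point of \(C''\) remains in \(Y''\). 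That needs a separate input: either observe (as \cref{cor_square_meets_halfspace} gives) that the edge of \(L'\) through \(y\) lies in a square \(\mathsf{s}\) with \(\mathsf{s} \setminus L' \subset C''\), so the perpendicular segment from \(y\) into \(\mathsf{s}\) stays in \(Y''\) and, if short, avoids \(P\); or, more in the spirit the paper hints at, apply the path-abundance lemma (\cref{lemma_path_abundance}) to get a path from any \(z \in C''\) to \(y\) meeting \(L'\) only at \(y\) --- such a path lies in \(C'' \cup \{y\} \subset Y''\) and misses \(P\) since \(y \notin P\). You have the first of these implicitly in hand (you invoke \cref{cor_square_meets_halfspace} a sentence earlier) but attribute the conclusion to the wrong cause; tightening that one sentence makes the proof airtight. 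Minor aside: \(L \subset Y\) at the end is immediate from \cref{lemma_half-spaces_contain_lines}, with no need to route through \cref{cor_square_meets_halfspace}.
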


\begin{lemma} \label{lemma_crossing_by_half-spaces}
$L$ and $L'$ don't cross if and only if for each half-space $Y$ of $L$, there exists a half-space $Y'$ of $L'$ such that either $Y \subset Y'$ or $Y' \subset Y$ and similarly for each half-space $Y'$ of $L'$, there exists a half-space $Y$ of $L$ such that either $Y \subset Y'$ or $Y' \subset Y$. \qed
\end{lemma}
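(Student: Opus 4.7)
The plan is to reformulate non-crossing using \cref{lemma_L_L'_others_half-spaces}: $L$ and $L'$ don't cross if and only if $L$ is contained in some half-space $Y_0'$ of $L'$ \emph{and} $L'$ is contained in some half-space $Y_0$ of $L$. I will then establish both implications by working with such $Y_0, Y_0'$.

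For the forward direction, fix $Y_0, Y_0'$ as above and let $Y \neq Y_0$ be any other half-space of $L$. I claim $Y \subseteq Y_0'$. First, by the path-abundance lemma (\cref{lemma_path_abundance}) every point of $L$ lies in the closure of $\mathring Y$ and of $\mathring{Y_0}$, so $\partial Y = L$ and distinct half-spaces of $L$ meet only along $L$. In particular $Y \cap L' \subseteq Y \cap Y_0 \cap L' = L \cap L' = P$. Next, $Y \setminus L' = \mathring Y \cup (L \setminus P)$ is connected: $\mathring Y$ is connected by definition of a half-space, and every point of $L \setminus P$ lies in the closure of $\mathring Y$ by \cref{lemma_path_abundance}. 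This connected set is disjoint from $L'$, hence is contained in the interior of a single half-space $Y_*'$ of $L'$. Since $L \setminus P \subseteq Y_0' \cap \mathring{Y_*'}$ and the interiors of distinct half-spaces of $L'$ are disjoint (and disjoint from $L'$), either $Y_*' = Y_0'$, giving $Y \subseteq \mathring{Y_0'} \cup P \subseteq Y_0'$, or $L \setminus P = \emptyset$, forcing $L \subseteq L'$ and hence $L = L'$ as isometric embeddings of $\mathbb R$, a degenerate case in which the nesting is trivial since every half-space of $L$ coincides with a half-space of $L'$. For the remaining half-space $Y_0$ of $L$, I apply the same argument with the roles of $L$ and $L'$ swapped: any half-space $Y' \neq Y_0'$ of $L'$ satisfies $Y' \subseteq Y_0$, and at least one such $Y'$ exists because $L'$ is separating.

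For the reverse direction, assume the nesting property. If for every half-space $Y$ of $L$ the nested $Y'$ satisfies $Y' \subseteq Y$ (rather than $Y \subseteq Y'$), then each half-space of $L$ contains $L'$; intersecting over the (at least two) half-spaces of $L$ forces $L' \subseteq L$, whence $L = L'$ and the conclusion is vacuous. Otherwise some half-space $Y$ of $L$ satisfies $Y \subseteq Y'$ for a half-space $Y'$ of $L'$, so $L \subseteq Y \subseteq Y'$, meaning $L$ lies in a half-space of $L'$ and does not cross $L'$. Applying the same argument to the symmetric clause of the hypothesis (quantifying over half-spaces $Y'$ of $L'$) shows that $L'$ does not cross $L$, completing the proof.

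I expect the main technical obstacle to be the connectedness of $Y \setminus L'$ together with the dichotomy $Y_*' = Y_0'$ versus $L = L'$; both rely on the topology of half-spaces determined by \cref{lemma_path_abundance} and the correspondence between half-spaces and components of the regular sphere (\cref{lemma_half-spaces_equal_components_reg_sphere_L}).
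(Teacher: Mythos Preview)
The paper does not actually prove this lemma: it is marked with \qed and the surrounding text says the three lemmas ``are consequences of the Brady-Meier property of $\wt{X}$ and the path-abundance lemma (\cref{lemma_path_abundance}), and we omit the proofs.'' Your argument is a correct detailed working-out along exactly those lines: the key steps---that distinct half-spaces of $L$ meet only in $L$, that $Y\setminus L'=\mathring Y\cup(L\setminus P)$ is connected, and that the intersection of all half-spaces of a separating line is the line itself---all follow from \cref{lemma_half-spaces_contain_lines} and \cref{lemma_path_abundance} as the paper indicates.
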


Before we go to the proof of \cref{prop_local_crossing_lines}, we will collect a couple of results about graphs without cut points as $\partial N(P)$ has no cut points whenever $P$ is compact (\cref{prop_regular_sphere_compact_path_bradymeier}). 

\subsection{Graphs with no cut points}

We fix a connected graph $\Gamma$ in this subsection such that $\Gamma$ has no cut points. We further assume that $\Gamma$ contains at least one edge. A \emph{cut pair} is a pair of points that separates $\Gamma$. 

We now draw the attention of the reader to certain similarities between cut pairs in $\Gamma$ and separating lines in $\wt{X}$. If $\{a,b\}$ is a cut pair, then a \emph{half-space} of $\{a,b\}$ is the closure of a component of $\Gamma \setminus \{a,b\}$. The first result is analogous to \cref{lemma_half-spaces_contain_lines}.

\begin{lemma}
Let $Y$ be a half-space of a cut pair $\{a,b\}$. Then $\{a,b\} \subset Y$. 
\end{lemma}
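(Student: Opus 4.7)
The plan is a direct proof by contradiction. Write $Y = \overline{C}$ where $C$ is a component of $\Gamma \setminus \{a,b\}$, and let $D$ denote the union of the remaining components of $\Gamma \setminus \{a,b\}$. Suppose, for contradiction, that $a \notin Y$. I will show this forces $b$ to be a cut point of $\Gamma$, contradicting the standing hypothesis that $\Gamma$ has no cut points.

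The key observation is that in a graph, the closure of a subset $C$ consists of $C$ together with all vertices joined to $C$ by an edge. Since $\Gamma \setminus \{a,b\}$ is open and $C$ is a component, the only possible boundary points of $C$ in $\Gamma$ are $a$ and $b$. Thus the assumption $a \notin \overline{C}$ is equivalent to saying that no edge of $\Gamma$ joins $a$ to a vertex of $C$. Moreover, by the definition of a component of $\Gamma \setminus \{a,b\}$, no edge joins $C$ to $D$. Decomposing $\Gamma \setminus \{b\} = C \sqcup (\{a\} \cup D)$ as sets, we conclude that there is no edge from $C$ to the complementary set $\{a\} \cup D$. Hence $\Gamma \setminus \{b\}$ is disconnected, so $b$ is a cut point: contradiction. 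An identical argument with the roles of $a$ and $b$ swapped gives $b \in Y$.

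I do not expect any serious obstacle; unlike the analogous \cref{lemma_half-spaces_contain_lines} (which needed the path-abundance lemma because $\wt{X}$ has cells of positive dimension), the graph-theoretic setting reduces the argument to the elementary fact that the boundary of a component of $\Gamma \setminus \{a,b\}$ in $\Gamma$ is contained in $\{a,b\}$, together with one application of the no-cut-points hypothesis to each of $a$ and $b$.
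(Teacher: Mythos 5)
Your proof is correct and is essentially the same argument as the paper's: both rest on the observation that if one of $a,b$ fails to lie in $\overline{C}$, then the other vertex disconnects $C$ from the rest of $\Gamma$ and is therefore a cut point. The paper phrases the first step via connectedness of $\Gamma$ (at least one of $a,b$ must lie in $Y$) and then applies the no-cut-point hypothesis once, whereas you apply the no-cut-point hypothesis symmetrically to each of $a$ and $b$; the difference is purely presentational.
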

\begin{proof}
Since $\Gamma$ is connected, at least one of the two, say $a$, is contained in $Y$. If $b$ is not contained in $Y$, then $a$ is a cut point as $a$ separates $Y$ from $b$.
\end{proof}
The second result is analogous to \cref{lemma_L_L'_others_half-spaces}.
\begin{lemma}[Lemma 2.3 (1), \cite{cashen_relative_jsj}] \label{lemma_crossings_cut_pairs_in_graphs}
Let $\{a,b\}$ and $\{a',b'\}$ be cut pairs in $\Gamma$. Then $\{a',b'\}$ separates $\{a,b\}$ if and only if $\{a,b\}$ separates $\{a',b'\}$.
\end{lemma}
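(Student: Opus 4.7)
The plan is to prove the contrapositive of the forward implication; the reverse implication then follows by swapping the roles of the two pairs. So I assume that $\{a,b\}$ does not separate $a'$ from $b'$ and aim to produce a path from $a$ to $b$ in $\Gamma\setminus\{a',b'\}$, which is precisely the negation of ``$\{a',b'\}$ separates $\{a,b\}$''. Throughout I may suppose the four vertices $a,b,a',b'$ are distinct, since otherwise the statement is vacuous.

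Under the assumption, $a'$ and $b'$ lie in a common component $X$ of $\Gamma\setminus\{a,b\}$. Because $\{a,b\}$ is a cut pair, $\Gamma\setminus\{a,b\}$ has at least one further component $Y$, disjoint from $X$; in particular $Y$ contains neither $a'$ nor $b'$. The strategy is to find a detour from $a$ to $b$ that stays inside $Y$ in its interior, since any such detour automatically avoids $\{a',b'\}\subset X$.

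The key step is to use the no-cut-point hypothesis to show that $Y$ contains both a vertex adjacent to $a$ and a vertex adjacent to $b$. Since $\Gamma$ has no cut points, $\Gamma\setminus\{a\}$ is connected; this space is obtained from the components of $\Gamma\setminus\{a,b\}$ by reinserting $b$ along with the edges of $\Gamma$ incident to $b$ and not to $a$. For $\Gamma\setminus\{a\}$ to be connected, the single vertex $b$ must be joined by an edge to every component of $\Gamma\setminus\{a,b\}$, so in particular some $y_b\in Y$ is adjacent to $b$. The symmetric argument applied to $\Gamma\setminus\{b\}$ produces some $y_a\in Y$ adjacent to $a$. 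Using the connectedness of $Y$, join $y_a$ to $y_b$ by a path inside $Y$; prepending the edge $a y_a$ and appending $y_b b$ yields a path $\delta$ from $a$ to $b$ whose interior lies in $Y$.

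Since $X$ and $Y$ are disjoint and $\{a',b'\}\subset X$, the interior of $\delta$ avoids $\{a',b'\}$; together with $a,b\notin\{a',b'\}$, this shows $\delta$ is a path from $a$ to $b$ in $\Gamma\setminus\{a',b'\}$, as required. The only real work in the argument is the closure-adjacency step, which is where the no-cut-point hypothesis on $\Gamma$ is used; this is closely analogous to \cref{lemma_half-spaces_contain_lines} for separating lines in $\wt{X}$.
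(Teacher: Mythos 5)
The paper cites this statement to Cashen's work (Lemma 2.3(1) of \cite{cashen_relative_jsj}) without reproducing a proof, so there is no internal proof to compare against. Your reconstruction is the natural one: prove the contrapositive, use the no-cut-point hypothesis to show each component of $\Gamma\setminus\{a,b\}$ touches both $a$ and $b$ (the graph analogue of \cref{lemma_half-spaces_contain_lines}, and indeed of the lemma immediately preceding the statement in the paper asserting that a half-space of a cut pair contains the whole pair), and route a detour through a component not containing $a',b'$. The degenerate-coincidence cases are correctly dismissed since both sides of the equivalence are false when the four points are not distinct.

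There is one small case your argument as written does not cover. If $a$ and $b$ are adjacent in $\Gamma$, the open edge between them is itself a component of $\Gamma\setminus\{a,b\}$, and your chosen $Y$ could be exactly that open edge; it then contains no vertex at all. In that situation the ``neighbour of $b$ in $Y$'' produced by connectedness of $\Gamma\setminus\{a\}$ is the vertex $a$, which lies outside $Y$, so $y_a$ and $y_b$ do not exist. The repair is immediate: when $Y$ is the open $a$--$b$ edge, that edge is already the required path $\delta$ (its interior is $Y$, disjoint from $X\supset\{a',b'\}$). Alternatively, pick $Y$ to be any component of $\Gamma\setminus\{a,b\}$ other than $X$ that is not an open $a$--$b$ edge; since a simplicial graph has at most one such edge, the only obstruction to this choice is $\Gamma\setminus\{a,b\}$ having exactly two components, one of which is the open edge, and then the direct repair applies. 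Apart from this small omission the argument is correct and complete.
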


\begin{cor}[Lemma 2.3 (3), \cite{cashen_relative_jsj}] \label{cor_cut_pairs_3_components_no_crossings}
Let $\{a,b\}$ and $\{a',b'\}$ be cut pairs in $\Gamma$. 
If there exist at least three half-spaces of $\{a,b\}$, then $\{a',b'\}$ is not separated by $\{a,b\}$.
\end{cor}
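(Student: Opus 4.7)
The plan is to combine the first lemma of this subsection---which guarantees that every half-space of a cut pair in $\Gamma$ contains both endpoints of the pair---with a pigeonhole argument on the half-spaces of $\{a,b\}$, and then invoke the symmetry established in \cref{lemma_crossings_cut_pairs_in_graphs}.

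First I would reformulate the conclusion: by \cref{lemma_crossings_cut_pairs_in_graphs}, it suffices to prove instead that $\{a',b'\}$ does not separate $\{a,b\}$, that is, that $a$ and $b$ lie in the same component of $\Gamma \setminus \{a',b'\}$. The degenerate case $\{a,b\} \cap \{a',b'\} \neq \emptyset$ is then immediate, since one of $a,b$ is removed in passing to $\Gamma \setminus \{a',b'\}$, leaving at most one point of $\{a,b\}$ to be placed in a component.

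So I would reduce to the main case in which the two pairs are disjoint, fix three distinct half-spaces $Y_1, Y_2, Y_3$ of $\{a,b\}$, and apply pigeonhole. Since the pairs are disjoint, each of $a',b'$ lies in the interior of exactly one $Y_i$; with two points and three half-spaces there is some $Y_k$ with $Y_k \cap \{a',b'\} = \emptyset$. By the first lemma of the subsection, both $a,b$ lie in $Y_k$, and $Y_k$ is connected as a subgraph of $\Gamma$ (it is the closure of a component of $\Gamma \setminus \{a,b\}$, with $a$ and $b$ being limit points of that component). Since $Y_k$ is connected and disjoint from $\{a',b'\}$, it embeds in a single component of $\Gamma \setminus \{a',b'\}$, placing $a$ and $b$ in the same component, as required. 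There is no real obstacle here: once one has the first lemma of the subsection, the argument is little more than pigeonhole; the only delicate point is using connectedness of the full closure $Y_k$ rather than merely of the open component $Y_k \setminus \{a,b\}$, so that the inclusion into one component of $\Gamma \setminus \{a',b'\}$ actually carries the points $a$ and $b$.
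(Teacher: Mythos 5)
Your proof is correct, and since the paper states this as an unproved corollary of \cref{lemma_crossings_cut_pairs_in_graphs} (citing Cashen), yours is precisely the intended derivation: reduce via symmetry, discard the overlapping case, and then pigeonhole two points into three connected half-spaces each of which contains $\{a,b\}$. One small wording slip: ``each of $a',b'$ lies in the interior of exactly one $Y_i$'' should be ``at most one $Y_i$,'' since $\{a,b\}$ may have more than three half-spaces and a point could fall outside the three you fixed; this does not affect the pigeonhole conclusion that some $Y_k$ misses $\{a',b'\}$.
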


\subsection{The criterion}
\begin{proof} [Proof of \cref{prop_local_crossing_lines}]
By \cref{lemma_L_separates_partial_N(P)}, both $L$ and $L'$ separate $\partial N(P)$.
Suppose that $P$ is compact and $L' \cap \partial N(P)$ separates $L \cap \partial N(P)$. Then by \cref{cor_cut_pairs_3_components_no_crossings}, $L'$ separates $\partial N(P)$ into exactly two components. This implies that $L'$ has exactly two half-spaces as each half-space of $L'$ meets $P$, by \cref{lemma_half-spaces_contain_lines}. This in turn implies that different components of $\partial N(P) \setminus L'$ are contained in different half-spaces of $L'$. Hence $L$ crosses $L'$. 

For the converse, note that if $P$ is not compact then $L$ and $L'$ don't cross, by \cref{lemma_P_non-compact}. Similarly, if $L' \cap \partial N(P)$ does not separate $L \cap \partial N(P)$, then clearly, $L$ lies in a half-space of $L'$.
\end{proof}

\section{Cyclic splittings and separating lines} \label{section_dual_trees}

\begin{defn} \label{def_algebraic_splitting}
A group $G$ \emph{splits over a subgroup $H$} if $G$ decomposes as a nontrivial free product with amalgamation over $H$ or as an HNN extension over $H$. In the sequel, a decomposition of $G$ either as a free product with amalgamation or as an HNN extension (over $H$) will be called a \emph{basic splitting} of $G$ (over $H$). 
\end{defn}

\begin{prop}[\cite{serre}] \label{prop_splitting_tree}
$G$ splits over $H$ if and only if $G$ acts without edge inversions on an unbounded tree $T$ such that $H$ is the stabiliser of some edge of $T$ and there exists no proper $G$-invariant subtree of $T$.
\end{prop}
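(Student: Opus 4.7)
The plan is to prove the two implications using the standard machinery of Bass-Serre theory.

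For the forward direction, starting from a basic splitting of $G$ over $H$, I would construct an explicit action on a tree. If $G = A *_H B$, define $T$ with vertex set $G/A \sqcup G/B$ and edge set $G/H$, where $gH$ joins $gA$ to $gB$. If $G$ is an HNN extension of $A$ over $H$ with stable letter $t$, define $T$ with vertex set $G/A$ and edge set $G/H$, where $gH$ joins $gA$ to $gtA$. In both cases the argument that $T$ is a tree (connected and acyclic) reduces to the normal form theorems for amalgams and HNN extensions. The left multiplication action is without inversions (by the bipartite structure in the amalgam case, and by the consistent orientation along $t$ in the HNN case), and the edge $eH$ has stabiliser exactly $H$. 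Nontriviality of the splitting gives $[A:H] \geq 2$ and $[B:H] \geq 2$ (resp.\ $H \subsetneq A$), so every vertex has valence at least two and $T$ is unbounded. Minimality follows since the single $G$-orbit of edges means any invariant subtree is either empty, a single fixed vertex, or all of $T$; a global fixed vertex would force $G$ into a conjugate of $A$ or $B$ (resp.\ $A$), contradicting nontriviality.

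For the reverse direction, given the action of $G$ on $T$ with the stated properties, I would apply the fundamental theorem of Bass-Serre theory to extract a graph of groups $(\mathcal{G}, Y)$ with $Y = T/G$, vertex and edge groups being the corresponding stabilisers, and $\pi_1(\mathcal{G}, Y) \cong G$. Since $G$ acts without inversions, $Y$ is a genuine graph, and the image $\bar e$ of $e$ in $Y$ carries edge group $H$. To extract a single splitting over $H$, I would choose a spanning tree $Y_0 \subseteq Y$ containing $\bar e$ (or avoiding $\bar e$, if $\bar e$ lies in a loop) and collapse every edge other than $\bar e$ to a point; this operation preserves the fundamental group of the graph of groups. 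The collapsed object is a graph of groups with a single edge whose edge group is still $H$, and this is either an amalgamated product $G_{v_1} *_H G_{v_2}$ (if $\bar e$ is non-loop) or an HNN extension $G_v *_H$ (if $\bar e$ is a loop), yielding the desired splitting of $G$ over $H$.

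The step I expect to be most delicate is verifying that the splitting obtained in the reverse direction is genuinely nontrivial, i.e.\ that neither vertex group collapses onto $H$ itself. This is where the hypotheses on $T$ are essential. Unboundedness prevents the degenerate case $G = H$; minimality (no proper invariant subtree) prevents the inclusion $H \hookrightarrow G_{v_i}$ from being surjective, since a surjection on one side would allow one to retract $T$ onto a smaller $G$-invariant subtree by pushing $\bar e$ off, contradicting minimality. Together these two hypotheses force both $[G_{v_1}:H] \geq 2$ and $[G_{v_2}:H] \geq 2$ in the amalgam case, and $H \subsetneq G_v$ in the HNN case, producing a basic splitting in the sense of \cref{def_algebraic_splitting}.
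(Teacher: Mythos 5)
The paper itself gives no proof of this proposition; it is cited directly from Serre's book, and your argument is the standard Bass-Serre-theoretic one (construct the Bass-Serre tree for the forward direction; for the converse, collapse all orbits of edges except that of $e$, yielding a tree whose quotient is a single edge or a single loop). Modulo the points below, your proof is correct and matches the source in spirit.

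Two small imprecisions worth noting. First, in the forward HNN direction you invoke nontriviality to claim $H \subsetneq A$; but \cref{def_algebraic_splitting} imposes no such condition on HNN extensions, and indeed one can have $H = A$ under one (or even both) of the two embeddings. Fortunately this is harmless: the valence of every vertex in the Bass-Serre tree of $A*_H$ is $[A\!:\!H_1] + [A\!:\!H_2] \geq 2$ regardless, and the stable letter acts hyperbolically, so the tree is unbounded and minimal in all cases; the extra hypothesis you stated is simply unnecessary. Second, in the reverse direction, the detour through a spanning tree $Y_0$ is a red herring: the clean move is to collapse, in $T$ itself, the $G$-invariant subforest consisting of all edges \emph{not} in the orbit $G e$, which produces a tree $T'$ with exactly one edge orbit and edge stabiliser still $H$; this is what ``collapsing all edges of $Y$ other than $\bar e$'' amounts to, and it always preserves the fundamental group because one is collapsing a $G$-invariant subforest. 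Your final nontriviality argument, via minimality and pulling back a proper invariant subtree of $T'$ to one of $T$, is sound.
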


As a consequence of \cref{lemma_coarsely_separating_lines_separate}, we have the following application of a result of Papasoglu:
\begin{lemma}[\cite{papasoglu_coarse_separation}] \label{lemma_algebraic_splittings_and_separating_lines}
Let $H$ be a cyclic subgroup over which $G$ splits. Suppose that a vertical line $L$ is an axis of $H$. Then $L$ is a separating line that does not cross any of its translates.
\end{lemma}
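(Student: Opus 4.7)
The plan is to combine Papasoglu's coarse separation theorem \cite{papasoglu_coarse_separation}, cited in the statement of the lemma, with two key results already established in this section: Proposition \ref{lemma_coarsely_separating_lines_separate} and Lemma \ref{lemma_crossing_by_half-spaces}.

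For the separation assertion, the first step is to invoke Papasoglu's theorem applied to the action of $G$ on $\wt{X}$: since $G$ splits over the cyclic (hence two-ended) subgroup $H$ and $L$ is an $H$-cocompact axis of $H$, $L$ coarsely separates $\wt{X}$ in the sense of \cref{defn_coarse_separation}. Because $H$ acts on $L$ by non-trivial translations, $L$ is periodic, so Proposition \ref{lemma_coarsely_separating_lines_separate} upgrades coarse separation to genuine separation and gives that $L$ separates $\wt{X}$.

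For the non-crossing assertion, let $g \in G$ and consider the translate $gL$, which is an axis of the conjugate cyclic subgroup $gHg^{-1}$, over which $G$ also splits (by conjugating the original splitting by $g$); hence $gL$ is likewise a separating line by the previous paragraph. Let $T$ be the Bass-Serre tree supplied by Proposition \ref{prop_splitting_tree} for the splitting of $G$ over $H$, and let $e$ be the edge of $T$ stabilised by $H$, so that $g \cdot e$ is stabilised by $gHg^{-1}$. Papasoglu's theorem also supplies a coarsely $G$-equivariant map $\wt{X} \to T$ under which the coarse half-spaces of $L$ (respectively $gL$) correspond to the components of $T \setminus \mathring{e}$ (respectively $T \setminus \mathring{(g \cdot e)}$). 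Since $T$ is a tree, each component of $T \setminus \mathring{(g \cdot e)}$ is contained in a component of $T \setminus \mathring{e}$ or vice versa; pulling back from $T$ to $\wt{X}$ and then from coarse to true half-spaces (the latter identification being furnished by \cref{lemma_half-spaces_equal_components_reg_sphere_L} together with the periodicity argument of Proposition \ref{lemma_coarsely_separating_lines_separate}), one obtains that every true half-space of $gL$ is contained in a true half-space of $L$ or vice versa. Lemma \ref{lemma_crossing_by_half-spaces} then yields that $L$ and $gL$ do not cross.

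The main obstacle I expect is in the second paragraph: the dictionary between the coarse half-spaces produced by Papasoglu's theorem and the true (combinatorial) half-spaces defined via the complement of $L$ in $\wt{X}$ has to be $G$-equivariant on the nose, not merely up to bounded Hausdorff error, in order to transfer the nesting relation from $T$ to $\wt{X}$. The one-line version is that the number of true half-spaces of $L$ is exactly the number of components of $T \setminus \mathring{e}$, namely two, and similarly for $gL$; this should follow from Proposition \ref{lemma_coarsely_separating_lines_separate} together with the convexity and tree-of-spaces structure developed earlier in the paper, but some bookkeeping is required to see that the bijection of half-spaces is compatible with the ambient group action.
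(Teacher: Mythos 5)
Your argument for separation is exactly the paper's intended route: Papasoglu's theorem gives coarse separation of $L$, and \cref{lemma_coarsely_separating_lines_separate} upgrades this to genuine separation using periodicity and the Brady--Meier property. That part is fine.

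The non-crossing paragraph has a genuine gap, and the gap you flag at the end is more serious than ``bookkeeping.'' Two specific problems. First, Papasoglu's theorem does not supply a coarsely $G$-equivariant map $\wt{X} \to T$ under which coarse half-spaces of $L$ correspond to components of $T \setminus \mathring{e}$; his result is about the coarse geometry of the group, not a projection to the Bass--Serre tree of a splitting. The natural equivariant projection available here is the one to the underlying tree of the tree-of-spaces structure of $\wt{X}$, and that is the \emph{wrong} tree: $L$ is vertical, so it collapses to a single vertex there, and that tree is the Bass--Serre tree of the original graph of groups, not of the splitting over $H$. Constructing an equivariant map to the tree of the new splitting that is compatible with half-spaces of $L$ is exactly the content of \cref{lemma_dual_tree}, whose hypotheses include that $L$ does not cross its translates, so invoking such a map here would be circular. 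Second, your one-line version asserts that $L$ has exactly two true half-spaces, matching the two components of $T \setminus \mathring{e}$; but the paper explicitly warns that a separating line can have more than two half-spaces, so this claim is false in general. (The case of three or more half-spaces is in fact harmless for non-crossing --- the argument of \cref{prop_C_3_half-spaces_universal_elliptic} shows such a line crosses nothing, via \cref{cor_cut_pairs_3_components_no_crossings} --- but your proposal neither uses that observation nor establishes the two-half-space case.) What is actually needed is a passage from Papasoglu's coarse non-crossing statement for the pattern of translates of $H$ to the combinatorial notion of crossing used in the paper, which is a local criterion on $\partial N(L \cap gL)$; that translation is not provided here.
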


\begin{lemma} \label{lemma_dual_tree}
Let $L$ be a periodic line that separates $\wt{X}$ and does not cross any of its translates. Then there exists an unbounded $G$-tree $T_L$ and a vertex in $T_L$ whose stabiliser is the stabiliser of $L$. Further, there exists no proper $G$-invariant subtree of $T_L$.
\end{lemma}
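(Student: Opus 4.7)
The plan is to construct $T_L$ as a bipartite graph dual to the pairwise non-crossing family $\mathcal{L} := G \cdot L$. Let $\mathcal{R}$ denote the set of connected components of $\wt{X} \setminus \bigcup_{L' \in \mathcal{L}} L'$, and let $T_L$ have vertex set $\mathcal{L} \sqcup \mathcal{R}$, with $L' \in \mathcal{L}$ adjacent to $R \in \mathcal{R}$ whenever $L' \subseteq \overline{R}$. The $G$-action on $\wt{X}$ preserves $\mathcal{L}$ and $\mathcal{R}$ and induces a simplicial $G$-action on $T_L$ respecting the bipartition; in particular it acts without edge inversions, and the vertex $L \in T_L$ has stabiliser $\{g\in G : gL = L\} = \mathrm{Stab}(L)$, settling the second conclusion. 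A preliminary claim is that every compact subset of $\wt{X}$ meets only finitely many lines of $\mathcal{L}$; this follows from proper discontinuity of the $G$-action together with cocompactness of $\mathrm{Stab}(L)$ on $L$, via a standard fundamental-domain argument.

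The main task is then to show that $T_L$ is a tree. Connectedness is immediate: join representatives of any two vertices by a geodesic in $\wt{X}$; by the preliminary claim this meets only finitely many lines of $\mathcal{L}$, giving the required edge-path. For acyclicity, suppose towards a contradiction that there is a simple cycle $R_0, L_1, R_1, \ldots, L_n, R_n = R_0$ in $T_L$ with the $L_i$ distinct. Choose $x_i \in R_i$; since $L_i$ lies on the common topological boundary of the open path-connected regions $R_{i-1}$ and $R_i$, I choose a path $\gamma_i$ from $x_{i-1}$ to $x_i$ that stays in $R_{i-1} \cup \{p_i\} \cup R_i$ for a single transverse crossing point $p_i \in L_i$, with $p_i$ chosen outside the (locally finite) collection of other lines in $\mathcal{L}$. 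The concatenation $\gamma := \gamma_1 \cdots \gamma_n$ is then a loop in $\wt{X}$ meeting $L_1$ transversally exactly once and meeting no other line of $\mathcal{L}$; simple connectedness of $\wt{X}$ forces the mod-$2$ intersection number of $\gamma$ with the separating line $L_1$ to vanish, yielding the desired contradiction. I expect this acyclicity step to be the main technical obstacle: the key subtlety is ensuring each $\gamma_i$ avoids every other line of $\mathcal{L}$, which relies on the definition of $R_j$ as a component of the complement of $\bigcup \mathcal{L}$ and, more subtly, on the pairwise non-crossing hypothesis to keep the local geometry at each $p_i$ unambiguous.

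For unboundedness: were $T_L$ finite, $\mathcal{L}$ would be finite and $\mathrm{Stab}(L)$ of finite index in $G$; since $G$ acts freely on $\wt{X}$ (hence is torsion-free) and $\mathrm{Stab}(L)$ is cyclic, this would force $G$ itself to be cyclic, excluded by the standing hypotheses on the tubular graph of graphs. For the absence of a proper $G$-invariant subtree $T' \subsetneq T_L$: transitivity of the $G$-action on $\mathcal{L}$ forces $T'$ to contain either every line vertex or none. In the first case, connectedness of $T'$ together with the tree structure of $T_L$ forces every region vertex separating two lines (i.e., every region vertex) to lie in $T'$, so $T' = T_L$, a contradiction. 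In the second case, $T'$ collapses to a single $G$-fixed region vertex $R$, whence $\mathrm{Stab}(R) = G$; but this forces $G$ to preserve the set of lines on $\partial R$, and by transitivity on $\mathcal{L}$ every line must then bound $R$, which is incompatible with the cocompact free action of $G$ on $\wt{X}$.
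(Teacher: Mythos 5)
You take a genuinely different route from the paper: the paper constructs $T_L$ as the $\cat$ cube complex dual to the space with walls $Z_L = \wt{X}\setminus\bigcup_g gL$, with walls given by the half-spaces of the $G$-translates of $L$; you instead take the classical bipartite ``lines vs.\ regions'' dual graph. But the paper explicitly flags, right after stating this lemma, that the classical construction is only straightforward when $L$ has exactly two half-spaces and is disjoint from its translates, and that the general case (lines meeting translates in compact segments, lines with more than two half-spaces) ``necessitates a more careful treatment.'' Your proposal runs straight into the issues the paper's wall/ultrafilter machinery is designed to avoid, and the gaps are real, not cosmetic. The central one is the acyclicity step, which you yourself flag as ``the main technical obstacle'' but do not close. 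Two specific problems: (a) your adjacency relation ``$L' \subseteq \overline R$'' is not obviously well behaved once translates can intersect --- Lemma \ref{lemma_half-spaces_contain_lines} guarantees a half-space of $L'$ contains all of $L'$, but a region $R$ is a component of $\wt{X}\setminus\bigcup\mathcal L$, cut further by other translates, and nothing you wrote rules out $\overline R$ meeting $L'$ along only a proper subsegment, which breaks both connectedness and the set-up for the crossing count; (b) finding a point $p_i\in L_i$ and a path $\gamma_i$ staying in $R_{i-1}\cup\{p_i\}\cup R_i$ with a single transverse crossing requires $R_{i-1}$ and $R_i$ to be precisely the two regions adjacent to $L_i$ on opposite sides near $p_i$, and when $L_i$ has three or more half-spaces, or when the boundary of $L_i$ is covered by compact intersections with other translates, the existence of such a $p_i$ is not automatic. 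The paper avoids all of this: \cref{lemma_non-crossing_lines_and_half-spaces} shows the walls are pairwise nested, and the ``no square'' argument in the dual-complex proof is the clean replacement for your mod-$2$ intersection count, requiring no transversal crossing points at all.

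There is also an unjustified jump in the ``no proper $G$-invariant subtree'' step. In your second case you assert that a $G$-fixed region vertex $R$ is ``incompatible with the cocompact free action of $G$'' without any argument; it is not a priori clear that every translate of $L$ bounding a single region contradicts cocompactness or freeness. The paper's proof of this step instead uses the fact that $T_L$ is spanned by the principal ultrafilters of $Z_L$ (Proposition 4.8 of Nica) together with \cref{lemma_half-spaces_deep} --- every half-space contains a translate of $L$ in its interior --- to force the spanning set into any invariant subtree. Your argument needs a concrete replacement for this. Your unboundedness argument (finite $\mathcal L$ $\Rightarrow$ $\mathrm{Stab}(L)$ of finite index $\Rightarrow$ $G$ virtually cyclic, hence $\mathbb Z$, contradicting one-endedness) is correct and is a reasonable alternative to the paper's argument via \cref{lemma_half-spaces_deep}.
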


The construction of such a \emph{dual tree} when $L$ has exactly 2 half-spaces and is disjoint from all its translates is standard. In that case, the dual tree is bipartite and is constructed as follows: each component of $\wt{X} \setminus \sqcup_{g \in G} gL$ defines a black vertex while each translate of $L$ defines a white vertex. The adjacency is given by containment: a white vertex is adjacent to a black vertex if it is contained in the closure of the black vertex. 

In our case, $L$ may not be disjoint from its translates and may have more than two half-spaces. This necessitates a more careful treatment, but the underlying idea is still the same. Our construction, in fact, coincides with the above standard construction when $L$ is disjoint from its translates and has only two half-spaces.

We start with an observation that will be used in the proof.

\begin{lemma} \label{lemma_non-crossing_lines_and_half-spaces}
Let $L_1$ and $L_2$ be separating lines that don't cross. Given half-spaces $Y_1$ of $L_1$ and $Y_2$ of $L_2$ such that $(Y_1 \setminus L_1) \cap (Y_2 \setminus L_2)$ is non-empty, then either $L_1$ is contained in $Y_2$ or $L_2$ is contained in $Y_1$. 
\end{lemma}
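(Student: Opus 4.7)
The plan is to invoke \cref{lemma_crossing_by_half-spaces} with the non-crossing hypothesis to get a half-space $Y_2'$ of $L_2$ comparable to $Y_1$, and then use the hypothesis that the interiors of $Y_1$ and $Y_2$ meet to either identify $Y_2'$ with $Y_2$ or extract $L_2 \subset Y_1$ directly.

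More concretely, fix a point $x \in (Y_1 \setminus L_1) \cap (Y_2 \setminus L_2)$, which exists by hypothesis. Since $L_1$ and $L_2$ do not cross, \cref{lemma_crossing_by_half-spaces} applied to the half-space $Y_1$ yields a half-space $Y_2'$ of $L_2$ with either $Y_1 \subset Y_2'$ or $Y_2' \subset Y_1$. I will analyse these two cases separately.

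In the first case, $x \in Y_1 \subset Y_2'$, but also $x \in Y_2$ and $x \notin L_2$. Since distinct half-spaces of $L_2$ are closures of distinct components of $\wt{X} \setminus L_2$ and hence have disjoint interiors, any point of $\wt{X} \setminus L_2$ lies in a unique half-space; thus $Y_2 = Y_2'$, so $L_1 \subset Y_1 \subset Y_2$. In the second case, \cref{lemma_half-spaces_contain_lines} gives $L_2 \subset Y_2' \subset Y_1$, as required.

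There is no real obstacle here: the statement is essentially a bookkeeping consequence of \cref{lemma_crossing_by_half-spaces} combined with the uniqueness of the half-space containing an interior point. The only subtle ingredient is this last uniqueness, which follows from the fact that half-spaces are closures of components of the complement of the line, so they can only overlap along $L_2$ itself.
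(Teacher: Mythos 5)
Your proof is correct and takes a somewhat different route from the paper's. The paper works directly from \cref{def_crossing_lines}: it extracts half-spaces $Y_1'$ of $L_1$ and $Y_2'$ of $L_2$ with $L_2\subset Y_1'$ and $L_1\subset Y_2'$, then argues by contradiction that $Y_1'=Y_1$ or $Y_2'=Y_2$, using the disjointness of distinct half-spaces (away from the lines) to reach a contradiction with the hypothesis that $(Y_1\setminus L_1)\cap(Y_2\setminus L_2)\neq\emptyset$. You instead invoke \cref{lemma_crossing_by_half-spaces} to produce a half-space $Y_2'$ of $L_2$ comparable to the given $Y_1$, then split into the two nesting cases, using \cref{lemma_half-spaces_contain_lines} in both and the uniqueness of the half-space of $L_2$ containing a point off $L_2$ in the first. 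The content is the same --- both arguments boil down to the fact that distinct half-spaces of a line overlap only along the line itself --- but your direct case split is a bit more streamlined than the paper's contradiction argument, at the cost of leaning on the (unproved in the paper) \cref{lemma_crossing_by_half-spaces} rather than the definition. One small point worth tightening: what you need in Case 1 is not that interiors of distinct half-spaces are disjoint, but that $Y_2\setminus L_2$ equals the component whose closure is $Y_2$ (equivalently $\partial(Y_2\setminus L_2)\subset L_2$), which holds because $\wt X$ is locally connected and $L_2$ is closed; with that phrasing, the uniqueness of the half-space containing $x$ is immediate.
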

\begin{proof}
Since $L_1$ and $L_2$ don't cross, there exist half-spaces $Y_1'$ of $L_1$ and $Y'_2$ of $L_2$ such that $L_1 \subset Y'_2$ and $L_2 \subset Y'_1$, by definition. We claim that either $Y'_1 = Y_1$ or $Y'_2 = Y_2$. Suppose not. 
Since $L_1 \subset Y_2'$, $L_1$ is disjoint from $\wt{X} \setminus Y_2' \supset Y_2 \setminus L_2$. Thus $Y_2 \setminus L_2$ is contained in a half-space $Y''_1$ of $L_1$. But the fact that $L_2$ is contained in $Y'_1$ implies that $Y_1'' = Y_1'$ and hence $Y_2 \setminus L_2$ is disjoint from $Y_1 \setminus L_1$, a contradiction. 
\end{proof}

The required tree $T_L$ will be the $\cat$ cube complex dual to a space with walls. Recall that 
\begin{defn} [\cite{haglund_paulin_walls}]
A \emph{wall} on a nonempty set $Z$ is a partition of $Z$ into two subsets. $Z$ is a \emph{space with walls} if $Z$ is endowed with a collection of walls such that any two points of $Z$ are separated by finitely many walls.
\end{defn} 

\begin{remark} The two subsets that define a wall are known as half-spaces in the literature. Note that we have already used this terminology for separating lines. Separating lines in $\wt{X}$ do define walls, as we will show below.
We will refer to a half-space associated to a wall as a half-space of the space with walls.
\end{remark}

We quickly recall some terminology of spaces with walls before going to the proof of \cref{lemma_dual_tree}. We refer the reader to \cite{nica_cubulation} for further details.

\begin{defn}
Let $Z$ be a space with walls. An \emph{ultrafilter} on $Z$ is a nonempty collection $\omega$ of half-spaces of $Z$ that satisfy the following conditions:
\begin{enumerate}
\item $A \in \omega$ and $A \subset B$ imply that $B \in \omega$ and
\item exactly one of $A$ and $A^c$ is contained in $\omega$.
\end{enumerate}
\end{defn}

\begin{lemma} \label{lemma_elements_ultrafilters_intersect}
If $\omega$ is an ultrafilter of $Z$ and $A, B \in \omega$, then $A$ and $B$ are not disjoint. \qed
\end{lemma}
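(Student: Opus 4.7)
The plan is a short proof by contradiction using the two defining axioms of an ultrafilter in sequence. Suppose, aiming for a contradiction, that $A, B \in \omega$ but $A \cap B = \emptyset$. The key observation is that disjointness rewrites as an inclusion: $A \subseteq B^{c}$, where $B^{c}$ denotes the complementary half-space in the wall $\{B, B^{c}\}$ (which is a half-space of $Z$ by the very definition of a wall as a partition into two subsets).

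Next I would apply the upward-closure axiom (1): since $A \in \omega$ and $A \subseteq B^{c}$, we conclude $B^{c} \in \omega$. But then both $B$ and $B^{c}$ belong to $\omega$, which directly contradicts axiom (2) asserting that exactly one of $B, B^{c}$ lies in $\omega$. This contradiction establishes the lemma.

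The only subtle point worth flagging is making sure that $B^{c}$ is genuinely a half-space, not just an arbitrary subset of $Z$, so that axiom (1) applies with $B^{c}$ playing the role of the superset. This is immediate from the formalism: walls are partitions into two parts, and both parts are declared to be half-spaces. Once this is acknowledged, the two axioms close the argument in one line each, so no further machinery (and in particular no appeal to the structure of $\wt{X}$) is needed.
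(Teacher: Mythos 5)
Your proof is correct and is the standard (essentially the only) argument: disjointness gives $A \subseteq B^c$, upward-closure forces $B^c \in \omega$, and this contradicts the dichotomy axiom. The paper omits the proof entirely (marking the lemma with a \qed), so there is no alternative route to compare against; you have supplied exactly the short verification the author left implicit.
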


For a $z \in Z$, the \emph{principal ultrafilter} $\sigma_z$ is defined to be the set of half-spaces of $Z$ that contain $z$. An ultrafilter $\omega$ of $Z$ is said to be \emph{almost principal} if for some (and therefore for any) $z \in Z$, the symmetric difference between $\omega$ and $\sigma_z$ is finite.

\begin{proof}[Proof of \cref{lemma_dual_tree}]
Let $Z_L = \wt{X} \setminus \cup_{g \in G} gL$. Then each half-space $Y$ of $gL$ defines a wall $\{Y \cap Z_L, Y^c \cap Z_L\}$, which we will denote as $\{Y, Y^c\}$. It is easy to see that $Z_L$ is a space with walls. 
By theorem 4.1 of \cite{nica_cubulation}, there exists a connected graph $T_L$ whose vertices are the principal and almost principal ultrafilters of $Z_L$. Two vertices are adjacent if the cardinality of their symmetric difference is two. $T_L$ is then the 1-skeleton of a unique $\cat$ cube complex (see section 3 of \cite{sageev}, for instance). 

We claim that $T_L$ is a tree. If not, then it is the 1-skeleton of a cube complex of dimension at least 2. Thus there exists a cycle $(w_1,w_2,w_3,w_4)$ of length 4 in $T_L$.
Since $\omega_1$ and $\omega_2$ are adjacent, there exists a wall $\{Y,Y^c\}$ of $Z_L$ such that $Y \in \omega_1$ and $Y^c \in \omega_2$. Similarly, there exists a wall $\{Y',Y'^c\}$ such that $Y' \in \omega_1$ and $Y'^c \in \omega_4$. Note that $Y' \in \omega_2$, $Y \in \omega_4$ and $Y^c, Y'^c \in \omega_3$, by definition. We will show below that this is not possible.
Assume that $Y$ and $Y'$ are half-spaces of the lines $gL$ and $g'L$.
By \cref{lemma_elements_ultrafilters_intersect}, $Y$ and $Y'$ are not disjoint. This implies that either $gL \subset Y'$ or $g'L \subset Y$, by \cref{lemma_non-crossing_lines_and_half-spaces}. Assume the former. Either $Y \subset Y'$ or not. 
If $Y \subset Y'$, then no ultrafilter can contain both $Y$ and $Y'^c$ and hence $\omega_4$ cannot exist. 
On the other hand, if $Y \nsubseteq Y'$, then $g'L$ meets $Y$ in its interior and hence $Y'^c \subset Y$. This then implies that no ultrafilter can contain both $Y^c$ and $Y'^c$ and hence $\omega_3$ cannot exist. This proves the claim.

There exists a natural action of $G$ on $T_L$. An element $g \in G$ sends an ultrafilter $\omega$ to an ultrafilter $g\omega$ where $g \omega$ is the set of half-spaces $gY$ of $Z_L$, where $Y \in \omega$. 

We claim that there exists an ultrafilter whose stabiliser is the stabiliser of $L$. Let $Y_1, \cdots, Y_n$ be the set of half-spaces of $L$, and let $\omega_L$ be the set of half-spaces of $Z_L$ consisting of $Y_1^c, \cdots, Y_n^c$ and all half-spaces (of proper translates of $L$) which contain $L$. Note that $\omega_L$ is an ultrafilter. (If $n=2$, then $\omega_L \setminus \{Y_1\}$ and $\omega_L \setminus \{Y_2\}$ are ultrafilters and hence $\omega_L$ is a vertex of $T_L$ after subdivision.) 
We claim that $\omega_L$ is almost principal. Indeed, choose $y_1 \in Y_1 \cap Z_L$. Then $\sigma_{y_1} = \{Y_1, Y_2^c, \cdots, Y_n^c\} \cup \{Y | y_1 \in Y\}$. There exist at most finitely many lines $g_1L \cdots g_kL$ that separate $y_1$ from $L$ in $\wt{X}$. Except for the half-spaces of these lines, a half-space contains $y_1$ if and only if it contains $L$. Hence $\sigma_{y_1} \triangle \omega_L$ is finite. It is straightforward to check that $\mathrm{stab}(L) = \mathrm{stab}(\omega_L)$.

\cref{lemma_half-spaces_deep} implies that $T_L$ is unbounded as a vertex at maximal distance from $\omega_L$ will contain a half-space that does not contain a translate of $L$ in its interior.

There is no proper $G$-invariant subtree of $T_L$. Now $T_L$ is spanned as a tree by the principal ultrafilters of $Z_L$, by Proposition 4.8 of \cite{nica_cubulation}. It thus suffices to prove that no subtree spanned by a proper subset of the set of principal ultrafilters is $G$-invariant. 
Choose $y_i \in Y_i \cap Z_L$ such that there exists a path $\alpha$ from $y_i$ to $L$ with $\mathring{\alpha} \subset Z_L$. Then observe that any principal ultrafilter $\sigma_y$ is a translate of $\sigma_{y_i}$, for some $i$. 
Thus, if a proper subtree is $G$-invariant, then it has to miss at least one $\sigma_{y_i}$, say $\sigma_{y_1}
$. But this is not possible as the interior of $Y_1$ contains at least one translate of $L$, by \cref{lemma_half-spaces_deep}.
\end{proof}

\begin{prop} \label{prop_cyclic_splittings_induce_separating_lines}
Let $H$ be a cyclic subgroup of $G$ and $L$ an axis of $H$ in $\wt{X}$. Suppose that $L$ is a separating line that does not cross any of its translates and that $H$ is equal to the stabiliser of a proper subset of the set of half-spaces of $L$.
Then $G$ splits over $H$.
\end{prop}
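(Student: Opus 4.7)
The plan is to exhibit an unbounded $G$-tree $T$ with no proper $G$-invariant subtree on which $H$ stabilises an edge, and then invoke \cref{prop_splitting_tree}.

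I would begin by applying \cref{lemma_dual_tree} to $L$, obtaining an unbounded $G$-tree $T_L$ with a vertex $\omega_L$ of stabiliser $K := \mathrm{stab}(L)$; since $L$ is an axis of $H$, $H \subset K$. Next, I would produce a vertex of $T_L$ whose stabiliser is exactly $H$, by modifying $\omega_L$ in the spirit of \cref{lemma_dual_tree}: define the ultrafilter $\omega_L^S$ on the same space with walls $Z_L$ by including $Y$ for each $Y \in S$, the complement $Y^c$ for each $Y \notin S$, and, for every wall coming from a proper translate of $L$, the same half-space as $\omega_L$ (the one containing $L$). Any two elements of $\omega_L^S$ intersect: for two half-spaces of $L$, their intersection contains $L$; for a half-space of $L$ and a half-space of a translate chosen by $\omega_L^S$, the intersection contains $L$; and for two half-spaces of different translates, the required nesting is provided by \cref{lemma_non-crossing_lines_and_half-spaces}. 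Hence $\omega_L^S$ is a valid ultrafilter, and the symmetric-difference argument from the proof of \cref{lemma_dual_tree} shows it is almost principal, so it is a vertex of $T_L$. Because any element stabilising $\omega_L^S$ must preserve the line $L$ (an element sending $L$ to another translate would fail to preserve two half-spaces $Y_i, Y_j \in S$), we have $\mathrm{stab}(\omega_L^S) \subset K$; combined with $H = \mathrm{stab}(S)$ this gives $\mathrm{stab}(\omega_L^S) = H$.

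To upgrade $H$ from a vertex stabiliser to an edge stabiliser, I would construct a new $G$-tree $T$ by equivariantly blowing up each vertex of the orbit $G \cdot \omega_L$. At $g\omega_L$, attach a star-shaped graph $F_g$ whose centre is identified with $g\omega_L$ and whose $[K:H]$ satellite leaves form the $gKg^{-1}$-orbit of $g\omega_L^S$; each new edge then has stabiliser conjugate to $H$. The old edges of $T_L$ incident to $g\omega_L$ are re-attached to the appropriate satellites according to which side of the partition $\{gY_S, gY_{S^c}\}$ of $\wt X$ the old neighbour lies on, a choice that is well-defined by the non-crossing hypothesis. If some element of $K$ swaps $S$ and $S^c$ (inducing an edge inversion on $F_g$), a single equivariant subdivision removes the inversion and yields edges of stabiliser exactly $H$. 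The resulting tree $T$ inherits unboundedness from $T_L$ via \cref{lemma_half-spaces_deep}, and has no proper $G$-invariant subtree: any such subtree must, by $G$-equivariance, contain both orbits $G \cdot \omega_L$ and $G \cdot \omega_L^S$, and hence every new edge. Applying \cref{prop_splitting_tree} to an edge of $T$ of stabiliser $H$ then yields the splitting of $G$ over $H$.

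The main technical obstacle is the blow-up step: one has to verify that the old edges of $T_L$ at each $g\omega_L$ can be consistently and equivariantly redistributed among the satellites, and that the resulting graph is a tree. Both points rely crucially on the non-crossing of $L$ with its translates, which makes the coarse partition $\{gY_S, gY_{S^c}\}$ a well-defined, $G$-coherent bipartition of $\wt X$ even though the collection of partitions of $\wt X$ arising from the full $K$-orbit of $S$ on $L$ may themselves cross one another as walls.
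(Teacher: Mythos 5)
Your approach breaks at the first step: $\omega_L^S$ is not a valid ultrafilter on $Z_L$ when $|S|\ge 2$. Recall that $Z_L=\wt X\setminus\bigcup_{g\in G}gL$, so $L$ itself has been deleted. If $Y_1,Y_2\in S$ are distinct half-spaces of $L$, then $Y_1\cap Y_2=L$, hence $Y_1\cap Z_L$ and $Y_2\cap Z_L$ are \emph{disjoint}. Your justification ``for two half-spaces of $L$, their intersection contains $L$'' is therefore false in the space with walls: $L$ is not a point of $Z_L$. Concretely, $Y_1\cap Z_L\subset Y_2^c\cap Z_L$, so the upward-closure axiom for ultrafilters forces $Y_2^c\in\omega_L^S$, contradicting $Y_2\in\omega_L^S$. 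By \cref{lemma_elements_ultrafilters_intersect} an ultrafilter on $Z_L$ can contain at most one of the $Y_i$; the only consistent modifications of $\omega_L$ at the walls of $L$ are the $n$ ultrafilters $\omega_i$ obtained by flipping a single $Y_i$, and these have stabiliser $\mathrm{stab}_K(\{Y_i\})$, not $\mathrm{stab}_K(S)$. This is not merely a technical inconvenience: if, say, the cyclic group $K$ permutes four half-spaces as a $4$-cycle and $S=\{Y_1,Y_3\}$, then $H=\mathrm{stab}(S)$ has index $2$ in $K$ yet is not the stabiliser of any singleton, so no vertex of the form $\omega_L^{S'}$ (with $S'$ a singleton, the only legal case) has stabiliser $H$, and the object your argument hinges on does not exist.

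The subsequent blow-up step inherits the problem and adds one of its own. You attach $[K:H]$ satellites at $\omega_L$, but then redistribute the old edges by sorting according to a single bipartition $\{Y_S,Y_{S^c}\}$ of $\wt X$; a two-way sort cannot distribute edges among $[K:H]$ satellites when $[K:H]>2$ (and when $[K:H]>2$ the $K$-translates of $S$ generally overlap, so they do not even give a well-defined coarser partition). The paper avoids both issues by not introducing any new ultrafilter: it works inside $T_L$ itself, taking the neighbours $\omega_1,\dots,\omega_n$ of $\omega_L$ and forming the $G$-equivariant \emph{quotient} graph that identifies $\omega_i\sim h\omega_i$ for $h\in H$, then checking directly that this quotient is a tree with an edge of stabiliser $H$. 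If you want to salvage your proposal, you would need to replace $\omega_L^S$ by a construction that respects the disjointness of the $Y_i\cap Z_L$ — for example, modify the wall structure itself rather than the ultrafilter — but as written the argument does not go through.
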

\begin{proof}
Let $T_L$ be the dual tree of $L$ obtained from \cref{lemma_dual_tree}. 
Let $Y_1, \cdots, Y_n$ be the half-spaces of $L$ and $\omega_i$ be the vertex adjacent to $\omega_L$ such that $\omega_i \triangle \omega_L = \{Y_i, Y_i^c\}$. Let $T$ be the quotient simplicial graph of $T_L$ obtained by first identifying for each $h \in H$ and $i \in \{1, \cdots, n\}$, vertices $\omega_i$ and $h\omega_i$, and then extending equivariantly. It is easy to check that $T$ is a tree that satisfies the conditions of \cref{prop_splitting_tree} for $H$.
\end{proof}

A cyclic subgroup $H$ of $G$ that satisfies the hypothesis of \cref{prop_cyclic_splittings_induce_separating_lines} is a \emph{geometric splitting subgroup}.

\begin{prop} \label{prop_algebraic_commensurable_geometric_splitting}
Let $H$ be a cyclic subgroup of $G$ over which $G$ splits. If $H$ has a vertical axis in $\wt{X}$ then there exists a geometric splitting subgroup $H'$ commensurable with $H$.
\end{prop}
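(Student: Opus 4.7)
My plan is to take $H'$ as the $G$-setwise stabiliser of a carefully chosen collection of half-spaces of the vertical axis $L$ of $H$. First I would show that any non-trivial subgroup of $H_0 := \mathrm{Stab}_G(L)$ is infinite cyclic and commensurable with $H$. By \cref{lemma_algebraic_splittings_and_separating_lines}, $L$ is separating and does not cross any of its translates. Since $G$ acts freely on $\wt{X}$, every non-trivial element of $H_0$ is a fixed-point-free isometry of $L \cong \mathbb{R}$, hence a non-trivial translation; the properness of the $G$-action makes this subgroup of translations discrete, so $H_0 \cong \mathbb{Z}$, and the inclusion $H \le H_0$ is of finite index. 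Moreover, if $g \in G$ setwise stabilises a non-empty subset of the set $\mathcal{Y}$ of half-spaces of $L$, then some half-space of $L$ is sent to a half-space of $L$, forcing $gL = L$, so the $G$-setwise stabiliser of any subset of $\mathcal{Y}$ is contained in $H_0 \cong \mathbb{Z}$.

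Note that $|\mathcal{Y}| \ge 2$ since $L$ separates $\wt{X}$. I would then exhibit a proper non-empty subset $S \subsetneq \mathcal{Y}$ whose $H_0$-setwise stabiliser is non-trivial, by case analysis on the $H_0$-action on $\mathcal{Y}$. If $H_0$ has at least two orbits on $\mathcal{Y}$, take $S$ to be a single orbit, whose setwise stabiliser is all of $H_0$. If $H_0$ acts transitively with $\mathcal{Y}$ finite, take $S$ to be a singleton, whose stabiliser has index $|\mathcal{Y}|$ in $H_0$. The main subtlety is the remaining case in which $H_0$ acts transitively and freely on an infinite $\mathcal{Y}$: a singleton then has trivial stabiliser, so I would identify $\mathcal{Y}$ with $\mathbb{Z}$ as a free $H_0$-set and take $S$ to be the image of $2\mathbb{Z}$, whose stabiliser is $2H_0$.

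In every case $H' := \mathrm{Stab}_G(S)$ is a non-trivial subgroup of $H_0$, hence an infinite cyclic subgroup commensurable with $H$ and admitting $L$ as an axis. The properties that $L$ separates $\wt{X}$ and crosses none of its translates depend only on $L$, so they hold for $H'$ as well. By construction $H'$ equals the stabiliser of the proper non-empty subset $S$, so $H'$ satisfies the hypotheses of \cref{prop_cyclic_splittings_induce_separating_lines} and is a geometric splitting subgroup commensurable with $H$.
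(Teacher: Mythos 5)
Your proof is correct and follows the same essential approach as the paper: take $H'$ to be the $G$-setwise stabiliser of a proper non-empty subset of the half-spaces of $L$ and invoke \cref{prop_cyclic_splittings_induce_separating_lines}. In fact you spell out several steps the paper leaves implicit (that $\mathrm{Stab}(L)$ is infinite cyclic, that the stabiliser of a subset of half-spaces already lies in $\mathrm{Stab}(L)$, and that $H'$ is non-trivial). One small simplification: the case analysis, and in particular the worry about $H_0$ acting freely and transitively on an infinite $\mathcal{Y}$, is unnecessary, since by \cref{cor_square_meets_halfspace} each half-space of $L$ contains a square through a fixed edge of $L$ and $\wt{X}$ is locally finite, so $\mathcal{Y}$ is finite; a singleton $\{Y\}$ then always has finite-index, hence non-trivial, stabiliser in $H_0 \cong \mathbb{Z}$, exactly as the paper chooses.
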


\begin{proof}
Let $L$ be an axis of $H$ satisfying \cref{lemma_algebraic_splittings_and_separating_lines}. 
Observe that $H$ is contained in the stabiliser of $L$ which is a cyclic subgroup. Choose a half-space $Y$ of $L$ and let $H'$ be the largest subgroup of the stabiliser of $L$ that preserves $Y$. Then by \cref{prop_cyclic_splittings_induce_separating_lines}, $H'$ is as required.
\end{proof}

\section{Vertical cycles and cyclic splittings}
In this section, we will examine splittings induced by vertical lines in $\wt{X}$.
Recall that a cycle (\cref{def_paths_cycles_lines}) is an immersion of graphs $\phi: C \to \Gamma$, where $C$ is a subdivided circle. From now on, throughout the text, unless mentioned otherwise, $\Gamma$ will be a vertex graph $X_s$ of $X$ and so $C$ is a vertical cycle.

\begin{remark}
The map $\phi$ is $\pi_1$-injective. Indeed, $\pi_1(C)$ injects into $\pi_1(X_{s})$ \cite{stallings_topology_finite_graphs} and $\pi_1(X_{s})$ injects into the fundamental group of $X$ in the graph of groups setup \cite{serre}. 
\end{remark}

\begin{fact}
A line in $\wt{X}$ is periodic and vertical if and only if it is a lift of a vertical cycle in $X$.
\end{fact}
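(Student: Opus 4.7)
\medskip

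\noindent\textbf{Proof proposal.} The plan is to verify the two directions separately using the basic correspondence between covering spaces of graphs and their fundamental groups, together with the fact that the vertical $1$-skeleton $\widetilde{X}^{v}$ of $\widetilde{X}$ is a disjoint union of trees (each a lift of some vertex graph $X_{s}$) that is permuted by $G$.

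For the forward direction, suppose $L$ is a periodic vertical line, so $L\subset \widetilde{X}_{s}$ for some lift $\widetilde{X}_{s}$ of a vertex graph $X_{s}$, and $L$ is an axis of some $g\in G$. Since $g$ preserves $L$ and any two distinct vertical trees of $\widetilde{X}$ are disjoint, $g$ must preserve $\widetilde{X}_{s}$, and hence lies in $\mathrm{stab}(\widetilde{X}_{s})$, which is conjugate to $\pi_{1}(X_{s})$. Let $h\in \mathrm{stab}(L)$ be a generator; then $\langle h\rangle$ acts freely and cocompactly on $L$ by translations, so the quotient $L/\langle h\rangle$ is a circle, and the composition $L\hookrightarrow \widetilde{X}_{s}\to X_{s}$ factors through a local isometry $L/\langle h\rangle \to X_{s}$. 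A local isometry of graphs from a subdivided circle is exactly a cycle in the sense of \cref{def_paths_cycles_lines}, and by construction $L$ is one of its lifts.

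For the converse, let $C\to X_{s}$ be a vertical cycle and fix a lift $\widetilde{C}\to \widetilde{X}$ of the composition $\mathbb{R}\to C\to X_{s}\hookrightarrow X$, where $\mathbb{R}\to C$ is the universal cover. By the remark preceding the fact, $\pi_{1}(C)\hookrightarrow \pi_{1}(X_{s})$, so a standard argument with covers of graphs shows that the lift $\mathbb{R}\to \widetilde{X}_{s}$ into the corresponding vertical tree is an embedding; moreover $\pi_{1}(C)=\langle g\rangle$ acts on its image by translations, so the image is a periodic combinatorial line contained in the vertical tree $\widetilde{X}_{s}$. Being an embedded combinatorial line inside a tree, it is automatically a $\mathrm{CAT}(0)$ geodesic line of $\widetilde{X}$, and it is vertical by construction and periodic as $\langle g\rangle$ acts cocompactly on it by translations.

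Nothing here is subtle: the only point requiring care is the choice of vocabulary, namely checking that the combinatorial line obtained in each direction is simultaneously a $\mathrm{CAT}(0)$ line, a vertical line, and a periodic one, in the precise senses of \cref{def_paths_cycles_lines} and the preceding definitions. Since vertical trees of $\widetilde{X}$ embed isometrically (they are convex subtrees of $\widetilde{X}$), combinatorial lines inside them are automatically $\mathrm{CAT}(0)$ lines, so no further work is needed.
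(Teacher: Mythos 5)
The paper labels this statement a \emph{fact} and gives no proof of its own, so there is no argument to compare against; your proposal supplies the missing argument, and it is correct. Both directions reduce, as you say, to standard covering-space facts for graphs: an immersion of graphs lifts to an isometric embedding between their universal covers (trees into trees embed because a reduced closed path in a tree is trivial), and conversely the quotient of a periodic vertical line by its stabiliser --- which is infinite cyclic since $G$ acts freely on $\wt X$ and a nontrivial isometry of $\mathbb{R}$ without fixed points is a translation --- is an immersed circle in $X_s$.

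The one point that deserves a firmer footing is your parenthetical assertion that vertical trees are $\mathrm{CAT}(0)$-convex in $\wt X$, which is what guarantees that a combinatorial line inside a vertical tree is simultaneously a $\mathrm{CAT}(0)$ geodesic line, i.e.\ satisfies both clauses of \cref{def_paths_cycles_lines}. This is genuinely the only nontrivial geometric input, and you state it without justification. The reason it holds: in a $\VH$-complex no square has two adjacent vertical edges, so in $\link(v)$ two vertical vertices are never adjacent, hence lie at distance at least $\pi$ in the angular metric. Therefore the Alexandrov angle at any vertex between two vertical directions is at least $\pi$, the vertical $1$-skeleton is locally convex, and local convexity plus connectedness gives global convexity in a complete $\mathrm{CAT}(0)$ space. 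Since the statement is otherwise purely a bookkeeping exercise with covers, it would be worth making this convexity argument explicit rather than tacit.
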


By abuse of notation, we will often call the lift $\tilde{\phi} : \wt{C} \to \wt{X}$ as the line $\wt{C}$. Since the projection of the regular neighbourhood of a path onto the path is a deformation retraction, we have:

\begin{lemma} \label{lemma_regular_sphere_C_quotient_wt(C)}
$\partial N(C) \cong \partial N(\wt{C}) / \pi_1(C)$. \qed
\end{lemma}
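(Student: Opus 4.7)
The plan is to exhibit $N(\wt{C})$ as the cyclic covering of $N(C)$ corresponding to $\pi_1(C) \leq \pi_1(X) = G$; the statement for regular spheres will then follow immediately since $\partial N(-)$ is the union of cells disjoint from the underlying path, and this subset is preserved by the covering. Since $\wt{C}$ is an embedded line in $\wt{X}$, \cref{fact_N(P)_embedding} identifies $N(\wt{C})$ with the union of cells of $\wt{X}^{(2)}$ meeting $\wt{C}$, and each such cell $\tilde{\mathsf{c}}$ corresponds canonically to the single arc $\tilde{\mathsf{c}} \cap \wt{C}$ of $\wt{C}$. The generator of $\pi_1(C)$ acts on $\wt{X}$ with $\wt{C}$ as an axis, hence preserves $N(\wt{C})$ as a subcomplex; the universal covering $p : \wt{X} \to X$ restricts to the cyclic covering $\wt{C} \to C$ whose deck group is $\pi_1(C)$.

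Next I would define a cellular map $\Phi : N(\wt{C}) \to N(C)$ by $\tilde{\mathsf{c}} \mapsto (p(\tilde{\mathsf{c}}), K)$, where $K$ is the unique component of $\phi^{-1}(p(\tilde{\mathsf{c}}))$ containing the connected set $p(\tilde{\mathsf{c}} \cap \wt{C})$. The map $\Phi$ is $\pi_1(C)$-invariant by construction, so it descends to a cellular map $\bar\Phi : N(\wt{C})/\pi_1(C) \to N(C)$.

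To conclude I would check that $\bar\Phi$ is a cellular isomorphism. For surjectivity on cells, given $(\mathsf{c}, K)$, lift any point of $K \subset C$ to $\wt{C}$ and take the cell of $\wt{X}^{(2)}$ containing its image under $\tilde\phi$; this cell lies in $N(\wt{C})$ and maps to $(\mathsf{c}, K)$. For injectivity up to $\pi_1(C)$, if $\Phi(\tilde{\mathsf{c}}_1) = \Phi(\tilde{\mathsf{c}}_2) = (\mathsf{c}, K)$, then the arcs $\tilde{\mathsf{c}}_i \cap \wt{C}$ are two lifts of $K$ under $\wt{C} \to C$ and hence differ by a deck transformation in $\pi_1(C)$. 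Adjacency in both $N(\wt{C})$ and $N(C)$ is defined as adjacency of the associated arcs, so the local homeomorphism property of $\wt{C} \to C$ transfers adjacent pairs to adjacent pairs in both directions. The main point needing care, which I would settle by invoking convexity of cells of $\wt{X}^{(2)}$ together with convexity of the line $\wt{C}$ in the $\cat$ metric, is to confirm that $\tilde{\mathsf{c}} \cap \wt{C}$ is genuinely a single connected arc, so that the indexing of cells of $N(\wt{C})$ by cells of $\wt{X}^{(2)}$ meeting $\wt{C}$ agrees with the abstract definition via components of preimages.
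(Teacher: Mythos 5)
Your proof is correct and takes essentially the same approach as the paper, which records only a $\qed$ justified by the preceding remark that the regular neighbourhood deformation-retracts onto the path; your argument is a careful expansion of this idea, exhibiting $N(\wt{C}) \to N(C)$ as the normal covering with deck group $\pi_1(C)$ and restricting to the regular spheres. The one point you flag at the end — that $\tilde{\mathsf{c}} \cap \wt{C}$ is a single arc so that the cells of $N(\wt{C})$ match the cells of $\wt{X}^{(2)}$ meeting $\wt{C}$ — is indeed the only place requiring an argument, and your appeal to convexity of cells and of the $\cat$ geodesic line settles it.
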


\begin{defn}
A \emph{cyclic path} is an immersed combinatorial path $\rho : P \to X_{s}$ such that the initial and terminal vertices of $P$ have the same image while the initial and terminal edges of $P$ have distinct images. 

A cyclic path $P$ induces a quotient cycle $\phi_P : C_P \to X_{s}$, where $C_P$ is the quotient of $P$ obtained by gluing the initial and terminal vertices and defining $\phi_P([x]) \coloneqq \rho(x)$.
\end{defn}

\begin{defn}[Fundamental domain of a cycle]
Let $\phi: C \to X_s$ be a cycle. A cyclic path $\rho_C : P_C \to X_s$ with induced quotient cycle $C_{P_C}$ is said to be a \emph{fundamental domain of $C$} if the following diagram commutes.
\begin{center}
\begin{tikzcd}[sep=tiny]
C_{P_C} \arrow [rr, "\cong"] \arrow[dr, "\phi_{P_C}"'] && C \arrow [dl, "\phi"] \\
&X_s&
\end{tikzcd}
\end{center}

\end{defn}
\begin{remark}
It is easy to see that for the action of $\pi_1(C)$ on $\wt{C}$, a lift $\wt{P}_C$ of $P_C$ is a fundamental domain of $\wt{C}$ in the usual sense.
\end{remark}

\begin{defn}\label{defn_orthogonal_sphere}
Let $P_C$ be a fundamental domain of a cycle $C$. Let $u$ and $v$ be the initial and terminal vertices of $P_C$ and $a$ and $b$ the initial and terminal edges. Let $b_u$ be the vertex of $\partial N(u)$ that meets $b$ and $a_v$ the vertex of $\partial N(v)$ that meets $a$. The \emph{orthogonal sphere around $P_C$}, denoted by $\partial_{orth} N(P_C)$, is defined as the closure of $\partial N(P_C) \setminus (\{b_u\}^{+2} \cup \{a_v\}^{+2})$, where $\{b_u\}^{+2}$ (respectively $\{a_v\}^{+2}$) denotes the second cubical neighbourhood in $\partial N(P_C)$ of $b_u$ ($a_v$).
\end{defn}

Let $\wt{C}$ be a lift of $C$ and $\wt{P}_C \subset \wt{C}$ of $P_C$. Then note that 

\begin{fact}\label{fact_orthogonal_sphere_and_its_lift}
The natural map from $P_C \cong \wt{P}_C \hookrightarrow \wt{C}$ induces an embedding of graphs $\partial_{orth} N(P_C) \hookrightarrow \partial N(\wt{P}_C) \setminus \wt{C} \subset \partial N(\wt{C})$ as a deformation retract. Further , $\partial_{orth} N(P_C)$ is connected if and only if $\partial N(\wt{P}_C) \setminus \wt{C}$ is connected.
\end{fact}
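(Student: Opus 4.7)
The plan is to lift to $\wt{X}$ via the natural identification $P_C\cong\wt{P}_C$, locate precisely where the line $\wt{C}$ meets $\partial N(\wt{P}_C)$, and then exhibit a deformation retract of the complement onto the orthogonal sphere. The key observation is that the two excised second cubical neighbourhoods in the definition of $\partial_{orth}N(P_C)$ are precisely the combinatorial thickenings of the two points where $\wt{C}$ pierces $\partial N(\wt{P}_C)$.

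First I would set up a canonical combinatorial isomorphism $N(P_C)\cong N(\wt{P}_C)$ compatible with $P_C\cong\wt{P}_C$. Since the lifted path $\wt{P}_C\hookrightarrow\wt{X}$ is an embedding, $N(\wt{P}_C)$ is simply the union of cells of $\wt{X}^{(2)}$ whose closure meets $\wt{P}_C$. Because the cover $\wt{X}\to X$ is a local combinatorial isomorphism, cells of $\wt{X}^{(2)}$ meeting $\wt{P}_C$ that project to a given cell $\mathsf{c}$ of $X^{(2)}$ are in bijection with the components of $\rho^{-1}(\mathsf{c})\subset P_C$. This yields the desired isomorphism $N(\wt{P}_C)\cong N(P_C)$, which restricts to $\partial N(\wt{P}_C)\cong\partial N(P_C)$, and under which the vertex $b_u$ of the definition corresponds to the vertex $b_{\wt{u}}$ of $\partial N(\wt{u})\subset\partial N(\wt{P}_C)$: the quarter-point in $\wt{X}^{(2)}$ of the edge $\wt{b}\subset\wt{C}$ adjacent to $\wt{u}$ that projects to $b$. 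Similarly $a_v\leftrightarrow a_{\wt{v}}$.

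Next I would show that $\wt{C}\cap\partial N(\wt{P}_C)=\{b_{\wt{u}},a_{\wt{v}}\}$. The line $\wt{C}$ extends $\wt{P}_C$ by a single edge on each side, and beyond the quarter-points of those edges the points of $\wt{C}$ lie at distance at least $1/2$ from $\wt{P}_C$, hence outside $N(\wt{P}_C)$. Therefore $\partial N(\wt{P}_C)\setminus\wt{C}=\partial N(\wt{P}_C)\setminus\{b_{\wt{u}},a_{\wt{v}}\}$, and under the isomorphism above this contains $\partial_{orth}N(P_C)$ as the subcomplex complementary to the two second cubical neighbourhoods $\{b_{\wt{u}}\}^{+2}\cup\{a_{\wt{v}}\}^{+2}$, furnishing the required embedding; the inclusion $\partial N(\wt{P}_C)\setminus\wt{C}\subset\partial N(\wt{C})$ is automatic because cells of $N(\wt{P}_C)\subset N(\wt{C})$ disjoint from $\wt{C}$ are cells of $\partial N(\wt{C})$.

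Finally, to promote this inclusion to a deformation retract, I would retract each punctured $2$-ball $\{b_{\wt{u}}\}^{+2}\setminus\{b_{\wt{u}}\}$ inside the $1$-complex $\partial N(\wt{P}_C)$ radially onto its outer interface with $\partial_{orth}N(P_C)$ by sliding along the finitely many edge-paths of length at most $2$ emanating from the deleted centre, and analogously on the $a_{\wt{v}}$ side. The equivalence of connectedness in the second clause is then immediate, since deformation retracts preserve the number of connected components. The main technical delicacy is this final retraction: one must verify that the radial motion is well-defined and lands inside $\partial_{orth}N(P_C)$, and this is where the Brady--Meier structure of $\wt{X}$ enters, by ensuring that the link-theoretic neighbourhoods of $\wt{u}$ and $\wt{v}$ are suitably tame so that the interface between each punctured $2$-ball and $\partial_{orth}N(P_C)$ behaves as expected.
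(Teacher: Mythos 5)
The paper states this as a Fact with no proof, treating it as immediate from the definition of the orthogonal sphere; your argument correctly supplies the missing verification and follows the route the definition was clearly designed to support: identify $N(P_C)\cong N(\wt{P}_C)$, locate $\wt C\cap\partial N(\wt P_C)=\{b_{\wt u},a_{\wt v}\}$, and retract the punctured second cubical neighbourhoods onto their frontiers.

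Two small refinements. First, the phrase ``beyond the quarter-points the points of $\wt C$ lie at distance at least $1/2$ from $\wt P_C$'' is not quite the right justification (those points are merely at distance $>1/4$); what one wants is that the closed cells of $\wt X^{(2)}$ carrying $\wt C$ past the quarter-point are disjoint from $\wt P_C$, hence not in $N(\wt P_C)$, which is what actually pins down $\wt C\cap\partial N(\wt P_C)$ to exactly those two vertices. Second, the ``tameness'' you attribute to the Brady--Meier hypothesis is really already forced by the $\VH$ nonpositively curved structure: $\partial N(\wt u)$ is the barycentric subdivision of $\link(\wt u)$ and the link has no bigons, so $\{b_{\wt u}\}^{+2}$ is a genuine tree (a cone on the $d$ paths $b_{\wt u}\text{--}m_i\text{--}c_i$, $d=$ thickness of $b$), with distinct leaves $c_i$, each of which carries an edge leaving $\{b_{\wt u}\}^{+2}$ (since each $c_i$ is a horizontal link-vertex of valence two and the two incident link-edges go to distinct vertical directions by the no-bigon condition). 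That is exactly what guarantees the leaves survive into the closure defining $\partial_{orth}N(P_C)$, so the retraction of the half-open whiskers $(b_{\wt u},c_i]$ onto $c_i$ is well-defined. Brady--Meier suffices (it rules out valence-one vertices in $\partial N(P_C)$) but is heavier than needed. The remainder of your argument---the containment in $\partial N(\wt C)$ and the connectedness equivalence---is fine.
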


It thus follows from \cref{lemma_regular_sphere_C_quotient_wt(C)} that
\begin{lemma} \label{lemma_orthogonal_sphere_fundamental_domain}
The regular sphere around a cycle $C$ is isomorphic to the quotient of the orthogonal sphere around a fundamental domain $P_C$ of $C$ with the natural gluing induced by $\pi_1(C)$.
\end{lemma}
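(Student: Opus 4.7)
The plan is to descend \cref{fact_orthogonal_sphere_and_its_lift} through the quotient isomorphism $\partial N(C) \cong \partial N(\wt{C})/\pi_1(C)$ supplied by \cref{lemma_regular_sphere_C_quotient_wt(C)}. Fix a lift $\wt{P}_C \subset \wt{C}$ of $P_C$, write $\wt u, \wt v$ for the lifts of the endpoints $u, v$, and let $g \in \pi_1(C)$ be the generator with $g \cdot \wt{u} = \wt{v}$. Then the translates $\{g^n \wt{P}_C\}_{n \in \mathbb{Z}}$ tile $\wt{C}$, with consecutive tiles meeting at the single vertex $g^n\wt v = g^{n+1}\wt u$, and this induces the covering
\[
\partial N(\wt C) \;=\; \bigcup_{n\in \mathbb Z} g^n \bigl(\partial N(\wt P_C) \setminus \wt C\bigr),
\]
since any cell of $\partial N(\wt C)$ lies in some $\partial N(g^n \wt P_C)$ and, being additionally disjoint from $\wt C$, must lie in $\partial N(g^n \wt P_C)\setminus \wt C$.

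The adjacent tiles $\partial N(\wt P_C)\setminus \wt C$ and $g\bigl(\partial N(\wt P_C)\setminus \wt C\bigr)$ overlap substantially: both contain the entirety of $\partial N(\wt v)\setminus \bigl(\st(a_v)\cup \st(b_v)\bigr)$. To extract a strict fundamental domain for the $\pi_1(C)$-action I would trim this overlap away. Using the identifications $\partial N(\wt u) \cong \partial N(u)$ and $\partial N(\wt v)\cong \partial N(v)$ induced by the covering $\wt X \to X$ and noting that $g\cdot a_u = a_v$ and $g\cdot b_u = b_v$, the collar around $b_u$ at the $\wt u$-end is the piece contributed twice by $\partial N(\wt P_C)\setminus \wt C$ and $g^{-1}\bigl(\partial N(\wt P_C)\setminus \wt C\bigr)$, and analogously for the collar around $a_v$ at the $\wt v$-end. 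Up to the deformation retract provided by \cref{fact_orthogonal_sphere_and_its_lift}, these two collars correspond precisely to the second cubical neighbourhoods $\{b_u\}^{+2}$ and $\{a_v\}^{+2}$ of \cref{defn_orthogonal_sphere}. Consequently, excising them exhibits $\partial_{orth} N(P_C)$ as a strict fundamental domain for the $\pi_1(C)$-action on $\partial N(\wt C)$.

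Passing to the quotient $\partial N(\wt C)/\pi_1(C)$ now amounts to taking a single copy of $\partial_{orth} N(P_C)$ and regluing the two excised collars via the deck transformation $g$ --- which is, by construction, the natural gluing induced by $\pi_1(C)$. Combined with \cref{lemma_regular_sphere_C_quotient_wt(C)}, this yields the claimed isomorphism $\partial N(C)\cong \partial_{orth} N(P_C)/\sim$.

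The main technical subtlety is justifying why exactly two cubical layers (the $+2$) should be removed around $b_u$ and $a_v$ rather than one. This reflects the construction of $N(P)$ from the second cubical subdivision $Z^{(2)}$: the cells of $Z^{(2)}$ around the shared endpoint vertex $\wt v$ that appear in both $\partial N(\wt P_C)\setminus \wt C$ and $g\bigl(\partial N(\wt P_C)\setminus \wt C\bigr)$ span two layers of $Z^{(2)}$-cells. Once this layer-counting is pinned down, the quotient description of $\partial N(C)$ follows at once.
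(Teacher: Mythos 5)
The overall strategy — tile $\partial N(\wt C)$ by $\pi_1(C)$-translates of $\partial N(\wt P_C)\setminus\wt C$, compare with $\partial_{orth}N(P_C)$ via \cref{fact_orthogonal_sphere_and_its_lift}, and descend through \cref{lemma_regular_sphere_C_quotient_wt(C)} — is the natural way to unpack the paper's ``it thus follows,'' and your covering identity and your identification $g\wt b_{\wt u}=\wt b_{\wt v}$, $g\wt a_{\wt v}=$ far end are all correct. However, there is a genuine error in the middle step: excising $\{b_u\}^{+2}$ and $\{a_v\}^{+2}$ does \emph{not} make $\partial_{orth}N(P_C)$ a strict fundamental domain, and the ``collar around $b_u$'' is not the piece contributed twice.

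To see the problem, recall from \cref{lemma_regular_sphere_at_combinatorial_path} that the cap of $\partial N(\wt P_C)$ at $\wt u$ is (essentially) all of $\partial N(\wt u)\setminus\mathring{\st}(\wt a_{\wt u})$, while the cap of $g^{-1}\partial N(\wt P_C)$ at $\wt u$ is $\partial N(\wt u)\setminus\mathring{\st}(\wt b_{\wt u})$. You correctly write that both tiles contain $\partial N(\wt u)\setminus(\st(a_u)\cup\st(b_u))$; that \emph{is} the double-counted region, and it is a large chunk of the vertex sphere sitting well away from $b_u$ and $a_u$. By contrast $\{b_u\}^{+2}$ is a small ball concentrated at the puncture $\wt b_{\wt u}$ — roughly disjoint from the overlap. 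So your next sentence, ``the collar around $b_u$ ... is the piece contributed twice,'' contradicts the sentence before it, and after excising the two $+2$-balls the translates of $\partial_{orth}N(P_C)$ in $\partial N(\wt C)$ still intersect in all of $\partial N(\wt u)\setminus(\{a_u\}^{+2}\cup\{b_u\}^{+2})$. (The actual purpose of the $+2$ in \cref{defn_orthogonal_sphere} is not to kill overlap but to push $\partial_{orth}N(P_C)$ inside $\partial N(\wt C)$, since cells of $\partial N(\wt P_C)$ adjacent to the puncture $\wt b_{\wt u}$ lie in $\st(\wt b_{\wt u})$ and are \emph{not} cells of $\partial N(\wt C)$.) The lemma is nevertheless true, but the argument has to be phrased differently: the map $\partial_{orth}N(P_C)\hookrightarrow\partial N(\wt C)\to\partial N(C)$ is surjective because $\partial_{orth}N(P_C)\cup g^{-1}\partial_{orth}N(P_C)$ fills in the entire ring $\partial N(\wt u)\setminus(\mathring{\st}(\wt a_{\wt u})\cup\mathring{\st}(\wt b_{\wt u}))$, and the ``natural gluing induced by $\pi_1(C)$'' is the identification of the $u$-cap of $\partial_{orth}N(P_C)$ with the $v$-cap along the large common region $\partial N\setminus(\{a\}^{+2}\cup\{b\}^{+2})$ — not a thin boundary regluing. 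You should replace the strict-fundamental-domain claim with this surjectivity-plus-fiber-identification argument.
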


Let $\rho_C : P_C \to X_s$ and $e'$ an edge in $P_C$. By \cref{cor_square_meets_halfspace}, we have:

\begin{lemma}\label{lemma_edge_square_orthogonal_sphere}
Let $K$ be a component of $\partial_{orth} N(P_C)$. Then there exists a square $\mathsf{s}$ in $N(P_C)$ that meets $e'$ and $\mathsf{s} \cap \partial_{orth} N(P_C) \subset K$. \qed
\end{lemma}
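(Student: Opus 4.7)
The plan is to lift everything to $\wt{X}$ and adapt the proof of \cref{cor_square_meets_halfspace} using the path-abundance lemma along the line $\wt{C}$. Let $\wt{P}_C \subset \wt{C}$ be a lift of $P_C$, let $\wt{e}' \subset \wt{P}_C$ be a lift of $e'$, and let $\wt{m}$ be the midpoint of $\wt{e}'$. By \cref{fact_orthogonal_sphere_and_its_lift}, $\partial_{orth} N(P_C)$ embeds as a deformation retract of $\partial N(\wt{P}_C) \setminus \wt{C}$, so the component $K$ lifts to a component $\wt{K}$ of $\partial N(\wt{P}_C) \setminus \wt{C}$. It then suffices to exhibit a square $\wt{\mathsf{s}}$ of $\wt{X}$ containing $\wt{e}'$ with $\wt{\mathsf{s}} \cap (\partial N(\wt{P}_C) \setminus \wt{C}) \subset \wt{K}$ and project to $X$.

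Next, I would pick any vertex $\wt{h} \in \wt{K}$. Since the only vertices of $\partial N(\wt{P}_C)$ that lie on $\wt{C}$ are $\wt{a}_{\wt{u}}$ and $\wt{b}_{\wt{v}}$, which are removed in passing to $\partial N(\wt{P}_C) \setminus \wt{C}$, we have $\wt{h} \notin \wt{C}$. The combinatorial line $\wt{C}$ is, by definition, a $\cat$-geodesic, so \cref{lemma_path_abundance} applied with $P = \wt{C}$, $x = \wt{h}$, and $p = \wt{m}$ yields a path $\alpha$ from $\wt{h}$ to $\wt{m}$ with $\alpha \cap \wt{C} = \{\wt{m}\}$. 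Let $\wt{\mathsf{s}}$ be the last square of $\wt{X}$ that $\alpha$ traverses before reaching $\wt{m}$; by construction it contains $\wt{e}'$, and its projection $\mathsf{s}$ is a square of $N(P_C)$ that meets $e'$.

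The verification that $\wt{\mathsf{s}} \cap (\partial N(\wt{P}_C) \setminus \wt{C}) \subset \wt{K}$ proceeds in two steps. First, because $\wt{C}$ is a $\cat$-geodesic, it cannot make a $90^\circ$ turn inside a single square, so $\wt{C} \cap \wt{\mathsf{s}} = \wt{e}'$. Consequently $\wt{\mathsf{s}} \cap (\partial N(\wt{P}_C) \setminus \wt{C})$ coincides with the subcomplex of $\wt{\mathsf{s}} \cap N(\wt{P}_C)$ disjoint from $\wt{e}'$, which is a connected ``half-strip'' in the subdivided $\wt{\mathsf{s}}$, hence is contained in a single component of $\partial N(\wt{P}_C) \setminus \wt{C}$. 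Second, to identify this component as $\wt{K}$, the prefix of $\alpha$ ending at its entry cell into $\wt{\mathsf{s}}$ provides a route in $\wt{X} \setminus \wt{C}$ from $\wt{h}$ to the half-strip; iterating the inductive construction in the proof of \cref{lemma_path_abundance} (using the no-cut-points property of regular spheres around compact subpaths of $\wt{P}_C$ furnished by \cref{prop_regular_sphere_compact_path_bradymeier}) upgrades this route to a path lying inside $\partial N(\wt{P}_C) \setminus \wt{C}$.

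The main technical obstacle is precisely this last upgrade: promoting the $\wt{X}$-path $\alpha$, which only avoids $\wt{C}$, to a path that stays within $\partial N(\wt{P}_C) \setminus \wt{C}$. This is overcome by importing the regular-sphere step from the proof of the path-abundance lemma itself and applying it repeatedly along the compact subsegments of $\wt{P}_C$, each of which has a cut-point-free regular sphere.
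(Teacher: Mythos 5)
Your lifting step and the use of \cref{lemma_path_abundance} applied to the line $\wt{C}$ are sound, but they only reprove \cref{cor_square_meets_halfspace}: the argument shows that the trace of your chosen square $\wt{\mathsf{s}}$ lies in the same component of $\partial N(\wt{C})$ as $\wt{h}$ (equivalently, in the half-space of $\wt{C}$ containing $\wt{h}$). That is weaker than what the lemma asserts. The lemma needs the trace to lie in the same component of $\partial N(\wt{P}_C) \setminus \wt{C}$, and a priori this is a strictly finer decomposition: several components of $\partial N(\wt{P}_C) \setminus \wt{C}$ can sit inside a single component of $\partial N(\wt{C})$, because two points of $\partial N(\wt{P}_C) \setminus \wt{C}$ can be joined in $\wt{X}\setminus \wt{C}$ by a path that leaves $N(\wt{P}_C)$ entirely. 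Your proposed ``upgrade'' --- promoting the route $\alpha$, which merely avoids $\wt{C}$, to a path inside $\partial N(\wt{P}_C) \setminus \wt{C}$ --- is exactly the statement that $\wt{h}$ and the trace lie in the same component of $\partial N(\wt{P}_C) \setminus \wt{C}$, i.e.\ it is the thing to be proved; the path-abundance machinery gives no control on how far $\alpha$ wanders from $\wt{P}_C$ (the segment $[p',\wt{m}]$ it constructs can be much longer than, and even contain, $\wt{P}_C$), so there is no reason it can be pushed into the regular sphere of the \emph{compact} path. (Also a minor notational slip: the vertices removed in forming $\partial_{orth}N(P_C)$ are $b_u$ and $a_v$, not $a_u$ and $b_v$.)

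The fix does not require \cref{lemma_path_abundance} at all; a purely local argument closes the gap. Write $\wt{P}_C = \wt{Q}_1 \cdot \wt{e}' \cdot \wt{Q}_2$ and invoke \cref{lemma_regular_sphere_at_combinatorial_path} to present $\partial N(\wt{P}_C)$ as the spliced graph $\partial N(\wt{Q}_1) \bigoplus \partial N(\wt{Q}_2)$, where the gluing set $S$ is in bijection with the squares containing $\wt{e}'$, each gluing segment being exactly the portion of that square's trace lying in $\partial N(m)$ for $m$ the midpoint of $\wt{e}'$. The two points $\wt{b}_{\wt{u}}$ and $\wt{a}_{\wt{v}}$ of $\wt{C} \cap \partial N(\wt{P}_C)$ land one in each of $\partial N(\wt{Q}_1)$ and $\partial N(\wt{Q}_2)$ (or both in one of them if $\wt{e}'$ is an end edge of $\wt{P}_C$). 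If a component $K$ of $\partial N(\wt{P}_C)\setminus\wt{C}$ missed $S$, it would lie entirely in, say, $\partial N(\wt{Q}_1) \setminus \mathring{\st}(v_1)$; but $\partial N(\wt{Q}_1)$ has no cut points by \cref{prop_regular_sphere_compact_path_bradymeier}, so $\partial N(\wt{Q}_1)\setminus\{\wt{b}_{\wt{u}}\}$ is connected, and any path in it from $K$ to $v_1$ must hit a vertex adjacent to $v_1$, i.e.\ a point of $S$, before entering $\mathring{\st}(v_1)$ --- contradicting $K \cap S = \emptyset$. Hence every component of $\partial N(\wt{P}_C)\setminus\wt{C}$, equivalently of $\partial_{orth}N(P_C)$ by \cref{fact_orthogonal_sphere_and_its_lift}, contains the gluing segment of some square $\mathsf{s}$ at $\wt{e}'$; since $\wt{C}$, being a combinatorial geodesic, meets $\mathsf{s}$ only in $\wt{e}'$, the whole trace of $\mathsf{s}$ is a connected arc in $\partial N(\wt{P}_C)\setminus\wt{C}$ lying in that component, as required.
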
 

\begin{defn} A cycle $C$ is a \emph{UC-separating (universal cover separating) cycle} if $\wt{C}$ is a separating line. $C$ is \emph{strongly UC-separating} if $\partial N(C)$ is not connected.
\end{defn}

By \cref{lemma_L_separates_partial_N(P)}, we have 
\begin{lemma}\label{lemma_orth_sphere_uc_separating}
If $C$ is a UC-separating cycle, then $\partial_{orth} N(P_C)$ is not connected.
\end{lemma}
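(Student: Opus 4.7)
The plan is to pass to the universal cover, apply the already-established separation lemma for subpaths of a separating line, and then transfer the disconnection back down to $\partial_{orth} N(P_C)$ via the retraction described in Fact \ref{fact_orthogonal_sphere_and_its_lift}.

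More precisely, since $C$ is UC-separating, $\wt{C}$ is a separating line in $\wt{X}$. Lift the fundamental domain $P_C$ to a combinatorial subpath $\wt{P}_C \subset \wt{C}$; this is legitimate because $P_C$ is a vertical cyclic path in $X_s$ and so lifts to a combinatorial subpath of the vertical periodic line $\wt{C}$. By \cref{lemma_L_separates_partial_N(P)} applied to the separating line $L = \wt{C}$ and the subpath $P = \wt{P}_C$, the line $\wt{C}$ separates $\partial N(\wt{P}_C)$: there exist points of $\partial N(\wt{P}_C)$ lying in distinct components of $\wt{X} \setminus \wt{C}$.

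Next I claim this forces $\partial N(\wt{P}_C) \setminus \wt{C}$ itself to be disconnected. Indeed, any path inside $\partial N(\wt{P}_C) \setminus \wt{C}$ is in particular a path in $\wt{X} \setminus \wt{C}$, so it cannot connect two points that lie in different components of $\wt{X} \setminus \wt{C}$. Thus the two witness points from the previous paragraph remain in distinct components of $\partial N(\wt{P}_C) \setminus \wt{C}$.

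Finally, I invoke \cref{fact_orthogonal_sphere_and_its_lift}, which states that $\partial_{orth} N(P_C)$ embeds in $\partial N(\wt{P}_C) \setminus \wt{C}$ as a deformation retract and that the two spaces have the same number of components. Since $\partial N(\wt{P}_C) \setminus \wt{C}$ is disconnected, so is $\partial_{orth} N(P_C)$, which is exactly the desired conclusion. I do not foresee a genuine obstacle here; the argument is essentially a bookkeeping chain \emph{(UC-separation of $C$)} $\Rightarrow$ \emph{(separation of $\partial N(\wt{P}_C)$ by Lemma \ref{lemma_L_separates_partial_N(P)})} $\Rightarrow$ \emph{(disconnection of $\partial N(\wt{P}_C) \setminus \wt{C}$)} $\Rightarrow$ \emph{(disconnection of $\partial_{orth} N(P_C)$ via \cref{fact_orthogonal_sphere_and_its_lift})}, with the only subtle point being the verification that ``$\wt{C}$ separates $\partial N(\wt{P}_C)$'' in the paper's sense upgrades to genuine disconnection of $\partial N(\wt{P}_C) \setminus \wt{C}$, which follows from the ambient disconnection argument above.
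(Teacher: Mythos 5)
Your proof is correct and is exactly the argument the paper intends: the paper's one-line proof cites \cref{lemma_L_separates_partial_N(P)} and leaves the invocation of \cref{fact_orthogonal_sphere_and_its_lift} (stated immediately above the lemma) implicit, whereas you spell out both steps. The intermediate step you flag — upgrading ``$\wt{C}$ separates $\partial N(\wt{P}_C)$'' to disconnectedness of $\partial N(\wt{P}_C) \setminus \wt{C}$ via the ambient separation in $\wt{X}$ — is handled correctly.
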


\begin{lemma} \label{lemma_regular_sphere_C_disconnected}
$C$ is strongly UC-separating if and only if the following two conditions are satisfied: 
\begin{enumerate}
\item $C$ is a UC-separating cycle and
\item $\pi_1(C)$ does not act transitively on the set of half-spaces of $\wt{C}$.
\end{enumerate}

\end{lemma}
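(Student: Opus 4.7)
The plan is to reduce both directions to a single observation about quotients of disconnected spaces by a group permuting the components, and then apply the two identification lemmas already in place.

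The starting point is \cref{lemma_regular_sphere_C_quotient_wt(C)}, which gives $\partial N(C) \cong \partial N(\wt{C})/\pi_1(C)$, together with \cref{lemma_half-spaces_equal_components_reg_sphere_L}, which puts the set of components of $\partial N(\wt{C})$ in natural bijection with the set of half-spaces of $\wt{C}$. Since $\pi_1(C)$ preserves $\wt{C}$ and acts by homeomorphisms, the induced action on $\partial N(\wt{C})$ permutes its components, and this action corresponds under the bijection above to the action of $\pi_1(C)$ on the set of half-spaces of $\wt{C}$.

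Next I would establish the key topological claim: if $Y$ is a space whose connected components $Y_1, \ldots, Y_k$ are permuted by a group action of $H$, then $Y/H$ is connected if and only if $H$ acts transitively on $\{Y_1, \ldots, Y_k\}$. For the forward direction, group the components into orbits $O_1, \ldots, O_m$; each union $\bigcup_{Y_i\in O_j} Y_i$ is open, closed, and $H$-invariant, so its image is a clopen subset of $Y/H$, partitioning the quotient into $m$ nonempty clopen pieces. Hence $Y/H$ disconnected forces $m\geq 2$. Conversely, if $H$ acts transitively on the components, any two points of $Y/H$ lift to points $y \in Y_i$ and $y'\in Y_j$; choosing $h\in H$ with $hY_i = Y_j$, the points $hy$ and $y'$ lie in the connected set $Y_j$, giving a path in $Y/H$ between their classes.

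Applying this to $Y = \partial N(\wt{C})$ with $H = \pi_1(C)$: if $C$ is not UC-separating, then $\wt{C}$ has only one half-space, so $\partial N(\wt{C})$ has a single component and $\partial N(C)$ is connected; if $\pi_1(C)$ acts transitively on the half-spaces, the claim above gives $\partial N(C)$ connected. This proves the contrapositive of ($\Leftarrow$). Conversely, suppose both hypotheses hold: UC-separation provides at least two components/half-spaces, and the failure of transitivity forces at least two orbits, so by the claim $\partial N(C)$ is disconnected, i.e.\ $C$ is strongly UC-separating.

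There is no real obstacle here; the main point is only to be careful that the bijection of \cref{lemma_half-spaces_equal_components_reg_sphere_L} is $\pi_1(C)$-equivariant (which is immediate from its construction, sending a half-space $Y$ to $Y \cap \partial N(\wt{C})$), so that transitivity on half-spaces and transitivity on components of the regular sphere are the same condition.
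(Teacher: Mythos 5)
Your proposal is correct and follows essentially the same route as the paper: both reduce via $\partial N(C)\cong \partial N(\wt{C})/\pi_1(C)$ (\cref{lemma_regular_sphere_C_quotient_wt(C)}) and the bijection of \cref{lemma_half-spaces_equal_components_reg_sphere_L}, then observe that connectivity of the quotient corresponds to transitivity of the $\pi_1(C)$-action on components. The paper phrases the middle step via lifting a component of $\partial N(C)$ to a component of $\partial N(\wt{C})$, whereas you give a slightly more explicit orbit/clopen argument and spell out the equivariance of the bijection; the content is the same.
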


\begin{proof}
Recall that $\partial N(C)\cong \partial N(\wt{C}) / \pi_1(C)$ (\cref{lemma_regular_sphere_C_quotient_wt(C)}).
A component of $\partial N(C)$ lifts to a component of $\partial N(\wt{C})$. So if $\partial N(C)$ is connected, then either $\partial N(\wt{C})$ is itself connected or every component of $\partial N(\wt{C})$ projects onto $\partial N(C)$. So $\pi_1(C)$ acts transitively on the components of $\partial N(\wt{C})$ and therefore on the components of $\wt{X} \setminus \wt{C}$ (\cref{lemma_half-spaces_equal_components_reg_sphere_L}). 
The converse is clear. \end{proof}

\begin{defn}
A cycle $\phi' : C' \to X_{s}$ is an \emph{$n^{th}$-power} of the cycle $\phi : C \to X_{s}$ if there exists an $n$-fold covering map $\psi: C' \to C$ such that the following diagram commutes. 
\begin{center}
\begin{tikzcd}[row sep=tiny]
C' \arrow[dd, "\psi"] \arrow[dr, "\phi'"] & \\
 & X_{s} \\
C \arrow[ur, "\phi"] &
\end{tikzcd}
\end{center}
\end{defn}

\begin{lemma}\label{lemma_power_uc_separatig_is_separating}
Let $N$ be such that the thickness of any edge of $X$ is at most $N$. Given a UC-separating cycle $C$, there exists $n\leq N$ such that the regular sphere around an $n^{\mathrm{th}}$ power of $C$ is not connected. 
\end{lemma}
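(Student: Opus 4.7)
The plan is to exploit \cref{lemma_regular_sphere_C_disconnected}, which reduces strong UC-separation to the single combinatorial condition that $\pi_1(C)$ acts non-transitively on the (finite) set of half-spaces of $\wt{C}$. My first observation is that for any $n \geq 1$, a lift $\wt{C^n}$ is literally the same line in $\wt{X}$ as $\wt{C}$: the covering map $C^n \to C$ corresponds to passing to the index-$n$ subgroup $\pi_1(C^n) \leq \pi_1(C)$, which acts on $\wt{C}$ with $n$ times the translation length. Consequently the set of half-spaces of $\wt{C^n}$ coincides with that of $\wt{C}$, so $C^n$ is UC-separating whenever $C$ is, and condition (1) of \cref{lemma_regular_sphere_C_disconnected} is automatic.

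Next I would bound the number $k$ of half-spaces of $\wt{C}$. Pick any edge $e$ of $\wt{C}$. By \cref{cor_square_meets_halfspace} together with \cref{lemma_half-spaces_equal_components_reg_sphere_L}, the half-spaces of $\wt{C}$ are in bijection with components of $\partial N(\wt{C})$, each of which contains a distinct square of $\wt{X}$ incident to $e$. Since the thickness of $e$ is at most $N$, we conclude $k \leq N$. Moreover $k \geq 2$ because $\wt{C}$ separates $\wt{X}$.

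The argument then splits on the action of $\pi_1(C)$ on the $k$ half-spaces. If this action is already non-transitive, \cref{lemma_regular_sphere_C_disconnected} shows that $C$ itself is strongly UC-separating, and we take $n = 1$. Otherwise $\pi_1(C) \cong \mathbb{Z}$ acts transitively on a set of size $k$, so by the orbit-stabiliser theorem the stabiliser of any half-space is the unique index-$k$ subgroup of $\pi_1(C)$, which is precisely $\pi_1(C^k)$. Thus $\pi_1(C^k)$ fixes every half-space of $\wt{C^k} = \wt{C}$; since $k \geq 2$ this action is non-transitive, and \cref{lemma_regular_sphere_C_disconnected} applied to $C^k$ gives that $\partial N(C^k)$ is disconnected, with $n = k \leq N$.

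The only subtle point is the identification of $\wt{C^n}$ with $\wt{C}$ as subsets of $\wt{X}$, together with the matching identification of their half-space sets. Once this is in place, the result is a short finite orbit-stabiliser calculation inside $\mathbb{Z}$ combined with the thickness bound furnished by \cref{cor_square_meets_halfspace}.
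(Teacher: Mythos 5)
Your proof is correct and follows essentially the same approach as the paper: bound the number $k$ of half-spaces of $\wt{C}$ by $N$ via \cref{cor_square_meets_halfspace}, then observe that a subgroup of $\pi_1(C)$ of index $k$ fails to act transitively on these half-spaces and invoke \cref{lemma_regular_sphere_C_disconnected}. Your write-up simply makes the paper's terse "there exists a subgroup $H$ of index at most $N$ that does not act transitively" explicit via the orbit-stabiliser argument in $\mathbb{Z}$ and the case split on whether $n=1$ already works.
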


\begin{proof}
By \cref{cor_square_meets_halfspace}, the number of half-spaces of $\wt{C}$ is at most $N$. Thus there exists a subgroup $H$ of index at most $N$ of $\pi_1(C)$ that does not act transitively on the set of half-spaces of $\wt{C}$. The required cycle $C'$ is the quotient of $\wt{C}$ by $H$.
\end{proof}

\begin{defn}
Let $\rho_C : P_C \to X_s$ be a fundamental domain of a cycle $C$. 
A \emph{subcycle} of $C$ is the quotient cycle of a cyclic path $\rho_C|_P : P \to X_s$ with $P \subset P_C$.
\end{defn}

Observe that if $C'$ is an $n^{th}$-power of $C$, then $C$ is a subcycle of $C'$. We will often use this fact.

\begin{defn}
A UC-separating cycle $C$ has a \emph{self-crossing} if $\wt{C}$ and a translate cross.
\end{defn}

\begin{defn} \label{def_splitting_cycle}
A cycle $C$ is a \emph{splitting cycle} if the following conditions are satisfied: \begin{enumerate}
\item $C$ is strongly UC-separating,
\item $\pi_1(C)$ is equal to the stabiliser of a proper subset of the set of half-spaces of $\wt{C}$, and
\item $C$ has no self-crossings.
\end{enumerate}
\end{defn}

\begin{remark}
By \cref{prop_cyclic_splittings_induce_separating_lines}, $G$ splits over $\pi_1(C)$ whenever $C$ is a splitting cycle. 
\end{remark}

We will now examine when $C$ can have self-crossings. We start with the following.

\begin{lemma} \label{lemma_length_lcm}
Let $L_1$ and $L_2$ be vertical lines of $\wt{X}$ stabilised by $H_1$ and $H_2$ respectively. Let $n_i$ be the translation length of a generator of $H_i$. If the length of $P = L_1 \cap L_2$ is at least LCM($n_1,n_2$), then $P = L_1$.
\end{lemma}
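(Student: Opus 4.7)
The strategy is to exploit the periodicity of both lines to extract a common power of the stabilising generators. First, since $P$ has positive length it contains a vertical edge, so $L_1$ and $L_2$ lie in a common vertical tree $\wt{X}_s$; consequently $P$ is a connected subtree of each line, hence an interval (a bounded segment, a ray, or a line) in $L_1$.

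Next, choose generators $g_1 \in H_1$ and $g_2 \in H_2$ so that they translate $L_1$ and $L_2$ in the same direction along $P$, replacing one by its inverse if necessary. Set $\ell = \mathrm{LCM}(n_1, n_2)$ and $k_i = \ell/n_i$. Pick a vertex $v \in P$ near the ``left'' end of $P$ in the chosen direction. Since $|P| \geq \ell$, the vertex at $P$-distance $\ell$ from $v$ in that direction still lies in $P$, and this vertex equals both $g_1^{k_1}\cdot v$ (computed in $L_1$) and $g_2^{k_2}\cdot v$ (computed in $L_2$). Freeness of the $G$-action on $\wt{X}$ then forces
\[
h := g_1^{k_1} = g_2^{k_2}.
\]

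The element $h$, being a non-trivial power of a generator of each $H_i$, stabilises both $L_1$ and $L_2$ and translates each by $\ell > 0$. Hence
\[
h(P) = h(L_1) \cap h(L_2) = L_1 \cap L_2 = P.
\]
Identifying $L_1 \cong \mathbb{R}$, the restriction of $h$ to $L_1$ is a non-trivial translation, and the only non-empty connected subset of $\mathbb{R}$ invariant under such a translation is $\mathbb{R}$ itself. We conclude $P = L_1$.

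The only delicate point is the orientation compatibility between $g_1$ and $g_2$, which is cosmetic since we may always replace a generator by its inverse. Everything else uses only that the vertical components of $\wt{X}^1$ are trees, freeness of the deck action (so that $g_1^{k_1}v = g_2^{k_2}v$ implies $g_1^{k_1} = g_2^{k_2}$), and the observation that any power of a generator of $\mathrm{Stab}(L_i)$ preserves $L_i$.
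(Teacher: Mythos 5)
Your proof is correct and takes essentially the same route as the paper's: choose an endpoint of $P$, use the two periodicities to produce the common element $h = g_1^{k_1} = g_2^{k_2}$ at a point of $P$, and invoke freeness of the $G$-action to identify them. The only (cosmetic) difference is in the final step: the paper concludes $L_1 = L_2$ from the uniqueness of the axis of $h$ in the vertical tree, whereas you observe directly that $P$ is a nonempty connected $h$-invariant subset of $L_1 \cong \mathbb{R}$ and so must be all of $L_1$.
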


Recall that $L_i$ is an axis of $H_i$. Hence a generator of $H_i$ translates every point of $L_i$ by $n_i$ (see Theorem II.6.8(i) of \cite{bridsonhaefliger}). Since any element of $H_i$ takes vertices to vertices, $n_i$ is indeed an integer. 

\begin{proof}
Suppose that $P$ contains a segment of length LCM$(n_1,n_2) = k$. Let $v$ be a terminal point of $P$. Choose generators $h_1 \in H_1$ and $h_2 \in H_2$ such that $h_i(v) \in P$. 
Since the length of $P$ is at least $k$, $h_i^{k/n_i}(v) \in P$ and hence $h_1^{k/n_1}(v) = h_2^{k/n_2}(v)$. As $G$ acts freely on $\wt{X}$, $h_1^{k/n_1} = h_2^{k/n_2}$ and hence $L_1 = L_2$.
\end{proof}

\begin{cor} \label{cor_P_embeds}
If $L_2$ is a translate of a periodic vertical line $L_1$, then either $L_2 = L_1$ or $P$ embeds in $L_1 / H_1$, where $H_1$ is the stabiliser of $L_1$.
\end{cor}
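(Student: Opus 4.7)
The plan is to deduce the corollary almost immediately from \cref{lemma_length_lcm}, with only two small observations to add: that the translation lengths of $H_1$ and its conjugate stabilising $L_2$ coincide, and that the intersection of two $\cat$ geodesic lines is connected.

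First I would observe that since $L_2$ is a translate of $L_1$, say $L_2 = gL_1$ for some $g \in G$, the stabiliser $H_2$ of $L_2$ is $gH_1g^{-1}$, and a generator of $H_2$ acts on $L_2$ with the same translation length $n$ as a generator of $H_1$ on $L_1$. In particular $n_1 = n_2 = n$ and $\mathrm{LCM}(n_1, n_2) = n$.

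Next I would note that $P = L_1 \cap L_2$ is convex, being the intersection of two convex subsets of the $\cat$ space $\wt{X}$; since $P \subset L_1$, it is therefore a (possibly empty, possibly single-point) segment of $L_1$. Assume $L_2 \neq L_1$. Then by the contrapositive of \cref{lemma_length_lcm}, the length of $P$ is strictly less than $n$.

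Finally I would verify that the natural projection $\pi : P \hookrightarrow L_1 \to L_1 / H_1$ is injective. The quotient $L_1/H_1$ is a circle of circumference $n$, since a generator of $H_1$ translates every point of its axis $L_1$ by $n$. If $p_1 \neq p_2$ in $P$ had the same image under $\pi$, then $p_2 = h p_1$ for some nontrivial $h \in H_1$, whence $d(p_1, p_2) \geq n$. But $P$ is a connected subset of $L_1$ containing both $p_1$ and $p_2$, so its length would be at least $n$, contradicting the previous paragraph. Hence $\pi$ restricted to $P$ is injective, i.e.\ $P$ embeds in $L_1/H_1$.

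There is no real obstacle here; the only subtlety is distinguishing the length estimate (strictly less than $n$) from the circumference of the target circle (equal to $n$), which is exactly what makes the embedding statement work.
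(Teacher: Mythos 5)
Your proof is correct and takes the same route the paper intends: the corollary is left as a direct deduction from \cref{lemma_length_lcm}, and your argument simply fills in the details (translation lengths are conjugation-invariant so $\mathrm{LCM}(n_1,n_2)=n$, convexity forces $P$ to be a segment, and a connected subset of $L_1$ of length $<n$ injects into the circle $L_1/H_1$ of circumference $n$).
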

In particular, for a cycle $C$, if $g\cdot \widetilde{C} \neq \wt{C}$, then $P = g\cdot \widetilde{C} \cap \wt{C}$ embeds in $C$.

\begin{defn} \label{defn_self_intersection}
A segment $P \subsetneq C$ is said to be a \emph{component of self-intersection} of $C$ if there exists a translate $g \wt{C} \neq \wt{C}$ such that the projection to $C$ of $\wt{C} \cap g\wt{C}$ is equal to $P$. We say that there is a \emph{self-crossing of $C$ at $P$} if there exists a $g \in G$ such that $\wt{C} \cap g \wt{C} = P$ and $\wt{C}$ and $g \wt{C}$ cross. 
\end{defn}

\begin{fact} \label{fact_sphere_P_embeds_sphere_C}
Let $P \subset C$ be a segment so that a lift of $P$ in $\wt{C}$ is isomorphic to $P$ (and hence also denoted by $P$). Then $\partial N(P) \cap \partial N(C) \simeq \partial N(P) \setminus \wt{C}$. In other words, there is a self-crossing at $P$ only if $g \wt{C}$ meets different components of $\partial N(P) \cap \partial N(C)$, by \cref{prop_local_crossing_lines}.
\end{fact}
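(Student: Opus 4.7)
The plan is to identify $\partial N(P)$ and $\partial N(\wt{C})$ as subcomplexes of $\wt{X}^{(2)}$ via \cref{fact_N(P)_embedding} (using that $P$ embeds in $\wt{C}$ and hence in $\wt{X}$), and to exploit \cref{lemma_regular_sphere_C_quotient_wt(C)} to lift the intersection $\partial N(P) \cap \partial N(C)$ up to $\partial N(P) \cap \partial N(\wt{C})$ in $\wt{X}$, where everything is concrete.

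The first step is to pin down which cells of $\partial N(P)$ meet $\wt{C}$. Since cells of $\partial N(P)$ are cells of $N(P)$ disjoint from $P$, and since $P$ covers all of $\wt{C} \cap N(P)$ except near the two endpoints of $P$, the set $\partial N(P) \cap \wt{C}$ reduces to two points — the midpoints (in the second cubical subdivision) of the two edges of $\wt{C}$ immediately outside $P$ — together with finitely many edges of $\partial N(P)$ emanating from them. Each such edge runs from one of the two midpoints to the barycentre of a square containing the corresponding edge of $\wt{C}$, and therefore meets $\wt{C}$ only at its initial endpoint.

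For the equivalence $\partial N(P) \cap \partial N(\wt{C}) \simeq \partial N(P) \setminus \wt{C}$: a cell $\mathsf{c}$ of $\partial N(P)$ lies in $\partial N(\wt{C})$ precisely when it is disjoint from $\wt{C}$, so the inclusion $\partial N(P) \cap \partial N(\wt{C}) \hookrightarrow \partial N(P) \setminus \wt{C}$ is clear. For the reverse direction, the only cells of $\partial N(P)$ which meet $\wt{C}$ are those described in the previous step; each such edge, after deleting its endpoint on $\wt{C}$, deformation retracts onto its interior midpoint, which does lie in $\partial N(\wt{C})$. Assembling these local retractions produces a deformation retraction of $\partial N(P) \setminus \wt{C}$ onto $\partial N(P) \cap \partial N(\wt{C})$, and in particular the two spaces have the same set of connected components, which is what is actually used downstream.

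The main subtlety will be the careful bookkeeping with the second cubical subdivision so that each ``extraneous'' cell of $\partial N(P)$ is tracked correctly; once this is done, the ``in other words'' clause is immediate from \cref{prop_local_crossing_lines}, since a self-crossing at $P$ means that $\wt{C}$ and a translate $g\wt{C}$ cross with intersection $P$, which by the crossing criterion is characterised by $g\wt{C}$ meeting two distinct components of $\partial N(P) \setminus \wt{C} \simeq \partial N(P) \cap \partial N(C)$.
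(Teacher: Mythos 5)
Your proof is correct, and since the paper states this as a \emph{Fact} with no written proof, there is no alternative argument to compare against; your cell-by-cell analysis in $\wt{X}^{(2)}$ followed by the deformation retraction onto the subcomplex $\partial N(P)\cap\partial N(\wt{C})$ is exactly the intended observation, and the reduction from $\partial N(C)$ to $\partial N(\wt{C})$ via the embedding of $N(P)$ in $\wt{X}$ (and \cref{lemma_regular_sphere_C_quotient_wt(C)}) is handled correctly. One small imprecision: the two points of $\partial N(P)\cap\wt{C}$ lie at distance $1/4$ from the endpoints of $P$ along the adjacent edges $e_a,e_b$ of $\wt{C}$ (they are vertices of $Z^{(2)}$ lying on $\wt{C}$ at the boundary of the first cubical neighbourhood), so calling them ``the midpoints of the two edges'' is misleading — the midpoint of $e_a$ at parameter $1/2$ never enters $N(P)$; this does not affect the argument, since the retraction of the punctured edges onto the barycentric endpoints in $\partial N(\wt{C})$ goes through unchanged.
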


\begin{lemma} \label{lemma_necessary_conditions_splitting_cycle}
A splitting cycle is strongly UC-separating and has no self-crossing at any component of self-intersection. \qed
\end{lemma}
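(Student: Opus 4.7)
The statement follows essentially by unpacking the definition of a splitting cycle (\cref{def_splitting_cycle}), together with \cref{cor_P_embeds}. The plan is as follows.

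The first assertion is immediate: condition (1) of \cref{def_splitting_cycle} states that $C$ is strongly UC-separating.

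For the second assertion, I would argue from condition (3) of the same definition, which asserts that $C$ has no self-crossings at all, i.e. $\wt{C}$ does not cross any of its non-trivial translates $g\wt{C}$. Let $P \subsetneq C$ be a component of self-intersection, witnessed by some $g \in G$ with $g\wt{C} \neq \wt{C}$ and with the projection of $\wt{C} \cap g\wt{C}$ onto $C$ equal to $P$. By \cref{cor_P_embeds}, the intersection $\wt{C} \cap g\wt{C}$ embeds into $C$ under the covering projection, so this intersection is itself (isomorphic to) a single lift of $P$; in particular it coincides as a subset of $\wt{X}$ with what \cref{defn_self_intersection} calls $P$ for the purpose of defining self-crossing at $P$. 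Since condition (3) rules out $\wt{C}$ and $g\wt{C}$ crossing, there is no self-crossing at $P$.

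The main (and only) subtlety is checking that condition (3) of \cref{def_splitting_cycle}, which forbids any self-crossing, implies the seemingly stronger statement about crossings at every component of self-intersection; but this is exactly what \cref{cor_P_embeds} supplies, by ensuring that the upstairs intersection corresponds to a single copy of $P$ rather than some more complicated union of lifts. Hence no routine calculations are required and the lemma follows directly.
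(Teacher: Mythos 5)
Your proposal is correct and matches the paper's approach: the paper gives no written proof (the lemma is marked \qed as immediate), and your argument is simply the correct unpacking of \cref{def_splitting_cycle} and \cref{defn_self_intersection}. The appeal to \cref{cor_P_embeds} is harmless but unnecessary, since the definition of a self-crossing at $P$ already requires $\wt{C}$ and some translate $g\wt{C}$ to cross, which condition (3) forbids outright.
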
 

Also, splitting cycles capture all `vertical' splittings up to commensurability:

\begin{lemma} \label{lemma_alg_splittings_commensurable_geom_splittings}
Let $H$ be a cyclic subgroup over which $G$ splits with a vertical axis. Then there exists a splitting cycle $C$ such that $\pi_1(C)$ is commensurable with a conjugate of $H$. 
\end{lemma}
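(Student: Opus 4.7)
The strategy is to apply Proposition~\ref{prop_algebraic_commensurable_geometric_splitting} to extract a geometric splitting subgroup $H'$ commensurable with $H$, and then to exhibit the quotient of its axis as a splitting cycle. Explicitly, that proposition furnishes a vertical axis $L$ of $H$ (which exists by hypothesis) such that, after fixing a half-space $Y$ of $L$, the subgroup $H' \coloneqq \{g \in \mathrm{stab}(L) : gY = Y\}$ is a geometric splitting subgroup commensurable with $H$. In particular, $L$ is separating, $L$ does not cross any of its translates, and $H'$ is precisely the stabiliser of the proper subset $\{Y\}$ of the set of half-spaces of $L$.

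Next, since $L$ is a periodic vertical line, it is a lift of a vertical cycle in $X$; the plan is to define $C$ to be the immersed cycle $L/H' \to X_s$ obtained by projecting into the vertex graph $X_s$ that contains the image of $L$. By construction $\wt{C} = L$, and the inclusion $\pi_1(C) \hookrightarrow G$ identifies $\pi_1(C)$ with $H'$ up to conjugation (the conjugation ambiguity coming from the choice of a path from the basepoint of $X$ into $X_s$). This already provides the desired commensurability of $\pi_1(C)$ with a conjugate of $H$.

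It then remains to verify the three conditions of Definition~\ref{def_splitting_cycle}. Condition~(2), that $\pi_1(C)$ equals the stabiliser of a proper subset of the half-spaces of $\wt{C}$, is immediate from the definition of $H'$, and condition~(3), that $C$ has no self-crossings, is immediate from the fact that $L$ does not cross any of its translates. For condition~(1), that $C$ is strongly UC-separating, $\wt{C} = L$ is already known to be UC-separating, so by Lemma~\ref{lemma_regular_sphere_C_disconnected} it is enough to verify that $\pi_1(C) = H'$ does not act transitively on the set of half-spaces of $L$. This holds because $L$ separates $\wt{X}$ and hence admits at least two half-spaces, whereas $H'$ stabilises the single half-space $Y$ by its defining property, so $\{Y\}$ is a proper $H'$-orbit. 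The only potentially delicate point is this final transitivity check, but it is essentially forced by the very choice of $H'$ in Proposition~\ref{prop_algebraic_commensurable_geometric_splitting}, so no real obstacle remains.
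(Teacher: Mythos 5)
Your proof is correct and follows the same route as the paper's: invoke Proposition~\ref{prop_algebraic_commensurable_geometric_splitting} to produce the geometric splitting subgroup $H'$ commensurable with $H$, and take $C$ to be the quotient of the vertical axis $L$ by $H'$. The paper's proof is a terse three-sentence version of this; you have simply spelled out the verification of the three conditions of Definition~\ref{def_splitting_cycle}, all of which check out, including the slightly delicate non-transitivity argument via Lemma~\ref{lemma_regular_sphere_C_disconnected}.
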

\begin{proof}
By \cref{prop_algebraic_commensurable_geometric_splitting}, $H$ is commensurable with a geometric splitting subgroup $H'$. Let $L$ be a vertical axis for $H$.
The required splitting cycle is obtained by taking the quotient of $L$ by $H'$. 
\end{proof}

As an immediate consequence, we have
\begin{lemma} \label{lemma_splitting_cycle_commensurable}
Given a UC-separating cycle $C$ with no self-crossings, there exists a splitting cycle $C'$ such that $\pi_1(C)$ and $\pi_1(C')$ are commensurable. \qed
\end{lemma}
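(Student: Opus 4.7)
The plan is to adapt the construction used in the proof of \cref{prop_algebraic_commensurable_geometric_splitting}: rather than first producing a splitting of $G$ and then extracting a geometric splitting subgroup from it, we build a geometric splitting subgroup directly from the data of $C$ and then take the associated quotient cycle. Set $L=\wt{C}$ and $H=\mathrm{Stab}_G(L)$. Since $C$ is vertical, $L$ is a periodic vertical line, and $H$ is an infinite cyclic group containing $\pi_1(C)$ with finite index. Let $\{Y_1,\dots,Y_n\}$ denote the half-spaces of $L$; because $L$ separates $\wt{X}$, we have $n\geq 2$.

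Fix one half-space $Y=Y_1$ and let $H'\subset H$ be its setwise stabiliser. Since $H$ acts on the finite set $\{Y_1,\dots,Y_n\}$ through a homomorphism to $\mathrm{Sym}(n)$, the subgroup $H'$ has index at most $n$ in $H$, so $H'$ is a non-trivial infinite cyclic subgroup of $H$ commensurable with $\pi_1(C)$. Define $C'\coloneqq L/H'$. Since $H'\subset\mathrm{Stab}_G(L)$ acts freely on $L$, the composition $L\hookrightarrow\wt{X}\to X$ factors through the circle $C'$ and, as both $L\hookrightarrow\wt{X}$ and $\wt{X}\to X$ are local isomorphisms onto their images, produces an immersion $C'\to X$, i.e.\ a cycle in the sense of \cref{def_paths_cycles_lines}. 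By construction $\wt{C'}=L$ and $\pi_1(C')=H'$.

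It remains to verify the three conditions of \cref{def_splitting_cycle} for $C'$. Condition~(3), no self-crossings, is immediate: every translate of $\wt{C'}=L=\wt{C}$ is a translate of $\wt{C}$, and by hypothesis none of these cross $\wt{C}$. For condition~(2), observe that the topological boundary of $Y$ in $\wt{X}$ is exactly $L$, so any $g\in G$ with $gY=Y$ satisfies $gL=L$; hence $H'=\mathrm{Stab}_G(Y)$, which is the $G$-stabiliser of the proper subset $\{Y\}\subset\{Y_1,\dots,Y_n\}$. For condition~(1), $C'$ is UC-separating because $L$ separates $\wt{X}$, and $\pi_1(C')=H'$ fixes $Y$ while $n\geq 2$, so $\pi_1(C')$ does not act transitively on the half-spaces; \cref{lemma_regular_sphere_C_disconnected} then yields that $C'$ is strongly UC-separating.

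The commensurability claim is automatic, since both $\pi_1(C)$ and $\pi_1(C')=H'$ are non-trivial subgroups of the infinite cyclic group $H$ and therefore have finite index in it. The only mildly delicate point in the argument is the identification $H'=\mathrm{Stab}_G(Y)$ (as opposed to merely $H'\subset\mathrm{Stab}_G(Y)$), which I expect to be the subtlest step; it rests on the observation that $L=\partial Y$ in $\wt{X}$ and hence is preserved by any $g$ preserving $Y$.
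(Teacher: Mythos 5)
Your proof is correct and takes essentially the same approach the paper intends (the paper treats this as an immediate consequence and omits the verification): fix the half-space $Y$ of $L=\wt{C}$, take $H'=\mathrm{Stab}_G(Y)$, and set $C'=L/H'$; commensurability is automatic inside the infinite cyclic $\mathrm{Stab}_G(L)$, and the three conditions of \cref{def_splitting_cycle} are checked exactly as you do, with the point that $\partial Y=L$ forcing $\mathrm{Stab}_G(Y)\subset\mathrm{Stab}_G(L)$ being the only step that genuinely needs a word.
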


\section{Universally elliptic splittings}

Recall that a subgroup $H$ of $G$ is \emph{elliptic} in a $G$-tree $T$ if $H$ fixes a point in $T$.

\begin{lemma}[Elliptic splittings] \label{lemma_elliptic-elliptic_splittings}
Let $H_1$ and $H_2$ be cyclic subgroups over which $G$ splits. Let $L_i$ be an axis of $H_i$ in $\wt{X}$. $H_1$ is elliptic in the Bass-Serre tree of the basic splitting over $H_2$ if and only if given any translate $gL_2$ of $L_2$, $L_1$ and $gL_2$ don't cross.
\end{lemma}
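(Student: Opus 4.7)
I will use the dual-tree machinery developed in \cref{lemma_dual_tree} and \cref{prop_cyclic_splittings_induce_separating_lines}. Since $G$ splits over $H_2$, the axis $L_2$ is separating and its translates are pairwise non-crossing (by \cref{lemma_algebraic_splittings_and_separating_lines} in the vertical case, \cref{fact_tubular_lines_separate} in the tubular case). The Bass--Serre tree $T$ of the basic splitting of $G$ over $H_2$ arises as the quotient of the dual tree $T_{L_2}$ whose vertices are almost principal ultrafilters on the space with walls $Z_{L_2} = \wt{X} \setminus \bigsqcup_{g \in G} gL_2$, the walls being induced by half-spaces of translates $gL_2$. Ellipticity of $H_1$ in $T$ is equivalent to the existence of an $H_1$-invariant (almost principal) ultrafilter on $Z_{L_2}$, which I will match with the non-crossing condition for $L_1$.

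\textbf{Backward direction.} Assume $L_1$ crosses no translate of $L_2$. If $L_1 = gL_2$ for some $g$, then $H_1 \subseteq \mathrm{stab}(gL_2) = \mathrm{stab}(g\omega_L)$, so $H_1$ fixes the vertex $g\omega_L$ of $T$ and is elliptic. Otherwise, $L_1$ lies in a unique closed half-space $Y_g$ of each $gL_2$ (uniqueness because distinct half-spaces share only $gL_2$ itself). Pick $z \in L_1 \setminus \bigcup_g gL_2$ (non-empty generically, since each $L_1 \cap gL_2$ is a compact proper subsegment of $L_1$ by \cref{cor_P_embeds}). For any $h \in H_1$, both $z$ and $hz$ lie in $L_1 \subseteq Y_g$, so they lie on the same side of every wall in $Z_{L_2}$; hence the principal ultrafilter $\sigma_z = \sigma_{hz}$ is $H_1$-invariant, giving an $H_1$-fixed vertex in $T_{L_2}$ that descends to an $H_1$-fixed vertex in $T$.

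\textbf{Forward direction.} Contrapositively, suppose $L_1$ crosses $g_0 L_2$ for some $g_0 \in G$. Let $h_1$ generate $H_1$ with translation length $\ell > 0$ on $L_1$. The translates $\{h_1^n g_0 L_2 : n \in \mathbb{Z}\}$ are pairwise distinct, for if $h_1^k$ stabilised $g_0 L_2$ for some $k \neq 0$, then $g_0 L_2$ and $L_1$ would be two distinct parallel axes of $h_1^k$ in the CAT(0) metric (by the flat-strip theorem), contradicting that they cross. By $H_1$-equivariance, each $h_1^n g_0 L_2$ also crosses $L_1$, and corresponds to a distinct edge of $T$ in a single $H_1$-orbit. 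Using the natural $G$-equivariant ``projection'' $\wt{X} \to T$ obtained by collapsing complementary regions of $\bigsqcup_g gL_2$ to vertices, the image of $L_1$ is an $H_1$-invariant subtree of $T$ containing this infinite family of edges, hence unbounded; so $H_1$ acts hyperbolically on $T$ and is not elliptic.

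\textbf{Main obstacle.} The chief technical challenge is to make the forward unboundedness argument rigorous, either by carefully defining and analyzing the projection $\wt{X} \to T$, or by working in $T_{L_2}$ and verifying that the quotient $T_{L_2} \to T$ only identifies vertices within finite pencils around each $g\omega_L$, so that linear growth of $d_{T_{L_2}}(\sigma_x, h_1^n \sigma_x)$ survives in $T$. A secondary subtlety is the backward direction in the pathological case where $L_1 \subseteq \bigcup_g gL_2$ without $L_1 = gL_2$ for any $g$; this edge case would require replacing the principal ultrafilter $\sigma_z$ with an appropriate almost principal one $\omega = \{Y \cap Z_{L_2} : L_1 \subseteq Y\}$ and verifying directly that it is almost principal and $H_1$-invariant.
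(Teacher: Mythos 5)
Your argument follows the same route as the paper's: build the dual tree $T_{L_2}$, use a principal ultrafilter $\sigma_z$ for $z \in L_1$ to produce an $H_1$-fixed vertex when $L_1$ crosses no translate of $L_2$, and deduce hyperbolicity of $H_1$ on $T_{L_2}$ (hence on $T_2$) when a crossing exists. Your extra care in the backward direction is warranted: you correctly require $z \in L_1 \setminus \bigcup_g gL_2$, whereas the paper writes $x \in L_1 \setminus L_2$; since $\sigma_x$ lives on the wall space $Z_{L_2} = \wt{X} \setminus \bigcup_g gL_2$, the paper's phrasing is a slight imprecision that your formulation repairs, and the almost-principal-ultrafilter fallback you mention is the right patch for the (a priori possible, though in practice excluded) case $L_1 \subseteq \bigcup_g gL_2$. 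Your acknowledged ``main obstacle'' in the forward direction is genuine but is equally present in the paper, which simply asserts that $g^{-1}H_1 g$ is hyperbolic in $T_{L_2}$ and hence in $T_2$ without further detail; your outline — the $h_1^n g_0 L_2$ are pairwise distinct by the flat strip theorem, each crosses $L_1$ by equivariance, so infinitely many walls of $Z_{L_2}$ separate $\sigma_z$ from $\sigma_{h_1^n z}$, and this linear growth survives the quotient $T_{L_2} \to T_2$ because that quotient only folds together edges within the finite pencil around each $g\omega_{L_2}$ — is exactly the right way to complete it, and is no less rigorous than what the paper records.
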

\begin{proof}
Note that if $L_1$ and $gL_2$ don't cross for any $g$, then $L_1$ is contained in a half-space of $gL_2$ for each $g$. Thus for $x \in L_1 \setminus L_2$, the stabiliser of $\sigma_x$ in the dual tree $T_{L_2}$ (\cref{lemma_dual_tree}) of $L_2$ contains $H_1$ and hence $H_1$ is elliptic in $T_{L_2}$. The Bass-Serre tree $T_2$ of the basic splitting over $H_2$ is obtained from $T_{L_2}$ by a sequence of $G$-equivariant gluings of edges of $T_{L_2}$. Thus $H_1$ remains elliptic in $T_2$.
Conversely, if there exists $g$ such that $L_1$ and $gL_2$ cross, then $g^{-1}H_1g$ is hyperbolic in the dual tree $T_{L_2}$ and hence in $T_2$. 
\end{proof}

\begin{remark}
Since $G$ is one-ended, $H_1$ is elliptic in the Bass-Serre tree of the basic splitting over $H_2$ if and only if $H_2$ is elliptic in the Bass-Serre tree of the basic splitting over $H_1$, see Theorem 2.1 of \cite{rips_sela_jsj}.
\end{remark}

\begin{defn}[\cite{guirardel_levitt_jsj}] \label{def_universally_elliptic_splittings}
A cyclic splitting of $G$ over the subgroup $H$ is \emph{universally elliptic} if $H$ is elliptic in the Bass-Serre tree of any cyclic splitting of $G$. We then say that $H$ is a universally elliptic subgroup.
Analogously, a splitting cycle $C$ is universally elliptic if $\pi_1(C)$ is universally elliptic.
\end{defn}

A splitting induced by a transversal line can never be universally elliptic:

\begin{lemma}\label{lemma_transversal_not_universally_elliptic}
Let $H$ be a cyclic subgroup over which $G$ splits. Suppose that an axis of $H$ is transversal in $\wt{X}$. Then $H$ is not universally elliptic. 
\end{lemma}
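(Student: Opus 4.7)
The plan is to exhibit a specific cyclic splitting of $G$ in which $H$ fails to be elliptic, built from a vertical hyperplane that the transversal axis $L$ must cross. The argument combines the elliptic splittings criterion (\cref{lemma_elliptic-elliptic_splittings}) with \cref{prop_cyclic_splittings_induce_separating_lines} and the crossing criterion (\cref{prop_local_crossing_lines}).

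First, I would locate a vertical hyperplane $\mathsf{h}$ that $L$ crosses. Since $L$ is transversal it hits at least two vertical trees and hence, as noted after \cref{rmk_transversal_lines_not_considered}, meets some vertical hyperplane $\mathsf{h}$ in exactly one point $p$. Using the crossing criterion (\cref{prop_local_crossing_lines}) I would check that $L$ and $\mathsf{h}$ cross in the sense of \cref{def_crossing_lines}: the intersection $L \cap \mathsf{h} = \{p\}$ is non-empty and compact, and the two points of $L \cap \partial N(p)$ lie on opposite arcs of the dipole graph $\partial N(p)$ separated by the two points of $\mathsf{h} \cap \partial N(p)$, precisely because the intersection is transverse at the midpoint of a horizontal edge.

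Next, I would extract a cyclic splitting of $G$ from $\mathsf{h}$. Distinct vertical hyperplanes of a $\VH$-complex are disjoint, so $\mathsf{h}$ does not cross any of its translates under $G$. Viewed as a combinatorial line in the first cubical subdivision of $\wt{X}$, $\mathsf{h}$ is tubular and therefore separates $\wt{X}$ (\cref{fact_tubular_lines_separate}), with exactly two half-spaces. Let $K$ be the cyclic stabiliser of $\mathsf{h}$ and let $K' \leq K$ be the subgroup of index at most two that preserves a chosen half-space of $\mathsf{h}$; then $\mathsf{h}$ remains an axis of $K'$, and $K'$ is exactly the stabiliser of a proper subset of the set of half-spaces of $\mathsf{h}$. \cref{prop_cyclic_splittings_induce_separating_lines} then furnishes a splitting of $G$ over the cyclic subgroup $K'$.

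Finally, I would apply \cref{lemma_elliptic-elliptic_splittings} with $H_1 = H$, $L_1 = L$, $H_2 = K'$ and $L_2 = \mathsf{h}$: since $L$ crosses the identity translate of $\mathsf{h}$, the subgroup $H$ is not elliptic in the Bass-Serre tree of the basic splitting over $K'$, so by \cref{def_universally_elliptic_splittings} it is not universally elliptic. The only delicate point in the whole argument is confirming that the transverse geometric intersection of $L$ with $\mathsf{h}$ really does translate into the formal crossing condition of \cref{def_crossing_lines}; the remaining steps are essentially bookkeeping against machinery already established.
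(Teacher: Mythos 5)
Your approach reaches the right conclusion and is the same underlying idea as the paper's, but you route through considerably more machinery than is needed, and in doing so you lean on results whose hypotheses are not literally met. The paper's own proof is short: since $L$ meets a vertical hyperplane $\mathsf{h}$ in a single point, $\mathsf{h}$ separates $\wt X$ and is equal to or disjoint from its $G$-translates, so the wall structure given by $\{g\mathsf{h}\}_{g\in G}$ dualizes to a Bass--Serre tree $T$ of a cyclic splitting; the edge $e$ of $T$ dual to $\mathsf{h}$ is translated by $H$ (which translates along $L$, hence through an infinite bi-infinite sequence of translates $h\mathsf{h}$, $h\in H$) so that its $H$-orbit spans a line in $T$, whence $H$ is hyperbolic in $T$. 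Your proposal instead produces the splitting via \cref{prop_cyclic_splittings_induce_separating_lines} and deduces non-ellipticity from \cref{lemma_elliptic-elliptic_splittings} with $L_1=L$, $L_2=\mathsf{h}$.

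The gaps to flag are both technical and both repairable, but you should be aware of them. First, \cref{prop_local_crossing_lines} is stated for \emph{combinatorial} separating lines, and a transversal line $L$ is not combinatorial: it crosses the strip $\mathsf{h}^{+1}$ along a Euclidean diagonal (\cref{fact_geodesics_strips}), not along the $1$-skeleton. You do not actually need the local crossing criterion here; the fact that the geodesic $L$ meets $\mathsf{h}$ in exactly one point immediately puts points of $L$ in both half-spaces of $\mathsf{h}$, which is exactly \cref{def_crossing_lines} for the pair $(L,\mathsf{h})$. Second, \cref{lemma_elliptic-elliptic_splittings} (and \cref{def_crossing_lines}) are phrased for pairs of \emph{separating} lines, and nothing in the setup tells you that the transversal line $L$ separates $\wt X$. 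The implication you are using—``$L$ crosses some $g\mathsf{h}$ implies $H$ is hyperbolic in the dual tree of $\mathsf{h}$''—does go through without assuming $L$ is separating, since it only requires that translates of $\mathsf{h}$ along $L$ give an unbounded nested family of half-spaces. But that observation is precisely the content of the paper's direct argument, so you end up re-deriving it rather than genuinely invoking \cref{lemma_elliptic-elliptic_splittings} as a black box. In short: correct idea and correct target splitting, but the cleanest writeup bypasses \cref{prop_local_crossing_lines} and \cref{lemma_elliptic-elliptic_splittings} entirely and argues directly on the tree dual to the $G$-orbit of $\mathsf{h}$, as the paper does.
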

\begin{proof}
Let $L$ be a transversal axis of $H$. By definition, there exists a vertical hyperplane $\mathsf{h}$ such that $L \cap \mathsf{h}$ is a singleton. Since $\mathsf{h}$ separates $\wt{X}$ and is either equal to or disjoint from its translates, it induces a splitting of $G$.
Let $T$ be the Bass-Serre tree of the splitting. Let $e$ be the edge stabilised by the stabiliser of $\mathsf{h}$. Note that the image of $e$ under $H$ then spans a line of $T$. Hence, $H$ is not elliptic in $T$. 
\end{proof}

Splittings induced by vertical lines need more careful treatment. They may or may not cross other vertical or transversal lines which induce splittings.
We present below one sufficient condition for a splitting induced by a vertical line (cycle) to be universally elliptic. 

\begin{prop} \label{prop_C_3_half-spaces_universal_elliptic}
Let $L$ be a line that separates $\wt{X}$ into at least three half-spaces. Then a subgroup of the stabiliser of $L$ is universally elliptic.
\end{prop}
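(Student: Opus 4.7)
The plan is first to establish that $L$ cannot cross any other separating line of $\wt{X}$ — a purely geometric fact extracted from the ``at least three half-spaces'' assumption via the crossing criterion — and then to use this twice: once to build a cyclic splitting of $G$ over a subgroup $H \leq \mathrm{stab}(L)$, and once to show that $H$ is elliptic in every cyclic splitting of $G$.

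Step one (no crossing). Suppose for contradiction that $L$ crosses a separating line $L'$. By \cref{prop_local_crossing_lines}, the intersection $P := L \cap L'$ is non-empty and compact, and the cut pair $\{a',b'\} := L' \cap \partial N(P)$ separates the cut pair $\{a,b\} := L \cap \partial N(P)$ in the graph $\partial N(P)$, which is connected and has no cut points by \cref{prop_regular_sphere_compact_path_bradymeier}. I would then check that $\{a,b\}$ admits at least three complementary components in $\partial N(P)$. Indeed, by \cref{lemma_half-spaces_equal_components_reg_sphere_L} each of the $\geq 3$ half-spaces $Y_i$ of $L$ corresponds to a distinct component $K_i$ of $\partial N(L)$; picking an edge $e\subset P$ and applying \cref{cor_square_meets_halfspace} to $e$ yields, for each $i$, a square $\mathsf{s}_i \ni e$ with $\mathsf{s}_i \cap \partial N(L) \subset K_i$, and this contributes to $\partial N(P) \setminus \{a,b\}$ a piece contained in the half-space $Y_i \setminus L$. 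Since $\partial N(P) \setminus \{a,b\} \subset \wt{X} \setminus L$ is cleanly partitioned by the half-spaces of $L$, these pieces lie in distinct components of $\partial N(P) \setminus \{a,b\}$. Now \cref{cor_cut_pairs_3_components_no_crossings} forbids any cut pair of $\partial N(P)$ from being separated by $\{a,b\}$, and by \cref{lemma_crossings_cut_pairs_in_graphs} neither can $\{a,b\}$ then be separated by $\{a',b'\}$, contradicting the crossing assumption.

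Step two (splitting over a subgroup). Let $K := \mathrm{stab}(L)$. If $K$ is trivial, the statement is vacuous (since $G$ is one-ended, it does not split over the trivial group), so assume $K$ is infinite cyclic with $L$ as an axis. Pick a half-space $Y_1$ of $L$ and let $H := \mathrm{stab}_K(\{Y_1\})$; since $L$ has finitely many half-spaces, $H$ has finite index in $K$, hence is itself infinite cyclic with the same axis $L$, and $\{Y_1\}$ is a proper subset of the half-spaces of $L$. By step one, $L$ does not cross any of its translates, so the hypotheses of \cref{prop_cyclic_splittings_induce_separating_lines} are met for $H$, and $G$ splits over $H$.

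Step three (universal ellipticity). Let $H_2$ be any cyclic subgroup of $G$ over which $G$ splits. By the results of the previous sections (namely \cref{lemma_algebraic_splittings_and_separating_lines} in the vertical case and the tubular/transversal reductions via \cref{fact_tubular_lines_separate} and \cref{lemma_coarsely_separating_lines_separate}), $H_2$ admits a separating axis $L_2$. By step one, $L$ does not cross any translate $gL_2$, so \cref{lemma_elliptic-elliptic_splittings} gives that $H$ is elliptic in the Bass-Serre tree of the splitting over $H_2$. As $H_2$ was arbitrary, $H$ is universally elliptic.

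The main obstacle is the bookkeeping in step one — rigorously matching the $\geq 3$ global half-spaces of $L$ to $\geq 3$ distinct local components of $\partial N(P) \setminus \{a,b\}$. One must rule out the a priori possibility that several half-spaces of $L$ glue to the same component of $\partial N(P) \setminus \{a,b\}$. This is handled by combining \cref{cor_square_meets_halfspace} (which produces a local witness inside each half-space near any edge of $P$) with the observation that any connected subset of $\partial N(P) \setminus \{a,b\}$ lives in $\wt{X}\setminus L$ and therefore inside a single half-space. Once this is set up, the rest of the argument is a clean application of earlier results.
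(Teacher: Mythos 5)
Your proof is correct and follows essentially the same strategy as the paper: derive from the $\geq 3$ half-space hypothesis that $L$ cannot cross any separating line (via \cref{prop_local_crossing_lines} and \cref{cor_cut_pairs_3_components_no_crossings}), then invoke \cref{prop_cyclic_splittings_induce_separating_lines} to split $G$ over a finite-index subgroup $H$ of $\mathrm{stab}(L)$, and finally invoke \cref{lemma_elliptic-elliptic_splittings} to get universal ellipticity. The paper compresses all three steps into a short paragraph and in particular leaves the appeal to \cref{lemma_elliptic-elliptic_splittings} entirely implicit (its citation of \cref{prop_cyclic_splittings_induce_separating_lines} only yields the splitting, not the ellipticity), whereas you spell that out. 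Your extra bookkeeping in step one — producing, via \cref{cor_square_meets_halfspace}, one square per component of $\partial N(L)$ at an edge of $P$ to certify that $\partial N(P)\setminus L$ has $\geq 3$ components — is a fine substitute for the paper's implicit use of \cref{lemma_path_abundance} for the same purpose, and the two tools are interchangeable here since the corollary is itself proved from the path-abundance lemma. Your handling of the degenerate trivial-stabiliser case is a reasonable addition the paper omits. The one soft spot is the parenthetical in step three asserting that every cyclic splitting subgroup $H_2$ has a \emph{separating} axis: \cref{lemma_coarsely_separating_lines_separate} applies to periodic combinatorial lines and so does not cover a purely transversal axis; but the paper's own proof has the identical implicit gap (resolvable via the one-endedness symmetry remark after \cref{lemma_elliptic-elliptic_splittings}, since $L$ itself is separating), so this is not a defect specific to your argument.
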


\begin{proof}

Let $L'$ be a separating line such that $L$ and $L'$ meet at a compact segment $P$. Since $\partial N(P) \setminus L$ has at least 3 components, by \cref{cor_cut_pairs_3_components_no_crossings}, $L' \cap \partial N(P)$ lies in a component of $\partial N(P) \setminus L$. Hence, $L$ and $L'$ don't cross. In particular, $L$ does not cross any of its translates.
Let $H$ be a maximal subgroup of the stabiliser of $L$ that preserves a half-space of $L$. Then by \cref{prop_cyclic_splittings_induce_separating_lines}, $G$ splits over $H$ and $H$ is universally elliptic.
\end{proof}

\section{Repetitive cycles} \label{section_repetitive}

\begin{defn}\label{defn_pre-image_square_reg_sphere}
Let $\rho : P \to X$ be a combinatorial path. Let $e$ be an edge in $X$ and $e'$ an edge in $\rho^{-1}(e)$. Denote also by $e'$ the image of $e'$ in $N(P)$.
Given a square $\mathsf{s}$ in $X$ containing $e$, denote by $\mathsf{s}'$ (\cref{fig_preimage_square_reg_sphere}) the union of all squares meeting $e'$ in $N(P)$ whose image in $X$ is contained in $\mathsf{s}$.
Then the \emph{pre-image of $\mathsf{s}$ around $e'$} in $\partial N(P)$ is defined as the segment $\mathsf{s}' \cap \partial N(P)$.
\end{defn}

\begin{figure}
\begin{center}
\begin{tikzpicture}
\draw [dotted] (0,0.5) to (-4,0.5);
\draw [ultra thick] (-1,0.5) to (-3,0.5);
\node [below] at (-2,.5) {$e'$};
\draw [->] (-1.25,-.5) to (2,-1.25);
\draw [->] (2.5,.5) to (4.5,.5);
\node [above] at (3.5,.5) {$\rho$};
\node [above] at (-2.5,.5) {$P$};

\draw [step = 0.5] (3.5,-2.5) grid (4,-.5);
\draw [green] (4,-2.5) to (4,-.5);
\draw [ultra thick] (3.5,-2.5) to (3.5,-.5);
\node [left] at (3.5,-1.5) {$e'$};
\node [right] at (4,-2.5) {$\mathsf{s}' \cap \partial N(P)$};
\draw [->] (4.5,-1.5) to (6,-0.5);

\draw (7,-1) rectangle (9,1);
\draw [ultra thick] (7,-1) to (7,1);
\node [left] at (7,0) {$e$};
\node at (8,0) {$\mathsf{s}$};

\end{tikzpicture}
\end{center}
\caption{A pre-image of a square in the regular sphere}\label{fig_preimage_square_reg_sphere}
\end{figure}
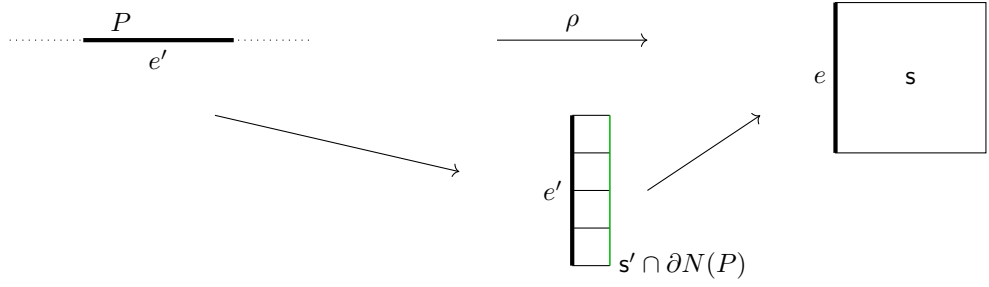

Recall that the orthogonal sphere of any fundamental domain $P_C$ of a UC-separating cycle $C$ contains at least two components (\cref{lemma_orth_sphere_uc_separating}). 
By \cref{lemma_edge_square_orthogonal_sphere}, for each component $K$ of $\partial_{orth} N(P_C)$ and each edge $e'$ in $P_C$ with image $e$ in $X$, there exists a square $\mathsf{s}$ containing $e$ such that the pre-image of $\mathsf{s}$ around $e'$ lies in $K$.

\begin{defn}[Repetitive cycles] \label{def_rep_cycle} Let $C$ be a UC-separating vertical cycle. $C$ is a \emph{$k$-repetitive cycle} if there exists a vertical edge $e$ in $X$ and a fundamental domain $P_C$ of $C$ such that 

\begin{enumerate}
\item \label{condition_1_repetitive} at least $k$ distinct edges $e_1, \cdots, e_k$ of $P_C$ are mapped to $e$, and
\item \label{condition_2_repetitive} for each square $\mathsf{s}$ containing $e$, there exists a component $K$ of $\partial_{orth} N(P_C)$ such that for each $i \in \{1, \cdots, k\}$, the pre-image of $\mathsf{s}$ around $e_i$ in $\partial N(P_C)$ lies in $K$.
\end{enumerate}
\end{defn}

Intuitively, if $C$ is $k$-repetitive, then the squares at $e$ do not `mix' in the components of $\partial_{orth} N(P_C)$. In other words, the notion of repetitiveness requires the cycle to not only `repeat' itself along some edges (Condition \ref{condition_1_repetitive}), but also to ensure that the partitions induced by the cycle on the set of squares containing $e_i$ coincide. 
Note that the definition depends on the choice of a fundamental domain, as illustrated in \cref{fig_domain_dependence_rep_cycle}.

\begin{fact}
A $k$-repetitive cycle is $k'$-repetitive for $1 \leq k' \leq k$.
\end{fact}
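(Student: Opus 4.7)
The plan is to observe that $k$-repetitiveness is defined by a witness consisting of an edge $e$ of $X$, a fundamental domain $P_C$ of $C$, and a tuple $(e_1,\dots,e_k)$ of distinct pre-image edges of $e$ in $P_C$, subject to two conditions, both of which are monotone under passing to a sub-tuple. Given a witness for $k$-repetitiveness, the natural candidate witness for $k'$-repetitiveness (with $1 \leq k' \leq k$) is simply $(e, P_C, e_1, \dots, e_{k'})$, and the proof amounts to verifying that each of the two clauses of \cref{def_rep_cycle} still holds for this shorter tuple.

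Clause (1) asks for at least $k'$ distinct edges of $P_C$ mapped to $e$, which is immediate since $e_1,\dots,e_{k'}$ are already $k'$ distinct such edges. Clause (2) asks, for each square $\mathsf{s}$ containing $e$, for a single component $K$ of $\partial_{orth}N(P_C)$ that contains the pre-image of $\mathsf{s}$ around $e_i$ for every $i\in\{1,\dots,k'\}$. But the same component $K$ supplied by the original witness works: it contains the pre-image around each $e_i$ for $i\in\{1,\dots,k\}$, hence \emph{a fortiori} for $i\in\{1,\dots,k'\}$.

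So the proof is immediate from inspection of the definition, and no genuine obstacle arises; the only content is to notice that both defining conditions are monotone in the size of the pre-image tuple. I would present the argument in one or two sentences, of the form: \emph{with the same edge $e$ and the same fundamental domain $P_C$, any sub-tuple of size $k'$ of the $k$ witnessing edges satisfies both conditions of \cref{def_rep_cycle}.}
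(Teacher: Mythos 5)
Your proposal is correct and is exactly the argument that the paper's definition invites (the paper leaves it unproved as a Fact because it is immediate from the definition). Both clauses of \cref{def_rep_cycle} are indeed monotone under passing to a sub-tuple of the witnessing edges, so fixing the same edge $e$ and fundamental domain $P_C$ and taking any $k'$ of the $e_i$ gives the required witness for $k'$-repetitiveness.
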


\begin{figure}
\begin{center}
\begin{tikzpicture} [scale = 0.6]
\begin{scope}[xshift = -10cm]
\draw [ultra thick] (0,0) circle [radius = 4];
\node at (0,0) {\fontsize{20}{22.4}$C$};

\draw [brown] (4.2,0.4) -- (3.8, 0.4);
\draw [brown] (4.2,-0.4) -- (3.8,-0.4);

\node [left] at (4.25,0) {$e_1$}; 
\node [right] at (4.5,0) {$\mathsf{s}'_1$};
\node [left] at (3.4,0) {$\mathsf{s}_1$};

\draw [brown] (0.4,4.2) -- (0.4,3.8);
\draw [brown] (-0.4,4.2) -- (-0.4,3.8);

\node at (0,4.25) {$e_2$}; 
\node [above] at (0,4.5) {$\mathsf{s}'_2$};
\node [below] at (0,3.4) {$\mathsf{s}_2$};

\draw [brown] (-4.2,0.4) -- (-3.8, 0.4);
\draw [brown] (-4.2,-0.4) -- (-3.8,-0.4);

\node [left] at (-3.8,0) {$e_3$};
\node [left] at (-4.6,0) {$\mathsf{s}'_3$};
\node [right] at (-3.4,0) {$\mathsf{s}_3$};

\draw [red] (0,-4.5) arc (-90:260:4.5);
\draw [blue] (0,-3.5) arc (-90:260:3.5);

\draw [red] (0,-4.5) to (-0.6,-3.45);
\draw [blue] (-0.8,-4.45) to (0,-3.5);

\node at (0,-6) {\parbox{0.3\linewidth}{\subcaption{The red and blue curves represent $\partial N(C)$}}};
\end{scope}

\begin{scope}[xshift = -2.5cm][scale = 0.7]
\draw (0,-1) to (0,1) to (-1.75,0.75) to (-1.75,-1.25) to (0,-1) to (1.75,-1.25) to (1.75,0.75) to (0,1);
\node [left] at (0,-0.7) {$e$};
\node at (1,0) {$\mathsf{s}'$};
\node at (-1,0) {$\mathsf{s}$};

\node at (0,-2) {\parbox{0.3\linewidth}{\subcaption{$e_i \mapsto e$ in $X$}}};
\node at (0,2) {$X$};
\end{scope}

\begin{scope}[xshift = 1.5cm]
\draw [ultra thick] (0,-4.5) to (0,4.5);
\draw [blue] (-.5,-4.5) to (-.5,3.5);
\draw [red] (-.5,3.5) to (-.5,4.5);
\draw [red] (.5,-4.5) to (.5,3.5);
\draw [blue] (.5,3.5) to (.5,4.5);

\draw [brown] (-0.1,-3.4) to (0.1,-3.4);
\draw [brown] (-0.1,-2.6) to (0.1,-2.6);

\draw [brown] (-0.1,-.4) to (0.1,-.4);
\draw [brown] (-0.1,.4) to (0.1,.4);

\draw [brown] (-0.1,3.4) to (0.1,3.4);
\draw [brown] (-0.1,2.6) to (0.1,2.6);

\node [left] at (0.2,-3) {$e_1$};
\node [left] at (0.2,0) {$e_2$};
\node [left] at (0.2,3) {$e_3$};

\node [left] at (-0.5,-3) {$\mathsf{s}_1$};
\node [right] at (0.5,-3) {$\mathsf{s}'_1$};

\node [left] at (-0.5,0) {$\mathsf{s}_2$};
\node [right] at (0.5,0) {$\mathsf{s}'_2$};

\node [left] at (-0.5,3) {$\mathsf{s}_3$};
\node [right] at (0.5,3) {$\mathsf{s}'_3$};

\node at (-0.5,-6) {\parbox{0.3\linewidth}{\subcaption{$\partial_{orth} N(P_C)$}}};
\end{scope}

\begin{scope}[xshift = 4.5cm]
\draw [ultra thick] (0,-4.5) to (0,4.5);
\draw [blue] (-.5,-4.5) to (-.5,1.5);
\draw [red] (-.5,1.5) to (-.5,4.5);
\draw [red] (.5,-4.5) to (.5,1.5);
\draw [blue] (.5,1.5) to (.5,4.5);

\draw [brown] (-0.1,-3.4) to (0.1,-3.4);
\draw [brown] (-0.1,-2.6) to (0.1,-2.6);

\draw [brown] (-0.1,-.4) to (0.1,-.4);
\draw [brown] (-0.1,.4) to (0.1,.4);

\draw [brown] (-0.1,3.4) to (0.1,3.4);
\draw [brown] (-0.1,2.6) to (0.1,2.6);

\node [left] at (0.2,-3) {$e_2$};
\node [left] at (0.2,0) {$e_3$};
\node [left] at (0.2,3) {$e_1$};

\node [left] at (-0.5,-3) {$\mathsf{s}_2$};
\node [right] at (0.5,-3) {$\mathsf{s}'_2$};

\node [left] at (-0.5,0) {$\mathsf{s}_3$};
\node [right] at (0.5,0) {$\mathsf{s}'_3$};

\node [left] at (-0.5,3) {$\mathsf{s'}_1$};
\node [right] at (0.5,3) {$\mathsf{s}_1$};
\node at (-0.5,-5) {\parbox{0.3\linewidth}{\subcaption{$\partial_{orth} N(P'_C)$}}};
\end{scope}

\end{tikzpicture}
\end{center}
\caption{$C$ is $3$-repetitive with the fundamental domain $P_C$ but not with the fundamental domain $P'_C$} \label{fig_domain_dependence_rep_cycle}
\end{figure}
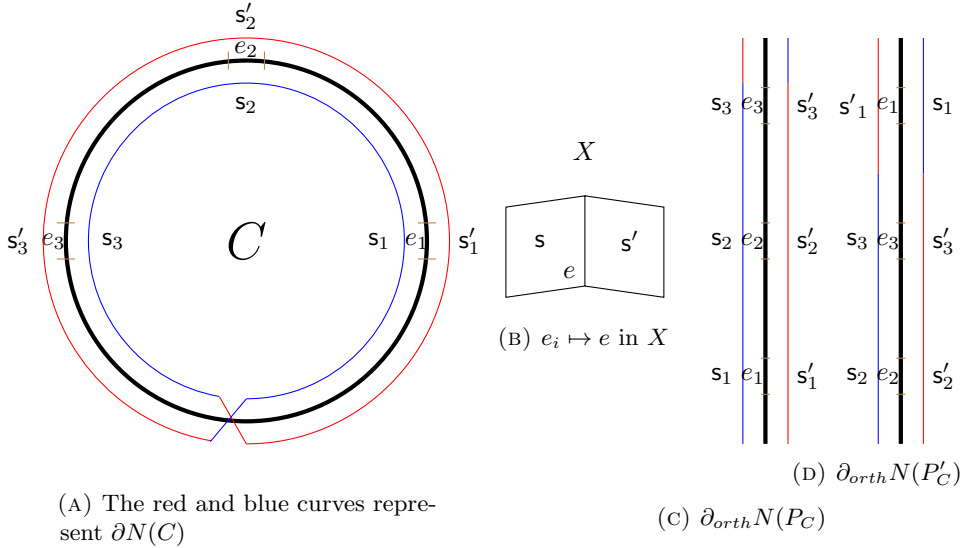

The following property of lifts of repetitive cycles will be crucial for the rest of the article. In fact, this is the only property of repetitive cycles that we will use. 

\begin{lemma}\label{lemma_rep_cycles_lifts}
Let $C$ be a $k$-repetitive cycle. Then there exists an edge $\tilde{e}$ in $\wt{X}$ and distinct elements $g_1, \cdots, g_k \in G$ such that \begin{enumerate}
\item for each $i \in \{1, \cdots, k\}$, $g_i \wt{C}$ contains $\tilde{e}$,
\item for each $i \in \{1, \cdots, k\}$, the translation length of $g_i$ is strictly less than the length of $C$, and
\item any two squares $\tilde{\mathsf{s}}$ and $\tilde{\mathsf{s}}'$ that contain $\tilde{e}$ are either separated by all translates $g_i \wt{C}$ for $i \in \{1, \cdots, k\}$, or by none of them. 
\end{enumerate}
\end{lemma}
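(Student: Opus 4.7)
The plan is to extract the elements $g_i$ via deck transformations after lifting the $k$-repetitivity data from $P_C$ to a fundamental domain of a lift $\wt{C}$, and then to translate condition (2) of \cref{def_rep_cycle} into the separation condition (3) via \cref{fact_orthogonal_sphere_and_its_lift}.

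\emph{Setup and conditions (1)--(2).} First I would fix a lift $\wt{P}_C \subset \wt{C}$ of the fundamental domain $P_C$. Under the canonical identification $P_C \cong \wt{P}_C$, the $k$ distinct edges $e_1, \ldots, e_k \in P_C$ supplied by \cref{def_rep_cycle} correspond to $k$ distinct edges $\tilde{e}_1, \ldots, \tilde{e}_k \in \wt{P}_C$. Set $\tilde{e} := \tilde{e}_1$ and let $g_i \in G$ be the unique element with $g_i \tilde{e}_i = \tilde{e}$, so that $g_1 = 1$. The $g_i$ are pairwise distinct (since the $\tilde{e}_i$ are), and $\tilde{e} = g_i \tilde{e}_i \in g_i \wt{C}$ because $\tilde{e}_i \subset \wt{P}_C \subset \wt{C}$; this gives (1). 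For (2), let $m_i$ denote the midpoint of $\tilde{e}_i$; then $m_1, \ldots, m_k$ lie on the geodesic segment $\wt{P}_C$, whose length equals that of $C$. Since the $\tilde{e}_i$ are distinct edges, $d(m_i, m_1) \leq \mathrm{length}(C) - 1$. The translation length of $g_i$ is at most $d(m_i, g_i m_i) = d(m_i, m_1)$, yielding (2).

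\emph{The separation condition (3).} This is the heart of the argument. By \cref{fact_orthogonal_sphere_and_its_lift}, $\partial_{orth} N(P_C)$ embeds as a deformation retract in $\partial N(\wt{P}_C) \setminus \wt{C} \subset \partial N(\wt{C})$, so each component of $\partial_{orth} N(P_C)$ sits in a unique component of $\partial N(\wt{C})$. For each square $\mathsf{s}$ of $X$ containing $e$ and each $i$, let $\sigma_i^{\mathsf{s}} \subset \partial N(\wt{C})$ denote the pre-image of $\mathsf{s}$ around $\tilde{e}_i$; the $k$-repetitivity hypothesis says that for fixed $\mathsf{s}$, all $\sigma_i^{\mathsf{s}}$ lie in a common component of $\partial_{orth} N(P_C)$, and hence in a single component $A_{\mathsf{s}}$ of $\partial N(\wt{C})$. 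Now let $\tilde{\mathsf{s}}, \tilde{\mathsf{s}}'$ be two squares containing $\tilde{e}$, projecting to $\mathsf{s}, \mathsf{s}'$ in $X$. For each $i$, the unique square at $\tilde{e}_i$ projecting to $\mathsf{s}$ is $g_i^{-1} \tilde{\mathsf{s}}$, which meets $\partial N(\wt{C})$ along $\sigma_i^{\mathsf{s}}$; similarly for $\mathsf{s}'$. By \cref{lemma_half-spaces_equal_components_reg_sphere_L}, $g_i^{-1}\tilde{\mathsf{s}}$ and $g_i^{-1}\tilde{\mathsf{s}}'$ lie in distinct half-spaces of $\wt{C}$ iff $\sigma_i^{\mathsf{s}}$ and $\sigma_i^{\mathsf{s}'}$ lie in distinct components of $\partial N(\wt{C})$, i.e.\ iff $A_{\mathsf{s}} \neq A_{\mathsf{s}'}$. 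Applying the isometry $g_i$, this is equivalent to $\tilde{\mathsf{s}}$ and $\tilde{\mathsf{s}}'$ being separated by $g_i \wt{C}$. Since the condition $A_{\mathsf{s}} \neq A_{\mathsf{s}'}$ is independent of $i$, (3) follows.

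The main obstacle I anticipate is keeping the identifications straight: the $k$-repetitivity hypothesis is phrased in terms of the compact complex $\partial_{orth} N(P_C)$ (a local invariant of a fundamental domain), whereas the separation condition involves the infinite regular spheres of the translates $g_i \wt{C}$. The bridge is provided by \cref{fact_orthogonal_sphere_and_its_lift} together with the equivariance of the covering map, which guarantee that the component-level data captured by $\partial_{orth} N(P_C)$ faithfully controls the half-space structure of $\wt{C}$ and, by translation, of every $g_i \wt{C}$.
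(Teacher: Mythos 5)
Your argument is correct and follows the same route as the paper's proof: lift $P_C$ to $\wt{P}_C \subset \wt{C}$, take $g_i$ with $g_i\tilde e_i = \tilde e$, and push condition (2) of \cref{def_rep_cycle} through the embedding $\partial_{orth} N(P_C) \hookrightarrow \partial N(\wt{C})$ of \cref{fact_orthogonal_sphere_and_its_lift} together with \cref{lemma_half-spaces_equal_components_reg_sphere_L} to get the $i$-independent separation statement. The only difference is that you spell out the translation-length bound in (2) (via the midpoints $m_i$ lying on the geodesic segment $\wt{P}_C$), which the paper dismisses as ``clear.''
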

\begin{proof}
Let $C$ be $k$-repetitive with fundamental domain $P_C$ so that there exist edges $e_1, \cdots, e_k$ in $P_C$ that satisfy the conditions of \cref{def_rep_cycle}. Let $\wt{P}_C \subset \wt{C}$ denote a lift of $P_C$ in $\wt{X}$. 
Denote $\tilde{e}_1$ in $\wt{P}_C$ by $\tilde{e}$. 
Since the edges $\tilde{e}_i$ in $\wt{P}_C$ all have the same image $e$ in $X$, there exist $1 = g_1, \cdots, g_k$ such that $g_i \tilde{e}_i = \tilde{e}$. Then clearly, for each $i \in \{1, \cdots, k\}$, the translation length of $g_i$ is strictly less than the length of $C$ and $g_i \wt{C}$ contains $\tilde{e}$.

Let $\tilde{\mathsf{s}}$ and $\tilde{\mathsf{s}}'$ be two squares that contain $\tilde{e}$. Then the squares $g_i^{-1}(\tilde{\mathsf{s}})$ and $g_i^{-1}(\tilde{\mathsf{s}}')$ in $\wt{P}_C$ are lifts of squares $\mathsf{s}_i$ and $\mathsf{s}'_i$ in $P_C$. Let $D$ and $D'$ be components of $\partial_{orth} N(P_C)$ such that the pre-image $s_1$ of $\mathsf{s}$ around $e_1$ meets $D$ and the pre-image $s'_1$ of $\mathsf{s}'$ around $e_1$ meets $D'$. By definition, the corresponding pre-image $s_i$ meets $D$ and $s'_i$ meets $D'$ for all $i$.
Now $\tilde{\mathsf{s}}= \tilde{\mathsf{s}}_1$ and $\tilde{\mathsf{s}}' = \tilde{\mathsf{s}}'_1$ lie in different half-spaces of $g_1\wt{C} = \wt{C}$ if and only if $D, D' \subset \partial N(\wt{C})$ (since $\partial_{orth} N(P_C) \hookrightarrow \partial N(\wt{C})$, by \cref{fact_orthogonal_sphere_and_its_lift}) meet different half-spaces of $\wt{C}$. Also, $\tilde{\mathsf{s}}$ and $\tilde{\mathsf{s}}'$ lie in different components of $g_i \wt{C}$ if and only if $\tilde{\mathsf{s}}_i = g_i^{-1}\tilde{\mathsf{s}}$ and $\tilde{\mathsf{s}}'_i = g_i^{-1} \tilde{\mathsf{s}}'$ lie in different half-spaces of $\wt{C}$ if and only if $D$ and $D'$ induce different half-spaces of $\wt{C}$. 
\end{proof}

As a consequence, we have the following useful result when at least two of the translates in the above lemma are not equal.
Let $k \geq 2$ and assume that $C$ is a $k$-repetitive cycle. Let $g_1, \cdots, g_k \in G$ be as in \cref{lemma_rep_cycles_lifts}.

\begin{lemma}\label{lemma_rep_separates_two_halfspaces}
Suppose that at least two translates $g_i\wt{C}$ and $g_j\wt{C}$ are distinct. Then $\wt{C}$ separates $\wt{X}$ into exactly two half-spaces.
\end{lemma}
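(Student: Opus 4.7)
The plan is to argue by contradiction: assume $\wt{C}$ has $n \geq 3$ half-spaces and derive a contradiction with condition~(3) of \cref{lemma_rep_cycles_lifts} applied to two distinct translates $L_1 := g_i \wt{C}$ and $L_2 := g_j \wt{C}$ supplied by the hypothesis (both of which contain $\tilde{e}$). That condition forces any pair of squares at $\tilde{e}$ to be separated by $L_1$ if and only if it is separated by $L_2$.

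First, by \cref{cor_P_embeds}, the segment $P := L_1 \cap L_2$ is compact, so \cref{prop_regular_sphere_compact_path_bradymeier} gives that $\partial N(P)$ is connected with no cut points. Since $L_1$ has $n \geq 3$ half-spaces, each meeting $\partial N(P) \setminus L_1$ nontrivially by \cref{cor_square_meets_halfspace}, the cut pair $L_1 \cap \partial N(P)$ disconnects $\partial N(P)$ into at least three components. \cref{cor_cut_pairs_3_components_no_crossings} then places $L_2 \cap \partial N(P)$ in a single such component, so by the crossing criterion \cref{prop_local_crossing_lines}, $L_1$ and $L_2$ do not cross. Hence there exist half-spaces $Y$ of $L_1$ and $Z$ of $L_2$ with $L_2 \subset Y$ and $L_1 \subset Z$.

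Next, I would fix any half-space $Y' \neq Y$ of $L_1$. Since $L_2 \cap Y' \subset Y \cap Y' \subset L_1$, the set $Y' \setminus L_1$ is disjoint from $L_2$; thus for any square $\tilde{\mathsf{s}}$ at $\tilde{e}$ with interior in $Y' \setminus L_1$, its interior lies in some half-space $Z'$ of $L_2$. If $Z' \neq Z$, then applying \cref{lemma_non-crossing_lines_and_half-spaces} to the non-empty intersection $(Y' \setminus L_1) \cap (Z' \setminus L_2)$ forces $L_1 \subset Z'$ or $L_2 \subset Y'$, both contradicting the choice of $Y$ and $Z$. Hence $Z' = Z$. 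By \cref{cor_square_meets_halfspace}, each of the $n - 1 \geq 2$ half-spaces $Y' \neq Y$ of $L_1$ contains a square at $\tilde{e}$, so there exist two squares at $\tilde{e}$ sitting in distinct half-spaces of $L_1$ but in the same half-space $Z$ of $L_2$. These squares are separated by $L_1$ and not by $L_2$, contradicting condition~(3) of \cref{lemma_rep_cycles_lifts}. Combined with the fact that $\wt{C}$ has at least two half-spaces (since $C$ is UC-separating), this gives $n = 2$.

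The main obstacle is recognising that $\geq 3$ half-spaces of $L_1$ force the ``far side'' of $L_1$ (everything outside $Y$) to sit entirely in a single half-space of $L_2$, via \cref{lemma_non-crossing_lines_and_half-spaces}; the rest is a direct comparison of the two partitions of squares at $\tilde{e}$.
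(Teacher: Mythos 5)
Your proof is correct, and the overall skeleton (contradiction from $\geq 3$ half-spaces, compactness of the intersection via \cref{cor_P_embeds}, and the coincidence of the partition of squares at $\tilde{e}$ from condition~(3) of \cref{lemma_rep_cycles_lifts}) matches the paper's. Where you diverge is in how the contradiction is extracted. The paper picks three squares at $\tilde{e}$ pairwise separated by both $\wt{C}$ and $g_2\wt{C}$ in $\partial N(S)$ and then invokes \cref{lemma_cut_pair_3_components_unique}, a standalone graph-theoretic lemma about cut pairs, to conclude $\wt{C}\cap\partial N(S)=g_2\wt{C}\cap\partial N(S)$, which is absurd since the lines are distinct with $S=\wt{C}\cap g_2\wt{C}$. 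You instead stay at the level of lines: you first show $L_1$ and $L_2$ do not cross via \cref{cor_cut_pairs_3_components_no_crossings}, then use \cref{lemma_non-crossing_lines_and_half-spaces} (plus $Y\cap Y' = L_1$, $Z\cap Z' = L_2$) to pin every ``far'' half-space $Y'\neq Y$ of $L_1$ into the single half-space $Z$ of $L_2$, and finally produce two squares at $\tilde{e}$ separated by $L_1$ but not by $L_2$. In effect you have specialized the proof of \cref{lemma_cut_pair_3_components_unique} to the ambient geometry, trading the reusability of that packaged lemma for a more self-contained and concrete contradiction against condition~(3). Both are valid; the paper's version is slightly more economical because the graph-theoretic lemma is reused elsewhere, while yours makes the role of \cref{lemma_non-crossing_lines_and_half-spaces} explicit and needs no separate cut-pair machinery.
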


We will need the following result on graphs with no cut points to prove the lemma.

\begin{lemma} \label{lemma_cut_pair_3_components_unique}
Let $\Gamma$ be a graph with no cut points. Let $\{a,b\}$ and $\{a',b'\}$ be cut pairs. Suppose there exist points $h_1, h_2, h_3 \in \Gamma \setminus \{a,b,a',b'\}$ such that they are pairwise separated by $\{a,b\}$ and also pairwise separated by $\{a',b'\}$. Then $\{a,b\} = \{a',b'\}$.
\end{lemma}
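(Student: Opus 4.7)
The plan is to argue by contradiction: assume $\{a,b\} \neq \{a',b'\}$ and exhibit a single connected subset of $\Gamma \setminus \{a',b'\}$ that contains at least two of $h_1, h_2, h_3$, which contradicts their pairwise separation by $\{a',b'\}$.

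First I would set up notation. Since $\{a,b\}$ separates $h_1,h_2,h_3$ pairwise, $\Gamma \setminus \{a,b\}$ has at least three components; label them so that $h_i \in Y_i$ for $i \in \{1,2,3\}$. The key structural fact I need before starting the case analysis is that each closure $\bar{Y}_i$ contains both $a$ and $b$. This is where the no-cut-points hypothesis enters: if, say, $b \notin \bar{Y}_i$, then $Y_i$ is a component of $\Gamma \setminus \{a\}$ disjoint from the union of the remaining $Y_j$'s and $\{b\}$, making $a$ a cut point of $\Gamma$. An immediate consequence is that $Y_i \cup \{a\}$, $Y_i \cup \{b\}$ and $Y_i \cup \{a,b\}$ are each connected, and that gluing such pieces for different $i$ along a common subset of $\{a,b\}$ preserves connectedness.

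I would then split on whether $\{a,b\}$ and $\{a',b'\}$ are disjoint. If $\{a,b\} \cap \{a',b'\} = \emptyset$, Corollary \ref{cor_cut_pairs_3_components_no_crossings} applied to $\{a,b\}$ (which has at least three half-spaces) forces $\{a',b'\}$ not to be separated by $\{a,b\}$, so both $a'$ and $b'$ lie in a single component $Y_m$. For every $i \neq m$ the set $Y_i \cup \{a,b\}$ is then disjoint from $\{a',b'\}$, and the union
\[
W \;=\; \{a,b\} \cup \bigcup_{i \neq m} Y_i
\]
is a connected subset of $\Gamma \setminus \{a',b'\}$. At most one of $h_1,h_2,h_3$ can fail to lie in $W$ (namely $h_m$, if $m \in \{1,2,3\}$), so $W$ contains at least two of them, contradicting that $\{a',b'\}$ separates them pairwise.

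If instead $\{a,b\} \cap \{a',b'\} \neq \emptyset$, after relabeling the elements inside each pair I may assume $a = a'$ and $b \neq b'$. Then $b' \in \Gamma \setminus \{a,b\}$, so $b'$ lies in some component $Y_m$. For every $i \neq m$, the set $Y_i \cup \{b\}$ avoids $\{a',b'\} = \{a,b'\}$ (since $a, b' \notin Y_i$ and $b \neq a, b'$), and these pieces share the point $b$, so $W = \{b\} \cup \bigcup_{i \neq m} Y_i$ is connected in $\Gamma \setminus \{a',b'\}$ and, exactly as in the disjoint case, contains at least two of the $h_i$'s — contradiction. The main obstacle I anticipate is the preliminary structural lemma that each $\bar{Y}_i$ contains both $a$ and $b$; once this is in hand, both cases reduce to routine connectivity bookkeeping together with Corollary \ref{cor_cut_pairs_3_components_no_crossings}.
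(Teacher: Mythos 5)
Your proof is correct and follows essentially the same route as the paper's: both apply \cref{cor_cut_pairs_3_components_no_crossings} to confine one cut pair to a single half-space of the other, and then observe that all but one of the $h_i$ must land in a single component, contradicting pairwise separation. The paper states this compactly by showing $Y'_1 \cup \cdots \cup Y'_n \subset Y$ and invoking the symmetry lemma, whereas you make the same containment explicit via a connected set $W$ and an overt case split on whether the cut pairs share a vertex; this is a presentational difference, not a different argument.
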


Compare with Lemma 3.8 of \cite{bowditch_jsj}.
\begin{proof}
By \cref{cor_cut_pairs_3_components_no_crossings}, $\{a,b\}$ lies in a half-space $Y'$ of $\{a',b'\}$ and \cref{lemma_crossings_cut_pairs_in_graphs} implies that $\{a',b'\}$ lies in a half-space $Y$ of $\{a,b\}$.
Let $Y', Y'_1, \cdots, Y'_n$ be the half-spaces of $\{a',b'\}$. If $\{a,b\} \neq \{a',b'\}$, then $Y'_1 \cup \cdots \cup Y'_n$ lies in the half-space $Y$ of $\{a,b\}$ that contains $\{a',b'\}$. This is a contradiction as at most one $h_i$ lies in $Y'$. 
\end{proof}

\begin{proof}[Proof of \cref{lemma_rep_separates_two_halfspaces}]
After a re-ordering if necessary, we assume that $\wt{C} = g_1\wt{C}$ and $g_2\wt{C}$ are distinct.
By \cref{lemma_half-spaces_equal_components_reg_sphere_L}, it is enough to show that $\partial N(C)$ has exactly two components.

By \cref{cor_P_embeds}, the segment $S = \wt{C} \cap g_2 \wt{C}$ is compact.
Suppose that $\wt{C}$ has at least three half-spaces. Then both $\partial N(S) \setminus \wt{C}$ and $\partial N(S) \setminus g_2\wt{C}$ have at least three components, by \cref{lemma_path_abundance}. This means that there exist three squares $\tilde{\mathsf{s}}$, $\tilde{\mathsf{s}}'$ and $\tilde{\mathsf{s}}''$ containing $\tilde{e}$ that meet different components of both $\partial N(S) \setminus \wt{C}$ and $\partial N(S) \setminus g_2\wt{C}$, by \cref{cor_square_meets_halfspace} (as any pair of squares at $\tilde{e}$ are either separated by both lines or by none, by \cref{lemma_rep_cycles_lifts}). \cref{lemma_cut_pair_3_components_unique} then implies that $\partial N(S) \cap \wt{C} = \partial N(S) \cap g_2 \wt{C}$, which is a contradiction. 
\end{proof}

\begin{defn}
A vertical cycle $\phi: C \to X_{s}$ is \emph{maximal} if it is not a non-trivial power of any cycle. 
\end{defn}

Note that any cycle which induces a maximal cyclic subgroup has to be maximal, justifying the name.

\begin{cor}\label{cor_maximal_rep_two_halfspaces}
A lift in $\wt{X}$ of a maximal $k$-repetitive cycle separates $\wt{X}$ into exactly two half-spaces whenever $k \geq 2$.
\end{cor}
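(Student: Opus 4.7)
The plan is to deduce the corollary directly from \cref{lemma_rep_separates_two_halfspaces}: it suffices to show that when $C$ is maximal and $k \geq 2$, at least two of the translates $g_i \wt{C}$ provided by \cref{lemma_rep_cycles_lifts} are distinct. In fact, I will show that all of them are pairwise distinct.

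\textbf{Key steps.} First, I would unpack \cref{lemma_rep_cycles_lifts} and its proof: the elements $g_1, \dots, g_k$ come from $g_i \tilde{e}_i = \tilde{e}$, where $\tilde{e}_1, \dots, \tilde{e}_k$ are the distinct lifts in a fundamental domain $\wt{P}_C \subset \wt{C}$ of the repeated edges $e_1, \dots, e_k$ of $P_C$. In particular, the $\tilde{e}_i$ all lie in a single fundamental domain of length equal to the length of $C$, so the combinatorial distance between any two of them is strictly less than the length of $C$. Next, I would identify the stabiliser of $\wt{C}$: because $G$ is torsion-free and acts freely on $\wt{X}$, the stabiliser of the periodic line $\wt{C}$ is a cyclic group $\langle\gamma\rangle$ containing $\pi_1(C)$, whose generator translates $\wt{C}$ by its translation length. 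Maximality of $C$ forces $\pi_1(C) = \langle\gamma\rangle$: otherwise $\pi_1(C)$ would have finite index $m > 1$ in $\langle\gamma\rangle$ and $C$ would be an $m$-th power of the quotient cycle $\wt{C}/\langle\gamma\rangle$, contradicting maximality.

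Then I would argue by contradiction: suppose $g_i \wt{C} = g_j \wt{C}$ for some $i \neq j$. Then $g_j^{-1} g_i$ stabilises $\wt{C}$, so by the previous step $g_j^{-1} g_i \in \pi_1(C)$. On the other hand, $g_j^{-1} g_i$ sends $\tilde{e}_i$ to $\tilde{e}_j$ (since $g_i \tilde{e}_i = \tilde{e} = g_j \tilde{e}_j$), so its translation length is at most the distance between $\tilde{e}_i$ and $\tilde{e}_j$, which is strictly less than the length of $C$. But every nontrivial element of $\pi_1(C) = \langle \gamma \rangle$ has translation length a positive multiple of the length of $C$, so $g_j^{-1} g_i$ must be trivial, contradicting the distinctness of the $g_i$'s. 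Hence all translates $g_i \wt{C}$ are pairwise distinct, and when $k \geq 2$ the hypothesis of \cref{lemma_rep_separates_two_halfspaces} is satisfied, yielding that $\wt{C}$ has exactly two half-spaces.

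\textbf{Main obstacle.} There is no serious obstacle: the only delicate point is the identification $\pi_1(C) = \mathrm{Stab}(\wt{C})$ under the maximality hypothesis, which reduces to a small argument about $n$-th powers of cycles. Once that is in hand the translation-length comparison is immediate from the fact that the edges $\tilde{e}_i$ fit inside a single fundamental domain $\wt{P}_C$.
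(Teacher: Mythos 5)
Your proof is correct and uses essentially the same idea as the paper: compare the translation length of $g_j^{-1}g_i$ (which is less than the length of $C$) with the translation length of elements of $\pi_1(C)$ (which is at least the length of $C$), concluding from maximality that two translates $g_i\wt{C}$ must differ, then invoke \cref{lemma_rep_separates_two_halfspaces}. The only differences are cosmetic: you prove the slightly stronger statement that all the translates $g_i\wt{C}$ are pairwise distinct, while the paper notes only that $g_2\wt{C} \neq g_1\wt{C} = \wt{C}$ (which suffices), and you make explicit the identification $\pi_1(C) = \mathrm{Stab}(\wt{C})$ under maximality, which the paper leaves implicit in the phrase ``$\pi_1(C) \lneq \mathrm{stab}(\wt{C})$ contradicts maximality.''
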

\begin{proof}
When $C$ is maximal, the element $g_2$ that moves $\tilde{e}_2$ to $\tilde{e}_1$ in $\wt{P}_C$ does not preserve $\wt{C}$. This is because the translation length of $g_2$ is strictly less than the translation length of the generator of $\pi_1(C)$. If $g_2 \in $ stab($\wt{C}$), then $<g_2>$ and $\pi_1(C)$ are contained in a common cyclic subgroup and hence $\pi_1(C) \lneq$ stab($\wt{C}$), contradicting the fact that $C$ is maximal. Hence $g_2 \wt{C} \neq \wt{C}$. \cref{lemma_rep_separates_two_halfspaces} then gives the result.
\end{proof}

We will end the section with a crucial result that bounds the length of non-repetitive UC-separating cycles. 
Let $E$ denote the number of vertical edges of $X$ and $F$ denote the number of squares of $X$.

\begin{prop}[Long cycles are repetitive]\label{prop_long_cycles_repetitive}
Let $C$ be a vertical UC-separating cycle with length at least $E(k-1)2^{F(F+1)/2} +1$. Then $C$ is $k$-repetitive.
\end{prop}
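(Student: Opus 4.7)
The plan is a double pigeonhole argument carried out after lifting to $\wt X$. First I fix any fundamental domain $P_C$ of $C$; since its length equals that of $C$, it is at least $E(k-1)2^{F(F+1)/2}+1$. Lift $P_C$ to a fundamental domain $\wt P_C\subset\wt C$ for the $\pi_1(C)$-action. The edges of $\wt P_C$ project to vertical edges of $X$, of which there are $E$, so a first pigeonhole produces a vertical edge $e$ of $X$ together with at least $(k-1)2^{F(F+1)/2}+1$ distinct lifts $\tilde e^{(1)},\dots,\tilde e^{(m)}$ of $e$ inside $\wt P_C$. Since $G$ acts freely on $\wt X$, each $\tilde e^{(i)}$ determines a unique $g_i\in G$ with $g_i\tilde e^{(i)}=\tilde e^{(1)}=:\tilde e$.

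Next I perform a second pigeonhole on ``half-space signatures''. Because $C$ is UC-separating, $\wt C$ separates $\wt X$, and so each translate $g_i\wt C$ also separates $\wt X$ and passes through $\tilde e$. This lets me assign to each $i$ the partition $\Pi_i$ of the (at most $F$) squares at $\tilde e$ whose classes are ``lying in the same half-space of $g_i\wt C$''. An equivalence relation on a set of size at most $F$ is specified by a reflexive symmetric subset of its pair set, hence the number of possible $\Pi_i$ is bounded by $2^{F(F+1)/2}$. Pigeonhole then produces at least $k$ indices on which the $\Pi_i$ coincide; after relabeling, call the corresponding group elements $g_1,\dots,g_k$, their associated lifts $\tilde e^{(1)},\dots,\tilde e^{(k)}$, and the corresponding projected edges $e_1,\dots,e_k$ in $P_C$.

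These choices directly witness the conclusion of \cref{lemma_rep_cycles_lifts}: the $g_i$ are distinct, each $g_i\wt C$ contains $\tilde e$, the translation length of $g_i$ is at most the distance between $\tilde e^{(i)}$ and $\tilde e$ along $\wt P_C$ (hence strictly less than the length of $C$), and the coincidence of the $\Pi_i$'s is exactly the ``all or none'' condition (3) there. To recover the formulation in \cref{def_rep_cycle} itself, I would translate back via the deformation retract $\partial_{orth} N(P_C)\hookrightarrow\partial N(\wt P_C)\setminus\wt C$ of \cref{fact_orthogonal_sphere_and_its_lift}, combined with the identification of components of $\partial N(\wt C)$ with half-spaces of $\wt C$ from \cref{lemma_half-spaces_equal_components_reg_sphere_L}.

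The delicate point, and where I expect the main obstacle to lie, is this last translation: passing from ``same half-space partition of squares at $\tilde e$'' to the condition in \cref{def_rep_cycle} that for each fixed square $\mathsf s$ the pre-images of $\mathsf s$ around $e_1,\dots,e_k$ all lie in a \emph{single} component of $\partial_{orth} N(P_C)$. This requires carefully matching components of $\partial N(\wt P_C)\setminus\wt C$ with half-spaces of $\wt C$ (distinct components of the former could a priori collapse to the same half-space of the latter); the Brady--Meier property and the compactness of $\wt P_C$ should pay off here to make the correspondence behave correctly.
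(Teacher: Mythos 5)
Your overall plan — fix a fundamental domain, first pigeonhole on the $E$ vertical edges of $X$, second pigeonhole on how the squares at the repeated edge get sorted — matches the paper's strategy. The first pigeonhole is fine. The problem is in the \emph{object} you pigeonhole on in the second step, and you have correctly located the danger yourself.

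You assign to each repeated edge the partition $\Pi_i$ of the squares at $\tilde e$ by the half\mbox{-}spaces of $g_i\wt C$. This only records which squares land on the same side of $\wt C$, i.e.\ it factors through $\pi_0\bigl(\partial N(\wt C)\bigr)$. But condition (2) of \cref{def_rep_cycle} is a statement about $\pi_0\bigl(\partial_{orth} N(P_C)\bigr)$: for each square $\mathsf s$, the pre\mbox{-}image of $\mathsf s$ around $e_i$ must land in the \emph{same} component of the orthogonal sphere for every $i$. The inclusion $\partial_{orth} N(P_C)\hookrightarrow \partial N(\wt P_C)\setminus\wt C\subset\partial N(\wt C)$ induces a surjection on $\pi_0$ that is in general not injective, so two distinct components of $\partial_{orth} N(P_C)$ can lie in the same half\mbox{-}space of $\wt C$. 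In that situation $\Pi_i=\Pi_j$ does \emph{not} force the component assignments to agree, so coinciding $\Pi_i$'s are insufficient for \cref{def_rep_cycle}. (They \emph{do} immediately give the conclusion of \cref{lemma_rep_cycles_lifts}, but that lemma is a consequence of the definition, not a reformulation of it, and the statement of the proposition is about the definition.) Your closing paragraph flags exactly this and expresses hope that Brady--Meier plus compactness of $\wt P_C$ rescues the correspondence; the paper does not attempt any such rescue, and I do not see one.

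The repair is direct and is what the paper does: pigeonhole on the surjective assignment $\Phi_i\colon\{\text{squares at }e\}\to\pi_0\bigl(\partial_{orth} N(P_C)\bigr)$ sending $\mathsf s$ to the component containing the pre\mbox{-}image of $\mathsf s$ around $e_i$ (this is well defined by \cref{lemma_edge_square_orthogonal_sphere}). The number of such surjections from a set of size $\lambda\le F$ onto a set of size $\mu$ is the quantity the paper calls $A(\lambda,\mu)$, computed by the recursion $A(\lambda,\mu)=\sum_{r=1}^{\lambda-\mu+1}\binom{\lambda}{r}A(\lambda-r,\mu-1)$ and bounded by $2^{\lambda(\lambda+1)/2}\le 2^{F(F+1)/2}$, which is the bound you also arrive at by a different (and slightly looser) count. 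So replacing $\Pi_i$ by $\Phi_i$ makes your argument essentially coincide with the paper's.
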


\begin{proof}
The key ingredient in the proof is the pigeonhole principle. We apply it twice to show that $C$ satisfies both the conditions of \cref{def_rep_cycle}. We give the details below.

Fix a fundamental domain $P_C$ of $C$. Since there are $E$ vertical edges in $X$, by the pigeonhole principle, there exists an edge $e$ in $X$ such that $n$ oriented edges $e_1, \dots, e_n$ of $C$ are mapped to $e$, with $n \geq (k-1)2^{F(F+1)/2} +1$. 

Let $\lambda \leq F$ be the thickness of $e$. Note that the number of components $\mu$ of $\partial_{orth} N(P_C)$ is at most $\lambda$, by \cref{lemma_edge_square_orthogonal_sphere}.
We would like to show that there exist $k$ edges out of $e_1, \cdots, e_n$ for which the conditions of \cref{def_rep_cycle} are satisfied. Denote by $A(\lambda,\mu)$ the number of ways in which the squares $\mathsf{s}_1, \cdots, \mathsf{s}_{\lambda}$ containing $e$ can be partitioned into exactly $\mu$ nonempty subsets. If $n > (k-1) A(\lambda,\mu)$, then by the pigeonhole principle, $k$ edges which satisfy the conditions of \cref{def_rep_cycle} exist. Since $n \geq (k-1)2^{F(F+1)/2} +1$ and $\lambda \leq F$, it is enough to show that $A(\lambda,\mu) \leq 2^{\lambda(\lambda+1)/2}$.
Note that if $\mu = 1$, then $A(\lambda,\mu) = 1$ for any $\lambda$.
Also, since no subset of the partition of the squares can be empty, any subset can contain at most $\lambda - \mu +1$ squares. Hence we have 
\begin{align*}
A(\lambda,\mu) = \sum\limits_{r=1}^{\lambda-\mu+1} \binom{\lambda}{r} A(\lambda-r,\mu-1) 
\end{align*}

Observe that $A(\lambda_2,\mu) \geq A(\lambda_1,\mu)$ whenever $\lambda_2 \geq \lambda_1 \geq \mu$. Hence $A(\lambda,\mu) \leq 2^{\lambda(\lambda+1)/2}$. 
\end{proof}

\section{3-repetitive cycles and crossings}

The main result of the section is the following.

\begin{prop}\label{prop_repetitive_not_jsj}
Let $C$ be a maximal UC-separating vertical cycle that is 3-repetitive. Then there exists a periodic separating line $L'$ in $\wt{X}$ such that $L'$ and $\wt{C}$ cross. 
\end{prop}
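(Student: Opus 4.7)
Apply \cref{lemma_rep_cycles_lifts} with $k = 3$ to obtain an edge $\tilde{e}$ of $\wt{X}$ and distinct elements $g_1 = 1, g_2, g_3 \in G$ such that each $g_i\wt{C}$ contains $\tilde{e}$, each $g_i$ has translation length strictly less than $|C|$, and the partition of the squares at $\tilde{e}$ into the two groups $A, B$ separated by each $g_i\wt{C}$ is the same for all $i$. Because $C$ is maximal, $\mathrm{stab}(\wt{C}) = \pi_1(C)$; combined with the translation length bound this forces the three lines $\wt{C}, g_2\wt{C}, g_3\wt{C}$ to be pairwise distinct (any equality $g_i\wt{C}=g_j\wt{C}$ would give a non-trivial element $g_i^{-1}g_j\in\pi_1(C)$ displacing an edge of a fundamental domain $\wt{P}_C$ by less than $|C|$). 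By \cref{cor_maximal_rep_two_halfspaces}, each of these three lines has exactly two half-spaces.

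The plan is to exhibit $L'$ as a translate of $\wt{C}$ obtained from the $g_i$. If either $g_2\wt{C}$ or $g_3\wt{C}$ crosses $\wt{C}$, the crossing translate is already a valid choice for $L'$: it is periodic (stabilised by a conjugate of $\pi_1(C)$) and separating. Otherwise, if $g_2\wt{C}$ and $g_3\wt{C}$ cross one another, then applying $g_2^{-1}$ to the pair shows that $L' := g_2^{-1}g_3\wt{C}$ crosses $\wt{C}$, and it has the required properties.

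The main obstacle is the remaining case in which the three translates are pairwise non-crossing. Set $S^* := \wt{C} \cap g_2\wt{C} \cap g_3\wt{C}$, which by \cref{cor_P_embeds} is a compact segment containing $\tilde{e}$. I would apply \cref{prop_local_crossing_lines} to each pair: non-crossing means the cut pair $\{p_i, q_i\} := g_i\wt{C} \cap \partial N(S^*)$ lies in a single component of $\partial N(S^*) \setminus \{p_j, q_j\}$ for all $i \neq j$. The 3-repetitive condition forces each of the three cut pairs to separate the pre-images around $\tilde{e}$ of $A$-squares from those of $B$-squares inside $\partial N(S^*)$; since each $g_i\wt{C}$ has exactly two half-spaces, each cut pair splits $\partial N(S^*)$ into exactly two components. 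The goal is to show that three pairwise nested cut pairs realising the same bipartition of the $\tilde{e}$-localised part of $\partial N(S^*)$ cannot all be distinct in a graph without cut points (\cref{prop_regular_sphere_compact_path_bradymeier}); this would contradict the distinctness of the three translates established above.

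The hardest step is this final contradiction. One refinement that should help is to observe that if $S^* \supsetneq \tilde{e}$ then the three lines agree on $S^*$ and diverge only at its two endpoints $p, q$, where each provides a distinct vertical half-edge continuation; a link-level argument at $p$ and $q$, exploiting the Brady-Meier hypothesis, should then force two of these continuations to belong to the same component of $\partial N(S^*)\setminus g_i\wt{C}$ for the third index $i$, contradicting the pairwise nesting and completing the proof in the style of \cref{lemma_cut_pair_3_components_unique}.
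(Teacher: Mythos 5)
Your reduction to the case where the three translates $\wt{C}$, $g_2\wt{C}$, $g_3\wt{C}$ are pairwise non-crossing is correct, and matches the paper's reduction (the self-crossing cases, and the pairwise distinctness via maximality, are handled the same way). But the proposed handling of the main case has a fundamental gap: you set out to derive a \emph{contradiction} — that three distinct pairwise non-crossing translates through a common edge realizing the same bipartition cannot exist — and there is no such contradiction. Three distinct, pairwise non-crossing (nested) translates of $\wt{C}$ through a common edge are perfectly possible, and this is in fact the generic picture. Your appeal to an argument in the style of \cref{lemma_cut_pair_3_components_unique} cannot work, because that lemma requires three points pairwise separated by each cut pair; here each $g_i\wt{C}$ has exactly two half-spaces (by \cref{cor_maximal_rep_two_halfspaces}), so each induced cut pair splits $\partial N(S^*)$ into only two components, and the hypotheses of the lemma fail. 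Likewise, the ``link-level argument'' at the endpoints of $S^*$ cannot force two continuations onto the same side of the third line: the correct consequence of nesting (which the paper proves as \cref{lemma_L_2_separates_L_1_L_3}) is precisely that $L_1$ and $L_3$ lie in \emph{complementary} half-spaces of $L_2$, i.e.\ their continuations are on opposite sides.

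The proposition does not require $L'$ to be a translate of $\wt{C}$, and the paper's $L'$ in this case is not. What the paper does with the nested structure $Y_1 \subset Y_2 \subset Y_3$ is constructive rather than by contradiction: choosing translations $h_1 \in \mathrm{stab}(L_1)$ and $h_3 \in \mathrm{stab}(L_3)$ which preserve the half-space labelling and push $e$ off $L_1 \cap L_2$ and off $L_3 \cap L_1$ in opposite directions, it takes $L'$ to be the axis of $h_3 h_1^{-1}$ through $e$. One then checks that $L'$ passes through $h_1(e) \in L_1 \setminus L_2$ and $h_3(e) \in L_3 \setminus L_2$ (which sit in opposite half-spaces of $L_2$ by \cref{lemma_L_2_separates_L_1_L_3}), and that $L'$ is a separating line because its orthogonal sphere along a fundamental domain inherits the two-component structure from $L_1$ and $L_3$ via the splicing of \cref{lemma_regular_sphere_at_combinatorial_path}. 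Crossing of $L'$ with $L_2$ then follows from \cref{prop_local_crossing_lines}. You should replace the hoped-for contradiction with this construction.
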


If $C$ has self-crossings, then by definition, $\wt{C}$ and a translate cross, and there is nothing to show. The nontrivial part is to show that one such line exists even when $C$ has no self-crossings. Henceforth, till the end of this section, $C$ refers to a maximal 3-repetitive cycle $C$ with no self-crossings.

The key idea behind the proof is the following. By \cref{cor_maximal_rep_two_halfspaces}, $\wt{C}$ separates $\wt{X}$ into exactly two half-spaces. Further, by \cref{lemma_rep_cycles_lifts}, since $C$ is 3-repetitive, there exists an edge in $\wt{X}$ along which three translates of $\wt{C}$ meet. We will show that one of these translates separates the other two. The periodic line $L'$ will then be constructed by ensuring that it meets both the separated translates outside the central translate. This implies that $L'$ crosses the central translate of $\wt{C}$. We give details below.

\subsection{Lifts of \texorpdfstring{$C$}{C}} Fix a fundamental domain $P_C$ of $C$ and edges $e_1$, $e_2$, $e_3$ with image $e$ in $X_{s}$ satisfying the conditions of \cref{def_rep_cycle}. Let $d$ be the thickness of $e$.
Denote also by $e$ a lift of the edge $e$ in a vertex graph $\wt{X}_s$ of $\wt{X}$ such that for $1 \leq i \leq 3$, there exist $L_i = g_i \wt{C}$ that satisfy the conclusions of \cref{lemma_rep_cycles_lifts}. Thus $L_1$, $L_2$ and $L_3$ contain $e$ and if $\mathscr{S} = \{\mathsf{s}_1, \cdots, \mathsf{s}_d\}$ is the set of squares containing $e$, then

\begin{lemma}\label{lemma_partition_squares_e}
There exists a partition $A \sqcup B$ of $\mathscr{S}$ such that two squares $\mathsf{s}$ and $\mathsf{s}'$ in $\mathscr{S}$ lie in $A$ (or $B$) if and only if for each $i \in \{1,2,3\}$, they lie in a single half-space of $L_i$. \qed
\end{lemma}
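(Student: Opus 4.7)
The proof will be a short, essentially formal consequence of three earlier results already in hand: \cref{cor_maximal_rep_two_halfspaces}, \cref{lemma_rep_cycles_lifts}, and \cref{cor_square_meets_halfspace}. The plan is to define the partition via the half-space structure of a single $L_i$, and then invoke the compatibility condition across the $L_i$ provided by 3-repetitiveness.

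First I would apply \cref{cor_maximal_rep_two_halfspaces} (using that $C$ is maximal and $k = 3 \geq 2$) to conclude that each $L_i = g_i \wt{C}$ separates $\wt{X}$ into exactly two half-spaces, call them $Y_i^+$ and $Y_i^-$. Next, by \cref{cor_square_meets_halfspace} applied to the edge $e \subset L_i$ and each of the two half-spaces of $L_i$, every half-space of $L_i$ contains at least one square of $\mathscr{S}$. Consequently, setting
\[
A_i := \{\mathsf{s} \in \mathscr{S} : \mathsf{s} \subset Y_i^+\}, \qquad B_i := \{\mathsf{s} \in \mathscr{S} : \mathsf{s} \subset Y_i^-\},
\]
we obtain a nontrivial partition $\mathscr{S} = A_i \sqcup B_i$ induced by $L_i$, with the property that two squares of $\mathscr{S}$ lie in the same part if and only if they lie in a single half-space of $L_i$.

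The key step is then to compare the three partitions $\{A_i, B_i\}$ for $i = 1,2,3$. By condition (3) of \cref{lemma_rep_cycles_lifts}, any two squares $\mathsf{s}, \mathsf{s}' \in \mathscr{S}$ are either separated by every $L_i$ or by no $L_i$. In terms of the partitions, this means that $\mathsf{s}$ and $\mathsf{s}'$ lie in the same part of $\{A_1, B_1\}$ if and only if they lie in the same part of $\{A_i, B_i\}$ for every $i \in \{1,2,3\}$. Hence the three two-part partitions coincide (up to relabelling), and taking $A := A_1$ and $B := B_1$ yields the desired $\mathscr{S} = A \sqcup B$: two squares lie in a single part of this partition precisely when, for each $i \in \{1,2,3\}$, they lie in a single half-space of $L_i$.

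There is essentially no obstacle here; the content has already been absorbed into \cref{cor_maximal_rep_two_halfspaces} and \cref{lemma_rep_cycles_lifts}, and the lemma is merely the organisational statement that packages the data as a single partition of $\mathscr{S}$. The only mild point to keep in mind is that the $L_i$ need not be distinct as lines, but this is harmless: if some of them coincide, the corresponding partitions trivially agree, and the argument above goes through unchanged.
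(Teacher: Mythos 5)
Your proof is correct and takes essentially the same route as the paper, which states the lemma with a \qed, treating it as immediate from \cref{cor_maximal_rep_two_halfspaces} (each $L_i$ has exactly two half-spaces) and condition (3) of \cref{lemma_rep_cycles_lifts} (the three separation patterns on $\mathscr{S}$ agree). Your extra invocation of \cref{cor_square_meets_halfspace} to see that the partition is nontrivial is a reasonable bit of bookkeeping that the paper leaves implicit.
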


By \cref{cor_maximal_rep_two_halfspaces}, we have 

\begin{lemma}\label{lemma_L_i_two_halfspaces}
Each $L_i$ separates $\wt{X}$ into exactly two half-spaces. \qed
\end{lemma}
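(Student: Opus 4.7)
The plan is to invoke \cref{cor_maximal_rep_two_halfspaces} for each $L_i$. Recall that this corollary states that any lift in $\wt{X}$ of a maximal $k$-repetitive cycle separates $\wt{X}$ into exactly two half-spaces whenever $k \geq 2$. Since $C$ is maximal and 3-repetitive, it is in particular 2-repetitive, so the hypotheses of the corollary are satisfied for $C$.

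It remains to observe that each $L_i = g_i \wt{C}$ is itself a lift of the cycle $C$ in $\wt{X}$. Indeed, $\wt{C}$ is a lift of $C$ by construction, and since $G$ acts on $\wt{X}$ by cellular isometries that commute with the covering projection $\wt{X} \to X$, the translate $g_i \wt{C}$ is again a lift of $C$. The action of $g_i$ sends half-spaces of $\wt{C}$ bijectively onto half-spaces of $g_i \wt{C}$, so $L_i$ has exactly as many half-spaces as $\wt{C}$, namely two by \cref{cor_maximal_rep_two_halfspaces}.

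I do not expect any real obstacle here: the lemma is a direct application of the preceding corollary combined with the $G$-equivariance of the number of half-spaces. The only minor point to make explicit is that $L_i$, being a translate of the fixed lift $\wt{C}$, is itself one of the lifts of $C$, so the corollary applies to $L_i$ just as well as to $\wt{C}$ itself.
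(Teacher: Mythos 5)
Your proof is correct and matches the paper's argument: the paper also derives this lemma directly from \cref{cor_maximal_rep_two_halfspaces} applied to the $L_i$, which are translates (hence lifts) of the maximal $3$-repetitive cycle $C$. The only thing you add is spelling out the routine point that a $G$-translate of a lift is again a lift, which is fine.
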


Fix a square $\mathsf{s} \in A \subset \mathscr{S}$. For $1\leq i \leq 3$, let $Y_i$ be the half-space of $L_i$ that contains $\mathsf{s}$. Let $\overline{Y}_i$ be its complementary half-space.
A set-theoretic consequence of \cref{lemma_partition_squares_e} is that the half-spaces of $L_1$, $L_2$ and $L_3$ are nested. Namely, 
\begin{lemma}
For $i,j \in \{1,2,3\}$, either $Y_i \subset Y_j$ or $Y_j \subset Y_i$.
\end{lemma}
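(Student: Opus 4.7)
The plan is to exploit the no-self-crossing hypothesis on $C$: since $L_i$ and $L_j$ are both translates of $\wt{C}$, they do not cross. By the half-space characterization of non-crossing (\cref{lemma_crossing_by_half-spaces}), there exists a half-space $Z$ of $L_j$ such that either $Y_i \subset Z$ or $Z \subset Y_i$. Since $L_j$ has only two half-spaces (\cref{lemma_L_i_two_halfspaces}), $Z \in \{Y_j, \overline{Y}_j\}$. If $Z = Y_j$, the conclusion is immediate, so the whole content is to rule out $Z = \overline{Y}_j$, which splits into the sub-cases $Y_i \subset \overline{Y}_j$ and $\overline{Y}_j \subset Y_i$.

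The first sub-case is straightforward: if $Y_i \subset \overline{Y}_j$, then $\mathsf{s} \subset Y_i \subset \overline{Y}_j$, but by definition $\mathsf{s} \subset Y_j$, giving $\mathsf{s} \subset Y_j \cap \overline{Y}_j = L_j$, which is absurd since squares are $2$-dimensional while $L_j$ is a line.

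The second sub-case is the main obstacle. The key is to produce a square in the opposite class of the partition. Since $L_i$ separates $\wt{X}$ into exactly two half-spaces, \cref{cor_square_meets_halfspace} guarantees that each half-space of $L_i$ contains a square meeting the edge $e$; in particular there is a square $\mathsf{s}'' \in \mathscr{S}$ lying in $\overline{Y}_i$. The $3$-repetitive condition (\cref{def_rep_cycle}, and the resulting partition in \cref{lemma_partition_squares_e}) then forces $\mathsf{s}''$ to be separated from $\mathsf{s}$ by \emph{all} of $L_1, L_2, L_3$ or by \emph{none}; since $\mathsf{s}'' \subset \overline{Y}_i$, it must be separated from $\mathsf{s}$ by each $L_k$, so $\mathsf{s}'' \in B$ and $\mathsf{s}'' \subset \overline{Y}_k$ for every $k \in \{1,2,3\}$, in particular $\mathsf{s}'' \subset \overline{Y}_j$.

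Now assume for contradiction $\overline{Y}_j \subset Y_i$. Then $\mathsf{s}'' \subset \overline{Y}_j \subset Y_i$, while also $\mathsf{s}'' \subset \overline{Y}_i$, forcing $\mathsf{s}'' \subset Y_i \cap \overline{Y}_i = L_i$, again contradicting that $\mathsf{s}''$ is $2$-dimensional. Hence $Z = Y_j$, and we conclude that either $Y_i \subset Y_j$ or $Y_j \subset Y_i$ (the case $i = j$ being trivial). The crucial ingredient was to convert the partition statement of \cref{lemma_partition_squares_e} together with \cref{cor_square_meets_halfspace} into the existence of a square simultaneously lying in $\overline{Y}_i$ and $\overline{Y}_j$, which rules out the ``flipped'' nesting direction.
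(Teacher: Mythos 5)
Your proof is correct and takes essentially the same approach as the paper: both rule out the two ``flipped'' inclusions $Y_i \subset \overline{Y}_j$ and $\overline{Y}_j \subset Y_i$ by exhibiting a square that would have to lie in two complementary half-spaces, then invoke \cref{lemma_crossing_by_half-spaces} to conclude nesting. You simply make explicit (via \cref{cor_square_meets_halfspace} and \cref{lemma_partition_squares_e}) the existence of a square in $B$, which the paper's terser version leaves implicit.
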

\begin{proof}
We will prove the lemma for $Y_1$ and $Y_2$. 
Observe that neither $\overline{Y}_1 \subset Y_2$ nor $Y_2 \subset \overline{Y}_1$, as otherwise a square in $B$ lies in $Y_2$ or a square in $A$ lies in $\overline{Y}_1$. Thus, by \cref{lemma_crossing_by_half-spaces}, either $Y_1 \subset Y_2$ or $Y_2 \subset Y_1$.
\end{proof}

After a re-ordering if necessary, assume that $Y_1 \subset Y_2 \subset Y_3$. We then have:
\begin{lemma}\label{lemma_L_2_separates_L_1_L_3}
$L_1$ and $L_3$ lie in complementary half-spaces of $L_2$.
\end{lemma}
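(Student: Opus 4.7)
The claim asserts that $L_1 \subset Y_2$ and $L_3 \subset \overline{Y}_2$ (or vice versa). Given the nesting $Y_1 \subset Y_2 \subset Y_3$ already established, I expect this to follow directly from purely set-theoretic manipulations of the half-spaces, combined with \cref{lemma_half-spaces_contain_lines} (which says every line is contained in each of its half-spaces) and \cref{lemma_L_i_two_halfspaces} (each $L_i$ has exactly two half-spaces).

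First, I would handle $L_1$: since \cref{lemma_half-spaces_contain_lines} gives $L_1 \subset Y_1$, the hypothesis $Y_1 \subset Y_2$ immediately yields $L_1 \subset Y_2$, placing $L_1$ in one of the two half-spaces of $L_2$.

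Next, I would handle $L_3$. The inclusion $Y_2 \subset Y_3$ gives $\wt{X} \setminus Y_3 \subset \wt{X} \setminus Y_2$. Because $L_i$ has exactly two half-spaces (\cref{lemma_L_i_two_halfspaces}), the complement $\wt{X} \setminus Y_i$ is precisely the interior of $\overline{Y}_i$, equal to $\overline{Y}_i \setminus L_i$. Hence $\overline{Y}_3 \setminus L_3 \subset \overline{Y}_2 \setminus L_2 \subset \overline{Y}_2$. By definition a half-space is the closure of a component of the complement of its line, so $\overline{Y}_3$ is the closure of $\overline{Y}_3 \setminus L_3$. Since $\overline{Y}_2$ is closed, this forces $\overline{Y}_3 \subset \overline{Y}_2$. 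Combining with $L_3 \subset \overline{Y}_3$ (again \cref{lemma_half-spaces_contain_lines}) gives $L_3 \subset \overline{Y}_2$.

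Putting the two pieces together, $L_1 \subset Y_2$ and $L_3 \subset \overline{Y}_2$, which are the two complementary half-spaces of $L_2$. I do not anticipate any real obstacle here; the only point requiring a moment's care is the topological step that $\overline{Y}_3 \subset \overline{Y}_2$ follows from $\overline{Y}_3 \setminus L_3 \subset \overline{Y}_2$, which uses that $\overline{Y}_2$ is closed and that $\overline{Y}_3$ is by definition the closure of its interior as a component.
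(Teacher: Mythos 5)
Your proof is correct and follows essentially the same route as the paper, which states the two inclusions $L_1 \subset Y_2$ and $\overline{Y}_3 \subset \overline{Y}_2$ without justifying the latter. You have supplied the missing justification — the observation that $\wt{X}\setminus Y_i = \overline{Y}_i \setminus L_i$ (using \cref{lemma_L_i_two_halfspaces}) together with the closure argument — which is a clean and correct way to deduce $\overline{Y}_3 \subset \overline{Y}_2$ from $Y_2 \subset Y_3$.
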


\begin{proof}
First, $L_1 \subset Y_2$ as $Y_1 \subset Y_2$.
Similarly, $\overline{Y}_3 \subset \overline{Y}_2$ (as $Y_2 \subset Y_3$) implies $L_3 \subset \overline{Y}_2$.
\end{proof}

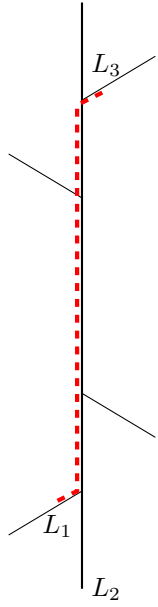
\begin{figure}
\begin{center}
\begin{tikzpicture} [scale = 0.65]
\draw [thick] (-2,-6) -- (-2,6);
\draw (-2,2) -- (-3.5,2.9);
\draw (-2,-2) -- (-.5,-2.9);
\draw (-2,4) -- (-0.5,4.9);
\draw (-3.5,-4.9) -- (-2,-4);

\node [below] at (-2.5, -4.3) {$L_1$};

\node [above] at (-1.5, 4.3) {$ L_3$};

\draw [ultra thick, dashed, red] (-2.5, -4.2) -- (-2.1,-4) to (-2.1,3.9) to (-1.5,4.2);

\node [right] at (-2,-6) {$L_2$};
 
\end{tikzpicture}
\end{center}
\caption{$L'$ crosses $L_2$ if it meets $L_1$ and $L_3$ outside $L_2$} \label{fig_crossing_line_L'}
\end{figure} 

\subsection{The main result} 

Fix orientations on $L_1, L_2, L_3$ such that they agree on $e$. As $L_1 \cap L_2$ is bounded, we can choose an element $h_1$ in the stabiliser of $L_1$ such that \begin{enumerate}
\item $h_1(Y_1) = Y_1$ (and thus $h_1(\overline{Y}_1) = \overline{Y}_1$), and 
\item $h_1(e)$ lies in $L_1 \setminus L_2$ before $e$ in the orientation of $L_1$.
\end{enumerate}
Recall that $h_1$ acts by translation on $L_1$.
Similarly, choose an element $h_3$ in the stabiliser of $L_3$ such that 
\begin{enumerate}
\item $h_3(Y_3) = Y_3$, and
\item $h_3(e)$ lies in $L_3 \setminus L_1$ after $e$ in the orientation of $L_3$.
\end{enumerate}

Let $L'$ be the axis of $h' = h_3 \cdot h_1^{-1}$ in the vertical tree $\wt{X}_s$ that contains $e$. $L'$ is periodic by definition. Observe that 

\begin{lemma}\label{lemma_L'_both_sides_L_2}
$L'$ contains $h_1(e)$ and, therefore, $h_3(e) = h'(h_1(e))$.
\end{lemma}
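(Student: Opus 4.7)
The identity $h_3(e) = h'(h_1(e))$ is immediate from $h' = h_3 \cdot h_1^{-1}$. To prove the first statement, my plan is to construct $L'$ explicitly as the $h'$-orbit of the geodesic segment $[h_1(e), h_3(e)]$ in the vertical tree $\wt{X}_s$, and then invoke uniqueness of the axis of a hyperbolic tree isometry. Denote the two endpoints of the edge $e$ by $v^-$ and $v^+$, chosen so that the common orientation of $L_1, L_2, L_3$ on $e$ points from $v^-$ to $v^+$.

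First, I will identify the geodesic $[h_1(e), h_3(e)]$ in $\wt{X}_s$ as the concatenation of $[h_1(e), e] \subset L_1$ followed by $[e, h_3(e)] \subset L_3$. With the common orientation, this path enters $v^-$ from below along $L_1$, crosses $e$ to $v^+$, and leaves $v^+$ upward along $L_3$. At each of $v^{\pm}$ the entry and exit edges are distinct, because $L_1$ and $L_3$ are themselves geodesic lines through $e$; so the concatenation is a local geodesic, and being in a tree it is therefore the unique geodesic between its endpoints.

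The second, and technically the most delicate, step is to show that consecutive translates $[h_1(e), h_3(e)]$ and $h'([h_1(e), h_3(e)])$ concatenate without backtracking at the common endpoint $h_3(e) = h'(h_1(e))$. Applying $h'$ to the decomposition of Step 1 yields $h'([h_1(e), h_3(e)]) = [h_3(e), h'(e)] \cdot [h'(e), h'(h_3(e))]$, where the first segment lies in $h'(L_1) = h_3(L_1)$ since $h_1$ stabilises $L_1$, and the second lies in $h'(L_3)$. Because $h_1(e)$ lies backward of $e$ on $L_1$, the inverse $h_1^{-1}$ translates $L_1$ forward; hence $h'(e) = h_3 h_1^{-1}(e)$ lies forward of $h_3(e)$ on $h_3(L_1)$, so the translated geodesic leaves $h_3(e)$ at the vertex $h_3(v^+)$. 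On the other hand, $[h_1(e), h_3(e)]$ arrives at $h_3(e)$ travelling upward along $L_3$ and therefore enters at the opposite vertex $h_3(v^-)$. The concatenation thus transits the edge $h_3(e)$ cleanly from $h_3(v^-)$ to $h_3(v^+)$ without backtracking.

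By $h'$-translation, no backtracking occurs at any junction $h'^n(h_3(e))$. Consequently $L'' \coloneqq \bigcup_{n \in \mathbb{Z}} h'^n([h_1(e), h_3(e)])$ is a locally, and hence globally, geodesic line in the tree $\wt{X}_s$. It is $h'$-invariant with $h'$ acting by translation of length $d(h_1(e), h_3(e))$, so by uniqueness of the axis of a hyperbolic tree isometry we obtain $L' = L''$. Since $h_1(e) \in [h_1(e), h_3(e)] \subset L''$, this gives $h_1(e) \in L'$ as required. The hard part is the orientation bookkeeping in Step 2, confirming that the entry and exit of the path at the edge $h_3(e)$ really occur at its two distinct endpoints.
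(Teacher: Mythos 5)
Your proof is correct, but it takes a genuinely different route from the paper. The paper argues by contradiction using Serre's translation-distance formula for a hyperbolic tree isometry, $d(x,h'x) = 2\,d(x,L') + |h'|$: if $h_1(e)\notin L'$, the two endpoints $h_1(u)$ and $h_1(v)$ of $h_1(e)$ lie at distances to $L'$ differing by one, so $d(h_1(u),h_3(u))$ and $d(h_1(v),h_3(v))$ differ by two; but since $h_3(u)$ and $h_3(v)$ are the endpoints of the single edge $h_3(e)$ and the tree geodesic between the edges $h_1(e)$ and $h_3(e)$ joins aligned endpoints, these two distances must actually be equal, a contradiction. Your proof instead builds the axis from scratch: you exhibit the geodesic $[h_1(e),e]\cdot[e,h_3(e)]$ explicitly, check via the orientation bookkeeping that consecutive $h'$-translates of this segment concatenate without backtracking, and invoke uniqueness of the axis of a hyperbolic tree isometry. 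What your route buys is that it makes explicit the key geometric fact which the paper leaves implicit — namely, that the geodesic from $h_1(e)$ to $h_3(e)$ passes through $e$, so it leaves $h_1(e)$ at its forward vertex and enters $h_3(e)$ at its backward vertex; this is exactly what justifies the paper's equality $d(h_1(u),h_3(u)) = d(h_1(v),h_3(v))$. Both arguments are sound; the paper's is shorter (and, incidentally, contains a typo, as $h'\cdot h_1(v)$ should read $h_3(v)$ rather than $h_1(v)$), while yours is more self-contained and serves double duty by also establishing the fundamental-domain structure that the paper re-derives in the proof of the following lemma.
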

\begin{proof}
Suppose that $h_1(e)$ does not belong to $L'$. Let $h_1(u)$ be the initial vertex of $h_1(e)$ and $h_1(v)$ the final vertex. Let $\alpha$ be the geodesic from $h_1(u)$ to $L'$. Then $d(h_1(u), h'\cdot h_1(u)) = 2 \ell(\alpha) + \mid h'\mid$, where $\ell(\alpha)$ is the length of $\alpha$ and $\mid h'\mid$ is the translation length of $h'$ (see Proposition 24 in $\S$I.6.4 of \cite{serre}).
We also have $d(h_1(v), h'\cdot h_1(v)) = d(h_1(u), h'\cdot h_1(u)) - 2$.
But since $h' \cdot h_1(u) = h_3(u)$ and $h' \cdot h_1(v) = h_1(v)$ are adjacent, $d(h_1(v), h_3(v)) = d(h_1(u), h_3(u))$, which is a contradiction. Hence $h_1(e)$ lies in $L'$. 
\end{proof}

\begin{lemma}\label{lemma_L'_separating}
$L'$ is a separating line.
\end{lemma}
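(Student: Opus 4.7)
\textit{Plan.} The line $L'$ is a periodic vertical combinatorial line in $\widetilde{X}$: periodic because $h' \in G$ preserves and translates along it, and vertical because it lies in the vertical tree $\widetilde{X}_s$. By \cref{lemma_coarsely_separating_lines_separate}, it suffices to show that $L'$ coarsely separates $\widetilde{X}$.

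The plan is to exploit the separating line $L_2$. First, $P \coloneqq L' \cap L_2$ is a non-empty compact segment: non-emptiness follows from combining \cref{lemma_L'_both_sides_L_2} with \cref{lemma_L_2_separates_L_1_L_3}, after replacing $h_1$ and $h_3$ by sufficiently large powers---which is permissible since the segments $L_1 \cap L_2$ and $L_3 \cap L_2$ are both compact by \cref{cor_P_embeds}---to arrange $h_1(e) \in \mathrm{int}(Y_2)$ and $h_3(e) \in \mathrm{int}(\overline{Y}_2)$. Compactness of $P$ follows from \cref{cor_P_embeds} applied to the distinct periodic vertical lines $L'$ and $L_2$. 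Thus $L'$ decomposes as $R_{-} \cup P \cup R_{+}$, where $R_{-}$ contains $h_1(e)$ and enters $\mathrm{int}(Y_2)$, while $R_{+}$ contains $h_3(e)$ and enters $\mathrm{int}(\overline{Y}_2)$.

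Next, fixing $R$ strictly greater than the diameter of $P$ (so that $P \subset N_R(L')$), the plan is to show that $N_R(L')$ separates $\widetilde{X}$. Condition (2) of coarse separation is automatic from the $\langle h' \rangle$-periodicity of $N_R(L')$ combined with \cref{lemma_half-spaces_deep} applied to $Y_2$ and $\overline{Y}_2$. For separation itself, any path in $\widetilde{X} \setminus N_R(L')$ joining a sufficiently far point on $R_{-}$ to one on $R_{+}$ must cross $L_2$, since $L_2$ separates $\widetilde{X}$ into exactly $Y_2$ and $\overline{Y}_2$ by \cref{lemma_L_i_two_halfspaces}. The crux is to force this crossing to lie in $P$, contradicting the fact that the path avoids $N_R(L') \supset P$.

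The main obstacle, and the technical heart of the argument, is ruling out crossings of $L_2 \setminus P$. The strategy is to invoke the path-abundance lemma (\cref{lemma_path_abundance}) applied to $L_2$ together with the Brady-Meier property: a path crossing $L_2$ at $x \in L_2 \setminus P$ combined with a path-abundance-produced path crossing $L_2$ inside $P$ forms a loop in $\widetilde{X}$, which by simple-connectedness bounds a disk. Traversing the disk's boundary produces a crossing of $L_2$ inside $P \subset N_R(L')$, yielding the required contradiction and completing the coarse separation argument, after which \cref{lemma_coarsely_separating_lines_separate} concludes.
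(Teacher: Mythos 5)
The high-level strategy---replacing the paper's local regular-sphere computation with a coarse-separation argument and then invoking \cref{lemma_coarsely_separating_lines_separate}---is sound in principle, and the setup is correctly established: $L'$ is periodic and vertical, $P = L' \cap L_2$ is a non-empty compact segment (compactness following from \cref{lemma_length_lcm} rather than directly from \cref{cor_P_embeds}, since $L'$ and $L_2$ need not be $G$-translates of one another), and the rays $R_\pm$ enter opposite half-spaces of $L_2$ after suitably enlarging $h_1$ and $h_3$. But the crux step, showing that a path from deep in $Y_2$ to deep in $\overline{Y}_2$ avoiding $N_R(L')$ is impossible, is exactly where $3$-repetitivity must enter, and your sketch does not use it. The disk-filling argument cannot force the crossing of $L_2$ into $P$: a loop built from such a path $\gamma$ together with path-abundance paths into $P$ does bound a disk by simple-connectedness, but the disk's existence contradicts nothing---separating lines do not obstruct disks in $\wt{X}$, and the loop you form trivially crosses $L_2$ in $P$ because you forced it to by routing through $P$, which says nothing about $\gamma$. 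If this argument went through as stated, it would establish that \emph{any} periodic vertical line crossing a separating line is itself separating, which would make the careful construction of $h_1$, $h_3$ and the hypothesis of $3$-repetitivity superfluous.

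The missing ingredient is the one the paper's proof hinges on: the consistency of the partition $A \sqcup B$ of the squares $\mathscr{S}$ at $e$ across all three translates $L_1, L_2, L_3$ (\cref{lemma_partition_squares_e}), which is precisely what $3$-repetitivity supplies via \cref{lemma_rep_cycles_lifts}. The paper writes $\sigma = \sigma_1 \cdot e \cdot \sigma_3$, observes $\partial N(\sigma_i)\setminus L' = \partial N(\sigma_i)\setminus L_i$, and shows $L'$ separates $\partial N(\sigma)$ because a square of $\mathscr{S}$ meets the component class $A_1$ of $\partial N(\sigma_1)\setminus L'$ if and only if it meets $A_3$ of $\partial N(\sigma_3)\setminus L'$, so that $A_1$ and $B_3$ cannot be joined in the spliced graph. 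It then checks that $h_1^{-1}$ and $h_3$ respect these classes, so the quotient $\partial N(C')$ remains disconnected; strong UC-separation then yields that $L'$ separates via \cref{lemma_regular_sphere_C_disconnected} and \cref{lemma_half-spaces_equal_components_reg_sphere_L}. Your proposal uses the nesting $Y_1 \subset Y_2 \subset Y_3$ (and hence repetitivity, indirectly) only to produce the crossing with $L_2$; the actual separation argument never touches the square-level compatibility, so the gap is genuine, not merely a matter of detail.
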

\begin{proof}
Let $C'$ be the cycle obtained by taking the quotient of $L'$ by the action of $<h'>$. We will show that $C'$ is strongly UC-separating. This will prove that $L'$ is separating (see \cref{lemma_regular_sphere_C_disconnected}).

Let $m$ be the midpoint of $e$. Subdivide $\wt{X}$ so that $m$, $h_1(m)$ and $h_3(m)$ are vertices of $L'$. Let $\sigma$ be the geodesic segment from $h_1(m)$ to $h_3(m)$. Since $h'\cdot h_1(m) = h_3(m)$ and $h'$ sends every element in the interior of $\sigma$ outside $\sigma$, $\sigma$ is a fundamental domain for $h'$ acting on $L'$ and hence a fundamental domain of $C'$. 
We will first show that $L'$ separates $\partial N(\sigma)$. 
Note that 
\begin{enumerate}
\item $\sigma = \sigma_1 \cdot e \cdot \sigma_3$, where $\sigma_1$ is the segment (see \cref{fig_L'_and_L_2}) in $L_1$ from $h_1(m)$ to the initial vertex of $e$ and $\sigma_3$ is the segment in $L_3$ from the final vertex of $e$ to $h_3(m)$.

\item By \cref{lemma_regular_sphere_at_combinatorial_path}, $\partial N(\sigma) \cong \partial N(\sigma_1) \bigoplus \partial N(\sigma_2)$, with labelling induced by the squares containing $e$.

\item $\partial N(\sigma_i) \setminus L_i = \partial N(\sigma_i) \setminus L'$ as both $L_i$ and $L'$ meet $\partial N(\sigma_i)$ at $e$ and $h_i(e)$.
\end{enumerate}

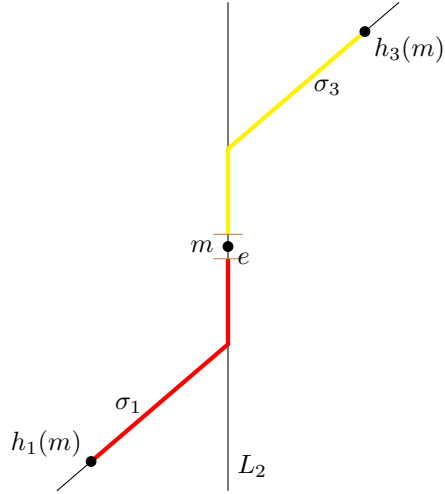
\begin{figure}
\begin{center}
\begin{tikzpicture} [scale = 0.65]
\draw (-2,-5) -- (-2,5);
\draw (-2,2) -- (1.5,5);
\draw (-2,-2) -- (-5.5,-5);

\draw [ultra thick, red] (-2,-.25) to (-2,-2) to (-4.8,-4.4);
\draw [ultra thick, yellow] (-2,.25) to (-2,2) to (0.8,4.4);

\draw [brown] (-1.7,-.25) to (-2.3,-.25);
\draw [brown] (-1.7,.25) to (-2.3,.25);

\node [right] at (-2,-4.5) {$L_2$};
\node [right] at (-2,-.25) {$e$};
\node [left] at (-2.1,0) {$m$};
\node [above right] at (-4.5,-3.6) {$\sigma_1$};
\node [below left] at (.5,3.6) {$\sigma_3$};
\node [above left] at (-4.8,-4.5) {$h_1(m)$};
\node [below right] at (.8,4.5) {$h_3(m)$};
 
\draw [fill] (-4.8,-4.4) circle [radius=0.1];
\draw [fill] (0.8,4.4) circle [radius=0.1];
\draw [fill] (-2,0) circle [radius=0.1];
\end{tikzpicture}
\end{center}
\caption{The segments $\sigma_1$ and $\sigma_3$} \label{fig_L'_and_L_2}
\end{figure}

Recall that $\partial N(\sigma_i) \setminus L_i$ is not connected (\cref{lemma_L_separates_partial_N(P)}) and $L_i$ has exactly two half-spaces (\cref{lemma_L_i_two_halfspaces}).
Thus $L_i$ induces a partition $A_i \sqcup B_i$ on the set of components of $\partial N(\sigma_i) \setminus L_i$ such that the components in $A_i$ meet one half-space of $L_i$ and the components in $B_i$ meet the other half-space of $L_i$.
Further, by \cref{lemma_partition_squares_e}, for each square $\mathsf{s} \in \mathscr{S}$, $\mathsf{s} \cap \partial N(\sigma_1)$ meets $A_1$ if and only if $\mathsf{s} \cap \partial N(\sigma_3)$ meets $A_3$. 
Therefore, there exists no path between a point in $A_1$ and a point in $B_3$ in the spliced graph $\partial N(\sigma) \setminus L'$. Hence $\partial N(\sigma)$ is separated by $L'$. Thus $\partial_{orth} N(\sigma)$ is not connected (\cref{fact_orthogonal_sphere_and_its_lift}).

As $h_1^{-1}$ preserves half-spaces of $L_1$, $h_1^{-1}$ sends a square containing $h_1(e)$ in $A_1$ ($B_1$) to a square containing $e$ in $A$ ($B$). Similarly, $h_3$ sends a square in $A$ ($B$) to $A_3$ ($B_3$).
In other words, there is no path between a point in $A_1$ and a point in $B_3$ in the quotient of $\partial_{orth} N(\sigma)$ by the action of $h'$. By \cref{lemma_orthogonal_sphere_fundamental_domain}, $\partial N(C')$ is not connected. 
\end{proof}

\begin{proof}[Proof of \cref{prop_repetitive_not_jsj}]

By \cref{lemma_L'_separating}, $L'$ is a separating line. By \cref{lemma_L'_both_sides_L_2}, $L'$ crosses $L_2$. 
\end{proof}

\section{An algorithm of double exponential time}
The main result of this section is the following theorem.

\begin{theorem}\label{thm_algo_G_hyp}
There exists an algorithm of double exponential time complexity that takes a Brady-Meier tubular graph of graphs $X$ with hyperbolic fundamental group $G$ as input and returns a finite list of splitting cycles that contains all universally elliptic cycles up to commensurability. 
\end{theorem}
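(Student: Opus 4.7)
Set $N := 2E \cdot 2^{F(F+1)/2} + 1$ where $E$ is the number of vertical edges and $F$ the number of squares of $X$; this is the bound from \cref{prop_long_cycles_repetitive} with $k=3$. The plan has two parts: first bound the length of a commensurability-representative of any universally elliptic cycle by $N$, and then enumerate.

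\textbf{Reduction to a length bound.} Let $H$ be a cyclic universally elliptic subgroup of $G$. By \cref{lemma_transversal_not_universally_elliptic}, no axis of $H$ is transversal. If an axis of $H$ is tubular but not vertical, then $H$ is commensurable with the stabiliser of a vertical hyperplane, which in turn is commensurable with $\pi_1$ of the vertical cycle carrying that hyperplane in the graph-of-spaces structure; I may therefore assume $H$ admits a vertical axis. By \cref{lemma_alg_splittings_commensurable_geom_splittings}, there is a splitting cycle $C$ with $\pi_1(C)$ commensurable with $H$. Let $C_{\max}:=\wt{C}/\mathrm{stab}(\wt{C})$, a maximal cycle; $C_{\max}$ is still UC-separating and still free of self-crossings, since these conditions depend only on $\wt{C}$ and its $G$-translates. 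Moreover $\pi_1(C_{\max})$ contains $\pi_1(C)$ as a finite index subgroup, hence is universally elliptic (a cyclic group with a finite-index elliptic subgroup in a tree action is itself elliptic). If the length of $C_{\max}$ exceeds $N$, then \cref{prop_long_cycles_repetitive} makes $C_{\max}$ a 3-repetitive cycle, and, $C_{\max}$ being maximal, \cref{prop_repetitive_not_jsj} yields a periodic separating line crossing $\wt{C_{\max}}$. By Proposition 5.30 of \cite{bowditch_jsj}, $\pi_1(C_{\max})$ then conjugates into a maximal hanging surface subgroup of the JSJ of $G$, contradicting universal ellipticity. Hence the length of $C_{\max}$ is at most $N$.

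\textbf{The algorithm.} Enumerate all cyclic combinatorial words in the vertical $1$-skeleton of $X$ of length at most $N$; there are at most $(2E)^N$ such words. For each candidate $C$ I would build a fundamental domain $P_C$ and use \cref{lemma_orth_sphere_uc_separating} to test whether $\partial_{orth} N(P_C)$ is disconnected, which detects UC-separation as a polynomial-time local computation. For each UC-separating candidate I detect self-crossings as follows. By \cref{cor_P_embeds}, every component of self-intersection $P=\wt{C}\cap g\wt{C}$ embeds in $C$, so I enumerate subsegments $P$ of $P_C$ together with all pairs of occurrences of $P$ inside $P_C$ (giving the finitely many candidate translates $g\wt{C}$); for each such pair I apply \cref{prop_local_crossing_lines} together with \cref{fact_sphere_P_embeds_sphere_C} to decide crossing by a finite local computation in $\partial N(P)$. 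Finally, for every candidate $C$ that is UC-separating and free of self-crossings, I invoke \cref{lemma_splitting_cycle_commensurable}, which (via \cref{lemma_power_uc_separatig_is_separating}) passes to an $n^{\mathrm{th}}$ power with $n$ bounded by the maximal thickness in $X$, to produce a splitting cycle $C'$ with $\pi_1(C')$ commensurable with $\pi_1(C)$; this $C'$ is added to the output list.

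\textbf{Correctness and complexity.} Correctness is immediate from the length bound: every universally elliptic cyclic subgroup of $G$ is commensurable with $\pi_1(C_{\max})$ for some UC-separating cycle $C_{\max}$ without self-crossings of length at most $N$, which the enumeration encounters and converts into a commensurable splitting cycle. For complexity, $E$ and $F$ are polynomial in the input size, so $N$ is singly exponential, $(2E)^N$ is doubly exponential, and each per-candidate test is polynomial in $N$; the total runtime is doubly exponential. The main obstacle I expect is certifying that the self-crossing test really is a bounded local check. The definition of a self-crossing quantifies over all $g\in G$, and one must argue, using \cref{cor_P_embeds}, that only those $g$ for which $\wt{C}\cap g\wt{C}$ projects to an embedded segment in $C$ need be considered, that these correspond bijectively to labelled pairs of subpaths in $P_C$, and that the crossing test for each pair reduces to a finite computation via \cref{prop_local_crossing_lines}. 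A careful accounting of this finite reduction, combined with the power-taking step in \cref{lemma_splitting_cycle_commensurable}, is where most of the work lies.
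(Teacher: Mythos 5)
Your overall strategy mirrors the paper's: derive a length bound on commensurability-representatives of universally elliptic cycles via the repetitivity criterion (\cref{prop_long_cycles_repetitive} with $k=3$, \cref{prop_repetitive_not_jsj}, and Bowditch), then enumerate and filter. The passage to the maximal cycle $C_{\max}$ and the observation that $\pi_1(C_{\max})$ inherits universal ellipticity from the finite-index subgroup $\pi_1(C)$ is correct and is in fact a cleaner phrasing of what the paper argues. Enumerating only up to length about $N$ and then taking powers, rather than enumerating directly up to $F\cdot M$, is a harmless variant with the same complexity.

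There is, however, a genuine gap in your UC-separation test. You assert that checking whether $\partial_{orth} N(P_C)$ is disconnected ``detects UC-separation.'' But \cref{lemma_orth_sphere_uc_separating} gives only the forward implication: UC-separating $\Rightarrow$ disconnected orthogonal sphere. The converse is false in general. Disconnection of the orthogonal sphere around a \emph{single} fundamental domain says only that $\wt{C}$ separates $\partial N(\wt{P}_C)$; the two sides can reconnect farther along $\wt{C}$, so the full regular sphere $\partial N(\wt{C})$ may still be connected. This is precisely why \cref{lemma_graph_sum_same_components} needs a consistency hypothesis on the splicing rather than being automatic, and why the paper's criterion \cref{lemma_regular_sphere_C_disconnected} is not the same as disconnection of $\partial_{orth}N(P_C)$. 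The paper's algorithm instead computes $\partial N(C)$ — the quotient by $\pi_1(C)$, which includes the end-to-end gluing of the fundamental domain — and checks disconnection of \emph{that} graph; this is the correct test for being strongly UC-separating, and via \cref{lemma_power_uc_separatig_is_separating} one checks $\partial N(C^n)$ for $n$ up to the maximal thickness to characterize UC-separation. Your algorithm would still produce the right answer if you discarded candidates $C$ for which no power $C^n$ is strongly UC-separating (the power-taking step would silently fail on the false positives), but as written you treat the orthogonal-sphere test as definitive and apply \cref{lemma_splitting_cycle_commensurable} to candidates that may not be UC-separating. The fix is simple — replace the orthogonal-sphere test by the test on $\partial N(C^n)$ for $n\leq$ maximal thickness — but as stated the correctness argument has a hole.

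One minor point: your $N = 2E\cdot 2^{F(F+1)/2}+1$ is the length at which 3-repetitivity kicks in, so the conclusion should be ``length of $C_{\max}$ is at most $N-1$,'' not ``at most $N$''; this off-by-one has no bearing on the double-exponential bound.
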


For the rest of the article, we will also assume that $G$ is $\delta$-hyperbolic. Denote by $\partial \wt{X}$ the \emph{Gromov boundary} of $\wt{X}$. 
We refer the reader to \cite{bridsonhaefliger} for background on hyperbolic groups and the Gromov boundary.

As $G$ is hyperbolic and one-ended, Lemma 5.21 of \cite{bowditch_jsj} and \cref{prop_repetitive_not_jsj} imply that

\begin{prop} \label{prop_crossing_cycle_not_JSJ_hyp}
If $C$ is a maximal 3-repetitive UC-separating cycle, then $\pi_1(C)$ is not universally elliptic.
\end{prop}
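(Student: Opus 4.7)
The plan is to combine \cref{prop_repetitive_not_jsj} with Bowditch's boundary analysis of JSJ decompositions of one-ended hyperbolic groups. Applying \cref{prop_repetitive_not_jsj} to the maximal $3$-repetitive UC-separating cycle $C$ yields a periodic separating line $L'$ in $\wt{X}$ that crosses $\wt{C}$. Let $h_C \in G$ be a generator of $\pi_1(C)$ (acting on $\wt{C}$ by translation), and let $h' \in G$ be a generator of the (infinite cyclic) stabiliser of $L'$.

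Next, I would pass to the Gromov boundary $\partial \wt{X} \cong \partial G$. Because $\wt{X}$ is $\delta$-hyperbolic and both $\wt{C}$ and $L'$ are $\cat$ geodesic lines, each has a well-defined endpoint pair at infinity, namely $\{h_C^{-\infty}, h_C^{+\infty}\}$ and $\{{h'}^{-\infty}, {h'}^{+\infty}\}$. Since $G$ is one-ended, $\partial G$ is connected, and each separating periodic line in $\wt{X}$ determines a cut pair in $\partial G$. The key step is to verify that geometric crossing in the sense of \cref{def_crossing_lines} corresponds to linking of these two cut pairs in $\partial G$: the half-spaces of $\wt{C}$ partition $\partial G \setminus \{h_C^{\pm \infty}\}$, and since $L'$ meets at least two half-spaces of $\wt{C}$, its two endpoints must lie in distinct pieces, and symmetrically for $\wt{C}$.

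Finally, I would invoke Lemma~5.21 of \cite{bowditch_jsj}, which asserts that in a one-ended hyperbolic group, if the endpoint pair of a loxodromic element is linked with another cut pair in $\partial G$, then that element fails to be universally elliptic; indeed, by Proposition~5.30 of \cite{bowditch_jsj}, the maximal cyclic subgroup containing it is conjugate into a maximal hanging surface subgroup of the JSJ decomposition. Applied to $\pi_1(C) = \langle h_C \rangle$ and the cut pair $\{{h'}^{\pm \infty}\}$ coming from $L'$, this delivers the desired conclusion.

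The main obstacle I anticipate is the careful translation between the combinatorial notion of crossing in \cref{def_crossing_lines} and topological linking of cut pairs in the Gromov boundary. This rests on the fact that periodic $\cat$ geodesics in a hyperbolic space are uniform quasi-geodesics, so they fellow-travel each other and their endpoints determine their coarse behaviour; combined with one-endedness, this ensures that the half-space decomposition of $\wt{X}$ corresponds to the cut-pair decomposition of $\partial G$ in the expected way.
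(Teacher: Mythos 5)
Your proposal is correct and takes essentially the same route as the paper. The paper proves this proposition in a single sentence by combining \cref{prop_repetitive_not_jsj} with Lemma~5.21 of Bowditch; your write-up simply fills in the boundary-translation details (half-spaces of $\wt{C}$ and $L'$ inducing linked cut pairs in $\partial G$, fellow-travelling of periodic $\cat$ geodesics with their quasi-geodesic orbits) that the paper leaves implicit.
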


\begin{proof}[Proof of \cref{thm_algo_G_hyp}]
Let $G$ and $X$ be given as in the statement. 
First note that by \cref{lemma_transversal_not_universally_elliptic}, every universally elliptic subgroup $H$ has a vertical axis in $\wt{X}$. This implies that there exists a splitting cycle $C$ in $X$ such that $\pi_1(C)$ and $H$ are commensurable (\cref{lemma_alg_splittings_commensurable_geom_splittings}).

Let $F$ be the number of squares of $X$ and $E$ the number of edges. By \cref{prop_long_cycles_repetitive}, any UC-separating cycle of length greater than $M = 2E(2^{F(F+1)/2}) \leq 8F(2^{F(F+1)/2})$ is 3-repetitive.
By \cref{prop_crossing_cycle_not_JSJ_hyp}, a universally elliptic cycle $C$ is either of length at most $M$, or it is a power of a maximal UC-separating cycle $C'$ of length at most $M$. If it is the latter, then $C'$ is not universally elliptic because it is not strongly UC-separating. By \cref{lemma_power_uc_separatig_is_separating}, $C$ is an $n^{th}$ power of $C'$, where $n$ is bounded by the maximal thickness of an edge of $C'$. Thus the length of $C$ is at most $F.M$.

There exist finitely many cycles of length at most $F.M$ in $X$. Thus our algorithm takes each cycle from this finite list as input and returns whether this cycle is strongly UC-separating with no self-crossings or not. By \cref{lemma_splitting_cycle_commensurable}, we thus have a list of all universally elliptic cycles up to commensurability.

The time taken by this algorithm is calculated as follows: 
\begin{enumerate}
\item The number of cycles of length at most $F.M$ is bounded by a number which is exponential in $F.M$ (see \cite{counting_cycles} for instance). This is of the order of a double exponential in $F$ as $M$ is itself exponential in $F$. 

\item The regular sphere around a cycle $C$ of length $k$ is a spliced graph of the regular spheres around its $k$ vertices (\cref{lemma_regular_sphere_at_combinatorial_path}), and the number of vertices and edges in this regular sphere is bounded by a constant times the number $F$ of squares of $X$. Finding whether this sphere is connected is linear in $F$, by \cite{hopcroft_connected_graph}.

\item A cycle $C$ has a self-crossing if there exists self-crossing at a component of self-intersection $P \subset C$ (\cref{defn_self_intersection}). There is a self-crossing at $P$ only if a subpath of $C$ meets $\partial N(P) \cap \partial N(C)$ in different components (\cref{fact_sphere_P_embeds_sphere_C}). This information is available when the regular sphere around $C$ is computed and does not cost any additional time.
\end{enumerate}

The algorithm thus takes double exponential time in the number of squares of $X$.
\end{proof}

\section{Constructing a JSJ complex} \label{section_jsj_complex}

The goal of this section is to construct from $X$ a tubular graph of graphs $X_{jsj}$ whose graph of groups structure gives the JSJ decomposition of $G$.

\subsection{Splitting cycles as hyperplanes} 

Let $\phi: C \to X_s \subset X$ be a splitting vertical cycle. We will show how to modify $X$ to a tubular graph of graphs $X_C$ such that $\pi_1(X) \cong \pi_1(X_C)$ and $\pi_1(C)$ is commensurable with the cyclic group generated by a vertical hyperplane of $X_C$.

We perform this construction at the level of universal covers using the machinery of spaces with walls \cite{haglund_paulin_walls} (utilised earlier in \cref{section_dual_trees}).
We refer the reader to \cite{nica_cubulation} and \cite{chatterji_niblo_cubulation} for details on constructing $\cat$ cube complexes from spaces with walls.

Recall that $\wt{X}$ is the cube complex dual to the space with walls $(\wt{X}^0, \mathcal{H})$ \cite{roller_thesis}, where $\wt{X}^0$ denotes the 0-skeleton of $\wt{X}$ and $\mathcal{H}$ the set of hyperplanes of $\wt{X}$. For our purposes, we slightly modify the space with walls as follows.
First we attach a strip $S_L = \mathbb{R} \times [0,1]$ isomorphically along $\mathbb{R} \times \{0\}$ to each translate $L$ of $\wt{C}$. Note that there is a natural square structure on $S_L$ so that every horizontal hyperplane of $\wt{X}$ that meets $L$ naturally extends to $S_L$. 
Let $Z$ be the set of open horizontal half-edges of the union of $\wt{X}$ and the attached strips. Then the vertical and horizontal hyperplanes of $\wt{X}$ induce a space with walls $(Z,W)$. 

Note that we do not add the vertical hyperplanes through the strips $S_L$ to the collection $W$. Thus the dual cube complex of $(Z,W)$ is nothing but $\wt{X}$ (Theorem 10.3 of \cite{roller_thesis}).
We now enrich $W$ to $W_C$. The walls in $W_C$ are determined by the following:

\begin{enumerate}
\item[(\textit{i})] the horizontal hyperplanes of $\wt{X}$,
\item[(\textit{ii})] the vertical hyperplanes of $\wt{X}$, and
\item[(\textit{iii})] the $G$-translates of $\wt{C}$.
\end{enumerate}

Note that the elements of type (\textit{i}) and (\textit{ii}) induce $W$, where each half-space $Y$ in $\wt{X}$ of an element of type (\textit{i}) or (\textit{ii}) defines a wall $\{Y \cap Z, Y^c \cap Z\}$. 
Given a translate $L$ of $\wt{C}$ in $\wt{X}$, each half-space $Y$ in $\wt{X}$ of $L$ defines a wall $\{Y \cap Z, Y^c \cap Z\}$ of type (\textit{iii}). Thus $L$ induces exactly $K$ walls in $Z$ if it has $K$ half-spaces in $\wt{X}$.

\begin{lemma}
$(Z,W_C)$ is a space with walls. \qed
\end{lemma}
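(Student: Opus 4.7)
The plan is to verify the two defining conditions of a space with walls: (a) each element of $W_C$ is a bona-fide partition of $Z$ into two nonempty subsets, and (b) any two points of $Z$ are separated by only finitely many walls. Condition (a) is essentially built into the construction. For walls of types (\textit{i}) and (\textit{ii}), a horizontal or vertical hyperplane $\mathsf{h}$ of $\wt X$ has two sides in $\wt X$, and a strip $S_L$ is either crossed by $\mathsf{h}$ (in which case $\mathsf{h}$ extends through $S_L$ and partitions it) or disjoint from $\mathsf{h}$ (in which case $S_L$ is assigned to a single side). For walls of type (\textit{iii}), every translate $gL$ of $\wt C$ is separating in $\wt X$ (since $C$ is a splitting cycle, $\wt C$ is separating), so each of its half-spaces in $\wt X$ intersected with $Z$ gives a nontrivial partition; this uses \cref{lemma_half-spaces_contain_lines} to see that both sides are nonempty.

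For condition (b), the walls of types (\textit{i}) and (\textit{ii}) are precisely those of the original space with walls $(Z,W)$, so any two points of $Z$ are separated by only finitely many such walls (this is the standard Sageev setup, cf.\ \cite{roller_thesis}). It therefore suffices to bound the number of walls of type (\textit{iii}) separating an arbitrary pair $z_1,z_2\in Z$. I split this into two finiteness statements. First, by \cref{lemma_half-spaces_equal_components_reg_sphere_L} together with the bound from \cref{lemma_bm_thickness_two} and \cref{cor_square_meets_halfspace}, the number of half-spaces of a single translate $gL$ is uniformly bounded (by the maximal thickness of an edge of $X$). Hence each translate contributes at most a uniformly bounded number of walls to $W_C$. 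Second, I will show that only finitely many translates of $\wt C$ have some half-space wall separating $z_1$ from $z_2$.

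For the second claim, I would argue as follows. Any half-space wall coming from a translate $gL$ of $\wt C$ separates $z_1$ from $z_2$ only when $z_1$ and $z_2$ lie in different components of the (augmented) complement of $gL$; in particular, any path joining them must meet $gL$. Fix a compact path $\gamma$ from $z_1$ to $z_2$ in $\wt X\cup\bigsqcup_LS_L$, and let $F\subset\wt C$ be a compact fundamental domain for the action of $\mathrm{stab}(\wt C)=\pi_1(C)$ on $\wt C$. If $gL\cap\gamma\neq\emptyset$, then some coset representative $g'\in g\cdot\pi_1(C)$ satisfies $g'F\cap\gamma\neq\emptyset$. Since $F$ and $\gamma$ are compact and $G$ acts properly (in fact freely) on $\wt X$, only finitely many elements of $G$ can satisfy this, whence only finitely many cosets, i.e.\ only finitely many translates $gL$, meet $\gamma$.

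Combining the two finiteness statements, only finitely many walls of type (\textit{iii}) separate $z_1$ from $z_2$, completing the verification. The main subtlety I anticipate is bookkeeping around the added strips $S_L$: one must check that the extended horizontal hyperplanes of $\wt X$ still give well-defined partitions of $Z$ and that a geodesic-like connecting path from $z_1$ to $z_2$ is still compact after including the strip pieces. Both are routine since each strip attaches along a single line and has the obvious product structure, but they require care to state cleanly.
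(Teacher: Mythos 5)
The paper states this lemma without proof (note the \(\qed\) attached to the statement itself), treating the verification as routine, and your plan correctly supplies it via the standard strategy: confirm each candidate wall is a genuine two-part partition, bound the number of half-space walls contributed by any single translate of \(\wt{C}\), and then use properness of the \(G\)-action and compactness of a connecting path to bound the number of translates that can separate a given pair \(z_1,z_2\). One minor citation imprecision: you invoke \cref{lemma_bm_thickness_two} while justifying the per-translate bound, but that lemma only gives a \emph{lower} bound on thickness; the needed \emph{upper} bound on the number of half-spaces of a translate \(gL\) comes from \cref{cor_square_meets_halfspace} combined with the finiteness of \(N=\max\) thickness, which holds because \(X\) is compact. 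The bookkeeping around the added strips \(S_L\) that you flag is indeed the one place where care is required --- in particular deciding where a half-edge lying in \(S_{gL}\) lands under a wall of type (\textit{iii}) coming from \(gL\) itself --- but your high-level structure is sound and matches what the authors evidently deemed routine.
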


Denote by $\wt{X}_C$ the $\cat$ cube complex dual to $Z$.

\begin{lemma} \label{lemma_X_C_VH}
$\wt{X}_C$ is a $\VH$-complex.
\end{lemma}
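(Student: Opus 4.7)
The plan is to verify that $\wt{X}_C$ is a $\VH$-complex by exhibiting a bipartition of the wall set $W_C$ into two classes, which I will call \emph{horizontal} and \emph{vertical}, such that no two walls in the same class cross (in the sense of walls of $(Z, W_C)$). Once this is done, the general theory of dual cube complexes (see \cite{nica_cubulation, chatterji_niblo_cubulation}) implies that $\wt{X}_C$ has dimension at most $2$: any collection of pairwise crossing walls must contain at most one representative from each class. Labeling each edge of $\wt{X}_C$ according to the class of its dual wall then gives a $\VH$-labeling, since every square is dual to exactly two crossing walls, one from each class, yielding the required alternation.

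The natural bipartition is to put the horizontal hyperplanes of $\wt{X}$ (walls of type (\textit{i})) in the horizontal class, and to put the vertical hyperplanes of $\wt{X}$ together with all walls induced by $G$-translates of $\wt{C}$ (walls of types (\textit{ii}) and (\textit{iii})) in the vertical class; this matches the intention stated in the paragraph preceding the lemma, where $\pi_1(C)$ is to become generated by a vertical hyperplane of $X_C$.

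Non-crossing in the horizontal class: two horizontal hyperplanes of $\wt{X}$ are non-crossing already as walls of the original space $(Z, W)$, since $\wt{X}$ is a $\VH$-complex. Attaching the strips $S_L$ does not create new crossings, because each $S_L$ is built along the vertical line $L$ and extends each horizontal hyperplane of $\wt{X}$ that meets $L$ without reintersecting any other horizontal hyperplane.

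For the vertical class there are three sub-cases to check. (a) Two vertical hyperplanes of $\wt{X}$ don't cross, again by the $\VH$ property of $\wt{X}$. (b) A vertical hyperplane $\mathsf{h}$ of $\wt{X}$ and a translate $L = g\wt{C}$: the line $L$ is contained in a vertical tree, hence disjoint from $\mathsf{h}$ (which is dual to horizontal edges and so disjoint from every vertical tree), so $L$ sits inside a single half-space of $\mathsf{h}$; the strip $S_L$ is attached along $L$ and lies on the same side, so every wall induced by $L$ is nested with the wall $\mathsf{h}$. (c) Two distinct translates $g_1\wt{C}$ and $g_2\wt{C}$: since $C$ is a splitting cycle, by \cref{def_splitting_cycle} it has no self-crossings, so $g_1\wt{C}$ and $g_2\wt{C}$ do not cross as lines; an application of \cref{lemma_crossing_by_half-spaces} and \cref{lemma_non-crossing_lines_and_half-spaces} then shows that for every pair of half-spaces coming from $g_1\wt{C}$ and $g_2\wt{C}$, one is contained in the other or they are (essentially) disjoint, so the induced walls do not cross.

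The main obstacle is the technical verification of case (b), namely tracking how the added strips interact with the walls coming from types (\textit{i}) and (\textit{ii}); everything else reduces cleanly to the $\VH$ structure on $\wt{X}$ and to the already-established no-self-crossing property of the splitting cycle $C$.
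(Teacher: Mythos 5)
Your proposal is correct and follows essentially the same route as the paper: partition the walls of $(Z,W_C)$ into two classes, show that no two walls in the same class cross, and deduce the $\VH$-labeling from the wall--hyperplane duality of \cite{nica_cubulation}. The crossing checks you carry out (types (ii)-(ii), (ii)-(iii), (iii)-(iii), (i)-(i)) are exactly the ones the paper does, with the (iii)-(iii) case reduced to the no-self-crossing hypothesis via \cref{lemma_crossing_by_half-spaces} and \cref{lemma_non-crossing_lines_and_half-spaces} (the paper packages this as \cref{lemma_noncrossing_lines_wall_intersection}). One thing worth flagging is the edge-labeling convention: you propose to give each edge of $\wt{X}_C$ the \emph{same} class label as its dual wall, but by the paper's own definition of a $\VH$-complex a vertical hyperplane is dual to \emph{horizontal} edges, so the labels must be transposed. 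The paper's proof does this correctly --- it declares an edge vertical iff its dual wall is of type (i). Either labeling yields a valid $\VH$ structure (so the lemma is unaffected), but yours would clash with the rest of the paper's development, in particular with \cref{lemma_eta_C_properties}, which requires $\hat{\eta}_C$ to send vertical edges to vertical edges and therefore needs the edges over type-(i) walls to be labeled vertical. Your extra care about whether the attached strips $S_L$ create new crossings in cases (i)-(i) and (ii)-(iii) is a genuine detail that the paper dismisses in a sentence; it is not a gap, but your assessment that (ii)-(iii) is "the main obstacle" overrates it --- in the paper this is the easiest case (the vertical line $L$ is disjoint from every vertical hyperplane, which forces nesting immediately), and the case requiring a dedicated lemma is (iii)-(iii).
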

The proof uses the following observation, which is a consequence of \cref{lemma_crossing_by_half-spaces}.

\begin{lemma} \label{lemma_noncrossing_lines_wall_intersection}
Let $L$ and $L'$ be two non-crossing lines of $\wt{X}$. Then given half-spaces $Y$ of $L$ and $Y'$ of $L'$, at least one of the following four intersections is empty: $\mathring{Y} \cap \mathring{Y'}$, $Y^c \cap \mathring{Y'}$, $\mathring{Y} \cap Y'^c$ and $Y^c \cap Y'^c$.
\end{lemma}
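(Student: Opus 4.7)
The plan is to case-split based on which of the containments $L\subset Y'$ and $L'\subset Y$ hold, invoking Lemma \ref{lemma_non-crossing_lines_and_half-spaces} as the main tool. First I would dispose of the degenerate case $L=L'$ as sets: then $Y$ and $Y'$ are among the same collection of closed half-spaces, so either $Y=Y'$ (forcing $\mathring Y\cap Y'^c\subseteq Y\cap Y^c=\emptyset$) or $Y\neq Y'$ (forcing $\mathring Y\cap\mathring{Y'}=\emptyset$, since these are distinct components of $\wt X\setminus L$). Henceforth I assume $L\neq L'$; a key observation I will use freely is that two distinct closed half-spaces of $L'$ meet exactly in $L'$, and $L$ cannot be properly contained in $L'$ without equalling it, so $L$ is contained in at most one closed half-space of $L'$ (and symmetrically for $L'$).

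If $\mathring Y\cap\mathring{Y'}=\emptyset$ we are done, so assume not. By Lemma \ref{lemma_non-crossing_lines_and_half-spaces}, at least one of $L\subset Y'$ or $L'\subset Y$ holds, giving three subcases. In the subcase $L\subset Y'$ and $L'\not\subset Y$, I claim $\mathring Y\cap Y'^c=\emptyset$: writing $Y'^c$ as the union of the remaining open half-spaces $\mathring{Y''}$ of $L'$, if one met $\mathring Y$, then Lemma \ref{lemma_non-crossing_lines_and_half-spaces} applied to $Y$ and $Y''$ would give $L\subset Y''$ (ruled out, since $L\subset Y'$ and $Y'\neq Y''$) or $L'\subset Y$ (ruled out by hypothesis). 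The symmetric subcase $L\not\subset Y'$ and $L'\subset Y$ gives $Y^c\cap\mathring{Y'}=\emptyset$ by the same argument with the roles swapped.

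The remaining subcase is $L\subset Y'$ and $L'\subset Y$, where I would show $Y^c\cap Y'^c=\emptyset$. If a point $z$ lay in $Y^c\cap Y'^c$, it would belong to some $\mathring{Y''}\cap\mathring{Y'''}$ with $Y''\neq Y$ a half-space of $L$ and $Y'''\neq Y'$ a half-space of $L'$. Lemma \ref{lemma_non-crossing_lines_and_half-spaces} would then force $L\subset Y'''$ (impossible since $L\subset Y'$ and $Y'\neq Y'''$) or $L'\subset Y''$ (impossible since $L'\subset Y$ and $Y\neq Y''$), yielding the required contradiction.

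The main obstacle is the bookkeeping when $L$ or $L'$ has more than two half-spaces; in particular, the complement $Y^c$ is not a single open half-space but a union of them, so one has to decompose and apply Lemma \ref{lemma_non-crossing_lines_and_half-spaces} to each piece separately. The observation that two distinct lines cannot be contained one in the other is precisely what makes the case analysis collapse to the three subcases above.
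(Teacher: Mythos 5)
Your proof is correct. The paper itself only asserts this lemma ``is a consequence of \cref{lemma_crossing_by_half-spaces}'' and supplies no argument, so there is no official proof to match word-for-word; but you in fact route through \cref{lemma_non-crossing_lines_and_half-spaces} rather than \cref{lemma_crossing_by_half-spaces}. Both are legitimate entry points, and they produce structurally different arguments. Your approach applies the ``nonempty intersection forces a line-containment'' dichotomy of \cref{lemma_non-crossing_lines_and_half-spaces} repeatedly, once per pair of open half-spaces that could produce a nonempty corner; the contradiction in each subcase comes from your clean observation that a line $L\neq L'$ can lie in at most one closed half-space of $L'$ (since distinct closed half-spaces of $L'$ intersect exactly in $L'$). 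The route the paper hints at instead works purely with the half-space containments $Y\subset Y'_*$ or $Y'_*\subset Y$ (and the symmetric ones) given by \cref{lemma_crossing_by_half-spaces}, and then deduces emptiness of a corner by tracing which open half-spaces are forced disjoint; this involves slightly more casework but avoids invoking \cref{lemma_non-crossing_lines_and_half-spaces}. Your version is, if anything, more systematic: the three subcases $(L\subset Y',\,L'\not\subset Y)$, $(L\not\subset Y',\,L'\subset Y)$, $(L\subset Y',\,L'\subset Y)$ each pin down exactly one empty corner, which makes the bookkeeping when $L$ or $L'$ has more than two half-spaces transparent. The separate treatment of $L=L'$ is also necessary and correctly handled, since \cref{lemma_non-crossing_lines_and_half-spaces} and your ``at most one closed half-space'' observation both implicitly assume $L\neq L'$.
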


Two walls $\{Y, Y^c\}$ and $\{Y', Y'^c\}$ in a space with walls \emph{cross} (\cite{chatterji_niblo_cubulation}) if all four intersections $Y \cap Y'$, $Y^c \cap Y'$, $Y \cap Y'^c$ and $Y^c \cap Y'^c$ are non-empty.

\begin{proof}[Proof of \cref{lemma_X_C_VH}]
Two walls of type (\textit{i}) don't cross as two horizontal hyperplanes of $\wt{X}$ are either equal or disjoint. Similarly, two walls of type (\textit{ii}) don't cross. By \cref{lemma_noncrossing_lines_wall_intersection}, two walls of type (\textit{iii}) don't cross either. Further, a wall of type (\textit{ii}) and a wall of type (\textit{iii}) don't cross since a vertical line is disjoint from any vertical hyperplane.
By Proposition 4.6 of \cite{nica_cubulation}, there exists a bijective correspondence between the hyperplanes of $\wt{X}_C$ and the walls of $(Z,W_C)$. Further, two hyperplanes in $\wt{X}_C$ intersect if and only if the corresponding walls cross.
Declare an edge $e$ of $\wt{X}_C$ to be vertical if and only if the hyperplane through $e$ corresponds to a wall of type (\textit{i}). Otherwise, declare the edge to be horizontal. No square contains two adjacent edges of the same type as otherwise two hyperplanes of the same type or two hyperplanes of type (\textit{ii}) and (\textit{iii}) intersect. 
\end{proof}

Observe that there exists a natural $G$-equivariant map $\hat{\eta}_C: \wt{X}_C \to \wt{X}$ such that the following diagram commutes: \begin{tikzcd}
(Z,W_C) \arrow[r, "id"] \arrow[d]
& (Z,W) \arrow[d] \\
\wt{X}_C \arrow[r, "\hat{\eta}_C"]
& \wt{X}
\end{tikzcd}

Since $W \subset W_C$, $\hat{\eta}_C$ takes any vertex (ultrafilter) $\sigma'$ of $\wt{X}_C$ to a vertex $\sigma' \cap W$ of $\wt{X}$. But every vertex of $\wt{X}$ is a principal ultrafilter, and hence $\hat{\eta}_C(\sigma') = \sigma_z$ for some $z$. By the way the set of walls $W_C$ was defined on $Z$, we have the following result:
\begin{lemma} \label{lemma_eta_C_properties}
The map $\hat{\eta}_C$ has the following properties: \begin{enumerate}
\item Let $\mathsf{c}$ be a cell of $\wt{X}$ that does not meet any translate of $\wt{C}$. Then $\hat{\eta}_C$ restricted to $\hat{\eta}_C^{-1}(\mathsf{c})$ is injective.

\item It sends vertical edges to vertical edges and horizontal edges to either horizontal edges or vertices.

\item A horizontal edge is mapped to a vertex if and only if the vertical hyperplane through this edge is induced by a wall of type \textit{(iii)}. \qed
\end{enumerate} \end{lemma}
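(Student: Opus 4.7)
The plan is to use the wall-edge duality (Proposition 4.6 of \cite{nica_cubulation}, already invoked in the proof of \cref{lemma_X_C_VH}): edges of $\wt{X}_C$ (respectively $\wt{X}$) are in bijection with walls of $W_C$ (respectively $W$), and the vertex map $\hat{\eta}_C|_{\wt{X}_C^0}$ is the restriction of ultrafilters from $W_C$ to $W$. Since $W \subset W_C$, this bijective correspondence lets us read off the image of each edge directly from the type of its dual wall.

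For (2) and (3) I will examine a single edge $e'$ of $\wt{X}_C$ together with its dual wall $w \in W_C$: its endpoints are ultrafilters differing exactly in the half-space choice for $w$. If $w$ is of type (i) or (ii), then $w \in W$, the two restricted ultrafilters still differ on $w$, and $e'$ maps bijectively onto the edge of $\wt{X}$ dual to the hyperplane of $\wt{X}$ corresponding to $w$. By the $\VH$-labeling established in the proof of \cref{lemma_X_C_VH}, vertical edges of $\wt{X}_C$ are precisely those dual to type (i) walls (horizontal hyperplanes of $\wt{X}$), whose dual edges in $\wt{X}$ are vertical; horizontal edges of $\wt{X}_C$ dual to type (ii) walls map to horizontal edges of $\wt{X}$. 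If instead $w$ is of type (iii), then $w \notin W$, both endpoint ultrafilters restrict to the same ultrafilter on $W$, and $e'$ collapses to a single vertex of $\wt{X}$. This settles (2) and both directions of the biconditional in (3).

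For (1), a cell $\mathsf{c}$ of $\wt{X}$ corresponds to a set $S \subset W$ of pairwise crossing walls transverse to $\mathsf{c}$, and $\hat{\eta}_C^{-1}(\mathsf{c})$ decomposes as a disjoint union of cells of $\wt{X}_C$ whose dual walls in $W_C$ restrict to $S$. Since $\mathsf{c}$ is disjoint from every translate of $\wt{C}$, no type (iii) wall is transverse to $\mathsf{c}$: each translate of $\wt{C}$ lies entirely on one side of $\mathsf{c}$, giving, for every vertex $v$ of $\mathsf{c}$, a unique half-space $Y(v,gL)$ for each translate $gL$. The uniqueness of the preimage of $v$ is then shown by arguing that for $z \in Z$ adjacent to $v$, the principal ultrafilter $\sigma_z$ realizes precisely the natural extension (since $z$ lies on the same side of each translate as $v$); any alternative choice on some translate produces disagreement with $\sigma_z$ on at least one type (iii) wall, so almost principality allows only finitely many alternative choices, and the consistency condition combined with the non-crossing property of translates of $\wt{C}$ (\cref{lemma_noncrossing_lines_wall_intersection}) rules out the remaining alternatives. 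Hence $\hat{\eta}_C^{-1}(\mathsf{c})$ is a single cell mapping isomorphically onto $\mathsf{c}$.

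The main obstacle will be the uniqueness of the ultrafilter extension in (1). The geometric picture -- cells disjoint from all translates of $\wt{C}$ are locally unaffected by the blow-up -- is transparent, but the combinatorial verification requires carefully tracking consistency and almost-principality across the infinite $G$-orbit of $\wt{C}$, using the nesting of half-spaces of pairwise non-crossing translates to propagate contradictions.
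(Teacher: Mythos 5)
Your parts (2) and (3) are correct and exactly match the reasoning the paper suppresses behind its \verb|\qed|: edges of $\wt{X}_C$ are dual to walls of $W_C$, the vertex map $\hat{\eta}_C$ is restriction of ultrafilters from $W_C$ to $W$, so an edge dual to a wall of type \textit{(i)} or \textit{(ii)} maps to the corresponding edge of $\wt{X}$ while an edge dual to a type \textit{(iii)} wall collapses, and the $\VH$-labelling fixed in the proof of \cref{lemma_X_C_VH} immediately gives the vertical/horizontal bookkeeping. That part is clean.

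For (1) the structure of the argument is right --- show the preimage of each vertex of $\mathsf{c}$ is a single ultrafilter --- but the last step is mislocated. You appeal to the non-crossing of translates of $\wt{C}$ via \cref{lemma_noncrossing_lines_wall_intersection}, but that lemma governs interactions between two type \textit{(iii)} walls and plays no role in pinning down the extension. What actually forces the choice is the interaction between a type \textit{(iii)} wall and a type \textit{(i)}/\textit{(ii)} wall: if $v$ is a vertex of $\mathsf{c}$ and $gL$ is any translate of $\wt{C}$, then since $v \notin gL$ the edge of a geodesic from $v$ to $gL$ that is incident to $gL$ but not contained in it has a dual hyperplane $\mathsf{h}$ of $\wt{X}$ disjoint from $gL$ (a vertical hyperplane avoids the vertical tree through $gL$ entirely; a horizontal hyperplane dual to a vertical edge at a vertex of $gL$ meets that vertical tree only at the midpoint of that one edge, which $gL$ does not contain). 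The half-space $Y_{\mathsf{h}}$ containing $v$ is convex and disjoint from $gL$, hence lies inside the unique half-space $Y$ of $gL$ containing $v$; since $\sigma'|_W = \sigma_v$ contains $Y_{\mathsf{h}} \cap Z$, the ultrafilter consistency condition $A \subset B,\,A\in\sigma' \Rightarrow B\in\sigma'$ forces $Y\cap Z\in\sigma'$, and for $j\neq i$ it likewise forces $Y_j^c\cap Z\in\sigma'$. Thus every type \textit{(iii)} choice at $v$ is already determined by $\sigma_v$, so the preimage of $v$ is a single vertex, and injectivity over edges and squares of $\mathsf{c}$ then follows from (2)--(3) since all walls transverse to $\mathsf{c}$ are in $W$. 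The appeal to almost-principality is also unnecessary once the separating hyperplane is in hand.
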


\begin{lemma}\label{lemma_opening__cofinite}
For any $z \in Z$, $\hat{\eta}_C^{-1}(\sigma_z)$ is a finite horizontal tree. Further, the edges in the pre-image of $\sigma_z$ are dual to vertical hyperplanes induced by translates of $\wt{C}$ that meet $\sigma_z$ in $\wt{X}$.
\end{lemma}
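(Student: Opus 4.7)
The plan is to identify the fiber $\hat{\eta}_C^{-1}(\sigma_z)$ with an explicit convex subcomplex of $\wt{X}_C$ and then read off its structure from the pairwise non-crossing of walls of type (\textit{iii}).

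First I would describe the fiber. A vertex $\sigma'$ of $\wt{X}_C$ lies in $\hat{\eta}_C^{-1}(\sigma_z)$ iff $\sigma' \cap W = \sigma_z$; equivalently, $\sigma'$ is an almost principal ultrafilter on $(Z,W_C)$ whose choices on walls of types (\textit{i}) and (\textit{ii}) are pinned by $\sigma_z$, with only walls of type (\textit{iii}) free (subject to consistency). Any edge of $\wt{X}_C$ between two such vertices flips a single wall, which must be of type (\textit{iii}); by the VH structure put on $\wt{X}_C$ in \cref{lemma_X_C_VH}, this edge is horizontal. The fiber is the intersection, over walls $w \in W$, of the half-space of $\wt{X}_C$ on the $\sigma_z$-side of the hyperplane dual to $w$, so it is a convex subcomplex of $\wt{X}_C$.

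Next I would show that only translates of $\wt{C}$ through the vertex $v \in \wt{X}$ corresponding to $\sigma_z$ can contribute edges to the fiber. If $L$ is a translate of $\wt{C}$ with $v \notin L$, then $v$ lies in the interior of a unique half-space $Y$ of $L$. Picking a sufficiently small half-space $H \in \sigma_z$ of a wall of type (\textit{ii}) near $v$ gives $H \subset Y^{\circ}$, so $H \cap (Y^c \cap Z) = \emptyset$ and $H \cap (Y_j \cap Z) = \emptyset$ for every other half-space $Y_j$ of $L$. By \cref{lemma_elements_ultrafilters_intersect}, any extension $\sigma'$ must contain $Y \cap Z$ and $Y_j^c \cap Z$ for every $Y_j \neq Y$. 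Hence no wall of type (\textit{iii}) coming from such $L$ can be flipped inside the fiber, and every edge of the fiber is dual to a vertical hyperplane of $\wt{X}_C$ induced by a translate of $\wt{C}$ that passes through $v$.

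Finally, I would derive tree-ness and finiteness. By the proof of \cref{lemma_X_C_VH}, any two walls of type (\textit{iii}) do not cross in $(Z,W_C)$, so the convex subcomplex cut out by the fiber contains no 2-cube (a 2-cube would flip two crossing walls, both of which would have to be of type (\textit{iii})); being a 1-dimensional convex subcomplex of a CAT(0) cube complex, it is a tree. Only finitely many translates of $\wt{C}$ contain $v$ (each occupies a pair of vertical half-edges at $v$, and $\wt{X}$ is locally finite), and each such translate has at most thickness-many half-spaces by \cref{cor_square_meets_halfspace}, so there are only finitely many valid extensions $\sigma'$ of $\sigma_z$; hence the tree is finite. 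The main obstacle is the forced-choice step for walls of type (\textit{iii}) from translates not passing through $v$: one must choose $H \in \sigma_z$ small enough to sit inside a single half-space of $L$ and then use ultrafilter consistency both ways to rule out the other choices; the rest of the argument is a fairly direct unpacking of the space-with-walls setup together with the non-crossing of type-(\textit{iii}) walls.
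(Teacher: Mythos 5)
Your approach is essentially the same as the paper's: characterise the fiber as those ultrafilters agreeing with $\sigma_z$ on $W$, observe that any wall flipped inside the fiber is of type~(\textit{iii}) (hence horizontal), use a separating hyperplane together with ultrafilter consistency to force the choice at any type-(\textit{iii}) wall coming from a translate of $\wt{C}$ not passing through $v$, and conclude finiteness from the local finiteness of $\wt{X}$. Your explicit tree-ness argument (non-crossing of type-(\textit{iii}) walls implies the fiber has no $2$-cube, a $1$-dimensional convex subcomplex of a $\cat$ cube complex is a tree) is a useful unpacking of what the paper leaves implicit.

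There is one small but genuine slip in the forced-choice step. You take a wall \emph{of type (\textit{ii})} (a vertical hyperplane of $\wt{X}$) whose half-space $H \ni v$ is contained in the interior of the half-space $Y$ of $L$ containing $v$. This requires a vertical hyperplane separating $v$ from $L$. But $L$ is a vertical line, and if $v$ lies in the \emph{same} vertical tree as $L$, then the whole vertical tree lies to one side of every vertical hyperplane, so no vertical hyperplane separates $v$ from $L$; you need a \emph{horizontal} hyperplane (a wall of type (\textit{i})) in that case. The paper avoids this by letting $\mathsf{h}$ be any hyperplane of $\wt{X}$ separating $\sigma_z$ from $L$. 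The fix is trivial — replace ``a wall of type (\textit{ii})'' with ``a wall of type (\textit{i}) or (\textit{ii})'' — and the rest of the argument goes through unchanged.
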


\begin{proof}
Let $\sigma'_1$ and $\sigma'_2$ be two vertices of $\hat{\eta}_C^{-1}(\sigma_z)$. Let $\{Y,Y^c\}$ be a wall such that $Y \in \sigma'_1$ and $Y^c \in \sigma'_2$. Then clearly, $\{Y,Y^c\}$ is a wall of type (\textit{iii}). Let $L$ be the line that defines $\{Y,Y^c\}$. 
We claim that $L$ passes through the vertex $\sigma_z$ in $\wt{X}$. If not, then let $\mathsf{h}$ be a hyperplane of $\wt{X}$ that separates $L$ from $\sigma_z$. Let $Y_{\mathsf{h}}$ be a half-space of $\mathsf{h}$ that contains the vertex $\sigma_z$. Then $Y_{\mathsf{h}} \in \sigma_z$, the ultrafilter. Clearly, this implies that $Y_{\mathsf{h}} \in \sigma'_1$ and $Y_{\mathsf{h}} \in \sigma'_2$. Since $L$ and $\mathsf{h}$ are disjoint, either $Y_{\mathsf{h}} \subset Y$ or $Y_{\mathsf{h}} \subset Y^c$. Thus either $Y \in \sigma'_1$ and $Y \in \sigma'_2$ or $Y^c \in \sigma'_1$ and $Y^c \in \sigma'_2$, a contradiction. 
So $L$ has to pass through $\sigma_z$. There are only finitely many translates of $\wt{C}$ that meet at any given point of $\wt{X}$. This proves the result.
\end{proof}

Since $\hat{\eta}_C$ is a finite-to-one $G$-equivariant map, we conclude that
\begin{lemma}
$G$ acts geometrically on $\wt{X}_C$. \qed
\end{lemma}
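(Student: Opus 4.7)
The plan is to derive both properness and cocompactness of $G \curvearrowright \wt{X}_C$ from the known geometric action on $\wt{X}$, by pushing through the $G$-equivariant finite-fibred map $\hat{\eta}_C$ supplied by \cref{lemma_opening__cofinite}. First I would show that the action is free: any $g \in G$ fixing a point $x' \in \wt{X}_C$ also fixes $\hat{\eta}_C(x') \in \wt{X}$ by equivariance, and since $G$ acts freely on its universal cover $\wt{X}$, we obtain $g = 1$.

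To upgrade freeness to properness it suffices to check that $\wt{X}_C$ is locally finite, which I would reduce to verifying that only finitely many walls of $W_C$ meet any bounded region of $Z$. The horizontal and vertical walls are locally finite since $G$ acts cocompactly on $\wt{X}$, and \cref{lemma_opening__cofinite} shows that only finitely many translates of $\wt{C}$ pass through any given vertex of $\wt{X}$; together these imply local finiteness at every vertex of $\wt{X}_C$. A free cellular action on a locally finite complex is automatically proper.

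For cocompactness I would show that $\wt{X}_C$ has only finitely many $G$-orbits of cells. Every cell $\mathsf{c}'$ of $\wt{X}_C$ is mapped by $\hat{\eta}_C$ into a single cell $\mathsf{c} := \hat{\eta}_C(\mathsf{c}')$ of $\wt{X}$ (possibly of lower dimension, by \cref{lemma_eta_C_properties}), and since $\wt{X}/G = X$ is finite it is enough to prove that $\hat{\eta}_C^{-1}(\mathsf{c})$ contains only finitely many cells for each cell $\mathsf{c}$ of $\wt{X}$. Any such cell $\mathsf{c}'$ is determined by its $0$-skeleton, and $\mathsf{c}'^{(0)} \subset \hat{\eta}_C^{-1}(\mathsf{c}^{(0)})$, which is finite by \cref{lemma_opening__cofinite}. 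Thus at most finitely many cells of $\wt{X}_C$ project into $\mathsf{c}$, so $G \backslash \wt{X}_C$ has finitely many cells and is compact.

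The bulk of the conceptual work has already been carried out in \cref{lemma_opening__cofinite}, so the remaining argument is essentially bookkeeping. The only mild subtlety is that $\hat{\eta}_C$ may collapse horizontal edges to vertices, but since cells of a cube complex are recovered from their vertex sets, this collapsing does not obstruct the finite-orbit count above.
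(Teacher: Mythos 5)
Your proof is correct and uses the same idea as the paper: the paper's entire justification is the single preceding sentence ``since $\hat{\eta}_C$ is a finite-to-one $G$-equivariant map, we conclude that\dots'', relying on \cref{lemma_opening__cofinite} exactly as you do. You simply unpack that observation into the three constituent checks (freeness, properness via local finiteness, cocompactness), all driven by the finite fibres of $\hat{\eta}_C$, so this is the paper's argument made explicit rather than a different route.
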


\begin{lemma}\label{lemma_X_C_tubular}
Every vertical hyperplane of $\wt{X}_C$ is a line.
\end{lemma}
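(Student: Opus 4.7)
The plan is to show that for every vertical hyperplane $h$ of $\wt{X}_C$, each horizontal edge of $\wt{X}_C$ dual to $h$ has thickness exactly $2$. Since hyperplanes of a CAT(0) cube complex are connected and $\wt{X}_C$ is two-dimensional (by \cref{lemma_X_C_VH}), this will force $h$ to be a connected graph in which every vertex has valence $2$, and hence a line.

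By the VH-structure of \cref{lemma_X_C_VH}, $h$ is dual to a wall $W \in W_C$ of type (ii) or (iii). Recall from the proof of that lemma that walls of types (ii) and (iii) cross only walls of type (i) in $W_C$. Thus every square of $\wt{X}_C$ containing a horizontal edge $e'$ dual to $W$ is dual to a pair $\{W, W_1\}$ where $W_1$ is a type (i) wall crossing $W$; that is, $W_1$ is a horizontal hyperplane of $\wt{X}$ meeting the line of $\wt{X}$ associated to $W$ (the vertical hyperplane $\mathsf{h}$ of $\wt{X}$ when $W$ is of type (ii), and the translate $L$ of $\wt{C}$ when $W$ is of type (iii)).

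In the type (iii) case, \cref{lemma_eta_C_properties}(3) tells us that $\hat{\eta}_C$ collapses $e'$ to a vertex $v$ of $L$. Each type (i) wall crossing $W$ is dual to a vertical edge of $L$, and exactly two such vertical edges are incident to $v$, contributing exactly two squares at $e'$. In the type (ii) case, by \cref{lemma_eta_C_properties}(2), $\hat{\eta}_C$ sends $e'$ to a horizontal edge $\bar{e}$ of $\wt{X}$ dual to $\mathsf{h}$. Since $X$ is a tubular graph of graphs, $\bar{e}$ lies in exactly two squares of $\wt{X}$ (horizontal edges have thickness two, as observed after \cref{def_thickness}); the two horizontal hyperplanes of $\wt{X}$ crossing $\mathsf{h}$ at the midpoint of $\bar{e}$ give the two squares containing $e'$ in $\wt{X}_C$.

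The main technical step is checking that the two candidate type (i) walls $W_1$ identified above really do produce squares at $e'$ in $\wt{X}_C$, and that no other type (i) wall crossing $W$ does. Using the description of the fibres $T_v = \hat{\eta}_C^{-1}(\sigma_v)$ from \cref{lemma_opening__cofinite}, this reduces to verifying that flipping such a $W_1$ at an ultrafilter $\sigma$ at an endpoint of $e'$ yields a consistent almost principal ultrafilter, so that the four corners of the hypothetical square all lie in $\wt{X}_C$; and conversely, that for any $W_1$ whose dual vertical edge in $\wt{X}$ is not incident to the relevant vertex, some wall of $W_C$ lies strictly between $\sigma$ and $\sigma \cdot W_1$, obstructing the square.
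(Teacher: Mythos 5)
Your approach is genuinely different from the paper's. The paper's own proof is a one-liner: the stabiliser of any vertical hyperplane of $\wt{X}_C$ is the stabiliser of a wall of type (\textit{ii}) or (\textit{iii}), hence an infinite cyclic subgroup, and since $G$ acts geometrically on $\wt{X}_C$ this stabiliser acts properly cocompactly on the hyperplane-tree, forcing it to be a line. You instead verify the equivalent combinatorial statement directly: every horizontal edge of $\wt{X}_C$ has thickness exactly two, by tracing through the space-with-walls construction and the cellular map $\hat{\eta}_C$. The paper's route is slicker, leaning on the $G$-action; yours stays at the level of walls and ultrafilters. Note that the paper's argument is itself a little compressed: a cocompact free $\mathbb{Z}$-action on a tree only forces the tree to be a circle's universal cover \emph{with trees possibly attached}, so one still needs to know the hyperplane has no leaves, i.e.\ thickness $\geq 2$ everywhere. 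Your approach would establish that too, and so in a sense fills in what the paper leaves implicit.

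That said, what you have written is a plan, not a proof. In the last paragraph you correctly isolate the essential work — showing that for the two candidate type~(\textit{i}) walls the flip of $\sigma$ produces a vertex of $\wt{X}_C$ (equivalently, that both ultrafilters $\sigma\triangle\{Y_1,Y_1^c\}$ are consistent and almost principal), and that for a type~(\textit{i}) wall dual to a vertical edge not incident to the relevant vertex, some intervening wall of $W_C$ blocks the square. You state these as things that ``reduce to verifying'' rather than verifying them. In the type~(\textit{iii}) case especially, this requires checking that the only type~(\textit{i}) walls $W_1$ with no wall of $W_C$ strictly between $\sigma$ and $\sigma\triangle W_1$ are precisely those dual to the two vertical edges of $L$ at $v$, and the competing walls here are not only other horizontal hyperplanes but also walls of type~(\textit{ii}) and~(\textit{iii}). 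You should also check the type~(\textit{ii}) case more carefully: although $\bar{e}=\hat{\eta}_C(e')$ has thickness two in $\wt{X}$, the two squares of $\wt{X}$ at $\bar{e}$ may meet translates of $\wt{C}$ at their vertical sides, so \cref{lemma_eta_C_properties}(1) does not apply directly and one must argue why their $\hat{\eta}_C$-preimages over $e'$ are single squares rather than larger complexes. Carry these verifications out and you will have a correct, self-contained alternative to the paper's argument.
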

\begin{proof}
The stabiliser of a vertical hyperplane is the stabiliser of a wall of either type (\textit{ii}) or type (\textit{iii}), and hence is a cyclic subgroup. Thus every vertical hyperplane is a line.
\end{proof}

The complex $\wt{X}_C$ consists of two types of subcomplexes:
\begin{itemize}
\item Denote by $\wt{Z}_C$ a connected component of the subcomplex of $\wt{X}_C$ consisting of the union of the first cubical neighbourhood of all hyperplanes corresponding to walls of type \textit{(iii)}. In other words, $\wt{Z}_C$ is a connected component of the closed strips in $\wt{X}_C$ induced by half-spaces of translates of $\wt{C}$.

\item The second type of subcomplex, denoted by $\wt{Y}_C$, is the closure of the complement in $\wt{X}_C$ of the $G$-translates of $\wt{Z}_C$.
\end{itemize}

\begin{lemma}
The subcomplex $\wt{Z}_C$ is a tree of finite trees whose underlying tree $\hat{\eta}_C(\wt{Z}_C)$ is a copy of $\wt{\phi(C)}$.
\end{lemma}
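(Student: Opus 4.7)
The plan is to first identify the image $\hat{\eta}_C(\wt{Z}_C)$ geometrically in $\wt{X}$, and then assemble the tree-of-trees structure on $\wt{Z}_C$ by combining the local description of fibres of $\hat{\eta}_C$ (supplied by \cref{lemma_opening__cofinite}) with the connectedness of $\wt{Z}_C$. First I would describe each strip in $\wt{Z}_C$: the first cubical neighbourhood of a hyperplane dual to a type-(\textit{iii}) wall $\{Y,Y^c\}$, where $Y$ is a half-space of some translate $L$ of $\wt{C}$, is a strip isomorphic to $L \times [0,1]$, and $\hat{\eta}_C$ restricted to this strip projects onto $L \subset \wt{X}$. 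Consequently $\hat{\eta}_C(\wt{Z}_C)$ is a union of translates of $\wt{C}$, all lying in the vertical tree $\wt{X}_s$ containing $\wt{C}$, and in fact contained in the full preimage of $\phi(C) \subset X_s$ in $\wt{X}_s$. Since $\wt{X}_s$ is a tree, this preimage decomposes as a disjoint union of connected components, each isomorphic as a graph to $\wt{\phi(C)}$.

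Next I would show that $\hat{\eta}_C(\wt{Z}_C)$ is exactly one such component. In one direction, the connectedness of $\wt{Z}_C$ forces $\hat{\eta}_C(\wt{Z}_C)$ to be connected. Conversely, if $L_1, L_2$ are translates of $\wt{C}$ whose images share a vertex $\bar v \in \wt{X}_s$ (which is exactly the condition for them to lie in a common component of the preimage of $\phi(C)$), I claim the strips associated to any half-spaces of $L_1$ and of $L_2$ lie in the same component $\wt{Z}_C$. The key point is that by \cref{lemma_opening__cofinite} the fibre $\hat{\eta}_C^{-1}(\bar v)$ is a finite horizontal tree whose edges are dual precisely to the type-(\textit{iii}) walls through $\bar v$; hence it contains edges crossing walls coming from both $L_1$ and $L_2$, and these edges lie inside the respective strips. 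Iterating this along a chain of pairwise intersecting translates, all relevant strips are shown to be in a single $\wt{Z}_C$, so $\hat{\eta}_C(\wt{Z}_C)$ is exactly one connected component of the preimage of $\phi(C)$ in $\wt{X}_s$, hence a copy of $\wt{\phi(C)}$.

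For the tree-of-trees decomposition itself, I take as vertex space over each vertex $\bar v \in \hat{\eta}_C(\wt{Z}_C)$ the finite tree $\hat{\eta}_C^{-1}(\bar v) \cap \wt{Z}_C$ (a subtree of the finite tree of \cref{lemma_opening__cofinite}), and analogously the edge space over each vertical edge of $\hat{\eta}_C(\wt{Z}_C)$ is its preimage in $\wt{Z}_C$, which is likewise a finite tree since $\hat{\eta}_C$ sends vertical edges to vertical edges injectively on each strip. The natural inclusions of edge-spaces into adjacent vertex-spaces give the graph-of-spaces data, with underlying graph $\hat{\eta}_C(\wt{Z}_C) \cong \wt{\phi(C)}$, which is a tree. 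The main obstacle is the intermediate connectivity step: one must argue carefully via the ultrafilter combinatorics that at a shared vertex $\bar v \in L_1 \cap L_2$ the fibre tree genuinely interlinks the strips of $L_1$ and $L_2$, and analogously that the several hyperplanes arising from distinct half-spaces of a single line $L$ all have strips meeting along the preimage of $L$.
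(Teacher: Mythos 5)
Your proposal is correct and follows essentially the same route as the paper's proof: both rely on \cref{lemma_opening__cofinite} to identify the fibres of $\hat{\eta}_C$ as finite horizontal trees, observe that $\hat{\eta}_C$ collapses the type-(\textit{iii}) horizontal edges and is faithful on vertical edges (so the image is a connected subgraph of the vertical tree $\wt{X}_s$, hence a tree), and then argue the two inclusions between $\hat{\eta}_C(\wt{Z}_C)$ and (a component of) the preimage of $\phi(C)$. The only cosmetic difference is that you make the chaining step — that connectivity of $\wt{Z}_C$ propagates the full translate $L$ into $\hat{\eta}_C(\wt{Z}_C)$ whenever one vertex of $L$ is present, iterated along pairwise-intersecting translates — explicit, whereas the paper leaves that iteration implicit after establishing the one-step closure property.
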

\begin{proof}
By \cref{lemma_opening__cofinite}, $\hat{\eta}_C^{-1}(\sigma_z)$ is a finite tree for every vertex of $\wt{X}$, and thus after subdivision, for the midpoint of every edge of $\wt{X}$. The horizontal edges of $\wt{Z}_C$ are all dual to hyperplanes of type \textit{(iii)}. Note that $\hat{\eta}_C$ sends such horizontal edges to vertices and vertical edges to vertical edges (\cref{lemma_eta_C_properties}). Thus $\hat{\eta}_C(\wt{Z}_C)$ is a tree and $\wt{Z}_C$ is a tree of finite trees.

We now claim that $\hat{\eta}_C(\wt{Z}_C)$ is a copy of the universal cover of $\phi(C)$. Note that $\wt{\phi(C)}$ is a connected union of lines which are translates of $\wt{C}$. Since $\hat{\eta}_C(\wt{Z}_C)$ is also a union of translates of $\wt{C}$ with image $\phi(C)$ in $X$, $\hat{\eta}_C(\wt{Z}_C) \subset \wt{\phi(C)}$. Conversely, if a vertex $v$ of a translate $L$ of $\wt{C}$ is contained in $\hat{\eta}_C(\wt{Z}_C)$, then $\hat{\eta}^{-1}_C(v)$ meets the strips induced by half-spaces of $L$ and thus these strips are contained in $\wt{Z}_C$. The image of any such strip under $\hat{\eta}_C$ is $L$ and thus $L \subset \hat{\eta}_C(\wt{Z}_C)$. 
\end{proof}

Define $X_C \coloneqq \wt{X}_C / G$. By \cref{lemma_X_C_tubular}, $X_C$ is a tubular graph of graphs. The space $X_C$ is called the \emph{opened-up space of $X$ along $C$}. The $G$-equivariant map $\hat{\eta}_C : \wt{X}_C \to \wt{X}$ induces a map $\eta_C : X_C \to X$.
Let $Y_C$ and $Z_C$ denote the respective images of $\wt{Y}_C$ and $\wt{Z}_C$ in $X_C$. We have proved that

\begin{lemma}\label{lemma_Z_C_same_Z'_C}
$Z_C$ is a graph of finite trees with underlying graph $\phi(C)$ and with the following property:
If $u \in \phi(C)$ is a vertex (or a midpoint of an edge), then the vertex (edge) tree $T(u)$ is the tree dual to $Z$ with the walls induced by translates of $\wt{C}$ passing through a lift $\tilde{u}$ of $u$ in $\wt{X}$.
\end{lemma}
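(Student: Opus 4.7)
The plan is to push the structural description of $\wt{Z}_C$ obtained in the previous lemma down to the quotient $X_C = \wt{X}_C/G$, using that $\hat{\eta}_C$ is $G$-equivariant. First observe that, since $\wt{Z}_C$ is a connected component of the $G$-invariant union of strips induced by walls of type \emph{(iii)}, any two distinct $G$-translates of $\wt{Z}_C$ are disjoint. Hence $Z_C$ really is $\wt{Z}_C/H$, where $H = \mathrm{Stab}_G(\wt{Z}_C)$, and the inclusion $Z_C \hookrightarrow X_C$ is an embedding. Because $\hat{\eta}_C(\wt{Z}_C) = \wt{\phi(C)}$ by the preceding lemma and $\hat{\eta}_C$ is $G$-equivariant, $H$ also stabilises $\wt{\phi(C)}$, and $\hat{\eta}_C$ induces a surjection $Z_C \twoheadrightarrow \wt{\phi(C)}/H \subset X$.

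Next I would identify $H$ with $\pi_1(\phi(C))$ inside $G = \pi_1(X)$. As $\phi$ is $\pi_1$-injective, $\wt{\phi(C)} \subset \wt{X}$ is the universal cover of $\phi(C)$, and its $G$-stabiliser is exactly the image of $\pi_1(\phi(C))$. Conversely, by \cref{lemma_eta_C_properties}(1), $\hat{\eta}_C$ is injective on any cell whose image lies off the translates of $\wt{C}$, so an element $g \in G$ preserves $\wt{Z}_C$ if and only if it preserves the underlying copy of $\wt{\phi(C)}$. This gives $H = \pi_1(\phi(C))$ and hence $\wt{\phi(C)}/H = \phi(C)$, so $Z_C$ fibers over $\phi(C)$ as claimed.

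Finally, I would identify the vertex and edge trees. Pick a vertex $u \in \phi(C)$ (the midpoint-of-edge case is handled identically after subdivision) and a lift $\tilde{u} \in \wt{\phi(C)}$. The vertex tree $T(u)$ is the image in $Z_C$ of $\hat{\eta}_C^{-1}(\tilde{u})$. Since $G$ is torsion-free and acts freely on $\wt{X}_C$, no non-trivial element of the stabiliser of $\tilde{u}$ in $H$ can act non-trivially on the finite fiber, so $T(u) \cong \hat{\eta}_C^{-1}(\tilde{u})$; and by \cref{lemma_opening__cofinite} this fiber is precisely the finite tree dual to the sub-space-with-walls of $(Z,W_C)$ whose walls are those of type \emph{(iii)} induced by translates of $\wt{C}$ passing through $\tilde{u}$. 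That is exactly the description in the statement. The main obstacle I expect is the clean identification $H = \pi_1(\phi(C))$: one must rule out any extra symmetry introduced by opening up along translates of $\wt{C}$, which is where the injectivity properties of $\hat{\eta}_C$ away from the opened lines become essential.
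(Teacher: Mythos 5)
Your proof is correct and follows the paper's intended route: the paper gives no explicit proof of this lemma, treating it as an immediate consequence of the preceding lemma (that $\wt{Z}_C$ is a tree of finite trees over $\wt{\phi(C)}$) together with \cref{lemma_opening__cofinite}, and your argument supplies exactly the quotient-by-$H$ bookkeeping that justifies that. One small slip: the citation of \cref{lemma_eta_C_properties}(1) for the inclusion $\mathrm{Stab}(\wt{\phi(C)}) \subseteq H$ is misplaced, since every cell of $\wt{Z}_C$ maps into a translate of $\wt{C}$, so that injectivity statement does not apply; the correct justification is the fact (established in the proof of the preceding lemma) that $\wt{Z}_C$ is precisely the union of the strips over translates of $\wt{C}$ lying in $\wt{\phi(C)}$, so any $g$ preserving $\wt{\phi(C)}$ permutes those strips and hence preserves $\wt{Z}_C$.
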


We conclude with the following observation:
\begin{lemma}\label{lemma_X_C_union_Y_C_Z_C}
The opened-up space $X_C$ is a union of the subcomplexes $Y_C$ and $Z_C$ with $Y_C \cap Z_C$ consisting of those cells of $Y_C$ that are mapped by $\eta_C$ to $\phi(C)$. \qed
\end{lemma}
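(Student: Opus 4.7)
The plan is to establish both assertions by working upstairs in $\wt{X}_C$ and then descending through the $G$-action.

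For the union $X_C = Y_C \cup Z_C$, I would argue purely from the definitions. Since $\wt{Y}_C$ is by construction the closure of $\wt{X}_C \setminus \bigcup_{g \in G} g\wt{Z}_C$, we have the tautological equality $\wt{Y}_C \cup \bigcup_{g \in G} g\wt{Z}_C = \wt{X}_C$. Passing to the quotient, and noting that $Z_C$ is the image of each $g\wt{Z}_C$ in $X_C = \wt{X}_C/G$, this yields $X_C = Y_C \cup Z_C$.

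For the characterization of the intersection I would prove both inclusions by lifting to $\wt{X}_C$. The forward direction is immediate: if $c \in Y_C \cap Z_C$ then some lift $\tilde{c}$ lies in $\wt{Y}_C \cap g\wt{Z}_C$ for some $g \in G$, and by \cref{lemma_Z_C_same_Z'_C} together with $\hat{\eta}_C(\wt{Z}_C) = \wt{\phi(C)}$, the image $\hat{\eta}_C(\tilde{c})$ lies in $g\cdot \wt{\phi(C)}$, which projects to $\phi(C)$ in $X$. Hence $\eta_C(c) \in \phi(C)$.

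For the converse, suppose $c \in Y_C$ with $\eta_C(c) \in \phi(C)$, and pick a lift $\tilde{c} \in \wt{Y}_C$; then $\hat{\eta}_C(\tilde{c})$ lies in some connected component $g\wt{\phi(C)}$ of the preimage of $\phi(C)$ in $\wt{X}$. I would then verify, case by case on the dimension of $\tilde{c}$, that $\tilde{c}$ lies in some $g'\wt{Z}_C$. When $\tilde{c}$ is a vertex, \cref{lemma_opening__cofinite} describes $\hat{\eta}_C^{-1}(\hat{\eta}_C(\tilde{c}))$ as a nontrivial finite horizontal tree whose edges are dual to type (\textit{iii}) hyperplanes (because $\hat{\eta}_C(\tilde{c})$ lies on a translate of $\wt{C}$), so $\tilde{c}$ is adjacent to at least one such edge and thus lies in the first cubical neighbourhood of a type (\textit{iii}) hyperplane. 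A horizontal edge whose image is a vertex is, by \cref{lemma_eta_C_properties}(3), dual to a type (\textit{iii}) hyperplane; a vertical edge or a square whose image lies in $g\wt{\phi(C)}$ must, by \cref{lemma_eta_C_properties}(2) and the local picture of opening a translate of $\wt{C}$ into a strip, sit on the boundary of one of those strips. In every case $\tilde{c}$ lies in a first cubical neighbourhood of a type (\textit{iii}) hyperplane and hence in some $g'\wt{Z}_C$, so $c \in Z_C$.

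The main delicate point I anticipate is the dimension-by-dimension analysis in the converse direction: one must verify uniformly that every cell of $\wt{X}_C$ whose $\hat{\eta}_C$-image lies in a translate of $\wt{\phi(C)}$ must sit in the first cubical neighbourhood of some type (\textit{iii}) hyperplane. The combinatorial description of $\hat{\eta}_C$ in \cref{lemma_eta_C_properties} and the fibre description in \cref{lemma_opening__cofinite} should be exactly the tools needed to make this verification routine, with the closure in the definition of $\wt{Y}_C$ causing no additional trouble since the cells of $\wt{Y}_C \cap g\wt{Z}_C$ are precisely the boundary cells of the strips, which are also the cells landing on $\wt{\phi(C)}$ under $\hat{\eta}_C$.
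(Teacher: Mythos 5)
Your proof is correct and fills in what the paper treats as immediate: the paper ends this lemma with \(\qed\) and no argument, regarding it as a direct consequence of the construction of \(\wt{X}_C\), the decomposition \(\wt{X}_C = \wt{Y}_C \cup \bigcup_g g\wt{Z}_C\), the identification \(\hat{\eta}_C(\wt{Z}_C) = \wt{\phi(C)}\), and the fibre description in \cref{lemma_opening__cofinite}. Your argument uses exactly these ingredients, so it is the expected filling-in rather than a different route.

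Two small inaccuracies are worth flagging, neither of which breaks the argument. First, the citation should be to the unnumbered lemma preceding \cref{lemma_Z_C_same_Z'_C} (the one asserting that \(\hat{\eta}_C(\wt{Z}_C)\) is a copy of \(\wt{\phi(C)}\)); \cref{lemma_Z_C_same_Z'_C} is the downstairs statement about \(Z_C\). Second, in the converse direction your claim that a square (or a horizontal collapsed edge) with \(\hat{\eta}_C\)-image in a translate of \(\wt{\phi(C)}\) ``sits on the boundary of one of those strips'' is not quite right: such a square crosses a type \textit{(iii)} hyperplane and so lies in the \emph{interior} of a strip, which is precisely why it is not a cell of \(\wt{Y}_C\) to begin with. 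Hence the square and type-\textit{(iii)} horizontal-edge cases are vacuous for the statement being proved, and the only non-vacuous cases are vertices and vertical edges. For a vertical edge the cleanest argument mirrors your vertex argument after one cubical subdivision: the midpoint of the edge maps to the midpoint of a vertical edge lying on a translate of \(\wt{C}\), whose \(\hat{\eta}_C\)-fibre is a nontrivial horizontal tree by \cref{lemma_opening__cofinite}, so the midpoint is adjacent to an edge dual to a type \textit{(iii)} hyperplane and hence the original (unsubdivided) vertical edge lies in that hyperplane's first cubical neighbourhood. With these clarifications your proof is complete.
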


\subsection{Algorithmic construction of \texorpdfstring{$X_C$}{Xc}}
The main result of this subsection is the following.

\begin{theorem}\label{thm_algorithm_X_C}
There exists an algorithm of exponential time complexity that takes a Brady-Meier tubular graph of graphs $X$ and a splitting cycle $\phi : C \to X$ as input and returns the opened-up space $X_C$ along $C$ as output.
\end{theorem}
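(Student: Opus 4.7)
The plan is to construct $X_C$ concretely as the union $Y_C \cup Z_C$ described in \cref{lemma_X_C_union_Y_C_Z_C}, building each piece directly from combinatorial data on $X$ and $C$, and then gluing them along their intersection.

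First compute the image $\phi(C) \subset X$ and its regular neighbourhood $N(\phi(C))$ in linear time. The piece $Y_C$ is obtained by cutting $X$ along $\phi(C)$: take one copy of each cell of $X$ for every component of its preimage in $N(\phi(C))$, with adjacencies inherited from $N(\phi(C))$. This is the standard cutting operation for an immersed subcomplex and runs in polynomial time.

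The piece $Z_C$ is a graph of finite trees with underlying graph $\phi(C)$, whose vertex (resp.\ edge) trees are dual cube complexes to the walls at a lift $\tilde{u}$ of $u \in \phi(C)$ (resp.\ at a lift of a midpoint $m_e$) induced by the translates of $\wt{C}$ passing through that lift (\cref{lemma_Z_C_same_Z'_C}). For each vertex $u$, the translates of $\wt{C}$ through $\tilde{u}$ are encoded by the preimages of $u$ in $C$: each preimage $u_i \in \phi^{-1}(u)$ gives a lift $L_i$ of $C$ through $\tilde{u}$, and the number of such lifts is at most the length of $C$. For each $L_i$, the two edges of $C$ incident to $u_i$ together with the local regular-sphere construction of \cref{lemma_regular_sphere_at_edge} determine a bipartition of the half-edges (hence squares) at $\tilde{u}$ into the two sides of $L_i$. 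Since $C$ is a splitting cycle, it has no self-crossings, so the resulting walls at $\tilde{u}$ are pairwise non-crossing; their dual cube complex $T(u)$ is therefore a tree of size polynomial in $|X|$ and $|C|$, explicitly constructible in polynomial time. Compute edge trees $T(m_e)$ the same way, and glue them to the vertex trees along the natural inclusions of edge midpoints into adjacent vertex stars in $\phi(C)$ to form $Z_C$.

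Finally, glue $Y_C$ to $Z_C$ along $Y_C \cap Z_C$: each cell of $Y_C$ mapping under $\eta_C$ to a vertex or edge of $\phi(C)$ is identified with the unique leaf of the corresponding tree $T(u)$ or $T(m_e)$ lying on the correct side of every local wall, using exactly the bipartition data from the previous step. The main obstacle is correctness: the explicitly constructed complex must coincide with the quotient of the abstract dual cube complex of $(Z, W_C)$ by $G$. This follows from \cref{lemma_opening__cofinite} (which characterises $\hat{\eta}_C^{-1}$ of each vertex as a finite tree with edges in bijection with the local translates of $\wt{C}$), \cref{lemma_Z_C_same_Z'_C} (identifying these trees as the local dual cube complexes), and the freeness of the $G$-action on $\wt{X}_C$, so that the global quotient is determined by the local data we have computed. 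Since every step is polynomial in $|X|$ and $|C|$, the algorithm runs in time polynomial in the input size, hence well within the exponential bound claimed.
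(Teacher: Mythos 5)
There is a genuine gap in your construction of $Z_C$. You claim that, for each preimage $u_i \in \phi^{-1}(u)$ and the corresponding lift $L_i$ of $\wt{C}$ through $\tilde{u}$, ``the two edges of $C$ incident to $u_i$ together with the local regular-sphere construction of \cref{lemma_regular_sphere_at_edge} determine a bipartition of the half-edges (hence squares) at $\tilde{u}$ into the two sides of $L_i$.'' This is not correct: first, $\wt{C}$ may have more than two half-spaces, so the partition is not a bipartition in general; and more seriously, the partition of the squares at $\tilde{u}$ into half-spaces of $L_i$ is \emph{not} determined locally. The regular sphere $\partial N(P) \setminus L_i$ around a short subpath $P \subset L_i$ generically has more components than $\partial N(L_i) \setminus L_i$ itself, because components of the local regular sphere can be connected to each other further along the line. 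Two squares that appear separated by $L_i$ in the link of $\tilde{u}$ may in fact lie in the same half-space of $L_i$. Consequently the walls you define at $\tilde{u}$ would in general be strictly finer than the true walls, and the resulting dual complex would not agree with $T(u)$ from \cref{lemma_Z_C_same_Z'_C}, so the glued complex would not be $X_C$.

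The paper's proof addresses exactly this issue: \cref{lemma_exponential_power_fund_domain_half_spaces} shows that one must look at the orthogonal sphere around a fundamental domain $P_N$ of a $2^{N^{\mathrm{th}}}$ power of $C$ (where $N$ bounds the edge thickness) before the component count of the regular sphere stabilises to the true number $K$ of half-spaces, and \cref{lemma_ball_K_halfspaces} packages this into a statement about the $D^{\mathrm{th}}$ cubical neighbourhood for $D = \ell(C)2^N$. This exponential-radius ball $\mathcal{B}$ is the essential ingredient in determining the walls at each $\tilde{u}$ correctly, and it is precisely what makes the algorithm exponential rather than polynomial as you assert. The correctness argument via \cref{prop_X_C_iso_X'_C} then compares the walls computed in $\mathcal{B}$ with the walls in $\wt{X}$, using \cref{lemma_components_reg_sphere_subcomplex} to justify that the ball is large enough. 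Your proposal is missing the step that supplies this stabilisation radius, and without it the local data you compute need not be the right data.
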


Define a complex $Y'_C$ as the square complex obtained from $X \setminus \phi(C)$ by ``completing the missing cells'' as follows: for each vertex or edge $x$ of $\phi(C)$, take as many copies of $x$ as the number of squares of $X$ that contain $x$ and add them to the semi-open squares of $X \setminus \phi(C)$ to obtain closed squares. Observe that
\begin{lemma} \label{lemma_Y'_C_same_Y_C}
$Y'_C$ is isomorphic to $Y_C$. \qed
\end{lemma}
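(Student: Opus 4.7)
The plan is to verify that both $Y_C$ and $Y'_C$ admit the same intrinsic description, namely that each is obtained from the disjoint union of the squares of $X$ by re-gluing them along precisely the cells they share in $X$ that do not lie in $\phi(C)$. For $Y'_C$, this is essentially the definition: the squares of $X$ are the squares of $Y'_C$, and they are only re-glued along cells outside $\phi(C)$, since each cell of $\phi(C)$ is duplicated once per containing square.

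To verify the same description for $Y_C$, I would first argue that $\eta_C|_{Y_C} \colon Y_C \to X$ is a combinatorial immersion of square complexes. By \cref{lemma_eta_C_properties}(2)--(3), horizontal edges of $\wt{X}_C$ get crushed by $\hat{\eta}_C$ only when they are dual to a wall of type \textit{(iii)}, and such edges lie in the interior of a strip of $\wt{Z}_C$. Since $\wt{Y}_C$ is by construction the closure of the complement of the $G$-translates of these strips, no such edge lies in $\wt{Y}_C$. Consequently $\hat{\eta}_C$ sends every square of $\wt{Y}_C$ homeomorphically to a square of $\wt{X}$. Combined with \cref{lemma_eta_C_properties}(1), this shows $\eta_C|_{Y_C}$ is a combinatorial immersion which is a bijection on squares disjoint from $\wt{\phi(C)}$.

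Next I would establish that \emph{every} square of $X$ has a unique lift to $Y_C$ and count cell multiplicities over $\phi(C)$. For uniqueness of the lift of a square $s$ at a lift $\tilde{x}$ of a vertex $x \in \phi(C)$: each vertex of $s$ determines an ultrafilter in $(Z, W_C)$ extending $\sigma_{\tilde{x}}$, because for every wall $\{Y, Y^c\}$ of type \textit{(iii)} passing through $\tilde{x}$, the square $s$ lies (via \cref{cor_square_meets_halfspace}) on a well-defined side, forcing the choice. Together with the consistency of these choices across the four corners of $s$ (which follows from the nesting of half-spaces of crossing-free translates of $\wt{C}$, \cref{lemma_non-crossing_lines_and_half-spaces}), this shows $s$ has exactly one lift in $\wt{Y}_C$. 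Therefore the lifts of $x$ in $Y_C \cap Z_C$ are in bijection with the squares of $X$ containing $x$, exactly as for $Y'_C$.

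The isomorphism $Y'_C \to Y_C$ is then defined squarewise: the copy of $s$ in $Y'_C$ is sent to the unique lift of $s$ in $Y_C$. Two such lifts share a cell in $Y_C$ iff the underlying squares share the cell in $X$ and that cell is not in $\phi(C)$ (since shared cells in $\phi(C)$ correspond to distinct extensions of the ultrafilter coming from walls of type \textit{(iii)}), which is precisely the gluing rule defining $Y'_C$. The map is therefore a bijective combinatorial isomorphism. The main obstacle I anticipate is the uniqueness of the lift of a square meeting $\wt{\phi(C)}$; the rest is a routine comparison of gluing patterns, carried by the explicit ultrafilter description of $\wt{X}_C$.
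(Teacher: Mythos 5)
Your plan is to show both complexes have the intrinsic description ``squares of $X$ re-glued only along cells outside $\phi(C)$,'' and the main work is verifying this for $Y_C$. The uniqueness-of-lifts argument (via crossing walls or ultrafilter extensions) is fine, but there is a genuine gap at the step \emph{``Therefore the lifts of $x$ in $Y_C \cap Z_C$ are in bijection with the squares of $X$ containing $x$''} and the accompanying parenthetical that ``shared cells in $\phi(C)$ correspond to distinct extensions of the ultrafilter.'' Uniqueness of the lift of each square does not imply that distinct squares have distinct corners over $\tilde{x}$. Indeed, the corner over $\tilde{x}$ of the lift of a square $\mathsf{s}$ is the principal ultrafilter $\sigma_z$ for any open half-edge $z\in\mathsf{s}$ at $\tilde{x}$, and this ultrafilter is determined, among the squares at $\tilde{x}$, purely by which half-space of each translate $L$ of $\wt{C}$ through $\tilde{x}$ contains $\mathsf{s}$ (walls of type (\textit{i}) and (\textit{ii}) assign all such half-edges to the same side). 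If two squares $\mathsf{s},\mathsf{s}'$ sharing an edge $\tilde{e}\subset\wt{C}$ lie in the same half-space of every translate through $\tilde{e}$, their lifts share the corresponding edge of $\wt{Y}_C$, whereas in $Y'_C$ as defined they get distinct copies. This can occur, e.g., when $\wt{C}$ is the only translate through $\tilde{e}$, $\wt{C}$ has $K$ half-spaces, and $\tilde{e}$ has thickness $d>K$: by pigeonhole two squares at $\tilde{e}$ lie in the same half-space. So to close the argument you need a reason why, at every cell of a lift of $\phi(C)$, the translates of $\wt{C}$ through that cell separate \emph{all} the incident squares pairwise; that is precisely the non-obvious content of the lemma (which the paper itself records without proof), and it is not supplied by \cref{cor_square_meets_halfspace} or \cref{lemma_non-crossing_lines_and_half-spaces}, which you cite. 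Either establish that refinement property from the splitting-cycle hypotheses, or reconcile the count by revisiting the intended meaning of ``as many copies of $x$ as the number of squares of $X$ that contain $x$'' in the definition of $Y'_C$.
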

Therefore in order to construct $X_C$ algorithmically, it only remains to construct $Z_C$.
The first result we will need is the following. Fix a lift $\wt{C}$ of the splitting cycle $\phi : C \to X_s \subset X$. Let $K$ be the number of half-spaces of $\wt{C}$ in $\wt{X}$. 

\begin{prop} \label{lemma_ball_K_halfspaces}
There exists $D' \in \mathbb{N}$ such that for any vertex or (midpoint of an edge) $v \in \wt{C}$ and $\forall D \geq D'$, the $D^{th}$ cubical neighbourhood $\{v\}^{+D}$ of $v$ has the following properties:
\begin{enumerate}
\item For each translate $g\wt{C}$ such that $v \in g\wt{C}$, $g\wt{C}$ separates $\{v\}^{+D}$ into exactly $K$ components.
\item For every $g, g' \in G$ such that $g\wt{C} \neq g'\wt{C}$ and $v \in g\wt{C} \cap g'\wt{C}$, $g\wt{C} \cap \{v\}^{+D} \neq g'\wt{C} \cap \{v\}^{+D}$.
\end{enumerate}
\end{prop}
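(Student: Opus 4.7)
The plan is to exploit that $\pi_1(C)$ acts cocompactly on $\wt{C}$, so the vertices and edge-midpoints of $\wt{C}$ form only finitely many $\pi_1(C)$-orbits; by the $G$-equivariance of the statement, it suffices to treat one representative $v$ from each such orbit. Since $\wt{X}$ is locally finite and $\wt{C}$ is periodic, only finitely many translates $g\wt{C}$ pass through any given $v$, so only finitely many individual conditions in (1) and (2) need be checked. I will produce a sufficient $D$ for each of these finitely many configurations and take the maximum to define $D'$.

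For property (2), fix distinct translates $g\wt{C} \neq g'\wt{C}$ through $v$. By \cref{cor_P_embeds} the intersection $P = g\wt{C} \cap g'\wt{C}$ embeds in $C$ and is in particular a compact segment. Any vertex $p$ of $g\wt{C}$ immediately adjacent to an endpoint of $P$ lies in $g\wt{C} \setminus g'\wt{C}$ at bounded distance from $v$; for every $D$ at least this distance, $p \in g\wt{C} \cap \{v\}^{+D}$ witnesses $g\wt{C} \cap \{v\}^{+D} \neq g'\wt{C} \cap \{v\}^{+D}$.

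For property (1), each connected component of $\{v\}^{+D} \setminus g\wt{C}$ is a connected subset of $\wt{X} \setminus g\wt{C}$, hence lies in the interior of a unique half-space of $g\wt{C}$. It therefore suffices to show that each of the $K$ half-spaces $Y_i$ meets $\{v\}^{+D} \setminus g\wt{C}$ in a single nonempty component, i.e.\ that the open connected set $U_i \coloneqq Y_i \setminus g\wt{C}$ intersects $\{v\}^{+D}$ in a nonempty and connected subset. Nonemptiness follows from the unboundedness of $Y_i$ (\cref{lemma_half-spaces_deep}) once $D$ is large. The connectedness is the main obstacle: abstractly $U_i$ is open, connected and locally path-connected, so any two of its points are joined by a compact path, which lies in some $\{v\}^{+D^\ast}$, but uniformity across the finite list of pairs $(v, g\wt{C})$ requires a quantitative argument. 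I would proceed by iterative merging: starting from a $D_0$ large enough that every component of $U_i \cap \{v\}^{+D_0}$ is attached through $Y_i$ to the link of $v$, successively absorb compact linking paths from $U_i$ between pairs of distinct components of $U_i \cap \{v\}^{+D}$; each merge increases $D$ by a bounded amount, and the finite component count strictly decreases, so the procedure terminates after finitely many steps at a uniform $D$.

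Finally, I would verify that once both conditions hold at $D'$, they persist for all $D \geq D'$. For (2), both intersections grow monotonically with $D$, preserving the strict inequality. For (1), the identity $\{v\}^{+D} = (\{v\}^{+(D-1)})^{+1}$ forces every cell in $\{v\}^{+D}$ to meet $\{v\}^{+(D-1)}$, so every component of $\{v\}^{+D} \setminus g\wt{C}$ contains a component of $\{v\}^{+(D-1)} \setminus g\wt{C}$; since each half-space $Y_i$ already contributes exactly one component at stage $D'$, no new component can appear at any later stage, and distinct half-space components cannot merge as they remain separated by $g\wt{C}$.
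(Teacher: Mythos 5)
Your reduction of property (1) to the statement ``each half-space $Y_i$ meets $\{v\}^{+D}\setminus g\wt{C}$ in a single nonempty component'' is the right reformulation, and the treatment of property (2) via the compactness of $P = g\wt{C}\cap g'\wt{C}$ (ultimately \cref{lemma_length_lcm}) essentially matches the paper. However, your ``iterative merging'' argument for the connectedness in (1) has a genuine logical gap. You claim that as $D$ increases, merging linking paths causes the component count of $U_i\cap\{v\}^{+D}$ to strictly decrease, terminating at a uniform $D$. But enlarging the ball can also create new components of $U_i\cap\{v\}^{+D}$: a square $c\in\{v\}^{+D}$ may meet $\{v\}^{+(D-1)}$ only in a vertex or vertical edge contained in $g\wt{C}$, so a priori $c\setminus g\wt{C}$ need not be joined inside $\{v\}^{+D}\setminus g\wt{C}$ to the part already seen. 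Your final paragraph asserts ``every component of $\{v\}^{+D}\setminus g\wt{C}$ contains a component of $\{v\}^{+(D-1)}\setminus g\wt{C}$'' without justification, and that assertion is precisely what is needed to make the merging terminate; citing the consequence as the proof of the step that enables it is circular. Moreover the vague starting condition (``every component is attached through $Y_i$ to the link of $v$'') would itself need an argument of the same kind.

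The paper avoids this difficulty by a different route. It works one dimension down, on the regular sphere: by \cref{lemma_components_reg_sphere_subcomplex} the components of $\{v\}^{+D}\setminus g\wt{C}$ are in bijection with the components of $\partial N(g\wt{C})\cap\{v\}^{+D}$, which in turn is an orthogonal sphere $\partial_{orth}N(P)$ for a suitable subsegment $P\subset g\wt{C}$. The behaviour of $\partial_{orth}N(P)$ under lengthening $P$ is controlled by the splicing operation of \cref{defn_graph_sum}, and \cref{lemma_graph_sum_same_components} shows that once a splicing step fails to change the number of components, no further step ever will. Since the components are bounded above by the thickness $N$, this must stabilise by the $2^{N}$th power of $C$ (\cref{lemma_exponential_power_fund_domain_half_spaces}), giving an explicit and computable $D' = \ell(C)\,2^{N}$. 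That explicit bound is not an optional extra: it is what makes the overall algorithm run in exponential time (\cref{thm_algorithm_X_C}), so even a corrected soft existence argument would not suffice for the paper's purposes. If you want to salvage your approach, you should (a) prove rather than assert the persistence claim that components of $\{v\}^{+D}\setminus g\wt{C}$ pull back to components of $\{v\}^{+(D-1)}\setminus g\wt{C}$ (which amounts to a convexity argument like \cref{lemma_half-space_coarsely_connected_lines}), and (b) produce an effective termination bound, at which point you will likely have rediscovered the spliced-sphere argument.
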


The main ingredient for proving \cref{lemma_ball_K_halfspaces} is the following result.
Let $N$ be such that the thickness of any edge of $X$ is at most $N$. 
\begin{lemma} \label{lemma_exponential_power_fund_domain_half_spaces}
Let $C_N$ be a $2^{N^{th}}$ power of $C$ and $P_N$ a fundamental domain of $C_N$. Then there exists a natural bijection between the set of half-spaces of $\wt{C}$ and the set of components of $\partial_{orth} N(P_N)$.
\end{lemma}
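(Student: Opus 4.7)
The plan is to construct the natural bijection as the composition of two maps: the embedding $\partial_{orth}N(P_N) \hookrightarrow \partial N(\wt{C})$ supplied by \cref{fact_orthogonal_sphere_and_its_lift}, followed by the correspondence between components of $\partial N(\wt{C})$ and half-spaces of $\wt{C}$ furnished by \cref{lemma_half-spaces_equal_components_reg_sphere_L}. Since each component of $\partial_{orth}N(P_N)$ is connected, its image in $\partial N(\wt{C})$ lies in a single component, hence determines a unique half-space; this assignment $\Phi$ is the candidate bijection.

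I would first verify surjectivity, which already holds for $n = 1$. Each component $K_Y \subset \partial N(\wt{C})$ associated with a half-space $Y$ is the union of $\pi_1(C)$-translates of its intersection with $\partial_{orth}N(\wt{P}_C)$, so in particular $K_Y \cap \partial_{orth}N(\wt{P}_C) \neq \varnothing$, and hence $K_Y \cap \partial_{orth}N(P_N) \neq \varnothing$ for every $n \geq 1$.

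The main step is injectivity, which is equivalent to showing that for every half-space $Y$ the subgraph $K_Y \cap \partial_{orth}N(P_N)$ is connected. Writing $P_N = P_C \cdot hP_C \cdots h^{n-1}P_C$ where $h$ generates $\pi_1(C)$, at each of the $n$ interface vertices $v_i$ (the common vertex of $h^iP_C$ and $h^{i+1}P_C$) I propose to record a \emph{state} $\mathcal{P}_i$: the partition of the squares incident to $v_i$ that is induced by the component structure of the partial orthogonal sphere $\partial_{orth}N(P_C \cdots h^iP_C)$. Because the $v_i$ are $\pi_1(C)$-translates of a single vertex and each relevant edge carries at most $N$ squares, a coarse encoding places $\mathcal{P}_i$ in a common finite set of cardinality at most $2^N$ (for instance, by encoding the partition via the 0/1 indicator, relative to a base square, of lying in the same class). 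With $n = 2^N$ a pigeonhole argument yields $i_1 < i_2$ with $\mathcal{P}_{i_1} = \mathcal{P}_{i_2}$; since the transition rule depends only on the state together with the periodic local combinatorics of the next copy of $P_C$, the partitions then cycle and the component structure reaches its stable limit, which by construction agrees with that of $\partial N(\wt{C})$.

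The main obstacle is making this state/transition formalism rigorous so that the state space has the claimed cardinality, the transition is genuinely a function of the state and the fixed periodic data, and periodicity of the states implies that $K_Y \cap \partial_{orth}N(P_N)$ is connected. The delicate point is (c): one must argue that, once the dynamics has cycled, no further splits of global components can arise inside $P_N$, equivalently that any two vertices of $K_Y$ lying in $\partial_{orth}N(P_N)$ can be connected by a path inside $\partial_{orth}N(P_N)$ — which should follow by transporting an external detour back inside $P_N$ using the $h$-action on the periodic part of the concatenation.
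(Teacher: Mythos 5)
Your framing—reducing the claimed bijection to surjectivity and injectivity of the map induced by the embedding $\partial_{orth}N(P_N) \hookrightarrow \partial N(\wt{C})$, with surjectivity being easy—is correct and matches the structure of the paper's argument. The gap is in the injectivity step, and it is concrete.

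The claimed bound of $2^N$ on the state space does not hold for the states you actually need. The encoding you propose, the $0/1$ indicator of co-membership with a fixed base square, lands in $\{0,1\}^N$ and hence has at most $2^N$ values, but it is lossy: it records only which squares share a block with the base square, so it cannot distinguish between different ways of partitioning the remaining squares among the other blocks. In other words, it faithfully represents the partition only when there are at most two blocks. The honest state space—the set of all partitions of the $\leq N$ squares at an interface vertex—has cardinality the Bell number $B_N$, which exceeds $2^N$ as soon as $N \geq 5$. So two genuinely distinct states can collide under your encoding, your pigeonhole over $2^N$ copies can produce a false period, and the ensuing "cycling" conclusion is unsupported. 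You also (rightly) flag that even a genuine period does not immediately rule out further merges of global components inside $P_N$; the argument you sketch for this is a heuristic, not a proof.

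The repair, which is what the paper does, is to drop the pigeonhole entirely and use monotonicity. As $k$ grows, the partial orthogonal spheres $\partial_{orth}N(P_k)$ form a nested increasing chain of subgraphs of $\partial N(\wt{C})$ whose union is all of $\partial N(\wt{C})$; hence their numbers of components form a non-increasing integer sequence. This sequence is bounded above by the thickness $N$ (via \cref{lemma_edge_square_orthogonal_sphere}) and below by $2$, so there can be at most $N-2$ strict drops, and some consecutive pair must agree well before step $N$. The paper's \cref{lemma_graph_sum_same_components} then supplies exactly the missing "once stable, always stable" step: if the spliced-graph component count is unchanged when passing from one copy of $\Gamma$ to two, it is unchanged for all further splices. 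Combined with the fact that the nested union exhausts $\partial N(\wt{C})$, the stable count must equal the number of components of $\partial N(\wt{C})$, which by \cref{lemma_half-spaces_equal_components_reg_sphere_L} is the number of half-spaces of $\wt{C}$. This also disposes of your delicate point (c) for free: since the chain is nested, components only ever merge—there are no later splits to worry about.
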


We first prove a preliminary result on the number of connected components of graphs. Let $\Gamma$ be a graph with no cut points and $\{a,b\}$ a cut pair. Assume that the valence $n$ of $a$ is equal to the valence of $b$. We will construct a spliced graph (\cref{defn_graph_sum}) of finitely many copies of $\Gamma$. Let $\phi_a : \{1, \cdots, n\} \to \mathrm{adj}(a)$ and $\phi_b : \{1, \cdots, n\} \to \mathrm{adj}(b) $ denote labellings of vertices adjacent to $a$ and $b$.
For each $i \in \mathbb{N}$, let $\Gamma_i$ be a copy of $\Gamma$ with the corresponding cut pair $\{a_i,b_i\}$. We will denote the labellings on the adjacent vertices by $\phi_{a_i}, \phi_{b_i}$.
Let $\Gamma'_i := \Gamma_1 \; {}_{(b_1,\phi_{b_1})} \! \bigoplus_{(a_2, \phi_{a_2})} \Gamma_2 \; {}_{(b_2,\phi_{b_2})} \! \bigoplus \cdots \bigoplus_{(a_i, \phi_{a_i})} \Gamma_i$.

\begin{lemma}\label{lemma_graph_sum_same_components}
Suppose that the number of components of $\Gamma \setminus \{a,b\}$ is equal to the number of components of $\Gamma'_2 \setminus \{a_1,b_2\}$. Then for each $i$, the number of components of $\Gamma'_i \setminus \{a_1,b_i\}$ is equal to the number of components of $\Gamma'_2 \setminus \{a_1, b_2\}$.
\end{lemma}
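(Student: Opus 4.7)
My plan is to translate the component count into a combinatorial invariant of the cut pair $\{a,b\}$. Label the components $K_1,\ldots,K_m$ of $\Gamma\setminus\{a,b\}$, and for each $s$ set $S_a(K_s)=\phi_a^{-1}(\mathrm{adj}(a)\cap K_s)$ and $S_b(K_s)=\phi_b^{-1}(\mathrm{adj}(b)\cap K_s)$, both regarded as subsets of $\{1,\ldots,n\}$. Because $\Gamma$ has no cut points, neither $a$ nor $b$ alone separates $\Gamma$, and so every $K_s$ must be adjacent to both $a$ and $b$; thus $S_a(K_s)$ and $S_b(K_s)$ are both nonempty for every $s$. This is the geometric fact that will drive the argument.

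Next, define $R=\{(s,t):S_b(K_s)\cap S_a(K_t)\neq\emptyset\}\subseteq\{1,\ldots,m\}^2$. Unfolding the splicing construction, for each $j$ the piece $\Gamma_j\setminus\{a_j,b_j\}$ contributes one component $K_s^{(j)}$ per $s$, and the gluings identify a point of $K_s^{(j)}$ with a point of $K_t^{(j+1)}$ precisely when $(s,t)\in R$. Hence the number $c_i$ of components of $\Gamma_i'\setminus\{a_1,b_i\}$ equals the number of components of the layered graph $G_i$ on the vertex set $\{1,\ldots,m\}\times\{1,\ldots,i\}$ whose edges are $(s,j)\sim(t,j+1)$ whenever $(s,t)\in R$. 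By construction $c_1=m$, and the hypothesis gives $c_2=m$.

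The crux is the case $i=2$. Here $G_2$ is a bipartite graph on parts of size $m$ with no isolated vertex. Since the $2m$ vertices are distributed into exactly $m$ components, each of size at least $2$, every component must have size exactly $2$. Therefore $G_2$ is a perfect matching and $R$ is the graph of a permutation $\pi$ of $\{1,\ldots,m\}$. Once this is known, $G_i$ immediately decomposes into $m$ disjoint paths $(s,1)-(\pi(s),2)-(\pi^2(s),3)-\cdots-(\pi^{i-1}(s),i)$, one per $s\in\{1,\ldots,m\}$, yielding $c_i=m$ for every $i$.

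I expect the only subtle point to be identifying precisely where the "no cut points" hypothesis enters: without it, $G_2$ could have an isolated vertex compensated by a larger component, $R$ would not be a permutation, and the conclusion would fail for $i\geq 3$.
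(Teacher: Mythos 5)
Your proof is correct and follows essentially the same route as the paper: both arguments show that the hypothesis forces the partitions of $\{1,\ldots,n\}$ coming from $\phi_a$ and $\phi_b$ (pulled back from the component structure of $\Gamma\setminus\{a,b\}$) to coincide, and then propagate this rigidity iteratively. Your reformulation via the relation $R$, the observation that the no-cut-point hypothesis forbids isolated vertices, and the resulting perfect-matching/permutation structure is a clean and more explicit rendering of the paper's terse ``continuing iteratively'' step.
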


\begin{proof}
Let $k$ be the number of components of $\Gamma \setminus \{a,b\}$.
$\Gamma'_2 \setminus \{a_1,b_2\}$ has the same number of components as $\Gamma \setminus \{a,b\}$ if and only if there is a partition into $k$ subsets of $\{1, \cdots, n\}$ such that the corresponding partition induced by $\phi_a : \{1, \cdots, n\} \to \mathrm{adj}(a)$ and $\phi_b : \{1, \cdots, n\} \to \mathrm{adj}(b) $ on the vertices adjacent to $a$ and $b$ coincides with the partition induced by the $k$ components of $\Gamma \setminus \{a,b\}$. Continuing iteratively, we obtain the result.
\end{proof}

\begin{proof}[Proof of \cref{lemma_exponential_power_fund_domain_half_spaces}]
Denote by $P_k$ a fundamental domain of a $2^{k^{th}}$ power $C_k$ of $C$. Assume that for each $k$, $P_k$ has been chosen such that $P_k$ is a concatenation of two copies of $P_{k-1}$. The result then follows from \cref{lemma_graph_sum_same_components} and \cref{lemma_regular_sphere_at_combinatorial_path}.
\end{proof}

Let $L$ be a line in $\wt{X}$ and $v \in L$ a vertex. Let $D \in \mathbb{N}$. Note that $\{v\}^{+D}$ is a $\cat$ cube (sub)complex \cite{haglund_wise_special}. We will assume that $\partial N(L) \subset \wt{X}$.
By \cref{lemma_half-spaces_equal_components_reg_sphere_L}, we have

\begin{lemma}\label{lemma_components_reg_sphere_subcomplex}
There exists a bijection between the half-spaces of $L \cap \{v\}^{+D}$ in $\{v\}^{+D}$ and the components of $\partial N(L) \cap \{v\}^{+D}$. \qed
\end{lemma}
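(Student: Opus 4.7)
The plan is a direct adaptation of \cref{lemma_half-spaces_equal_components_reg_sphere_L}, with the ambient CAT(0) complex taken to be the subcomplex $\{v\}^{+D}$ and the line replaced by the compact combinatorial segment $P := L \cap \{v\}^{+D}$. First I would observe that cubical neighbourhoods of a point in a CAT(0) cube complex are convex (this is the same Haglund--Wise result already invoked in the paper for $L^{+k}$), so $\{v\}^{+D}$ is a simply connected square complex in which $P$ is an embedded combinatorial geodesic.

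The natural map is defined as in the cited lemma: it sends a half-space $Y$ of $P$ in $\{v\}^{+D}$ to the subset $Y \cap \partial N(L)$. This is a well-defined assignment to the set of components of $\partial N(L) \cap \{v\}^{+D}$, since each such component is connected and disjoint from $L$, hence lies in a single component of $\{v\}^{+D} \setminus P$. Surjectivity is immediate from an analogue of \cref{cor_square_meets_halfspace} applied inside $\{v\}^{+D}$: every half-space of $P$ contains the interior of a square meeting $P$, and that square contributes a subsegment of $\partial N(L)$ in $Y$. For injectivity I would replay the disk-bounding argument from the proof of \cref{lemma_half-spaces_equal_components_reg_sphere_L}: given $h_1, h_2 \in \partial N(L) \cap \{v\}^{+D}$ in the same half-space $Y$, join them by a path inside $Y \setminus P$ and by a path through $P$; the resulting loop bounds a disk in the simply connected $\{v\}^{+D}$, and the intersection of this disk with $\partial N(L)$ yields a path from $h_1$ to $h_2$ inside $\partial N(L) \cap \{v\}^{+D}$.

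The main point of possible friction I foresee is that $\partial N(L) \cap \{v\}^{+D}$, computed as a subset of the ambient regular sphere, need not coincide on the nose with the intrinsic regular sphere of $P$ inside $\{v\}^{+D}$: near the two endpoints of $P$, which may lie on the boundary of $\{v\}^{+D}$, the two descriptions can differ by a few hanging edges. This discrepancy, however, does not change the number or identity of connected components, so the bijection extracted from the argument above descends to the statement as written. Since the cited proof of \cref{lemma_half-spaces_equal_components_reg_sphere_L} uses only simple connectedness of the ambient complex (not the full Brady-Meier property), no further local hypothesis on $\{v\}^{+D}$ is required.
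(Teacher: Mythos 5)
Your proposal is essentially the same approach that the paper takes: the paper's entire proof consists of the phrase ``By \cref{lemma_half-spaces_equal_components_reg_sphere_L}, we have\ldots'' followed by a $\qed$, and you have correctly unpacked what that citation hides, namely that one runs the disk-bounding argument of \cref{lemma_half-spaces_equal_components_reg_sphere_L} inside the convex (hence simply connected) subcomplex $\{v\}^{+D}$ with $P = L \cap \{v\}^{+D}$ in the role of the line. Your identification of the point of friction --- that $\partial N(L) \cap \{v\}^{+D}$ as a subset of the ambient regular sphere is not literally the intrinsic regular sphere of $P$ in $\{v\}^{+D}$, but differs only by hanging edges near the two endpoints of $P$ sitting on $\partial\{v\}^{+D}$ --- is the right thing to flag and the right resolution.

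One small caveat: your closing remark that ``no further local hypothesis on $\{v\}^{+D}$ is required'' because the cited proof only uses simple connectedness is slightly overstated. The disk argument alone shows that $Y \cap \partial N(L) \cap \{v\}^{+D}$ is connected whenever it is nonempty, but the fact that it is \emph{nonempty} for each half-space $Y$ does need a separate (if elementary) argument. In the ambient $\wt{X}$ this is supplied via \cref{lemma_path_abundance} / \cref{cor_square_meets_halfspace}, which do use the Brady-Meier hypothesis, and your surjectivity step invokes exactly such an analogue of \cref{cor_square_meets_halfspace} inside $\{v\}^{+D}$, whose boundary vertices need not satisfy the Brady-Meier link conditions. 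Fortunately a cruder argument suffices and sidesteps the issue: since $\{v\}^{+D}$ is connected, every half-space $Y = \bar A$ has $\bar A \cap P \neq \emptyset$, so $A$ contains a square adjacent to $P$; the part of that open square outside $\mathring N(P)$ lies in $A$, forcing $A$ to cross $\partial N(L) \cap \{v\}^{+D}$. It would be cleaner to state that argument explicitly rather than rely on \cref{cor_square_meets_halfspace} transplanted to a subcomplex where its hypotheses are not obviously met.
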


\begin{proof}[Proof of \cref{lemma_ball_K_halfspaces}]
Let $D = \ell(C) 2^N$, where $\ell(C)$ denotes the length of $C$. Then $\{v\}^{+D}$ contains a lift of $P_N$, a fundamental domain of a $2^{N^{th}}$ power of $C$. By \cref{lemma_exponential_power_fund_domain_half_spaces}, $\wt{C}$ separates $\partial N(P_N) \subset \{v\}^{+D}$ into exactly $K$ components. 
\cref{lemma_components_reg_sphere_subcomplex} then implies conclusion (1). Conclusion (2) follows from \cref{lemma_length_lcm}.
\end{proof}

\subsubsection{Construction of \texorpdfstring{$X_C$}{XC}}

Choose a basepoint $v \in \phi(C)$ with lift $\tilde{v}$ in $\wt{C}$. Let $\mathcal{B} := \{\tilde{v}\}^{+D}$ in $\wt{X}$.
Note that

\begin{lemma}\label{lemma_construction_finite_ball}
There exists an algorithm that takes $X$, $C$ and $v$ as input and returns $\mathcal{B}$ in exponential time. \qed
\end{lemma}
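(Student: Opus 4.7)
The plan is to build $\mathcal{B}$ inductively, one cubical layer at a time, starting from $\tilde v$ and exploiting the fact that the covering map $p : \wt X \to X$ is a local isomorphism of square complexes: for every vertex $\tilde u$ of $\wt X$, $p$ maps the first cubical neighbourhood of $\tilde u$ in $\wt X$ isomorphically onto the first cubical neighbourhood of $p(\tilde u)$ in $X$. In particular, the combinatorial data needed to grow $\wt X$ by one cubical layer around any vertex can be read directly off $X$.

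Initialise $\mathcal{B}_0 := \{\tilde v\}$ with the tagging $\tilde v \mapsto v$. For $k = 1, \dots, D$, I would construct $\mathcal{B}_k$ from $\mathcal{B}_{k-1}$ as follows: for each vertex $\tilde u \in \mathcal{B}_{k-1}$, look at the star of $p(\tilde u)$ in $X$ via the tagging, and for every edge (respectively square) incident to $p(\tilde u)$ in $X$ whose lift at $\tilde u$ is not already a cell of $\mathcal{B}_{k-1}$, attach a fresh edge (respectively square) at $\tilde u$ and tag it by its image in $X$. Set $\mathcal{B} := \mathcal{B}_D$.

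The subtle point is the identifications. When attaching a square $\mathsf s$ with corner at $\tilde u$ whose two incident edges $e_1, e_2$ already lie in $\mathcal{B}_{k-1}$, one must decide whether the fourth corner $\tilde u''$ is new or is an existing vertex of $\mathcal{B}_{k-1}$. This is determined by a purely local check at the far endpoints of $e_1$ and $e_2$: by consulting their links (read off $X$), one checks whether the two opposite edges of $\mathsf s$ have already been lifted there, in which case they necessarily share a common endpoint that plays the role of $\tilde u''$ and the lifted square closes up. The correctness of this local rule rests on the simple connectivity of $\wt X$: any identification among cells newly introduced at step $k$ would be witnessed by a disk of squares contained in $\mathcal{B}_{k-1}$, and therefore would already have been forced at an earlier stage. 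An induction on $k$ then gives $\mathcal{B}_k = \{\tilde v\}^{+k}$ as subcomplexes of $\wt X$, so in particular $\mathcal{B} = \{\tilde v\}^{+D}$.

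For the complexity, let $\Delta$ bound the number of cells of $X$ incident to a single vertex (polynomial in the size of $X$). Each vertex of $\mathcal{B}_{k-1}$ contributes at most $\Delta$ fresh cells to $\mathcal{B}_k$, so $|\mathcal{B}_k| \leq (1+\Delta)\,|\mathcal{B}_{k-1}|$ and hence $|\mathcal{B}| \leq (1+\Delta)^D$. The work at each step is polynomial in $|X|$ and $|\mathcal{B}_{k-1}|$ (scanning boundary vertices, reading stars in $X$, performing the local identification checks on the links), so the total running time is bounded by a polynomial in $|X| \cdot (1+\Delta)^D$, that is, exponential in $D$. The main obstacle is precisely the identification step, ensuring that no pair of cells corresponding to a common cell of $\wt X$ is split and no two distinct cells are merged; this is exactly where the simple connectivity of $\wt X$ is used.
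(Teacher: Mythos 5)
Your layer-by-layer development of $\{\tilde v\}^{+D}$ from the local data of $X$ is the natural construction, and since the paper offers no proof of this lemma it is presumably the intended one; your complexity estimate $|\mathcal{B}|\leq(1+\Delta)^D$ with a running time polynomial in it is likewise as expected (you should add that $D=\ell(C)2^N$ is itself computed from the input in polynomial time, $N$ being the maximal edge thickness of $X$). The one place to tighten is the justification of the identification step. Rather than appealing loosely to a disk of squares inside $\mathcal{B}_{k-1}$, observe that $\mathcal{B}_{k-1}=\{\tilde v\}^{+(k-1)}$ is a \emph{convex} subcomplex of $\wt X$ --- the paper records this convexity of cubical neighbourhoods, following Lemma 13.15 of \cite{haglund_wise_special}. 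Convexity forces a square of $\wt X$ not contained in $\mathcal{B}_{k-1}$ to meet it in a single vertex or a single edge (never in three vertices, nor in two opposite edges, nor in an L of two adjacent edges). Thus each newly attached square has a \emph{unique} attaching cell in $\mathcal{B}_{k-1}$, and two cells introduced fresh at different boundary vertices $\tilde u_1,\tilde u_2$ must be glued precisely when $\tilde u_1,\tilde u_2$ span an edge $e$ of $\mathcal{B}_{k-1}$ and both see the same square of $X$ through $e$; this is a completely local check. In particular, in the case you highlight (both $e_1,e_2$ already in $\mathcal{B}_{k-1}$), convexity already implies $\tilde u''$ is new, so the link-chasing at $\tilde w_1,\tilde w_2$ you describe is not actually needed. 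With this amendment the inductive claim $\mathcal{B}_k\cong\{\tilde v\}^{+k}$ goes through cleanly.
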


For each translate $L$ meeting $\mathcal{B}$, we attach a finite strip $S_L \cong L \cap \mathcal{B} \times [0,1]$. Let $Z = Z_{\mathcal{B}}$ be the set of open horizontal half-edges in the union of $\mathcal{B}$ and the collection of finite strips. For each vertex $u$ (edge $e$) in $\phi(C)$, we define a set of walls $W_u$ ($W_e$) on $Z$ as follows. First fix a fundamental domain $P$ of $C$ in $\wt{X}$ such that $\tilde{v} \in P$. Choose a lift of $u$ ($e$) in $P$. 
Let $\wt{C} = L_1, \cdots L_n$ be the translates of $\wt{C}$ that pass through $\tilde{u}$ ($\tilde{e}$). By \cref{lemma_ball_K_halfspaces}, each line $L_i$ separates $\mathcal{B}$ into exactly $K$ components. Thus each line $L_i$ induces $K$ walls of $W_u$ ($W_e$) on $Z$, where each half-space $Y$ in $\wt{X}$ of $L_i$ defines a wall $\{Y \cap Z, Y^c \cap Z\}$. 

Since $C$ has no self-crossing, no two walls of $W_u$ ($W_e$) cross (\cref{lemma_noncrossing_lines_wall_intersection}). Thus the dual cube complex of $(Z,W_u)$ (respectively $(Z,W_e)$) is a tree, denoted by $T(u)$ ($T(e)$).
Note that the definition of $W_u$ (and $W_e$) is independent of the choice of $\tilde{v}$ or of the choice of $\tilde{u}$ ($\tilde{e}$) in $P\subset \mathcal{B}$.

Suppose that an edge $e$ is incident to a vertex $u$ in $\phi(C)$. Let $\tilde{e}$ with incident vertex $\tilde{u}$ be the corresponding lifts in $P \subset B$. Since every translate of $\wt{C}$ that passes through $\tilde{e}$ also passes through $\tilde{u}$, there exists a natural inclusion $W_e \subset W_u$. Further, given translates $L_1$, $L_2$ that contain $\tilde{e}$ with half-spaces $Y_1$, $Y_2$ such that $Y_1 \subset Y_2$, suppose there exists a translate $L'$ that meets $\tilde{u}$ with half-space $Y'$ such that $Y_1 \subset Y' \subset Y_2$. Then it is easy to see that $L'$ contains $\tilde{e}$. Thus we have

\begin{lemma}
Given an edge $e$ in $\phi(C)$ incident to a vertex $u$, there exists a natural inclusion $T(e) \hookrightarrow T(u)$.
\end{lemma}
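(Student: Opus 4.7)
The plan is to build a simplicial inclusion $\iota : T(e) \hookrightarrow T(u)$ on the level of ultrafilters, using the inclusion of wall sets $W_e \subset W_u$ established immediately above. Given $\omega \in T(e)$ (an ultrafilter on $W_e$), I would extend $\omega$ to an ultrafilter $\iota(\omega)$ on $W_u$ by the following rule: on every wall of $W_e$, $\iota(\omega)$ agrees with $\omega$; on every wall $\{Y,Y^c\} \in W_u \setminus W_e$, which corresponds to a translate $L'$ of $\wt{C}$ passing through $\tilde{u}$ but not through $\tilde{e}$, I choose the half-space of $L'$ whose interior in $\wt{X}$ contains $\tilde{e}$. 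Since $L'$ misses $\tilde{e}$, exactly one such half-space exists, so the rule is unambiguous.

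The substantive step is to verify that $\iota(\omega)$ is in fact an ultrafilter. Because $C$ has no self-crossings, any two walls in $W_u$ are pairwise non-crossing and hence (as half-spaces in $\wt{X}$) stand in a nested or disjoint relation by \cref{lemma_noncrossing_lines_wall_intersection}. The upward-closure condition for $\iota(\omega)$ must then be checked across the $W_e$ versus $W_u \setminus W_e$ divide, the critical case being when a half-space $A$ chosen by the $\tilde{e}$-rule (so $A \in W_u \setminus W_e$) is contained in a half-space $B \in W_e$ chosen by $\omega$ — one needs $B \in \omega$, i.e.\ one needs to rule out $B^c \in \omega$. Here I would invoke the sandwich observation stated just before the lemma: any $W_u$-half-space squeezed strictly between two $W_e$-half-spaces must itself lie in $W_e$. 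Combined with the fact that the line defining $B$ passes through $\tilde{e}$ and the line defining $A$ does not, this constrains the possible nestings between $A$, $B$, and the other half-spaces of $B$'s line enough to force the orientation on $B$ prescribed by $\omega$ to be compatible with the $\tilde{e}$-rule on $A$.

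Once $\iota(\omega)$ is shown to be a well-defined vertex of $T(u)$, the remaining steps are routine. Injectivity of $\iota$ is immediate since restriction to $W_e$ recovers $\omega$. Simpliciality follows because two vertices $\omega_1, \omega_2$ of $T(e)$ are joined by an edge exactly when they differ in a single wall of $W_e$; by the rule defining $\iota$ their images agree on every wall of $W_u \setminus W_e$, hence differ in exactly one wall of $W_u$ and are adjacent in $T(u)$. Thus $\iota$ embeds $T(e)$ as a subtree of $T(u)$.

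The main obstacle I anticipate is the consistency verification in the second paragraph: a short but careful case analysis on the four possible nested/disjoint configurations of a half-space $A \in W_u \setminus W_e$ against a half-space $B \in W_e$, in which the sandwich observation together with the location of $\tilde{e}$ on every line in $W_e$ must be combined to exclude the pathological configurations that would violate upward closure.
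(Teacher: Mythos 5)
Your plan — extending each ultrafilter on $W_e$ to one on $W_u$ by choosing, on each new wall, the side whose open half-space contains the interior of $\tilde{e}$ — is the right construction, and it is precisely the mechanism the paper leaves implicit after establishing $W_e \subset W_u$ and the sandwich observation. The injectivity and simpliciality arguments in your third paragraph are correct and routine as you say.

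On the consistency check you flag as the main obstacle: the picture becomes cleaner if you first record a consequence of the no-self-crossing hypothesis that does more work than the sandwich observation by itself. If $L$ is a line through $\tilde{e}$ (so $L$ defines walls in $W_e$) and $L'$ is a line through $\tilde{u}$ missing $\tilde{e}$ (so $L'$ defines walls in $W_u \setminus W_e$), then since $\tilde{e}$ lies in $\mathring{A}$ where $A$ is the $\tilde{e}$-side half-space of $L'$, and since $L$ and $L'$ do not cross, one gets $L \subset A$. This kills the case you call critical outright: $A$ contains all of $L$ and therefore parts of every component of $\wt{X} \setminus L$ near $\tilde{e}$, so $A$ cannot sit inside any single wall half-space of $L$ — the inclusion $A \subset B$ with $B \in W_e$ simply never occurs. (Your sandwich-based argument reaches the same conclusion but requires first producing a $W_e$-half-space inside $A$ to feed into the sandwich; the direct route is shorter.) The direction that actually carries content is the one you did not single out: a wall half-space $A' \in \omega \subset W_e$ of $L$ contained in some $B' \in W_u \setminus W_e$. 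When $A'$ is a closed half-space, $L \subset A' \subset B'$, and since $A$ is the unique half-space of $L'$ containing $L$, one has $B' = A$ or $B'$ is a complement of a different half-space of $L'$; either way $B'$ contains $\tilde{e}$ and agrees with your rule. When $A'$ is a complement, a similar argument (using that its connected pieces avoid $L'$ and all meet a small neighbourhood of the midpoint of $\tilde{e}$) again forces $B'$ to be the $\tilde{e}$-side. So the plan goes through; only the bookkeeping of which inclusion genuinely needs work is slightly different from what you anticipated.
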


Let $Z'_C$ denote the geometric realisation of the graph of trees ($\phi(C)$, $\{T(u)\}$, $\{T(e)\})$.

\begin{prop} \label{prop_X_C_iso_X'_C}
There exists a natural isomorphism between the square complexes $Z_C$ and $Z'_C$.
\end{prop}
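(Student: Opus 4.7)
The plan is to define, for each cell $x$ (vertex or midpoint of an edge) of $\phi(C)$, a natural isomorphism between the tree attached to $x$ in $Z_C$ (as characterised by \cref{lemma_Z_C_same_Z'_C}) and the corresponding tree in $Z'_C$, and then to verify that these isomorphisms are compatible with the inclusions $T(e) \hookrightarrow T(u)$ for incident pairs. These assemble into the desired isomorphism of graphs of trees.

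First I would treat a single vertex $u \in \phi(C)$ with lift $\tilde{u}$. On the $Z_C$ side, $T(u)$ is dual to the space with walls on $Z$ whose walls are the $K$ half-spaces of each of the finitely many translates $L_1,\dots, L_n$ of $\wt{C}$ passing through $\tilde{u}$. On the $Z'_C$ side, $T(u)$ is dual to $(Z_{\mathcal{B}}, W_u)$, whose walls are obtained by intersecting each such half-space with $\mathcal{B}$. Since $C$ has no self-crossings, \cref{lemma_noncrossing_lines_wall_intersection} implies that no two walls in either family cross each other. Both dual complexes are therefore trees, and because each family is finite, the trees are finite and their combinatorial type is entirely determined by the pocset of their walls (the partial order under inclusion together with the complementation and disjointness relations).

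The key step is therefore to show that the restriction map $\{Y, Y^c\} \mapsto \{Y \cap Z_{\mathcal{B}}, Y^c \cap Z_{\mathcal{B}}\}$ is a pocset isomorphism between the two wall families. Surjectivity is by construction. For injectivity, \cref{lemma_ball_K_halfspaces}(1) says that each $L_i$ still separates $\mathcal{B}$ into exactly $K$ components, so distinct half-spaces of $L_i$ restrict to distinct non-empty half-spaces in $\mathcal{B}$, while \cref{lemma_ball_K_halfspaces}(2) says that two distinct translates $L_i, L_j$ through $\tilde{u}$ remain distinct inside $\mathcal{B}$, so their walls remain distinct. For pocset preservation, given two half-spaces $Y$ of $L_i$ and $Y'$ of $L_j$, \cref{lemma_crossing_by_half-spaces} and non-crossing imply that one of the four relations inclusion, reverse inclusion, disjointness, or co-containment holds in $\wt{X}$; since $\mathcal{B}$ is connected and each of $Y, Y^c, Y', Y'^c$ meets $\mathcal{B}$ with non-empty interior (again by \cref{lemma_ball_K_halfspaces}(1)), exactly the same relation holds after restriction. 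Hence the two trees $T(u)$ agree.

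The identical argument, with $\tilde{u}$ replaced by a lift $\tilde{e}$ of an edge $e$ of $\phi(C)$, gives a natural isomorphism between the two versions of $T(e)$. For the gluing data, the inclusion $T(e) \hookrightarrow T(u)$ is in either case induced by the inclusion of wall sets corresponding to the containment of translates through $\tilde{e}$ in translates through $\tilde{u}$; this containment is preserved by restriction to $\mathcal{B}$, so the two inclusions coincide under the isomorphisms above. Hence the cell-wise isomorphisms assemble into an isomorphism of the graph-of-trees structures, yielding $Z_C \cong Z'_C$ as square complexes. The main obstacle is the verification of pocset preservation under restriction to $\mathcal{B}$; everything else is bookkeeping, and that step is exactly what forced the quantitative choice of radius $D'$ in \cref{lemma_ball_K_halfspaces}.
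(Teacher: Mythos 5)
Your proof is correct and takes essentially the same approach as the paper: compare, cell by cell over $\phi(C)$, the wall structures defining the dual trees and observe that the restriction to the ball $\mathcal{B}$ is a faithful correspondence. The paper's own proof is considerably terser — it invokes \cref{lemma_Z_C_same_Z'_C} and the bijection of half-spaces from \cref{lemma_components_reg_sphere_subcomplex} and leaves the pocset-preservation and the compatibility of the edge-to-vertex gluings implicit — whereas you spell out both; those details, in particular the verification that inclusion/disjointness relations among half-spaces are the same in $\mathcal{B}$ as in $\wt{X}$, are exactly the points the paper is silently relying on, so your expansion is a legitimate unpacking rather than a different argument.
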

\begin{proof}
By \cref{lemma_Z_C_same_Z'_C}, $Z_C$ is a graph of finite trees with underlying graph $\phi(C)$. So is $Z'_C$. Further, the wall structures that define vertex and edge trees of $Z_C$ and $Z'_C$ are isomorphic: Indeed, the walls that define $T(u)$ for $u \in \phi(C)$ in $Z_C$ are induced by half-spaces in $\wt{X}$ of translates of $\wt{C}$ that pass through a lift $\tilde{u}$ of $u$. In $Z'_C$, the tree is defined by walls induced by half-spaces of translates of $\wt{C}$ in a finite ball $\mathcal{B}$ of $\wt{X}$ containing $\tilde{u}$. Since there exists a bijection between the half-spaces of $\wt{C}$ in $\mathcal{B}$ and the half-spaces of $\wt{C}$ in $\wt{X}$ (\cref{lemma_components_reg_sphere_subcomplex}), $Z_C$ is isomorphic to $Z'_C$. 
\end{proof}

\begin{proof}[Proof of \cref{thm_algorithm_X_C}]
The compact space $\mathcal{B}$ can be constructed in exponential time from $X$ (\cref{lemma_construction_finite_ball}). It costs exponential time to calculate the number of half-spaces of any translate of $\wt{C}$ (\cref{lemma_exponential_power_fund_domain_half_spaces}). 

Since the number of translates of $\wt{C}$ meeting at any point of $\wt{X}$ is bounded by the length of $C$ (by \cref{lemma_length_lcm}), the dual trees $T(u)$ ($T(e)$) of all vertices $u$ (edges $e$) in $\phi(C)$ can be constructed in polynomial time. Thus $Z'_C$ is constructed in exponential time. $Y'_C$ is constructed in linear time in $X$ and $X_C$ is obtained in linear time from $Y_C \cong Y'_C$ and $Z_C \cong Z'_C$. Hence the result.
\end{proof}

\subsubsection{Structure of \texorpdfstring{$X_C$}{Xc}}

\begin{lemma}
The tree $T(u)$ (or $T(e)$) is a bipartite tree with black vertices having valence exactly $K$.
\end{lemma}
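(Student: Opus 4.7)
The strategy is to exhibit an explicit bipartition of the vertices of $T(u)$. For each line $L_i$ passing through $\tilde{u}$ I define a distinguished ultrafilter $\omega_{L_i}$, the ``central ultrafilter at $L_i$'', by selecting $Y_{i,j}^c \cap Z$ for every half-space $Y_{i,j}$ of $L_i$, and for each other line $L_{i'}$ selecting $Y_{i',j_{i'}} \cap Z$ together with $Y_{i',k}^c \cap Z$ for $k \neq j_{i'}$, where $j_{i'}$ is the unique index such that $L_i \subset Y_{i',j_{i'}}$. Existence and uniqueness of $j_{i'}$ follow from the non-crossing of translates of $\wt{C}$ together with \cref{lemma_L_L'_others_half-spaces}. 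I would first verify that $\omega_{L_i}$ is a valid (almost principal) ultrafilter by checking pairwise non-disjointness of its elements in $Z$.

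To prove bipartiteness, I declare the $n$ vertices $\omega_{L_i}$ to be black and all remaining vertices of $T(u)$ white, and then analyze an arbitrary edge. Two adjacent ultrafilters $\omega, \omega'$ differ at a unique wall, say $\{Y_{i,j} \cap Z,\, Y_{i,j}^c \cap Z\}$. The containments $Y_{i,j} \subset Y_{i,k}^c$ for $k \neq j$ (since $Y_{i,j}$ and $Y_{i,k}$ are disjoint) force, via upward closure, the ultrafilter which contains $Y_{i,j} \cap Z$ to also contain $Y_{i,k}^c \cap Z$ for every $k \neq j$; since the other ultrafilter agrees at all those walls, it contains $Y_{i,k}^c \cap Z$ for every $k = 1, \dots, K$ and is therefore central at $L_i$. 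Compatibility with the remaining walls then identifies this central ultrafilter with $\omega_{L_i}$, so every edge has exactly one black endpoint.

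Finally, to count the valence of $\omega_{L_i}$, I would exhibit the $K$ neighbors obtained by flipping each of the $K$ walls of $L_i$. Validity of such a flip reduces to checking $Y_{i,j} \cap Y_{i',j_{i'}} \neq \emptyset$ in $Z$ for each $i' \neq i$, which holds because a small neighborhood of any point of the non-empty set $L_i \setminus L_{i'}$ meets both half-spaces and contains horizontal half-edges. Conversely, I would show that flipping a wall from a line $L_{i'} \neq L_i$ cannot produce a neighbor of $\omega_{L_i}$: the resulting collection would be central at both $L_i$ and $L_{i'}$, but the pairwise non-disjoint condition in $Z$ between the all-complement choices at two distinct lines fails thanks to the nested structure of half-spaces of non-crossing lines supplied by \cref{lemma_noncrossing_lines_wall_intersection}.

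The main obstacle will be this last verification: precisely ruling out the cross-line flips requires carefully accounting for how the strips attached to translates of $\wt{C}$ interact with open half-space intersections, since the strips contribute to $Z$ and could a priori rescue otherwise-empty intersections.
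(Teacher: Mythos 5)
Your plan follows essentially the same route as the paper: your $\omega_{L_i}$ is exactly the paper's distinguished ultrafilter $\sigma_{L_i}$, the bipartition is the same, and the valence count by flipping the $K$ walls of $L_i$ is the paper's argument. The main thing you miss, and the reason you flag verification concerns at every step, is the paper's shortcut: $\sigma_{L_i}$ is simply defined as the \emph{principal} ultrafilter $\sigma_z$ for a half-edge $z$ in the attached strip $S_{L_i}$, so it is automatically a valid almost-principal ultrafilter and the strip/half-space bookkeeping you worry about at the end is resolved by fiat. One small inaccuracy in your final paragraph: flipping a wall $\{Y_{i',m},Y_{i',m}^c\}$ of another line $L_{i'}$ produces a collection ``central at both $L_i$ and $L_{i'}$'' only when $m=j_{i'}$; when $m\neq j_{i'}$ it instead places two disjoint half-spaces $Y_{i',j_{i'}}$ and $Y_{i',m}$ of the same line in the collection, which violates \cref{lemma_elements_ultrafilters_intersect} directly — either way the flip is invalid, but the cases are distinct.
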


\begin{proof}
We will first show that there exist vertices of valence $K$ in $T(u)$ ($T(e)$) and then show that the tree is bipartite.
Let $L$ be a translate of $\wt{C}$ passing through $\tilde{u}$ ($\tilde{e}$) in $\mathcal{B}$. Let $Y_1, \cdots, Y_K$ be the half-spaces of $L$. Let $z$ be an open horizontal half-edge in the strip $S_L$. Denote by $\sigma_L$ the ultrafilter $\sigma_z$ in $T(u)$ ($T(e)$).
Thus $\sigma_L$ contains $\{Y_1^c, \cdots, Y_K^c\}$ and exactly those half-spaces of translates of $\wt{C}$ passing through $\tilde{u}$ that contain $L$. 
Observe that the valence of $\sigma_L$ is at least $K$: switching each half-space $Y_i$ of $L$ gives an edge incident to $\sigma_L$. It is easy to see that it is exactly $K$ as no edge of the form $ \{Y', Y'^c\}$ can be incident to $\sigma_L$ when $Y' \neq Y_i$.

Let $\sigma'$ be a vertex at distance two from $\sigma_L$. Let $\sigma' \triangle \sigma_L = \{Y_1,Y_1^c, Y_1', Y_1^{'c}\}$, where $Y'_1$ is a half-space of a translate $L'$ of $\wt{C}$. We will show that $\sigma' = \sigma_{L'}$.
Assume that $Y'_1 \in \sigma_L$. This implies that for each half-space $Y'_i$ of $L'$ with $i \neq 1$, $Y^{'c}_i \in \sigma_L$ and hence in $\sigma'$. Further, $Y_1^{'c} \in \sigma'$ by assumption. If $\sigma' \neq \sigma_{L'}$, then any path from $\sigma'$ to $\sigma_{L'}$ involves a change of half-spaces of the type $\{Y'_i, Y_i^{'c}\}$. Hence we conclude that $\sigma' = \sigma_{L'}$. The case when $Y'_1 \notin \sigma_L$ is similar and we leave it as an exercise.
This proves that a vertex in $T(u)$ ($T(e)$) is of the form $\sigma_{L'}$ if and only if it is at even distance from $\sigma_L$. Thus the tree is bipartite.
\end{proof}

Let $u$ ($e$) be in $\phi(C)$. Let $v_1$ and $v_2$ be black vertices in $T(u)$ ($T(e)$). 

\begin{lemma}
Given an edge $e_1$ incident to $v_1$ in $T(u)$ ($T(e)$), there exists an edge $e_2$ incident to $v_2$ in $T(u)$ ($T(e)$) such that the hyperplane in $Z'_C$ dual to $e_1$ is equal to the hyperplane dual to $e_2$.
\end{lemma}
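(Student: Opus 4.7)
My plan is to construct $e_2$ explicitly from $e_1$ using the $G$-equivariance of the opening-up construction, and then establish equality of the dual hyperplanes first at the level of $X_C$ and finally in $Z'_C$. Since $v_1$ and $v_2$ are black vertices of $T(u)$, I write $v_1 = \sigma_{L_1}$ and $v_2 = \sigma_{L_2}$ where $L_1$ and $L_2$ are translates of $\wt{C}$ passing through the chosen lift $\tilde{u}$; the case $v_1 = v_2$ is trivial, so assume $L_1 \neq L_2$. Because both $L_1$ and $L_2$ are $G$-translates of $\wt{C}$, I pick $h \in G$ with $hL_1 = L_2$, and if $Y$ is the half-space of $L_1$ labelling $e_1$, I will take $e_2$ to be the edge incident to $v_2$ labelled by the half-space $hY$ of $L_2$.

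The next step is to verify that $e_1$ and $e_2$ are dual to the same vertical hyperplane of $X_C$. Working in $\wt{X}_C$, the edge $e_1$ is dual to the vertical hyperplane $\mathsf{h}_Y$ associated with the type \textit{(iii)} wall $\{Y \cap Z,\, Y^c \cap Z\}$, and this hyperplane sits alongside the (unique) lift of $L_1$ in $\wt{X}_C$. Translating by $h$, the hyperplane $h\mathsf{h}_Y$ is associated with the wall for $hY$, is adjacent to the lift of $L_2 = hL_1$, and hence the horizontal edge at $\sigma_{L_2}$ crossing it is precisely the edge $e_2$ labelled by $hY$. Under the quotient $\wt{X}_C \to X_C$, the $G$-translates $\mathsf{h}_Y$ and $h\mathsf{h}_Y$ descend to a common vertical hyperplane $\mathsf{H}$ of $X_C$, so $e_1$ and $e_2$ have the same dual hyperplane in $X_C$.

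The step I expect to require the most care is the descent from $X_C$ to the subcomplex $Z'_C$. To handle it, I will show that any vertical hyperplane of $X_C$ arising from a type \textit{(iii)} wall lies entirely inside $Z_C$. Concretely, \cref{lemma_eta_C_properties}(3) says every horizontal edge of $X_C$ dual to such a hyperplane is collapsed by $\eta_C$ to a vertex of $\phi(C)$, so the edge is a tree edge of some $T(u')$ inside $Z_C$; moreover the squares of $X_C$ propagating $\mathsf{H}$ come from edges of the edge trees $T(e')$ and thus remain inside the graph-of-trees structure of $Z_C$. Invoking the isomorphism $Z_C \cong Z'_C$ from \cref{prop_X_C_iso_X'_C}, the restriction of $\mathsf{H}$ to $Z'_C$ is a single hyperplane containing both $e_1$ and $e_2$, proving the lemma. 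The same argument, with $\tilde{u}$ replaced by $\tilde{e}$ and $T(u)$ by $T(e)$, settles the parenthetical case.
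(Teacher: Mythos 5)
Your proof is correct but takes a genuinely different route from the paper's. The paper stays entirely inside the finite graph-of-trees $Z'_C$: it chooses the specific translation $g$ with $g\tilde{u}' = \tilde{u}$ and $gL_1 = L_2$ for $\tilde{u}'$ in a fundamental domain $P' \subset L_1$, projects the segment $[\tilde{u},\tilde{u}']$ of $L_1$ to a subgraph of $\phi(C)$, and observes that since $L_1$ passes through every vertex and edge of this segment, every intermediate $T(\cdot)$ carries an edge labelled $\{Y_1,Y_1^c\}$; these edges all lie on one hyperplane of $Z'_C$, which re-enters $T(u)$ at $\sigma_{L_2}$ as the edge labelled $\{gY_1,gY_1^c\}$. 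You instead lift to $\wt{X}_C$, use $G$-equivariance to see that $\mathsf{h}_Y$ and $h\mathsf{h}_Y$ descend to a common hyperplane $\mathsf{H}$ of $X_C$, and then argue that $\mathsf{H}$ is fully contained in $Z_C \cong Z'_C$. Your route is slicker and avoids the bookkeeping of the paper's explicit propagation, but it leans on the universal-cover picture and on the isomorphism of \cref{prop_X_C_iso_X'_C} in a way the paper's proof does not; the paper's construction also has the virtue of pinning down the precise group element relating $e_1$ to $e_2$. Two minor points: the phrase ``the (unique) lift of $L_1$ in $\wt{X}_C$'' is imprecise ($\hat{\eta}_C^{-1}(L_1)$ is a tree of strips, not a line), but this is harmless since your argument only needs that $h\mathsf{h}_Y$ is the hyperplane of the wall $\{hY,(hY)^c\}$ and that $L_2=hL_1$ passes through $\tilde{u}$; and the cleanest justification of $\mathsf{H}\subset Z_C$ is simply that $\wt{Z}_C$ is, by definition, the union of the first cubical neighbourhoods of all type \textit{(iii)} hyperplanes, so $\mathsf{h}_Y$ together with every square it crosses already lies in $\wt{Z}_C$, which descends to the containment you need.
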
 
\begin{proof}
Let $v_i$ be the ultrafilter $\sigma_{L_i}$, where $L_1$ and $L_2$ are translates of $\wt{C}$ passing through $\tilde{u}$ ($\tilde{e}$) in $\mathcal{B}$. Let $e_1$ correspond to the wall $\{Y_1, Y_1^c\}$, where $Y_1$ is a half-space of $L_1$. Since $L_2$ is a translate of $L_1$, there exists a fundamental domain $P'$ of $C'$ in $L_1$ containing $\tilde{u}$ ($\tilde{e}$) such that there exists $g \in G$ and $\tilde{u}'$ ($\tilde{e}'$ in $P'$) with $g\tilde{u}' = \tilde{u}$ ($g \tilde{e}' = \tilde{e}$) and $gP' \subset L_2$. The segment from $\tilde{u}$ to $\tilde{u}'$ ($\tilde{e}$ to $\tilde{e}'$) projects to $\phi(C)$ as a subgraph. Since $L_1$ passes through every vertex and edge in this segment, there is an edge corresponding to $\{Y_1,Y_1^c\}$ in the dual tree of the image in $\phi(C)$ of each vertex and edge of this segment. By the way $Z'_C$ was defined, this defines a unique hyperplane in $Z'_C$. Since $g\tilde{u}' = \tilde{u}$ ($g\tilde{e}' = \tilde{e}$) and $gL_1 = L_2$, the required edge incident to $\sigma_{L_2}$ is $\{gY_1, gY_1^c\}$.
\end{proof}

Let $\phi': C' \to X_s$ be a maximal cycle such that $C$ is a power of $C'$. 

\begin{lemma}\label{lemma_embeddind_C'_Z'_C}
There exists a natural embedding of $C'$ in $X_C$ such that the vertical graph that contains $C'$ is isomorphic to $C'$. Further, for a vertex $u$ (edge $e$) in $\phi(C)$, the embedded copy of $C'$ meets every black vertex of $T(u)$ ($T(e)$) exactly once. \qed
\end{lemma}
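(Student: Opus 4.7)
The plan is to construct the embedding explicitly at the level of universal covers using the principal-ultrafilter description of $\wt{X}_C$. Since $C$ is a power of the maximal cycle $C'$, the lift $\wt{C'}\subset\wt{X}$ coincides as a subset with $\wt{C}$, has $\pi_1(C')$ as its $G$-stabiliser, and its $G$-translates are exactly the lines defining walls of type \textit{(iii)}.

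For each vertex or midpoint of an edge $\tilde{c}\in\wt{C'}$ I would take the principal ultrafilter $\sigma_{\wt{C'}}^{\tilde{c}}\in\wt{X}_C$ based at a horizontal half-edge of the strip $S_{\wt{C'}}$ at $\tilde{c}$. By the description of the vertex/edge trees $T(u)$ in \cref{prop_X_C_iso_X'_C}, this is exactly the black vertex labelled by the translate $\wt{C'}$ in the fibre tree at $\eta_C(\tilde{c})$. For consecutive vertices $\tilde{u},v\in\wt{C'}$ joined by an edge $\tilde{e}$, the ultrafilters $\sigma_{\wt{C'}}^{\tilde{u}}$ and $\sigma_{\wt{C'}}^{v}$ differ in exactly one wall, namely the type-\textit{(i)} wall associated to the horizontal hyperplane $\mathsf{h}$ dual to $\tilde{e}$; indeed $\mathsf{h}$ meets $\wt{C'}$ at the midpoint of $\tilde{e}$ and extends naturally across $S_{\wt{C'}}$, separating the two base half-edges while agreeing with every other wall. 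Hence these ultrafilters are joined by a vertical edge of $\wt{X}_C$, and the resulting subcomplex $\wt{C'}_C\subset\wt{X}_C$ is a combinatorial line mapping isomorphically as a graph onto $\wt{C'}$ under $\hat{\eta}_C$.

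Since $\pi_1(C')$ stabilises $\wt{C'}$ and hence the strip $S_{\wt{C'}}$, it preserves $\wt{C'}_C$ and acts freely; the quotient $\wt{C'}_C/\pi_1(C')$ is canonically isomorphic to $C'$, giving the desired natural embedding into $X_C$. The main obstacle is to verify that the vertex graph of $X_C$ containing this embedded $C'$ coincides with it, i.e.\ that every vertical edge of $\wt{X}_C$ incident to $\sigma_{\wt{C'}}^{\tilde{u}}$ already lies in $\wt{C'}_C$. This reduces to the observation that a type-\textit{(i)} wall flip at $\sigma_{\wt{C'}}^{\tilde{u}}$ produces a neighbour in the same vertical-1-skeleton component of $\wt{X}_C$ only when the corresponding edge of $\wt{X}$ at $\tilde{u}$ lies on $\wt{C'}$, because only such flips preserve the ultrafilter's basing in the strip $S_{\wt{C'}}$.

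Finally, the bijection with black vertices is a bookkeeping exercise. Preimages of $u\in\phi(C)$ in $C'$ are in bijection with $\pi_1(C')$-orbits of lifts of $u$ on $\wt{C'}$; writing such a lift as $g\tilde{u}$ for $g\in G$, the map $g\pi_1(C')\mapsto g^{-1}\wt{C'}$ gives a well-defined bijection onto the set of $G$-translates of $\wt{C'}$ passing through the fixed lift $\tilde{u}$, which are precisely the black vertices of $T(u)$. Under the construction above, the vertex of the embedded $C'$ over the preimage $g\pi_1(C')$ is exactly the image in $X_C$ of $g^{-1}\sigma_{\wt{C'}}^{g\tilde{u}}=\sigma_{g^{-1}\wt{C'}}^{\tilde{u}}$, so the embedded $C'$ meets each black vertex of $T(u)$ exactly once; an identical argument applies at edges.
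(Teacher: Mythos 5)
The construction itself is essentially what the paper has in mind (the lemma is left as immediate from the preceding structural results on the fiber trees $T(u)$), and the coset bookkeeping at the end is a correct way to see the bijection with black vertices. But there is a genuine gap in the middle: the central claim that the vertical graph containing the embedded $C'$ is exactly $C'$ is not actually proved. You assert that a type-\textit{(i)} wall flip at $\sigma_{\wt{C'}}^{\tilde{u}}$ yields a vertical edge of $\wt{X}_C$ only when the dual edge lies on $\wt{C'}$, ``because only such flips preserve the ultrafilter's basing in the strip $S_{\wt{C'}}$.'' That clause is not an argument. A priori, flipping the type-\textit{(i)} wall dual to some vertical edge $\tilde{e}'$ at $\tilde{u}$ not on $\wt{C'}$ could produce a perfectly valid vertex of $\wt{X}_C$ that merely fails to be a principal ultrafilter at a half-edge of $S_{\wt{C'}}$; that would still give a third vertical edge at $\sigma_{\wt{C'}}^{\tilde{u}}$ and contradict the conclusion.

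What actually rules this out is a minimality (equivalently, consistency) check on the ultrafilter. Since $\tilde{e}'\notin\wt{C'}$, the horizontal hyperplane $\mathsf{h}$ of $\wt{X}$ dual to $\tilde{e}'$ meets the vertical tree only at the midpoint of $\tilde{e}'$ and is therefore disjoint from $\wt{C'}$; being connected, $\mathsf{h}$ lies in the interior $\mathring{Y}_j$ of a single half-space of $\wt{C'}$. Consequently the type-\textit{(iii)} half-space $Y_j^c=\bigcup_{i\neq j}Y_i$, which belongs to $\sigma_{\wt{C'}}^{\tilde{u}}$, is properly contained in the half-space $Y$ of $\mathsf{h}$ containing $\tilde{u}$; so $Y$ is not minimal in $\sigma_{\wt{C'}}^{\tilde{u}}$, and the flip does not correspond to an edge of $\wt{X}_C$. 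Flips at type-\textit{(ii)} walls, and at type-\textit{(i)} walls whose hyperplane does not pass through the star of $\tilde{u}$, are excluded by the same kind of nesting argument. With this step filled in, your argument goes through; and note in the bookkeeping that the $\pi_1(C')$-orbits of lifts of $u$ on $\wt{C'}$ correspond to right cosets $\pi_1(C')g$, not left cosets $g\pi_1(C')$, which is what makes the map $\pi_1(C')g\mapsto g^{-1}\wt{C'}$ well defined.
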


\subsection{The tubular graph of graphs \texorpdfstring{$X'$}{X'}}\label{subsection_construction_X'}

Let $\mathscr{C} = \{C_1, \cdots, C_n\}$ be the set of splitting cycles of $X$ furnished by \cref{thm_algo_G_hyp}.

\begin{remark} \label{rmk_peripheral_cycles}
It is easy to see that a vertical cycle induced by the attaching map of a tube is a splitting cycle that is not 3-repetitive. Hence each such cycle is included in $\mathscr{C}$.
\end{remark}

\begin{procedure}[Construction of $X'$] \label{procedure_construction_X'}
The tubular graph of graphs $X'$ is constructed from $X$ using the cycles in $\mathscr{C}$ as follows:

\begin{itemize}
\item Start with $X = X_0$.

\item For $1 \leq i \leq n$, check if $\phi_i : C_i \to X$ factors through a vertical cycle $\psi_i : C_i \to X_{i-1}$. If it doesn't, then declare $X_i = X_{i-1}$. Else, define $X_i$ to be the opened-up space of $X_{i-1}$ along the cycle $\psi_i : C_i \to X_{i-1}$.

\item Declare $X' = X_n$. 
\end{itemize}
\end{procedure}

\begin{lemma}
The cycle $C_i$ in $\mathscr{C}$ factors through a vertical cycle in $X_{i-1}$ if and only if for $1 \leq j \leq i$, lifts of $C_j$ and $C_i$ don't cross in $\wt{X}$. \qed
\end{lemma}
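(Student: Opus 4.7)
My proof plan proceeds by induction on $i$. The base case $i=1$ is trivial: $X_0 = X$, so $C_1$ is already a vertical cycle in $X_{i-1}$, and the hypothesis reduces to $C_1$ having no self-crossing, which holds by \cref{lemma_necessary_conditions_splitting_cycle} since $C_1$ is a splitting cycle.

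For the inductive step, I would first give an explicit description of $\wt{X_{i-1}}$ by iterating the opening-up construction of \cref{subsection_construction_X'}: it is the $\cat$ cube complex dual to a space with walls $(Z, W_{i-1})$, where $W_{i-1}$ consists of walls of three types, namely (i) horizontal hyperplanes of $\wt X$, (ii) vertical hyperplanes of $\wt X$, and (iii) $G$-translates of $\wt{C_k}$ for each $k < i$ with $C_k$ actually added in the construction. The key point enabling this iterative description is that, by the induction hypothesis together with \cref{lemma_noncrossing_lines_wall_intersection}, the walls of type (iii) pairwise do not cross, so they can be consistently added to form a space with walls. The induced map $\hat\eta_{i-1} : \wt{X_{i-1}} \to \wt X$ then satisfies the analog of \cref{lemma_eta_C_properties}, in particular restricting to a bijection on vertical edges. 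Consequently, $C_i$ factors through a vertical cycle in $X_{i-1}$ iff $\wt{C_i}$ admits a lift to a vertical line in $\wt{X_{i-1}}$, which (since vertical edges lift canonically) is equivalent to $\wt{C_i}$ not crossing any wall of type (iii), i.e., not crossing any $G$-translate of an added $\wt{C_k}$.

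To match this with the stated condition ranging over all $j \leq i$, the forward implication for added $C_j$ is immediate, and the $j=i$ case is handled by $C_i$ being a splitting cycle (no self-crossings). The nontrivial point is the case of a skipped $C_j$ with $j < i$: by the induction hypothesis, some translate of $\wt{C_j}$ crosses an added $\wt{C_k}$, and I would argue via the crossing criterion (\cref{prop_local_crossing_lines}) and a local half-space analysis near the intersection $\wt{C_i} \cap g\wt{C_j} \cap h\wt{C_k}$ that any crossing of $\wt{C_i}$ with a translate of $\wt{C_j}$ forces a crossing of $\wt{C_i}$ with some translate of $\wt{C_k}$---contradicting the assumed no-crossing with added cycles. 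I expect this last propagation-of-crossings step to be the main obstacle: in general crossings do not compose transitively, so the argument will have to exploit the specific structure of splitting cycles in the Brady--Meier setting, likely by tracking how half-spaces of $\wt{C_i}$, $g\wt{C_j}$ and $h\wt{C_k}$ nest along their common intersection.
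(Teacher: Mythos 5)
Your inductive framework, and the reduction of the factoring condition to a wall-crossing condition via the iterated space-with-walls description of $\wt{X}_{i-1}$, is the natural argument and presumably what the paper has in mind (the lemma is stated without proof). The base case, the ``if'' direction, and the ``only if'' direction restricted to $j = i$ and to those $j < i$ for which $C_j$ was actually opened up, are all handled correctly by your plan.

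You have, however, put your finger on a real difficulty in the ``only if'' direction when $C_j$ is skipped, and I do not think the proposed repair can be made to work. First, the induction hypothesis only gives you that $\wt{C}_j$ crosses some translate of $\wt{C}_l$ with $l < j$, and $C_l$ may itself be skipped; reaching an \emph{added} cycle already requires chasing a chain of crossings. Second, the propagation step you then want---a crossing of $\wt{C}_i$ with $g\wt{C}_j$ forcing a crossing of $\wt{C}_i$ with some added $h\wt{C}_k$ via a half-space analysis near $\wt{C}_i \cap g\wt{C}_j \cap h\wt{C}_k$---presupposes a triple intersection that need not exist; \cref{prop_local_crossing_lines} is only a pairwise criterion; and the nesting tool one would reach for, \cref{lemma_non-crossing_lines_and_half-spaces}, applies precisely to \emph{non}-crossing pairs, the opposite of what you would be handed. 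More fundamentally, the construction of $X_{i-1}$ records walls only from the added cycles, so a skipped $C_j$ leaves no trace in $\wt{X}_{i-1}$, and nothing in the opening-up process prevents $\wt{C}_i$ from crossing some $g\wt{C}_j$ while still lifting to a vertical line. The equivalence that \cref{procedure_construction_X'} actually relies on, and that your argument cleanly establishes, quantifies only over the added $C_j$ together with $j = i$; the stated lemma overreaches as written, and I would restate it in that restricted form rather than pursue the propagation step.
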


By \cref{thm_partial_converse_brady_meier}, we will assume that $X'$ is a Brady-Meier tubular graph of graphs.

\begin{theorem}\label{thm_alg_constructing_X'}
There exists an algorithm of double exponential time complexity that takes a Brady-Meier tubular graph of graphs $X$ with hyperbolic fundamental group $G$ as input and returns a homotopy equivalent Brady-Meier tubular graph of graphs whose vertical hyperplanes generate all universally elliptic subgroups of $G$ up to commensurability. 
\end{theorem}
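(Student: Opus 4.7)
The plan is to iterate the opening-up procedure from \cref{thm_algorithm_X_C} using the finite list of splitting cycles produced by \cref{thm_algo_G_hyp}, producing a tubular graph of graphs in which every universally elliptic subgroup is realised (up to commensurability) as the generator of a vertical hyperplane. First I would run the algorithm of \cref{thm_algo_G_hyp} on $X$ to obtain $\mathscr{C} = \{C_1, \ldots, C_n\}$, the finite list of splitting cycles containing all universally elliptic cycles up to commensurability. By the bound in \cref{thm_algo_G_hyp}, $n$ is at most double exponential in the number of squares of $X$, and $\mathscr{C}$ is produced in double exponential time.

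Second, I would implement \cref{procedure_construction_X'} using \cref{thm_algorithm_X_C} as the opening-up subroutine: starting with $X_0 = X$, at step $i$ check whether $C_i$ descends to a vertical cycle in $X_{i-1}$ (equivalently, whether the lift of $C_i$ in $\wt{X}$ does not cross any lift of $C_j$ for $j<i$); if so, set $X_i$ to be the opened-up space of $X_{i-1}$ along this cycle via \cref{thm_algorithm_X_C}, otherwise set $X_i = X_{i-1}$. Set $X' = X_n$ and finally apply \cref{thm_partial_converse_brady_meier} to restore the Brady-Meier property in polynomial time. Each opening-up step runs in exponential time in the size of the current complex; since there are at most double exponentially many steps and each only enlarges the complex by a polynomial factor, the total running time remains double exponential.

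Correctness rests on three observations. First, by the construction in \cref{thm_algorithm_X_C}, $G$ acts geometrically on $\wt{X}_{C_i}$ at every step, so $\pi_1(X_i) \cong G$; since $\cat$ cube complexes are aspherical, this upgrades to a homotopy equivalence $X_i \simeq X$. Second, whenever the procedure opens up along $C_i$, the line $\wt{C}_i$ is subsumed into the set of walls defining vertical hyperplanes of type \textit{(iii)} in $\wt{X}_{C_i}$; consequently $\pi_1(C_i)$ is commensurable with the cyclic subgroup stabilising a vertical hyperplane of $X_i$, and this property persists in $X'$. Third, any $C_i$ that is skipped has a lift crossing a lift of some earlier $C_j$; by \cref{lemma_elliptic-elliptic_splittings}, $\pi_1(C_i)$ is then not elliptic in the Bass-Serre tree of the splitting induced by $C_j$, hence not universally elliptic. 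Combined with the fact that $\mathscr{C}$ contains every universally elliptic cycle up to commensurability, this shows that the vertical hyperplanes of $X'$ generate all universally elliptic subgroups of $G$ up to commensurability.

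The main obstacle is the inductive interaction between the opening-up operation and the splitting-cycle condition: after replacing $X_{i-1}$ by $X_i$, the cycles $C_j$ with $j>i$ are a priori defined in $X$, not in $X_i$, so one must verify that when the lifts no longer cross the previously processed lifts, $C_j$ genuinely factors as a vertical cycle $\psi_j : C_j \to X_i$ of the same length and that it remains a splitting cycle of $X_i$ whose half-space structure matches that of the original $\wt{C_j} \subset \wt{X}$. The $G$-equivariance of the natural map $\hat{\eta}_C : \wt{X}_C \to \wt{X}$ (\cref{lemma_eta_C_properties}) together with \cref{lemma_Z_C_same_Z'_C} and \cref{lemma_embeddind_C'_Z'_C} provide the needed bookkeeping: cells of $\wt{X}$ disjoint from the translates of $\wt{C_i}$ lift uniquely, and the remaining cells are replaced by trees whose half-space structure is governed by the same collection of walls in $\wt{X}$. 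This ensures that the iteration is well-defined and that the final application of \cref{thm_partial_converse_brady_meier} restores Brady-Meier without altering $\pi_1$, producing the desired $X'$ within the required double exponential time bound.
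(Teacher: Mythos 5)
Your proof follows the paper's approach exactly: run \cref{thm_algo_G_hyp} to obtain $\mathscr{C}$, iterate \cref{thm_algorithm_X_C} through \cref{procedure_construction_X'}, and appeal to \cref{thm_partial_converse_brady_meier} to ensure the output is Brady--Meier. The correctness observations you add — that $G$ acts freely and geometrically on each $\wt{X}_{C_i}$ so asphericity upgrades $\pi_1$-isomorphism to homotopy equivalence, that skipped cycles $C_i$ cross an earlier $\wt{C}_j$ and so fail universal ellipticity by \cref{lemma_elliptic-elliptic_splittings}, and that the compatibility of $\hat{\eta}_{C_i}$ with vertical edges guarantees the residual cycles descend with unchanged length and wall structure — are all implicit in the paper's short proof and your spelling them out is harmless and, if anything, clarifying.

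One point on which you are slightly more cautious than the paper, and rightly so: the claim that the iterated opening-up stays within a double exponential bound requires that the growth of $X_{i-1}$ does not feed back catastrophically into the exponential-time subroutine of \cref{thm_algorithm_X_C}. The key facts that save the bound, which you gesture at but could make explicit, are that the exponential cost in \cref{thm_algorithm_X_C} is measured in the \emph{length of the cycle} and the maximal edge thickness (via the radius $D = \ell(C) 2^N$ and \cref{lemma_exponential_power_fund_domain_half_spaces}), and that these quantities are unchanged by earlier opening-up steps because vertical edges are preserved (\cref{lemma_eta_C_properties}) and the lengths of the finitely many cycles in $\mathscr{C}$ are already bounded exponentially in the number of squares of the original $X$. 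With that noted, the argument is complete and matches the paper's.
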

\begin{proof}
The algorithm of \cref{thm_algo_G_hyp} takes $X$ as input and returns a set of cycles $\mathcal{C}$ that contains all universally elliptic cycles up to commensurability. Given $\mathcal{C}$, $X'$ is constructed using \cref{procedure_construction_X'}. This procedure consists of applying the algorithm of \cref{thm_algorithm_X_C} repeatedly.
The algorithm of \cref{thm_algo_G_hyp} takes double exponential time to return a finite set of cycles. The number of cycles in this set is bounded by a number of double exponential magnitude. Given this data, the algorithm of \cref{thm_algorithm_X_C} operates by taking exponential time for each cycle, and hence obtaining $X'$ costs double exponential time in the input data.
\end{proof}

\subsubsection{Structure of \texorpdfstring{$X'$}{X'}}

Note that the maps $\hat{\eta}_i : \wt{X}_i \to \wt{X}_{i-1}$ induce maps $\hat{\eta} : \wt{X}' \to \wt{X}$ and $\eta : X' \to X$.
Denote by $\Gamma'$ the underlying graph of the graph of spaces $X'$ and by $T'$ the underlying tree of the tree of spaces $\wt{X}'$. 
Let $L$ be a lift of an element $C_i$ of $\mathscr{C}$ such that $C_i$ factors through a vertical cycle in $X_{i-1}$. From \cref{lemma_embeddind_C'_Z'_C}, it follows (see \cref{fig_L'_opening_of_L}) that

\begin{lemma} \label{lemma_C_vertical_tree_stabilisers_X'}
There exists a vertical tree in $\wt{X}'$ whose stabiliser is equal to the stabiliser of $L$.
\end{lemma}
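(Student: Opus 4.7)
The plan is to track the line $L$ through the sequence of opening-up maps $\wt{X}' = \wt{X}_n \to \cdots \to \wt{X}_i \to \wt{X}_{i-1} \to \cdots \to \wt{X}_0 = \wt{X}$ and produce a vertex tree of $\wt{X}'$ with the required stabiliser, by repeatedly invoking \cref{lemma_embeddind_C'_Z'_C}. Let $C'_i$ be the maximal cycle of which $\psi_i(C_i)$ is a power. Since $L$ is a vertical line lifting $C_i$, its stabiliser in $G$ is the maximal cyclic subgroup containing $\pi_1(C_i)$, namely $\pi_1(C'_i)$.

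For the base step, at the opening-up producing $X_i$ from $X_{i-1}$ along $\psi_i$, \cref{lemma_embeddind_C'_Z'_C} furnishes an embedding of $C'_i$ into $X_i$ whose ambient vertex graph $V_i$ is isomorphic to $C'_i$. I would select the lift $\wt{V}_i$ of $V_i$ in $\wt{X}_i$ whose image under $\hat{\eta}_1 \circ \cdots \circ \hat{\eta}_i$ is $L$. Because $V_i$ is a cycle, $\wt{V}_i$ is a line and hence a vertex tree of $\wt{X}_i$, with stabiliser $\pi_1(V_i) = \pi_1(C'_i) = \mathrm{stab}(L)$.

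The inductive step, on $j \geq i$, shows that $\wt{X}_j$ contains a vertex tree projecting to $L$ with stabiliser $\pi_1(C'_i)$. If $X_j = X_{j-1}$ the vertex tree from the previous stage persists. Otherwise $X_j$ is obtained by opening $X_{j-1}$ along $\psi_j$, and two subcases arise. If $\psi_j(C_j)$ is disjoint from the persistent copy of $V_i$ in $X_{j-1}$, this vertex graph is untouched by the opening and its lift remains a vertex tree in $\wt{X}_j$ with unchanged stabiliser. If instead $\psi_j(C_j)$ lies inside the copy of $V_i \cong C'_i$, then $C_j$ must be a power of $C'_i$, forcing $C'_j = C'_i$ by maximality. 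A fresh application of \cref{lemma_embeddind_C'_Z'_C} then produces a new vertex graph in $X_j$ isomorphic to $C'_i$, whose suitably chosen lift in $\wt{X}_j$ is again a vertex tree with stabiliser $\pi_1(C'_i)$. Taking $j = n$ delivers the claimed vertex tree in $\wt{X}'$.

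The main obstacle is the second subcase of the inductive step: one must verify that the new vertex graph produced by the later opening can be chosen coherently with the earlier lift so that it still corresponds to the same line $L$ in $\wt{X}$. Establishing this compatibility requires tracing through the dual-trees construction of \cref{subsection_construction_X'} and invoking the non-crossing condition on $\{C_1,\dots,C_n\}$ built into \cref{procedure_construction_X'}, which precludes conflicting identifications across successive openings.
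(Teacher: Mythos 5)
Your strategy — iterating \cref{lemma_embeddind_C'_Z'_C} through the opening-up sequence and tracking the resulting circle vertex graph — is precisely what the paper intends; the paper's ``proof'' amounts to pointing at that lemma and \cref{fig_L'_opening_of_L}. Your two-case dichotomy in the inductive step is also sound: since $V_i$ is a whole vertex graph of $X_{j-1}$, the immersed cycle $\psi_j(C_j)$ either lands entirely in $V_i$ or misses it. Where the proposal falls short is that you leave the second subcase genuinely open, and your diagnosis of what it would take to close it is off the mark: this step has nothing to do with the non-crossing condition or with re-tracing the dual-tree construction of \cref{subsection_construction_X'}. You also need not arrange the new lift to ``still correspond to the same line $L$ in $\wt{X}$''; all that matters is that \emph{some} lift of the new circle graph has stabiliser $\mathrm{stab}(L)$.

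Here is the missing argument. In the second subcase $\psi_j(C_j)$ is an immersion of a circle into the circle $V_i$, hence a covering map, so the relevant maximal cycle for \cref{lemma_embeddind_C'_Z'_C} applied at stage $j$ is $V_i$ itself (not $C'_j$ as a cycle of $X$). The lemma then produces a circle vertex graph $V_j$ of $X_j$ with $\eta_j|_{V_j} : V_j \to V_i$ an isomorphism. Choose the lift $\wt{V}_j$ in $\wt{X}_j$ that the $G$-equivariant map $\hat{\eta}_j$ carries onto the persistent line $\wt{V}_i$. Equivariance immediately gives $\mathrm{stab}(\wt{V}_j) \subseteq \mathrm{stab}(\wt{V}_i) = \mathrm{stab}(L)$, while both stabilisers are conjugates of $\pi_1(C'_i)$ and therefore have equal translation length; an inclusion of infinite cyclic subgroups with the same translation length is an equality. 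This closes the induction by pure equivariance and needs no appeal to non-crossing. A secondary point worth flagging: your write-up slides between ``the maximal cycle $C'_j$'' viewed in $X$ and viewed in $X_{j-1}$, and you should be explicit that the argument uses the latter, where — as just observed — it is simply $V_i$.
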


\begin{figure}
\begin{center}
\begin{tikzpicture}[scale = 0.75]
\fill [lightgray] (0,-4) rectangle (2,4);
\fill [gray] (0,-4) to (0,4) to (-1.8,4.4) to (-1.8,-3.8);
\fill [lightgray] (0,-4) to (0,4) to (-1.6,3.6) to (-1.6,-4.4);
\draw [ultra thick] (0,-4) to (0,4);
\draw (2,-4) to (2,4);
\draw (-1.6,-4.4) to (-1.6,3.6);
\draw (-1.8,-3.8) to (-1.8,4.4);
\node [below] at (0,-4) {$L'$};

\end{tikzpicture}
\end{center}
\caption{$L'$ in $\wt{X}'$ when $L$ has three half-spaces}\label{fig_L'_opening_of_L}
\end{figure}
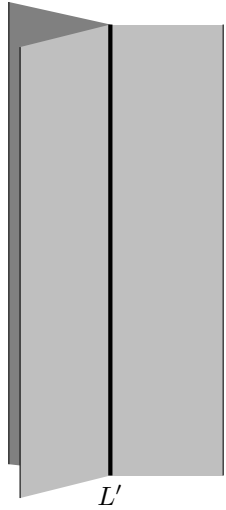

Let $\mathsf{h}$ be a vertical hyperplane in $\wt{X}'$. Let $L_1$ and $L_2$ be the two \emph{boundary lines} of $\mathsf{h}$, that is, the two vertical lines on either side of $\mathsf{h}$ at distance $\frac{1}{2}$ from $\mathsf{h}$, and parallel to $\mathsf{h}$. As $G$ is hyperbolic, it follows from the Flat Plane Theorem (see Theorem $\Gamma$.3.1 of \cite{bridsonhaefliger}) that

\begin{lemma}\label{lemma_strips_crushed_X'}
The stabiliser of $\mathsf{h}$ is equal to either stab($L_1$) or stab($L_2$). \qed
\end{lemma}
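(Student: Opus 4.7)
The plan is to argue by contradiction using the Flat Plane Theorem (\cite[Theorem $\Gamma$.3.1]{bridsonhaefliger}): since $G$ is $\delta$-hyperbolic and acts geometrically on the $\cat$ space $\wt{X}'$, no isometric embedding of the Euclidean plane into $\wt{X}'$ can exist. Assume for contradiction that both $\mathrm{stab}(\mathsf{h}) \neq \mathrm{stab}(L_1)$ and $\mathrm{stab}(\mathsf{h}) \neq \mathrm{stab}(L_2)$. The starting point is the flat strip $\mathsf{h}^{+1}\cong [0,1]\times\mathbb{R}$, with $\mathsf{h}$ as its midline and $L_1, L_2$ as its two boundary lines; I will iteratively enlarge this strip to produce the forbidden flat plane.

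The basic extension step is as follows. Pick $a_1 \in \mathrm{stab}(L_1)\setminus\mathrm{stab}(\mathsf{h})$. Then $a_1\mathsf{h}\neq \mathsf{h}$ is another vertical hyperplane with $L_1$ as a boundary line, so $a_1\mathsf{h}^{+1}$ is another flat strip sharing $L_1$ with $\mathsf{h}^{+1}$. Since $\wt{X}'$ is a $2$-dimensional $\VH$ $\cat$ cube complex, the link at each vertex is bipartite; consequently, at every vertex of $L_1$ the two squares of $\mathsf{h}^{+1}$ and $a_1\mathsf{h}^{+1}$ incident to the corresponding edge of $L_1$ share a vertical half-edge, so their horizontal half-edges are at link distance exactly $\pi$, and the gluing is flat at that vertex. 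This makes $\mathsf{h}^{+1}\cup a_1\mathsf{h}^{+1}$ a flat strip of width $2$ (the two strips cannot coincide on the same side of $L_1$, for a strip determines its midline, yet $a_1\mathsf{h}\neq \mathsf{h}$). Symmetrically, an element $a_2\in\mathrm{stab}(L_2)\setminus\mathrm{stab}(\mathsf{h})$ extends across $L_2$.

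Iterating on both sides yields an ascending chain of flat strips in $\wt{X}'$ of arbitrarily large widths: at each stage the freshly exposed boundary line and the adjacent hyperplane are $G$-translates of some $L_i$ and of $\mathsf{h}$, and the hypothesis $\mathrm{stab}(\mathsf{h})\neq\mathrm{stab}(L_i)$ passes to these translates by conjugation, so a suitable extending element is always available. The union of this chain is an isometrically embedded flat plane $\mathbb{R}^2\hookrightarrow\wt{X}'$, the desired contradiction. The main obstacle is justifying flat gluing at each vertex of a shared boundary line; this is handled by the bipartite $\VH$ link condition above. A minor bookkeeping subtlety is that $\mathrm{stab}(\mathsf{h})$ may contain a cubical glide reflection of $\mathsf{h}^{+1}$ swapping $L_1$ and $L_2$; this is resolved by passing to the index-$\leq 2$ subgroup of $\mathrm{stab}(\mathsf{h})$ preserving each $L_i$ setwise before beginning the iteration.
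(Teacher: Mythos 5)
Your proof correctly fills in what the paper leaves implicit: under the assumption that $\mathrm{stab}(\mathsf{h})$ is a proper subgroup of both $\mathrm{stab}(L_1)$ and $\mathrm{stab}(L_2)$, iteratively gluing translates of the flat strip $\mathsf{h}^{+1}$ along its boundary lines produces a convexly embedded flat plane, contradicting the Flat Plane Theorem --- which is exactly the route the paper cites. One small correction regarding your final remark: the glide-reflection worry is better dismissed by observing that $G$ acts on the underlying Bass--Serre tree $T'$ of $\wt{X}'$ without edge inversions, so $\mathrm{stab}(\mathsf{h})$ preserves each $L_i$ setwise and hence $\mathrm{stab}(\mathsf{h}) \subseteq \mathrm{stab}(L_i)$ automatically; your proposed fix of passing to the index-$\leq 2$ subgroup of $\mathrm{stab}(\mathsf{h})$ would only establish the equality for that smaller subgroup, which is not what the lemma asserts.
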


Observe that $\wt{X}'$ is the cube complex dual to a space with walls, where walls are defined on the set $Z$ of open horizontal half-edges of $\wt{X}$ along with open horizontal half-edges of strips attached to translates of $\wt{C}_i$ whenever $C_i$ factors through a vertical cycle in $X_{i-1}$. The set of walls $W'$ in $Z$ are thus of three types:
\begin{itemize}
\item \emph{Walls of type \textit{(i)}}, induced by horizontal hyperplanes of $\wt{X}$.
\item \emph{Walls of type \textit{(ii)}}, induced by vertical hyperplanes of $\wt{X}$.
\item \emph{Walls of type \textit{(iii)}}, induced by translates of $\wt{C}_i$, where $C_i \in \mathscr{C}$ factors through a vertical cycle in $X_{i-1}$.
\end{itemize} 

A \emph{vertical half-space} of $\wt{X}$ (or $\wt{X}'$) is a half-space of a wall of type \textit{(ii)} or \textit{(iii)}.
Let $\mathsf{h}$ be a vertical hyperplane in $\wt{X}'$ and $L_1$, $L_2$ its boundary lines. Then
\begin{lemma}
Either the vertical tree containing $L_1$ or the vertical tree containing $L_2$ is a line.
\end{lemma}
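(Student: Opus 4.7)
The plan is to invoke Lemma~\ref{lemma_strips_crushed_X'} to reduce the problem and then exploit the structure of $\wt{X}'$ inherited from the opening-up procedure. By Lemma~\ref{lemma_strips_crushed_X'}, we may assume, without loss of generality, that $\operatorname{stab}(\mathsf{h}) = \operatorname{stab}(L_1)$; the goal then becomes to show $T_1 = L_1$. The key reduction is that $T_1$ is a line if and only if the vertex group $\operatorname{stab}(T_1)$ equals the cyclic subgroup $\operatorname{stab}(L_1)$, equivalently, the quotient vertex graph $X'_{T_1} = T_1/\operatorname{stab}(T_1)$ is itself a circle. So the question becomes whether the vertex group containing $L_1$ is cyclic.

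To establish this, I would analyze $\mathsf{h}$ by cases based on whether it corresponds to a wall of type (ii) or (iii) in the space with walls $(Z, W')$ dual to $\wt{X}'$. For walls of type (iii), $\mathsf{h}$ arose from opening up along some $C_j \in \mathscr{C}$; by Lemma~\ref{lemma_C_vertical_tree_stabilisers_X'}, a translate of $\wt{C_j}$ forms a vertex tree of $\wt{X}'$ that is itself a line, and this translate is a boundary line of $\mathsf{h}$ (the central line of the opening-up, cf.\ Figure~\ref{fig_L'_opening_of_L}). For walls of type (ii), $\mathsf{h}$ corresponds to a vertical hyperplane of the original $\wt{X}$, i.e., to a tube of $X$ whose attaching cycles belong to $\mathscr{C}$ by Remark~\ref{rmk_peripheral_cycles}; whichever attaching cycle is actually processed by the procedure produces a central line on its side that becomes a boundary line of $\mathsf{h}$ in $\wt{X}'$ and a vertex tree all by itself. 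In either situation, at least one of $L_1, L_2$ lies in such a line-shaped vertex tree, and the stabilizer matching from Lemma~\ref{lemma_strips_crushed_X'} pins down which boundary line has stabilizer equal to $\operatorname{stab}(\mathsf{h})$.

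The hard part will be verifying the stabilizer identification precisely. For a wall of type (iii) coming from a splitting cycle $C_j$ with maximal version $C_j'$, the stabilizer of the central line equals $\pi_1(C_j')$, while $\operatorname{stab}(\mathsf{h})$ equals the stabilizer of a half-space of $\wt{C_j}$, which may be a proper subgroup of $\pi_1(C_j')$ when $C_j$ is a non-maximal cycle or when $\pi_1(C_j')$ permutes half-spaces. In such edge cases, Lemma~\ref{lemma_strips_crushed_X'} would force $L_1$ to be the outer boundary rather than the central one; the statement nevertheless holds because the central line still constitutes a line vertex tree on the opposite side of $\mathsf{h}$, so at least one of $T_1, T_2$ is a line regardless. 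The careful bookkeeping of these stabilizer inclusions and of the interaction between type-(ii) walls and the opening-up along attaching cycles will be the main technical work.
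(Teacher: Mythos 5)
Your proposal identifies the right existence input — Lemma~\ref{lemma_C_vertical_tree_stabilisers_X'} (which in turn rests on Remark~\ref{rmk_peripheral_cycles} for the type-(ii) case) supplies a linear vertical tree $L'$ in $\wt{X}'$ with $\mathrm{stab}(L')=\mathrm{stab}(L)$, where $L$ is the line in $\wt{X}$ defining the wall for $\mathsf{h}$. But the proof stops before the step that actually carries the argument. You assert, with a pointer to Figure~\ref{fig_L'_opening_of_L}, that this $L'$ \emph{is} one of the boundary lines $L_1,L_2$, i.e.\ that $L'$ is adjacent to $\mathsf{h}$ in $\wt{X}'$. That is precisely what has to be shown, and it is not automatic: further opening-ups along $C_k$ with $k>j$, or walls of type~(ii), could a priori introduce vertical hyperplanes between $L'$ and $\mathsf{h}$. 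The paper closes this gap by observing that every wall in $W'$ arises from a line non-crossing with $L$, so by Lemma~\ref{lemma_non-crossing_lines_and_half-spaces} every vertical half-space in $W'$ is nested with $\{Y,Y^c\}$; hence no vertical wall of $W'$, other than one based on $L'$ itself, can separate $L'$ from $\mathsf{h}$, and adjacency follows. This non-separation argument is the actual content of the lemma and is missing from your proof; your own final paragraph (``the careful bookkeeping\dots will be the main technical work'') acknowledges the hole.

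A secondary issue is the detour through Lemma~\ref{lemma_strips_crushed_X'}. Matching $\mathrm{stab}(\mathsf{h})$ with $\mathrm{stab}(L_1)$ does not by itself show the vertical tree $T_1$ containing $L_1$ equals $L_1$: $T_1$ could strictly contain $L_1$, with $\mathrm{stab}(T_1)$ a non-cyclic free group, and the boundary line $L_1$ just one of many lines in it. Knowing $\mathrm{stab}(L_1)$ is cyclic tells you nothing about $T_1$ unless you already know $T_1=L_1$, which is what we are proving. The paper's proof does not use Lemma~\ref{lemma_strips_crushed_X'} at all; it bypasses stabilizer matching entirely and goes directly for the geometric adjacency of $L'$ and $\mathsf{h}$ via nesting of half-spaces.
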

\begin{proof}

Let $\{Y,Y^c\}$ be the wall of type (\textit{ii}) or type (\textit{iii}) in $Z$ corresponding to $\mathsf{h}$. 
Let $L$ be the (boundary) line in $\wt{X}$ (of the vertical hyperplane) that defines $\{Y,Y^c\}$.
By \cref{lemma_C_vertical_tree_stabilisers_X'}, there exists a linear vertical tree $L'$ in $\wt{X}'$ such that stab($L$) $=$ stab($L')$. 
Note that any vertical half-space of $\wt{X}$ contained in a vertex (ultrafilter) of $L'$ contains either $Y$ or $Y^c$, by \cref{lemma_non-crossing_lines_and_half-spaces}. Thus any vertical half-space of $\wt{X}'$ (except perhaps a half-space of $L'$) that contains $L'$ contains $\mathsf{h}$. So no vertical tree separates $L'$ and $\mathsf{h}$. Hence the result.
\end{proof}

\begin{defn}
A vertex of a $G$-tree is a \emph{cyclic vertex} if its stabiliser is a cyclic subgroup of $G$.
\end{defn}

Thus in the underlying tree $T'$ of $\wt{X}'$, at least one of the two vertices of any edge is a cyclic vertex.

\begin{lemma}\label{lemma_type_1_lines_in_X'}
Let $L'$ be a line in $\wt{X}'$. Suppose that $\partial N(L')$ contains at least three components. Then the vertical tree containing $L'$ is equal to $L'$.
\end{lemma}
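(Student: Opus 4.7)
The plan is a proof by contradiction. Suppose the vertical tree $T'$ containing $L'$ strictly contains $L'$; then there exist a vertex $v \in L'$ and a vertical edge $e$ incident to $v$ with $e \not\subset L'$. Let $v^+$ and $v^-$ denote the two vertices of $\link(v) \cong \partial N(v)$ corresponding to the two vertical edges of $L'$ at $v$, and let $e$ also denote the vertex of $\link(v)$ corresponding to the extra vertical edge.

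First, I would observe that since $\partial N(L')$ has at least three components, \cref{lemma_half-spaces_equal_components_reg_sphere_L} gives at least three half-spaces of $L'$ in $\wt{X}'$, and \cref{cor_square_meets_halfspace} applied to the edges of $L'$ incident to $v$ shows that $\partial N(v) \setminus \{v^+, v^-\}$ has at least three components, one for each half-space. In particular, $\{v^+, v^-\}$ is a cut pair of the Brady-Meier graph $\partial N(v)$ (which has no cut points, by \cref{lemma_no_cuts_regular_spheres_points}) with at least three complementary components.

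Second, I would construct a line $L''$ through $v$ designed to cross $L'$. The extra vertex $e$ lies in one component $K$ of $\partial N(v) \setminus \{v^+, v^-\}$, and by the first step there exists another component $K'$; choose a vertex $f \in K'$. If $f$ and $e$ are not adjacent in $\link(v)$ (which can be arranged because $\link(v)$ has girth at least four, and there is choice in $K'$), the concatenation of the half-edges corresponding to $e$ and $f$ through $v$ has angle at least $\pi$ in the link, giving the germ of a local geodesic at $v$. Using the path-abundance lemma (\cref{lemma_path_abundance}) and the periodicity of $L'$ (which implies the orbit of $v$ on $L'$ is discrete and cofinite in the closed part of $L'$ near $v$), I would extend these two germs to rays that remain in the half-spaces determined by $K$ and $K'$ respectively, yielding a bi-infinite line $L''$ with $L' \cap L'' = \{v\}$, hence compact.

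Third, I would apply the crossing criterion (\cref{prop_local_crossing_lines}): $L''$ and $L'$ meet in the compact set $\{v\}$, and the two germs of $L''$ at $v$ lie in different components of $\partial N(v) \setminus \{v^+, v^-\} = \partial N(\{v\}) \setminus L'$, so $L''$ crosses $L'$. But this contradicts \cref{cor_cut_pairs_3_components_no_crossings}: the cut pair $\{v^+, v^-\}$ in $\partial N(v)$ has at least three complementary components, so no other cut pair (in particular the cut pair defined by $L''$) can be separated by it. This yields the desired contradiction and forces $T' = L'$.

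The main obstacle is the global extension of $L''$ in the second step. Locally the existence of $e$ gives us a germ outside $L'$, but arranging the extensions so that $L' \cap L''$ remains compact requires careful use of the path-abundance lemma together with periodicity-type arguments, analogous to those appearing in the proof of \cref{prop_C_3_half-spaces_universal_elliptic} and in the repetitive-cycle analysis of \cref{section_repetitive}. Alternatively, one may bypass this global issue by running the local argument directly inside $\partial N(P)$ for a carefully chosen compact subsegment $P \subset L'$ containing $v$, applying \cref{cor_cut_pairs_3_components_no_crossings} to that finite graph.
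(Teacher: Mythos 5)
The core difficulty is that your argument never uses anything specific to $\wt{X}'$. As written, the reasoning would apply verbatim to $\wt{X}$, where it would conclude that any line with three or more half-spaces equals its vertical tree. That is false in general: a lift $L$ of a UC-separating cycle with three half-spaces (for instance the axis of a cyclic edge group of the JSJ with valence at least three) typically sits inside a vertical tree $\wt{X}_s$ which is much larger than $L$. The lemma is true precisely because of what the opening-up process does to such lines, and your argument has to use that somewhere.

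The precise gap in the chain of deductions is at the last step. To obtain a contradiction from \cref{cor_cut_pairs_3_components_no_crossings}, you need $\{e,f\}$ (the two points where $L''$ meets $\partial N(v)$) to be a \emph{cut pair} of $\partial N(v)$. That holds only if $L''$ is a \emph{separating} line --- this is exactly what \cref{lemma_L_separates_partial_N(P)} provides, and it is also a hypothesis of the crossing criterion \cref{prop_local_crossing_lines}, which is stated for two \emph{separating} combinatorial lines. You flag the global extension of $L''$ from its germ as the main obstacle, but even granting that extension, nothing forces $L''$ to separate $\wt{X}'$, and generically it will not. There is no contradiction in having a non-separating line through $v$ whose two germs lie in different components of $\partial N(v)\setminus\{v^+,v^-\}$; this is exactly what happens in the counterexamples inside $\wt{X}$.

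The paper's proof goes a completely different way and leans on the structure of $\wt{X}'$: from $\partial N(L')$ having at least three components, \cref{prop_C_3_half-spaces_universal_elliptic} produces a universally elliptic subgroup $H\leq\mathrm{stab}(L')$. Since $\hat{\eta}$ is $G$-equivariant, $H$ stabilises $L=\hat{\eta}(L')$. By the construction of $\mathscr{C}$ (which captures all universally elliptic cycles up to commensurability) and \cref{procedure_construction_X'}, $L$ is a wall of type (\textit{iii}). Then \cref{lemma_L'_multipod} and \cref{lemma_boundary_lines_multipod}, which describe $\hat{\eta}^{-1}(L)$ (a ``multipod'' of $K$ strips whose boundary lines each have exactly two half-spaces), pin down $L'$ as the unique central line and force the vertical tree containing $L'$ to be $L'$ itself. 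If you want to keep a more geometric flavour, the place to inject the $\wt{X}'$-specific information is precisely there: $L'$ must be of the form $\hat{\eta}^{-1}(L)$ for a type-(\textit{iii}) wall, and you must use the multipod structure rather than trying to produce a crossing line.
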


We need two observations to prove the lemma.
Let $L$ be a line in $\wt{X}$ that defines a wall of type (\textit{iii}). Suppose that the number of half-spaces of $L$ is $K$. Let $L'$ be a vertical tree in $\wt{X}'$ such that $\hat{\eta}(L') = L$ (\cref{lemma_C_vertical_tree_stabilisers_X'}).

\begin{lemma}\label{lemma_L'_multipod}
Exactly $K$ vertical strips are attached to $L'$ in $\wt{X}'$. Further, if $\hat{\eta}(L'') = L$ for any vertical line $L''$, then $L''$ is contained in one of these $K$ strips. 
\end{lemma}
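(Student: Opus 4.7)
The plan is to analyse $L'$ and any vertical line $L''$ above $L$ by using the dual-tree description of each $T(u)$ together with the inclusions $T(e) \hookrightarrow T(u)$ for edges $e$ of $L$ incident to $u$. Specifically, recall that $\sigma_L \in T(u)$ has valence exactly $K$, with its $K$ incident horizontal edges corresponding to the walls $\{Y_j \cap Z,\, Y_j^c \cap Z\}$ ($j=1,\ldots,K$) induced by the half-spaces of $L$. Because $L$ passes through every edge $e$ of $L$, each of these walls lies in $W_e$, so the $K$ edges at $\sigma_L \in T(u)$ lie in the image of $T(e) \hookrightarrow T(u)$ and extend consistently along $L'$. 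This yields exactly $K$ vertical hyperplanes of $\wt{X}'$ having $L'$ as a boundary line, and their first cubical neighbourhoods are the $K$ strips attached to $L'$, proving the first assertion.

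For the second assertion, let $L''$ be a vertical line in $\wt{X}'$ with $\hat{\eta}(L'') = L$. Since $\hat{\eta}$ preserves vertical edges and no two vertices of a single $T(u)$ are joined by a vertical edge (vertical edges cross type \textit{(i)} walls, whereas $T(u)$ is built from type \textit{(iii)} walls), $\hat{\eta}|_{L''}$ is an isomorphism onto $L$ and $L''$ meets each $T(u)$ in a single vertex $\sigma_u$. Moreover, vertical edges of $\wt{X}'$ above $e$ correspond bijectively to vertices of $T(e)$, so $\sigma_u$ must lie in the image of $T(e) \hookrightarrow T(u)$ for every edge $e$ of $L$ at $u$. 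I claim $\sigma_u \in \{\sigma_L, \sigma^{(1)}, \ldots, \sigma^{(K)}\}$, where $\sigma^{(j)}$ is the neighbour of $\sigma_L$ in $T(u)$ obtained by flipping $Y_j^c$ to $Y_j$. If not, the path from $\sigma_L$ to $\sigma_u$ in $T(u)$ has length at least two; since every edge at $\sigma_L$ comes from an $L$-wall and each $\sigma^{(j)}$'s only $L$-neighbour is $\sigma_L$ itself (because $Y_j \cap Y_i = \emptyset$ in $Z$ for $i \neq j$), this path must flip a wall coming from a translate $L^{\sharp} \neq L$ of $\wt{C}$ passing through $\tilde{u}$. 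For $\sigma_u$ to lie in the image of $T(e) \hookrightarrow T(u)$ for every edge $e$ of $L$ at $u$, this wall must belong to each $W_e$, forcing $L^{\sharp}$ to pass through every edge of $L$. By \cref{cor_P_embeds} we conclude $L^{\sharp} = L$, a contradiction.

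To finish, the $K$ walls of $L$ lie in $W_e$ for every edge $e$ of $L$, so the $K+1$ vertices $\sigma_L, \sigma^{(1)}, \ldots, \sigma^{(K)}$ have pairwise distinct images in each $T(e)$ under the natural restriction; hence no vertical edge of $\wt{X}'$ above $e$ joins any two of them, and $\sigma_u$ is constant along $L$. Thus $L''$ is either $L'$ or the boundary line $L^{(j)}$ of the $j$-th strip attached to $L'$ for some fixed $j$, and in both cases $L''$ lies in one of the $K$ strips attached to $L'$. The main delicate point I anticipate is the identification of the image of $T(e) \hookrightarrow T(u)$ with the set of vertices of $T(u)$ admitting a vertical edge above $e$; this geometric characterisation of vertical edges in the dual cube complex is the crux that makes the rest of the argument purely combinatorial.
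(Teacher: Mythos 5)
Your approach is genuinely different from the paper's: the paper works globally, considering the set $\sigma'_{L''}$ of vertical half-spaces contained in every ultrafilter of $L''$, and argues directly about that set; you work locally in the individual trees $T(u)$ and $T(e)$, using the inclusion $T(e)\hookrightarrow T(u)$ and the valence-$K$ structure at $\sigma_L$. The local approach is attractive and the framing is mostly sound, but there is a genuine gap in the central step.

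The gap is the sentence ``this wall must belong to each $W_e$, forcing $L^{\sharp}$ to pass through every edge of $L$.'' The argument you give only uses the two edges of $L$ incident to the fixed vertex $\tilde{u}$; what it establishes is that $L^{\sharp}$ passes through \emph{those two} edges, nothing more. That is entirely compatible with $L^\sharp \neq L$: by \cref{cor_P_embeds}, $L\cap L^\sharp$ is a segment shorter than the length of $C$, and this segment can perfectly well contain both edges at $\tilde u$ without being all of $L$. So the invocation of \cref{cor_P_embeds} to get a contradiction at this point is premature; you have not yet shown $L^\sharp$ meets a long enough piece of $L$. The statement as written conflates ``every edge of $L$ at $u$'' with ``every edge of $L$.''

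The argument can be repaired, but the repair is the substance of the proof and needs to be spelled out: since $W^\sharp\in W_e$ for the edge $e=[\tilde u,\tilde u_+]$, and $\sigma_{\tilde u}$, $\sigma_{\tilde u_+}$ restrict to the same vertex of $T(e)$, the ``wrong'' half-space of $W^\sharp$ persists into $\sigma_{\tilde u_+}$; meanwhile $\sigma_L(\tilde u_+)$ contains the half-space of $L^\sharp$ that contains $L$, so $W^\sharp$ again lies in $\sigma_{\tilde u_+}\triangle\sigma_L(\tilde u_+)$, and since $\sigma_L(\tilde u_+)$ has only $L$-walls incident to it, the path in $T(\tilde u_+)$ still has length at least two; rerunning the argument at $\tilde u_+$ then forces $L^\sharp$ to contain the next edge as well. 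Iterating, $L^\sharp$ contains a ray of $L$, and \emph{that} contradicts \cref{cor_P_embeds}. Without this inductive propagation, the contradiction does not follow and the claim $\sigma_u\in\{\sigma_L,\sigma^{(1)},\dots,\sigma^{(K)}\}$ is not established. (You correctly flag the identification of the image of $T(e)\hookrightarrow T(u)$ with the vertices of $T(u)$ carrying a vertical edge above $e$ as a delicate point; it is true and is needed both for your argument and for the propagation above, but it deserves a justification rather than an appeal to plausibility.)
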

\begin{proof}
The fact that exactly $K$ strips are attached to $L'$ follows from \cref{lemma_embeddind_C'_Z'_C}.
Further, each of the $K$ strips above are contained in $\hat{\eta}^{-1}(L)$, by \cref{lemma_opening__cofinite}.

Let $L''$ be a vertical line such that $\hat{\eta}(L'') = L$. 
Denote by $\sigma'_{L''}$ the set of vertical half-spaces contained in any vertex (ultrafilter) of $L''$. Note that $\hat{\eta}(L'') = L$ implies that the vertical half-spaces of type (\textit{ii}) in $\sigma'_{L''}$ consists of the half-spaces of vertical hyperplanes in $\wt{X}$ that contain $L$. 

If $\sigma'_{L''} = \sigma'_{L'}$, we have nothing to prove as the vertical tree that contains $L'$ is equal to $L'$.
Let $\sigma'_i$ be the set of vertical half-spaces such that $\sigma'_i \triangle \sigma'_{L'} = \{Y_i, Y_i^c\}$, where $Y_i$ is a half-space of $L$. 
Either $\sigma'_{L''} = \sigma'_i$ for some $i$ or $\sigma'_{L''} \neq \sigma'_i$ for any $i$. 

First assume the latter. Then there exists a half-space $Y_0$ of a line $L_0$ such that $\{Y_0, Y_0^c, Y_i, Y_i^c\} \subset \sigma'_{L''} \triangle \sigma'_{L'}$, with $Y_i \subset \sigma'_{L''}$. 
Let $\sigma$ be a vertex in $L \setminus L_0$. Then there exists $\sigma'' \in L''$ such that $\hat{\eta}(\sigma'') = \sigma$. This implies that the vertex $\sigma$ contains $Y_i$, which is not possible.
Assume now that $\sigma'_{L''} = \sigma'_i$, for some $i$. Let $L'_i \neq L'$ be a boundary line of the strip that separates $L''$ from $L'$. The result follows from the following observation.
Let $\gamma$ denote a geodesic between $L''$ and $L'_i$. Since $\gamma$ consists of vertical edges, $\hat{\eta}(\gamma)$ has the same length as $\gamma$ (by \cref{lemma_eta_C_properties}) and is a geodesic between $\hat{\eta}(L'')$ and $\hat{\eta}(L'_i)$.
\end{proof}

Let $L'_1, \cdots, L'_K$ be the boundary lines of strips attached to $L'$ such that $L'_i \neq L'$. Then

\begin{lemma}\label{lemma_boundary_lines_multipod}
$\partial N(L'_i)$ has exactly two components.
\end{lemma}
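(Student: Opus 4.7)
The plan is to reduce the count to components of $\wt{X}'\setminus L'_i$ via \cref{lemma_half-spaces_equal_components_reg_sphere_L}, and then show that this complement has exactly two components. Since $L'_i$ is a boundary line of the vertical strip $\mathsf{h}^{+1}$ between $L'$ and $L'_i$, it is tubular, so \cref{fact_tubular_lines_separate} gives that $L'_i$ separates $\wt{X}'$, handling the lower bound of two.

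For the upper bound, let $L = \hat{\eta}(L'_i)$ and let $Y_i$ be the half-space of $L$ in $\wt{X}$ such that the wall in $(Z,W')$ dual to $\mathsf{h}$ is $\{Y_i,Y_i^c\}$ (so the ultrafilters on $L'_i$ contain $Y_i$). I set
\[
B \;=\; \hat{\eta}^{-1}(Y_i\setminus L), \qquad A \;=\; \wt{X}'\setminus(B\cup L'_i).
\]
Since $\hat{\eta}(L'_i)\subset L$ is disjoint from $Y_i\setminus L$, the set $B$ misses $L'_i$, so $A\sqcup B=\wt{X}'\setminus L'_i$. The task is then to show that $A$ and $B$ are each connected.

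The central ingredient is a path-lifting argument: any combinatorial path in $\wt{X}$ whose image lies in $Y_j\setminus L$ (for any $j$) lifts to a path in $\hat{\eta}^{-1}(Y_j\setminus L)\subset\wt{X}'$. The lift is built edge-by-edge using \cref{lemma_eta_C_properties} to track the behaviour of $\hat{\eta}$ on edges, with horizontal adjustments within each fiber $\hat{\eta}^{-1}(v)$, which is a connected horizontal tree by \cref{lemma_opening__cofinite}. Since $Y_i\setminus L$ is a connected component of $\wt{X}\setminus L$, applying the lifting to any two points of $B$ produces a path between them inside $B$, so $B$ is connected.

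For $A$, I decompose
\[
A \;=\; \bigl(\hat{\eta}^{-1}(L)\setminus L'_i\bigr) \,\cup\, \bigcup_{j\neq i}\hat{\eta}^{-1}(Y_j\setminus L).
\]
By \cref{lemma_L'_multipod}, $\hat{\eta}^{-1}(L)$ is the union of the $K$ strips attached to $L'$, which remains connected through $L'$ after deleting $L'_i$. Each piece $\hat{\eta}^{-1}(Y_j\setminus L)$ with $j\neq i$ is connected by the lifting argument, and its closure meets $L'_j\subset\hat{\eta}^{-1}(L)\setminus L'_i$, so all pieces connect through the central hub $L'$. Hence $A$ is connected. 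The main technical obstacle is executing the lifting cleanly — one needs to check that preimages of edges in $Y_j\setminus L$ lie wholly in $\hat{\eta}^{-1}(Y_j\setminus L)$ and that horizontal fiber adjustments stay inside this set — but this follows from \cref{lemma_eta_C_properties} together with the non-crossing property: no lift of a cycle in $\mathscr{C}$ crosses $L$, so any such lift is contained in one half-space of $L$ and its induced walls do not connect $Y_j\setminus L$ to any other $Y_k\setminus L$.
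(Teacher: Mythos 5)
Your overall strategy — reduce to counting components of $\wt{X}'\setminus L'_i$, and use $\hat{\eta}$ to split them according to whether they map into $Y_i\setminus L$ or not — is in the same spirit as the paper's, but you run it as a direct decomposition whereas the paper argues by contradiction (if two half-spaces of $L'_i$ avoid $L'$, both map into $Y_i\setminus L$, and then \cref{lemma_opening__cofinite} and \cref{lemma_half-spaces_deep} are invoked to rule this out). Your version is more explicit about where the components actually live, which is a fine route in principle.

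There is, however, a genuine logical gap in the final step. You write $\wt{X}'\setminus L'_i = A\sqcup B$ and then declare that "the task is to show $A$ and $B$ are each connected." But a partition of a space into two disjoint connected subsets does not imply the space has two components: the two pieces could still lie in a single component if they accumulate on each other away from the removed set (for instance, $\mathbb{R}^2\setminus\{0\}$ partitions into two connected sets meeting along the punctured $y$-axis, yet is connected). What you additionally need is that $A$ and $B$ are \emph{separated} — equivalently, that $B$ is both open and closed in $\wt{X}'\setminus L'_i$, or that every cell of $\wt{X}'$ meeting both $A$ and $B$ must meet $L'_i$. This does hold here, but it is exactly the geometric content of the lemma: $\overline{B}\setminus B\subset\hat{\eta}^{-1}(L)$, and among the $K$ strips making up $\hat{\eta}^{-1}(L)$ (see \cref{lemma_L'_multipod}), only $L'_i$ abuts $B$; the other boundary lines $L'_j$ and the central line $L'$ are adjacent only to cells in $A$. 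Without this step, your argument would prove "exactly two components" for any separating line, including ones that genuinely have three or more half-spaces, so the gap is not cosmetic. On the secondary point, your path-lifting claim for connectedness of $B$ and of each $\hat{\eta}^{-1}(Y_j\setminus L)$ is plausible but you only gesture at the justification; in particular, a translate $L''$ of some $\wt{C}_k$ entering $Y_i$ may still touch $L$ along a compact segment, so one must check that the strips $\hat{\eta}^{-1}(L'')$ one traverses stay inside $B$, which is not literally "any such lift is contained in one half-space of $L$."
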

\begin{proof}
Note that $L'_i$ is a separating line (\cref{fact_tubular_lines_separate}).
Let $Y'_i$ be a half-space of $L'_i$ that does not contain $L'$. Then $\hat{\eta}(\mathring{Y}'_i)$ does not contain $L$ (\cref{lemma_L'_multipod}) and is connected. Thus $\hat{\eta}(Y'_i)$ lies in the half-space $Y_i$ of $L$. Further, if there exist two half-spaces of $L'_i$ that do not contain $L'$, then $\hat{\eta}^{-1}(Y_i \setminus L)$ contains these half-spaces. By \cref{lemma_opening__cofinite}, one of these half-spaces is at finite distance from $L'_i$, contradicting \cref{lemma_half-spaces_deep}.
\end{proof}

\begin{proof}[Proof of \cref{lemma_type_1_lines_in_X'}]
If $\partial N(L')$ contains three or more components, then a subgroup $H$ of stab($L'$) is universally elliptic, by \cref{prop_C_3_half-spaces_universal_elliptic}.
Let $L = \hat{\eta}(L')$. Then $H$ stabilises $L$ as $\hat{\eta}$ is $G$-equivariant, and $L$ defines a wall of type (\textit{iii}). \cref{lemma_L'_multipod} and \cref{lemma_boundary_lines_multipod} then give the result.
\end{proof}

\subsection{Modification of \texorpdfstring{$X'$}{X'}}\label{subsection_construction_X''}
The next step in the modification of $X$ to $X_{jsj}$ is the construction of an intermediate
tubular graph of graphs $X''$ from $X'$. 

\subsubsection{Construction of \texorpdfstring{$X''$}{X''}}
Remove an open tube of $X'$ if both the vertex graphs bounding the tube are circles, and then identify the vertex graphs. This is possible as, by \cref{lemma_strips_crushed_X'}, one of the attaching maps of such a tube is an isomorphism of graphs. 
Successively remove all such tubes of $X'$. Let $X''$ be the tubular graph of graphs obtained after removing all such tubes. 

\subsubsection{Structure of \texorpdfstring{$X''$}{X''}}

Let $T''$ be the underlying tree of $\wt{X}''$ and let $T_{jsj}$ denote the Bass-Serre tree of the canonical JSJ decomposition of $G$. 

\begin{lemma}\label{lemma_cyclic_vertices_of_jsj}
For each cyclic vertex $u$ of $T_{jsj}$ there exists a cyclic vertex $v$ of $T''$ such that stab($u$) $=$ stab($v$).
\end{lemma}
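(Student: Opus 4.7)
The plan is to follow the stabiliser $H := \mathrm{stab}(u)$ through the two constructions $X \to X'$ and $X' \to X''$, producing a cyclic vertex of $T''$ whose stabiliser is exactly $H$. By the fullness condition in \cref{defn_jsj}, $H$ is a maximal cyclic subgroup of $G$. Since $u$ is a cyclic vertex, every incident edge of $T_{jsj}$ has an infinite cyclic stabiliser $E \subset H$, necessarily of finite index in $H$. Because $E$ is an edge group of a JSJ splitting it is universally elliptic, and since universal ellipticity is preserved under commensurability in a torsion-free hyperbolic group, $H$ is itself universally elliptic. By \cref{lemma_transversal_not_universally_elliptic}, $H$ cannot have a transversal axis; since a tubular axis that is not vertical lies in a strip whose boundary lines are vertical axes for the same cyclic stabiliser, $H$ admits a vertical axis $L$ in $\wt{X}$.

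Next I would locate a cycle $C_i \in \mathscr{C}$ realising $H$ as the stabiliser of a lift. Since $G$ splits over the edge group $E$, which has vertical axis $L$, \cref{lemma_alg_splittings_commensurable_geom_splittings} yields a splitting cycle $D$ whose fundamental group is commensurable with a conjugate of $E$; this $D$ is universally elliptic, so by \cref{thm_algo_G_hyp} there exists $C_i \in \mathscr{C}$ with $\pi_1(C_i)$ commensurable with $\pi_1(D)$, and therefore with some conjugate of $H$. Choosing the lift of $C_i$ appropriately, one arranges $\pi_1(C_i)$ to be commensurable with $H$ inside $G$. In a torsion-free hyperbolic group, two commensurable cyclic subgroups are contained in a common cyclic subgroup, so maximality of $H$ forces $\pi_1(C_i) \subset H$; both subgroups then share the same axis $L$, and $\mathrm{stab}(L)$ is a cyclic group containing $H$, which by maximality must equal $H$.

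Now I would verify that the opening-up construction in \cref{procedure_construction_X'} actually processes $C_i$. Since $H$ is universally elliptic and each $C_j \in \mathscr{C}$ is a splitting cycle (so $G$ splits over $\pi_1(C_j)$), \cref{lemma_elliptic-elliptic_splittings} implies that $L$ does not cross any $G$-translate of any lift $\wt{C}_j$. The characterisation stated immediately after \cref{procedure_construction_X'} then shows that $C_i$ factors through a vertical cycle in $X_{i-1}$, so the opening-up step is performed at stage $i$, and \cref{lemma_C_vertical_tree_stabilisers_X'} furnishes a vertical tree $L'$ in $\wt{X}'$ with $\mathrm{stab}(L') = \mathrm{stab}(L) = H$. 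The corresponding vertex $v'$ of $T'$ is cyclic with $\mathrm{stab}(v') = H$.

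Finally I would trace $v'$ through the passage $T' \to T''$. The vertex $v'$ is altered only by collapsing incident edges whose two bounding vertex graphs in $X'$ are both circles. By \cref{lemma_strips_crushed_X'}, the stabiliser of any vertical hyperplane of $\wt{X}'$ coincides with the stabiliser of one of its two boundary lines, so in graph-of-groups terms each such collapsed edge has edge group equal to one of its endpoint vertex groups; collapsing it replaces the two cyclic endpoint stabilisers by the larger one. Since $H$ is maximal cyclic and the other endpoint stabiliser is cyclic, this larger subgroup is again $H$. Iterating over all collapsed edges in the merger class of $v'$ produces the desired cyclic vertex $v$ of $T''$ with $\mathrm{stab}(v) = H$. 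The main subtlety, as usual in JSJ arguments, is ensuring that the iterated collapsing never enlarges the cyclic stabiliser; this is controlled exactly by the maximality of $H$ given by the fullness clause of \cref{defn_jsj}.
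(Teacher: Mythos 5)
Your proof is correct and follows the same route as the paper: pass to a vertical axis $L$ of $H = \mathrm{stab}(u)$, use \cref{lemma_alg_splittings_commensurable_geom_splittings} and \cref{thm_algo_G_hyp} to produce a cycle in $\mathscr{C}$ whose lift realises $H$, apply \cref{lemma_C_vertical_tree_stabilisers_X'} to obtain a cyclic vertex of $T'$, and descend to $T''$. You additionally spell out two steps the paper compresses into ``Hence the result'' --- verifying via \cref{lemma_elliptic-elliptic_splittings} that the opening-up step in \cref{procedure_construction_X'} is actually performed on the chosen cycle, and using maximality of $H$ together with \cref{lemma_strips_crushed_X'} to show the $T' \to T''$ edge-collapses leave the stabiliser unchanged --- which are genuine, if minor, gaps that your write-up usefully closes.
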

\begin{proof}
Fix an axis $L$ in $\wt{X}$ of stab($u$). Note that stab($u$) $=$ stab($L$) as stab($u$) is maximal cyclic.
We can assume $L$ is vertical since stab($u$) is commensurable with a universally elliptic subgroup (\cref{lemma_transversal_not_universally_elliptic}).
By \cref{lemma_alg_splittings_commensurable_geom_splittings}, there exists a splitting cycle $C$ in $X$ such that $\pi_1(C)$ is commensurable with a conjugate of stab($u$). Hence $C \in \mathscr{C}$ and there exists a vertical tree in $\wt{X}'$ whose stabiliser is stab($\wt{C})$. 
Hence the result. 
\end{proof}

Let $v$ be a cyclic vertex of $T''$ and $L''$ the corresponding vertical tree (line) in $\wt{X}''$.
Denote by $\Lambda H$ the limit set in $\partial G$ of a subgroup $H$ of $G$.

\begin{lemma}\label{lemma_edges_incident_cyclic_vertex}
The number of components of $\partial G \setminus \Lambda$stab($v$) is equal to the number of edges incident to $v$.
\end{lemma}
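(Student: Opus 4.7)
The plan is to identify both the edges of $T''$ incident to $v$ and the components of $\partial G \setminus \Lambda \mathrm{stab}(v)$ with the half-spaces of $L''$ in $\wt{X}''$. Set $H = \mathrm{stab}(v) = \mathrm{stab}(L'')$, which is infinite cyclic and acts cocompactly on $L''$. Since $G$ is hyperbolic and $L''$ is a geodesic line on which $H$ acts by translation, $\Lambda H$ consists of the two endpoints of $L''$ in $\partial G$.

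First I would set up the bijection between edges incident to $v$ and half-spaces of $L''$ in $\wt{X}''$. Since $v$ is cyclic, the vertex space at $v$ in the graph of spaces structure of $\wt{X}''$ is exactly $L''$. For each edge $E$ incident to $v$ in $T''$, I denote by $\wt{X}''_E$ the convex subcomplex of $\wt{X}''$ corresponding to the component of $T'' \setminus \{v\}$ containing $E$, together with $v$ itself. Because $T''$ is a tree with the single vertex $v$ shared among these subtrees, one has $\wt{X}''_E \cap \wt{X}''_{E'} = L''$ for $E \neq E'$ and $\bigcup_E \wt{X}''_E = \wt{X}''$. Each $\wt{X}''_E \setminus L''$ is connected, since it is built by gluing the connected half-strip $S_E \setminus L''$ to the connected subcomplex beyond $S_E$. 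Hence the $\wt{X}''_E \setminus L''$ are exactly the half-spaces of $L''$, and their number equals the number of edges incident to $v$.

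Next I would decompose $\partial G \setminus \Lambda H$ into pieces indexed by the half-spaces. Each $\wt{X}''_E$ is convex and contains $L''$, so $\Lambda \wt{X}''_E$ is a closed subset of $\partial G$ containing $\Lambda H$. Any geodesic ray from a basepoint on $L''$ converging to $\xi \in \partial G \setminus \Lambda H$ must eventually leave $L''$ and, by convexity and hyperbolicity, remain in a single $\wt{X}''_E$; this gives $\bigcup_E \Lambda \wt{X}''_E = \partial G$. For $E \neq E'$, a standard hyperbolicity argument shows that any $\xi \in \Lambda \wt{X}''_E \cap \Lambda \wt{X}''_{E'}$ is approached by geodesic rays in both convex subcomplexes, which fellow-travel and must stay in a bounded neighbourhood of $\wt{X}''_E \cap \wt{X}''_{E'} = L''$; hence $\Lambda \wt{X}''_E \cap \Lambda \wt{X}''_{E'} = \Lambda L'' = \Lambda H$. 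Therefore $\partial G \setminus \Lambda H$ is a finite disjoint union of clopen subsets $\Lambda \wt{X}''_E \setminus \Lambda H$, one for each edge incident to $v$.

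The main obstacle, and the last thing to prove, is that each $\Lambda \wt{X}''_E \setminus \Lambda H$ is \emph{connected}; only then does the clopen decomposition coincide with the decomposition into connected components. I would establish this by showing that $\wt{X}''_E$ is coarsely one-ended relative to $L''$: for every compact $K \subset \wt{X}''_E$, the set $\wt{X}''_E \setminus (L'' \cup K)$ has a unique unbounded component. Combined with the convexity of $\wt{X}''_E$ inside the hyperbolic space $\wt{X}''$, this coarse connectedness translates into connectedness of $\Lambda \wt{X}''_E \setminus \Lambda H$. The coarse connectedness itself I would prove by a path-abundance argument in the spirit of \cref{lemma_path_abundance}: two unbounded regions of $\wt{X}''_E$ can always be linked by a path travelling close to $L''$ through the strip $S_E$, and the Brady-Meier property of $\wt{X}''$ (inherited locally at every vertex of $\wt{X}''_E$) allows such a path to be rerouted around any compact obstruction $K$. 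Putting these steps together yields the desired equality.
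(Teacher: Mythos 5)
Your overall skeleton matches the paper's: identify edges incident to $v$ with half-spaces of $L''$, identify these in turn with pieces of $\partial G \setminus \Lambda\,\mathrm{stab}(v)$, and then show each piece is connected. The first two steps are fine. The gap is in the last step, and it is a real one.

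You claim that each $\Lambda \wt{X}''_E \setminus \Lambda H$ is connected and propose to prove this by showing $\wt{X}''_E$ is ``coarsely one-ended relative to $L''$: for every compact $K \subset \wt{X}''_E$, the set $\wt{X}''_E \setminus (L'' \cup K)$ has a unique unbounded component.'' This is the wrong statement. To count components of the limit set you need to control the components of $\wt{X}''_E \setminus N_R(L'')$ for \emph{all} $R$ — that is, remove arbitrarily large (noncompact) neighbourhoods of $L''$, not $L''$ together with a compact set. The paper makes this precise by equating the number of components of $\partial G \setminus \Lambda\,\mathrm{stab}(v)$ with $\sup_k$ (number of components of $\wt{X}'' \setminus L''^{+k}$). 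Removing a compact $K$ is a strictly weaker operation and does not see the ends of the half-space relative to $L''$.

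Second, the tool you propose — ``a path-abundance argument in the spirit of \cref{lemma_path_abundance}'' — cannot deliver this. The path-abundance lemma lets you reroute a path so that it meets the geodesic $L''$ only at a prescribed point; it says nothing about keeping the path outside a tubular neighbourhood $N_R(L'')$. What is actually needed is \cref{lemma_half-space_coarsely_connected_lines} (proved by induction on $k$ via the tangent-hyperplane analysis of \cref{lemma_intersecting_trace_connected_L^k+1}), which is a substantial lemma, not a consequence of path abundance. Moreover the paper does \emph{not} apply that lemma to $L''$ itself: the strips attached to $L''$ put $L''$ inside the first cubical neighbourhood of their vertical hyperplanes, so the hypothesis of \cref{lemma_half-space_coarsely_connected_lines} fails for $L''$. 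Instead the paper passes to the far boundary line $L''_i$ of each strip, checks that the hypothesis holds there (using that the vertical tree containing $L''_i$ is not a line, otherwise the strip would have been collapsed in forming $X''$), and deduces that $Y''_i \setminus L''^{+(k+1)}$ is connected via $Y''_i \setminus L''^{+k}_i$. This shift from $L''$ to $L''_i$ is exactly the nontrivial point that your proposal does not address.

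So while the high-level decomposition is sound, the heart of the proof — connectedness of each half-space deep away from $L''$ — is asserted rather than proved, with an incorrect formulation and an inadequate tool. To close the gap you would need to either rederive something like \cref{lemma_half-space_coarsely_connected_lines}, or cite it and carry out the reduction to the boundary lines $L''_i$ as the paper does.
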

\begin{proof}
The number of edges incident to $v$ is equal to the number of strips attached to $L''$, which is equal to the number of components of $\partial N(L'')$. The number of components of $\partial G \setminus \Lambda$stab($v$) is equal to the supremum of the number of components of $\wt{X}'' \setminus L''^{+k}$, where $k \in \mathbb{N}$.
Let $K$ be the number of strips attached to $L''$. Let $L''_i$ be the boundary line of the $i^{th}$ strip such that $L'' \neq L''_i$. Note that the vertical tree containing $L''_i$ is not equal to $L''_i$ as otherwise the corresponding strip would have been removed to obtain $\wt{X}''$. By construction, $L''_i$ has exactly two half-spaces. Let $Y''_i$ be the half-space of $L''_i$ that does not contain $L''$. Note that no strip of $Y''_i$ contains $L''_i$.
By \cref{lemma_half-space_coarsely_connected_lines}, $Y''_i \setminus L''^{+k}_i$ and hence $Y''_i \setminus L''^{+(k+1)}$ is connected, for every $k \in \mathbb{N}$. 
\end{proof}

\begin{prop}\label{prop_T''_refinement_jsj}
For each edge stabiliser $H$ of $T_{jsj}$, there exists an edge of $T''$ whose stabiliser is $H$.
\end{prop}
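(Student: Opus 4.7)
The plan is to match, for each cyclic vertex of $T_{jsj}$, its incident edges with the incident edges at the corresponding cyclic vertex of $T''$, via a common $H_u$-equivariant parameterization by components of $\partial G$ minus a limit set.

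Let $e$ be an edge of $T_{jsj}$ with stabiliser $H$. By \cref{defn_jsj}, $e$ has exactly one endpoint of type (1); call it $u$ and let $H_u = \mathrm{stab}(u)$, a maximal cyclic subgroup of $G$ (since JSJ vertex groups are full) containing $H$. By \cref{lemma_cyclic_vertices_of_jsj}, there exists a cyclic vertex $v$ of $T''$ with $\mathrm{stab}(v) = H_u$; let $L''$ be the associated vertical line in $\wt{X}''$, whose endpoints in $\partial \wt{X}'' \cong \partial G$ are precisely the two points of $\Lambda H_u$.

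By \cref{lemma_edges_incident_cyclic_vertex}, the edges of $T''$ incident to $v$ are in $H_u$-equivariant bijection with the components of $\partial G \setminus \Lambda H_u$: an edge corresponds to a strip at $L''$ and is matched with the component at infinity of the half-space of $L''$ on its far side. A parallel bijection holds on the $T_{jsj}$ side: removing $u$ from $T_{jsj}$ produces subtrees whose vertex groups generate subgroups whose limit sets partition $\partial G \setminus \Lambda H_u$ into exactly its components. This is part of Bowditch's canonical description of the JSJ for one-ended hyperbolic groups (Theorem~5.28 and Proposition~5.30 of \cite{bowditch_jsj}); if two subtrees on different sides of $u$ accumulated on the same component, the JSJ could be refined at $u$, contradicting its maximality.

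Composing these bijections gives a canonical $H_u$-equivariant bijection between edges at $u$ in $T_{jsj}$ and edges at $v$ in $T''$. Under it, an edge's stabiliser in $H_u$ equals the $H_u$-stabiliser of the associated component of $\partial G \setminus \Lambda H_u$: on the $T_{jsj}$ side because the edge stabiliser coincides with the stabiliser of the far subtree and the latter is detected by its limit set; on the $T''$ side because the edge stabiliser equals the stabiliser of the associated strip at $L''$, which acts on the strip as it does on the associated direction at infinity. Thus the edge of $T''$ matched with $e$ has stabiliser exactly $H$, proving the proposition. The main obstacle is the $H_u$-equivariant identification, on the $T_{jsj}$ side, of edges at $u$ with components of $\partial G \setminus \Lambda H_u$ respecting stabilisers; I would extract this from the canonical boundary-theoretic description of the JSJ for one-ended hyperbolic groups.
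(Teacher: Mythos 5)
Your proof matches the paper's in structure and key ideas: both pass to the cyclic vertex $u$ of $T_{jsj}$ incident to $e$, invoke \cref{lemma_cyclic_vertices_of_jsj} to find the matching cyclic vertex $v$ of $T''$, use \cref{lemma_edges_incident_cyclic_vertex} to get the correspondence between edges at $v$ and components of $\partial G \setminus \Lambda\,\mathrm{stab}(u)$, and identify $H$ as the stabiliser of one such component via Bowditch's boundary-theoretic description of the JSJ. The only difference is presentational: you are more explicit about the $\mathrm{stab}(u)$-equivariance of the two bijections (a point the paper leaves implicit), which makes the final stabiliser-matching step cleaner.
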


\begin{proof}
Since each edge of $T_{jsj}$ is incident to a cyclic vertex, let stab($u$) be the cyclic vertex group of $T_{jsj}$ that contains $H$. Then $H$ is the stabiliser of a component of $\partial G \setminus \Lambda $stab($u$). Let $v$ be a vertex of $T''$ such that stab($v$) $=$ stab($u$) (\cref{lemma_cyclic_vertices_of_jsj}).
Then the number of edges incident to $v$ is equal to the number of components of $\partial G \setminus \Lambda $stab($u$), by \cref{lemma_edges_incident_cyclic_vertex}. Further, $H$ is the stabiliser of an edge incident to $v$ as each edge incident to $v$ induces a unique component of $\partial G \setminus \Lambda $stab($u$).
\end{proof}

\begin{defn}[\cite{guirardel_levitt_jsj}]\label{def_refinement}
A $G$-tree $\hat{T}$ is a \emph{refinement} of a $G$-tree $T$ if there exists a $G$-equivariant map $p : \hat{T} \to T$ such that $p$ sends any segment $[x,y]$ in $\hat{T}$ onto the segment $[p(x),p(y)]$. In other words, $\hat{T}$ is obtained by blowing up vertices of $T$.
\end{defn}

By \cref{prop_T''_refinement_jsj} and the properties of the JSJ decomposition (\cref{defn_jsj}), we have
\begin{cor} \label{cor_T''_refinement}
$T''$ is a refinement of $T_{jsj}$. \qed
\end{cor}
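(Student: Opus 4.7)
The plan is to realize $T_{jsj}$ as the quotient of $T''$ by collapsing a $G$-invariant set of edges, which by Definition \ref{def_refinement} establishes the refinement. Proposition \ref{prop_T''_refinement_jsj} provides the core input: every edge stabiliser of $T_{jsj}$ is the stabiliser of some edge of $T''$. By equivariantly selecting, for each $G$-orbit of edges in $T_{jsj}$, an edge of $T''$ with the same stabiliser, one obtains a $G$-equivariant injection $\iota : E(T_{jsj}) \hookrightarrow E(T'')$ that preserves stabilisers. Writing $E_0 = \iota(E(T_{jsj}))$ and $E_1 = E(T'') \setminus E_0$, I would define the refinement map $p : T'' \to T_{jsj}$ by sending each edge of $E_0$ homeomorphically to its $\iota$-preimage and collapsing each edge of $E_1$ to a point.

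To check that $p$ is a well-defined $G$-equivariant simplicial map, I would first handle cyclic vertices. For each cyclic vertex $u$ of $T_{jsj}$, Lemma \ref{lemma_cyclic_vertices_of_jsj} supplies a cyclic vertex $v$ of $T''$ with $\mathrm{stab}(v) = \mathrm{stab}(u)$; set $p(v) = u$. By Lemma \ref{lemma_edges_incident_cyclic_vertex}, the valence of $v$ in $T''$ equals the number of components of $\partial G \setminus \Lambda\mathrm{stab}(v)$, which matches the valence of $u$ in $T_{jsj}$ (since in the JSJ of a one-ended hyperbolic group the incident edges at a cyclic vertex are in bijection with the components of the corresponding boundary complement). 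Hence all edges of $T''$ at $v$ lie in $E_0$ and are in bijective correspondence with the edges of $T_{jsj}$ at $u$ via $\iota$. For each non-cyclic vertex $w$ of $T_{jsj}$, declare $p^{-1}(w)$ to be the maximal connected subgraph $S_w$ of $T''$ made of $E_1$-edges together with the non-cyclic-matched endpoints of the $\iota$-images of edges incident to $w$ in $T_{jsj}$.

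The main obstacle is verifying that the subgraphs $\{S_w\}$, together with the cyclic-matched vertices, partition $V(T'')$ and that incidences in the collapsed tree match those of $T_{jsj}$. Connectedness of each $S_w$ follows by lifting paths: two $E_0$-edges of $T''$ whose $\iota^{-1}$-images share the endpoint $w$ must have their $w$-side endpoints joined by a path in $T''$ using only $E_1$-edges, since any intervening $E_0$-edge on that path would transit through a cyclic-matched vertex, forcing a longer path in $T_{jsj}$ that passes through $w$ and another cyclic vertex, contradicting the fact that the $T_{jsj}$-geodesic between the two edges goes through $w$ alone. Disjointness of the $S_w$'s and surjectivity of $p$ follow from the fact that each edge of $T_{jsj}$ has exactly one cyclic endpoint (Definition \ref{defn_jsj}): this forces each $E_0$-edge of $T''$ to separate a unique cyclic-matched vertex from a unique $S_w$. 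Equivariance of the whole construction is inherited from $\iota$ and from the $G$-equivariance of Lemma \ref{lemma_cyclic_vertices_of_jsj}.
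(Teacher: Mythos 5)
Your overall strategy — build the collapse map $p : T'' \to T_{jsj}$ by selecting a stabiliser-preserving equivariant set $E_0 = \iota(E(T_{jsj}))$ of edges of $T''$ and collapsing the complement — is exactly what "refinement" asks for, and the construction of $\iota$ by matching edges around the cyclic vertices supplied by \cref{lemma_cyclic_vertices_of_jsj} and counting them via \cref{lemma_edges_incident_cyclic_vertex} is sound. However, the verification that the collapse is well-defined has a genuine gap at the connectedness step.

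You argue that the $w$-side endpoints $w_1, w_2$ of $\iota(e_1), \iota(e_2)$ (with $e_1, e_2$ incident to the non-cyclic vertex $w$ in $T_{jsj}$) must be joined by a path of $E_1$-edges, because an intervening $E_0$-edge would ``force a longer path in $T_{jsj}$ that passes through $w$ and another cyclic vertex.'' This is circular: you are using a correspondence between geodesics of $T''$ and geodesics of $T_{jsj}$ to derive a contradiction, but that correspondence is precisely the content of $T''$ being a refinement of $T_{jsj}$. Without it, nothing rules out the $T''$-geodesic from $w_1$ to $w_2$ passing through a cyclic-matched vertex $v_3$ unrelated to $w$. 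The missing ingredient is the ellipticity of $G_w = \mathrm{stab}(w)$ in $T''$, which is what the paper is invoking via ``properties of the JSJ decomposition.'' Once you know $G_w$ fixes a nonempty subtree $Z \subset T''$, the argument does close: $Z$ is disjoint from all cyclic-matched vertices (their stabilisers are cyclic while $G_w$ is free non-abelian), each $H_i = \mathrm{stab}(e_i) \leq G_w$ fixes both $Z$ and $\iota(e_i)$, hence fixes the geodesic between them, which forces $w_i$ to lie between $Z$ and $\iota(e_i)$ and keeps all the $w_i$ in a common $E_1$-component containing $Z$. Note that this ellipticity is automatic for rigid vertex groups and (by \cref{lemma_cyclic_vertices_of_jsj}) for cyclic ones, but for maximal hanging surface groups it is a nontrivial input coming from the structure of the JSJ of a one-ended hyperbolic group (compare Bowditch's Theorem 5.28 cited in \cref{defn_jsj}); this is exactly the case your combinatorial argument cannot avoid engaging with.

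Two smaller remarks: (1) the equivariance and injectivity of $\iota$ should be derived from the $G$-equivariant bijection between edges at $v_u$ and components of $\partial G \setminus \Lambda\,\mathrm{stab}(u)$ rather than from \cref{prop_T''_refinement_jsj} alone (which only asserts existence of some edge with the right stabiliser, not an equivariant choice); (2) the disjointness of the $S_w$'s is easier once connectedness is anchored at the fixed sets $\mathrm{Fix}_{T''}(G_w)$, since for $w \neq w'$ these fixed sets are separated by the $E_0$-edge $\iota(e)$ corresponding to any edge $e$ on the $T_{jsj}$-geodesic from $w$ to $w'$.
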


\begin{lemma}\label{lemma_surface_edges_T''}
The stabiliser $H$ of an edge $e$ of $T''$ is not an edge stabiliser of $T_{jsj}$ if and only if the cyclic vertex $u$ incident to $e$ in $T''$ is of valence two and both the non-cyclic vertices adjacent to $u$ are stabilised by hanging surface groups.
\end{lemma}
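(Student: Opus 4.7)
The plan is to leverage \cref{cor_T''_refinement}, which supplies a $G$-equivariant refinement map $p : T'' \to T_{jsj}$. Either $p(e)$ is an edge of $T_{jsj}$, in which case $G$-equivariance combined with \cref{prop_T''_refinement_jsj} makes $H$ an edge stabiliser of $T_{jsj}$; or $p(e) = w$ is a single vertex of $T_{jsj}$, and the two endpoints $u,v$ of $e$ both lie in $p^{-1}(w)$. So the statement reduces to characterising collapse of $e$ under $p$ in terms of the local structure at $u$.

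For the backward direction, suppose $u$ has valence $2$ in $T''$ with non-cyclic neighbours $v_1,v_2$ stabilised by hanging surface groups $\Sigma_1,\Sigma_2$. The two edges at $u$ share the stabiliser $H$, which is peripheral in each $\Sigma_i$ by the definition of hanging surface subgroup. I would collapse the star of $u$ in $T''$ to produce a new cyclic splitting of $G$ whose vertex group containing $v_1$ and $v_2$ is the amalgam $\Sigma_1 *_H \Sigma_2$, isomorphic to the fundamental group of the surface obtained by gluing $\Sigma_1$ and $\Sigma_2$ along a common boundary circle; in this glued surface $H$ represents a non-peripheral simple closed curve. By Proposition~5.30 of \cite{bowditch_jsj}, $\Sigma_1 *_H \Sigma_2$ conjugates into a maximal hanging surface vertex group $\Sigma_w$ of $T_{jsj}$ with $H$ remaining non-peripheral; since edges of $T_{jsj}$ incident to a hanging surface vertex have peripheral stabilisers (\cref{defn_jsj}), $H$ cannot appear as an edge stabiliser of $T_{jsj}$.

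For the forward direction, the collapse of $e$ places both $u$ and $v$ in $p^{-1}(w)$ for a non-cyclic vertex $w$ of $T_{jsj}$ (non-cyclic because $\mathrm{stab}(v)$ is a non-cyclic free group). I would exclude the rigid case by noting that, since $H$ is not an edge stabiliser of $T_{jsj}$, $H$ is not universally elliptic, so by Proposition~5.30 of \cite{bowditch_jsj} $H$ conjugates into a maximal hanging surface vertex group of $T_{jsj}$, and this vertex must equal $w$ by equivariance; hence $w$ is a hanging surface vertex with $\mathrm{stab}(w) = \Sigma_w$. Next, using that every splitting cycle in $\mathscr{C}$ is non-self-crossing (\cref{def_splitting_cycle}), the cyclic edge stabilisers appearing in $p^{-1}(w)$ correspond to pairwise disjoint simple closed curves on $\Sigma_w$, so the non-cyclic vertices of $p^{-1}(w)$ are fundamental groups of subsurfaces, i.e., hanging surface groups. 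Finally, the valence of $u$ in $T''$ follows from \cref{lemma_edges_incident_cyclic_vertex}: it equals the number of components of $\partial G \setminus \Lambda \mathrm{stab}(u)$, which for $\mathrm{stab}(u)$ generated by a simple closed curve on a surface is exactly $2$, and both neighbours are non-cyclic since $X''$ was constructed by removing every tube whose two bounding vertex graphs are circles.

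The main obstacle is the identification of the subtree $p^{-1}(w)$ with a simple-closed-curve decomposition of the hanging surface $\Sigma_w$; this requires combining the non-self-crossing property of the cycles in $\mathscr{C}$ with Bowditch's structural results on how cyclic splittings of a one-ended hyperbolic group interact with its hanging surface vertices in the JSJ.
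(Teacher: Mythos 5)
Your approach matches the paper's in outline but spells out considerably more detail. The paper's own proof of this lemma is two sentences: the backward direction is dismissed "by definition" — which is exactly your observation that merging $\Sigma_1$ and $\Sigma_2$ along $H$ produces a larger hanging surface group containing each $\Sigma_i$ properly, so this configuration cannot persist around an edge of $T_{jsj}$ by maximality — and the forward direction is referred to \cref{lemma_edges_incident_cyclic_vertex} and \cref{cor_T''_refinement}, which is the substance of your collapse-into-a-surface-vertex argument. So you are tracking the intended proof closely, with Bowditch's Proposition~5.30 invoked explicitly where the paper leaves the structural input tacit.

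A few points deserve tightening. In your backward direction, if $v_1$ and $v_2$ lie in the same $G$-orbit the collapse yields an HNN extension rather than an amalgam $\Sigma_1 *_H \Sigma_2$; the conclusion that $H$ becomes a non-peripheral curve is unchanged, but worth stating. In your forward direction, you open with ``the collapse of $e$ places both $u$ and $v$ in $p^{-1}(w)$'', implicitly assuming the refinement map $p$ collapses $e$; a priori $p(e)$ could be an edge $e'$ of $T_{jsj}$ with $\mathrm{stab}(e) \subsetneq \mathrm{stab}(e')$, which is still consistent with $H = \mathrm{stab}(e)$ failing to be an edge stabiliser, so this needs an argument (e.g., that the edge groups of $T''$ coming from splitting cycles and those of the JSJ are both full, forcing equality for a non-collapsed edge). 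Likewise, ``$H$ not an edge stabiliser $\Rightarrow H$ not universally elliptic'' is not a tautology — it is a structural fact about the JSJ that the universally elliptic cyclic subgroups over which $G$ splits coincide (up to conjugacy) with edge groups of $T_{jsj}$ — and should be flagged as such. Neither gap is fatal, but both go beyond what you have written.
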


\begin{proof}
No edge of $T_{jsj}$ is such that the cyclic vertex incident to this edge is adjacent to exactly two hanging surface group vertices, by definition.
The converse follows from \cref{lemma_edges_incident_cyclic_vertex} and \cref{cor_T''_refinement}.
\end{proof}

So we can modify $X''$ to $X_{jsj}$ by removing tubes which connect hanging surface groups. This requires an identification of such groups, which is done in the next subsection.

\subsection{Surface graphs}

\begin{defn}
A vertex graph of a tubular graph of graphs is a \emph{surface graph} if the graph is not a circle and the fundamental group of the graph is a surface group whose peripheral subgroups are precisely the subgroups induced by the incident edge graphs.
\end{defn}

Thus a vertex graph is a surface graph if its fundamental group is a hanging surface group. Recall that 

\begin{defn}\label{defn_double}
The \emph{double} of a graph $\Gamma$ with a finite family of immersed cycles $\{C_1, \cdots, C_n\}$ is a tubular graph of graphs whose underlying graph consists of two vertices with $n$ edges between them, each vertex space is a copy of $\Gamma$ and the $i^{th}$ tube attaches as $C_i$ on both sides.
\end{defn}

\begin{lemma} \label{lemma_surface_graphs}
A vertex graph of a Brady-Meier tubular graph of graphs is a surface graph if and only if every edge of its double is of thickness two.
\end{lemma}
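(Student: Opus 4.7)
The strategy is to establish two equivalences and chain them together: (a) every edge of the double $D$ has thickness two if and only if $D$ is a closed $2$-manifold, and (b) $D$ is a closed $2$-manifold if and only if $X_s$ is a surface graph.

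For (a), I will perform a local link computation. Given a vertex $v$ of $X_s$, the link in $D$ of the corresponding vertex $v_1$ in one copy of $X_s$ is canonically isomorphic to $\link(v)$ in $X$, since the squares containing $v_1$ are indexed in the same way by the incident tubes. In particular, this link is bipartite, connected, and has no cut points (inherited from the Brady--Meier property of $X$). A horizontal vertex of the link has degree exactly two, because any horizontal half-edge at $v_1$ sits in exactly two squares of its tube. A vertical vertex has degree equal to the thickness of the corresponding vertical edge at $v$. A connected graph with no cut points in which every vertex has degree two must be a single cycle (of length at least four, since $\VH$-complexes have no bigons in links). Combining this observation with the fact that horizontal edges of $D$ always have thickness two, we conclude: every edge of $D$ has thickness two if and only if every vertex link of $D$ is a single cycle, if and only if $D$ is a closed $2$-manifold.

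For the backward direction of (b), if $D$ is a closed $2$-manifold, then being also nonpositively curved, it is a closed surface. The central circle of each of the $n$ tubes is an embedded simple closed curve in $D$; these $n$ circles are pairwise disjoint and, by the double construction, jointly separate $D$ into two connected components, each deformation retracting onto a copy of $X_s$. Each component is therefore a compact surface with boundary whose boundary components correspond under the retraction to the incident cycles $C_i$ of $X_s$. Hence $\pi_1(X_s)$ is a surface group with boundary whose peripheral subgroups are exactly the subgroups induced by the incident edge graphs, so $X_s$ is a surface graph.

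The forward direction of (b) is the main obstacle and requires a ribbon graph argument. Supposing $X_s$ is a surface graph, there exist a compact surface $\Sigma$ with boundary and an isomorphism $\pi_1(X_s) \cong \pi_1(\Sigma)$ sending each peripheral class $[C_i]$ to the class of a distinct boundary component. Realising $X_s$ as a PL spine embedded in $\Sigma$ yields a deformation retract $\Sigma \to X_s$; each boundary component of $\Sigma$ retracts to an immersed cycle in $X_s$ representing $[C_i]$, and by the uniqueness of the reduced cyclic representative of a conjugacy class in the free group $\pi_1(X_s)$ (via Stallings folding), this retracted cycle coincides with $C_i$. The ribbon graph structure induced by the embedding $X_s \hookrightarrow \Sigma$ shows that each edge of $X_s$ borders exactly two faces of the ribbon graph, namely its two sides in $\Sigma$, so the family $\{C_i\}$ collectively traverses each edge of $X_s$ exactly twice. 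Hence every vertical edge of $D$ has thickness two, and by (a) every edge of $D$ has thickness two, completing the proof.
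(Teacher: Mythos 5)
Your overall structure matches the paper's: you reduce the lemma to (a) a local criterion (all edges of $D$ have thickness two iff $D$ is a closed $2$-manifold), and (b) the equivalence ``$D$ is a closed surface iff $X_s$ is a surface graph.'' For (a), your purely local argument via vertex links is cleaner than the paper's, which instead handles the converse direction by exhibiting a geodesic line inside a horizontal hyperplane of $\wt{D}$ that fails to separate $\wt{D}$ when some edge has thickness at least three. Your backward direction of (b), cutting $D$ along the core circles of the tubes, is also correct and is exactly what the paper labels a ``standard fact'' without proving it.

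The gap is in your forward direction of (b). You assert that $X_s$ can be realised as a PL spine embedded in $\Sigma$, and the whole ribbon-graph argument hangs on that. This is not automatic: a graph $\Gamma$ with $\pi_1(\Gamma)\cong\pi_1(\Sigma)$ need not embed in $\Sigma$ at all, let alone as a spine --- a non-planar graph such as $K_{3,3}$ or $K_5$ cannot be embedded in a planar surface of matching rank, for instance. The claim that the abstract surface-pair isomorphism $(\pi_1(X_s),\{\pi_1(C_i)\})\cong(\pi_1(\Sigma),\{\text{peripherals}\})$ forces a ribbon structure on the given graph $X_s$ with boundary pattern $\{C_i\}$ is precisely equivalent to the conclusion that $D$ is a closed $2$-manifold, so the argument as written begs the question. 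You need either a genuine proof of this step (e.g.\ via the Whitehead-graph / cut-vertex machinery that characterises surface boundary multiwords in free groups), or, as the paper does, simply to invoke the equivalence in (b) as a known fact.
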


\begin{proof}
Let $D_s$ be the double of the vertex graph $X_s$ with incident edge cycles. 
It is a standard fact that $D_s$ is homeomorphic to a surface if and only if $X_s$ is a surface graph.
Note that $D_s$ is Brady-Meier as every vertex of $X_s$ satisfies the Brady-Meier conditions. Thus every edge of $D_s$ is of thickness at least two. If each edge is of thickness two, then the fact that every vertex link is connected implies that every vertex link is a circle. This implies that $D_s$ is homeomorphic to a closed surface and we are done.

Conversely, suppose that there exists an edge $e$ of thickness at least three in $D_s$. Let $\tilde{e}$ be a lift of $e$ in $\wt{D}_S$ and $\mathsf{h}$ the horizontal hyperplane through $\tilde{e}$. Note that $\mathsf{h}$ is a tree. Let $L$ be a line in $\mathsf{h}$ passing through the midpoint $m$ of $\tilde{e}$. Note that $L$ does not separate $\partial N(m)$ as $\tilde{e}$ is of thickness at least three. By \cref{lemma_L_separates_partial_N(P)}, $L$ does not separate $\wt{D}_s$. But this implies that $D_s$ is not homeomorphic to a closed surface.
\end{proof}

\subsection{Construction of \texorpdfstring{$X_{jsj}$}{Xjsj}}\label{subsection_construction_X_jsj}
We are now ready to construct $X_{jsj}$. 
Remove from $X''$ every cyclic vertex graph and the (open) tubes attached to it whenever exactly two tubes are attached to the vertex graph and both the tubes are attached to surface graphs on the other side. Call the resulting complex as $X'''$.
Denote by $T'''$ the underlying tree of $\wt{X}'''$. By \cref{lemma_surface_edges_T''}, we have

\begin{prop}\label{prop_T'''_is_jsj}
$T'''$ is isomorphic to $T_{jsj}$ as $G$-trees. \qed 
\end{prop}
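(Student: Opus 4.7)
The plan is to exploit \cref{cor_T''_refinement}, which tells us that $T''$ is a refinement of $T_{jsj}$. By \cref{def_refinement}, this gives a $G$-equivariant collapse map $p : T'' \to T_{jsj}$ that contracts a $G$-invariant set $\mathcal{E}$ of edges of $T''$, leaving all other edges isomorphic. The strategy is to identify $\mathcal{E}$ explicitly in combinatorial terms on the complex $X''$, and then to verify that the passage from $X''$ to $X'''$ described in \cref{subsection_construction_X_jsj} collapses, at the level of Bass-Serre trees, precisely the edges in $\mathcal{E}$.

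For the identification of $\mathcal{E}$, note that an edge $e \in T''$ belongs to $\mathcal{E}$ exactly when its stabiliser is not an edge stabiliser of $T_{jsj}$. By \cref{lemma_surface_edges_T''}, this happens if and only if the cyclic vertex $u$ incident to $e$ has valence two in $T''$ and both non-cyclic vertices adjacent to $u$ are stabilised by hanging surface subgroups of $G$. I would then translate these conditions to $X''$. The valence condition is immediate: a cyclic vertex $u \in T''$ has valence two iff the corresponding circle vertex graph in $X''$ has exactly two incident tubes. For the surface-group condition, I would show that a non-cyclic vertex $v \in T''$ is stabilised by a hanging surface subgroup iff the corresponding vertex graph $X''_s$ is a surface graph in the sense of \cref{lemma_surface_graphs}. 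The forward direction uses that since $T''$ refines $T_{jsj}$, the subgroup $\mathrm{stab}(v)$ is contained in a vertex group $H$ of $T_{jsj}$; fullness (which is part of \cref{defn_jsj}) together with the fact that $\mathrm{stab}(v)$ is itself a hanging surface subgroup forces $\mathrm{stab}(v) = H$, so the incident edge subgroups at $v$ in $X''$ are exactly the peripheral subgroups of the surface group, i.e., $X''_s$ is a surface graph. The reverse direction is the definition of surface graph. \cref{lemma_surface_graphs} then provides the concrete combinatorial test (every edge of the double has thickness two) which the algorithm of \cref{subsection_construction_X_jsj} implicitly relies on.

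Combining the previous two observations, the construction of $X'''$ from $X''$ in \cref{subsection_construction_X_jsj} removes precisely those cyclic vertex graphs of $X''$ with exactly two incident tubes such that both tubes attach to surface graphs, together with those tubes; at the level of $\wt{X}''$, this collapses the $G$-orbits of edges of $T''$ identified above, which are exactly the edges in $\mathcal{E}$. Since all the constructions are $G$-equivariant, we conclude $T''' \cong T''/\mathcal{E} \cong T_{jsj}$ as $G$-trees. The main obstacle in this plan is the careful verification in the second paragraph that a hanging-surface stabiliser of a non-cyclic vertex of $T''$ corresponds to a surface vertex graph of $X''$: one has to argue that the incident edge subgroups at $v$ in the graph-of-groups structure of $X''$ coincide with the peripheral subgroups of the surface group $\mathrm{stab}(v)$, using the JSJ's property that edge groups at a hanging surface vertex are exactly the peripheral subgroups (\cref{defn_jsj}, (2)) together with the refinement property of $T''$ over $T_{jsj}$.
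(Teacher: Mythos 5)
Your overall strategy — invoke \cref{cor_T''_refinement} to get the collapse map $p : T'' \to T_{jsj}$, identify the collapsed edges via \cref{lemma_surface_edges_T''}, and check that the passage $X'' \rightsquigarrow X'''$ removes exactly those edges — is the same as the paper's (which marks the proposition as a direct consequence of \cref{lemma_surface_edges_T''}). You are also right that the crux is the correspondence between ``non-cyclic vertex of $T''$ stabilised by a hanging surface group'' (the language of \cref{lemma_surface_edges_T''}) and ``the corresponding vertex graph of $X''$ is a surface graph'' (the language of the construction in \cref{subsection_construction_X_jsj}).

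However, your argument for the forward direction of this correspondence does not work. You claim that ``fullness together with the fact that $\mathrm{stab}(v)$ is itself a hanging surface subgroup forces $\mathrm{stab}(v) = H$.'' Fullness in the sense of \cref{defn_jsj} only rules out $\mathrm{stab}(v)$ sitting inside $H$ with \emph{finite} index; a proper subsurface group inside a larger surface group has infinite index, so fullness gives you nothing here. Moreover, the conclusion $\mathrm{stab}(v) = H$ is simply false in precisely the situation of interest: if $v$ is adjacent in $T''$ to one of the valence-two cyclic vertices that \cref{lemma_surface_edges_T''} flags, then the collapse $p$ identifies $v$ with at least one other non-cyclic vertex, so $\mathrm{stab}(v) \subsetneq \mathrm{stab}(p(v)) = H$. (And conversely, when $\mathrm{stab}(v) = H$ nothing is collapsed at $v$, so that case is irrelevant.) Even granting $\mathrm{stab}(v) = H$, the incident edge groups of $v$ in $T''$ are not visibly the same set as the incident edge groups of $H$ in $T_{jsj}$ until you have already verified what you are trying to prove. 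A correct argument must instead work with $\mathrm{stab}(v)$ as a vertex group of the $\mathrm{stab}(p(v))$-subtree $p^{-1}(p(v))$: first rule out $p(v)$ being rigid (a hanging surface subgroup cannot sit inside a rigid vertex group as a vertex of a cyclic splitting), then use that a cyclic decomposition of a surface-with-boundary group has subsurfaces as vertex groups and that the incident edges at $v$ — whether they come from $T_{jsj}$ or from the collapsed subtree — are precisely the peripheral subgroups of that subsurface. That is the content that makes $X''_s$ a surface graph; fullness does not deliver it.
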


The proposition proves that $X'''$ is the required $X_{jsj}$.
We now have the main result of the article:

\begin{theorem}\label{thm_main_result_jsj_hyp}
There exists an algorithm of double exponential time complexity that takes a Brady-Meier tubular graph of graphs $X$ with hyperbolic fundamental group $G$ as input and returns a Brady-Meier tubular graph of graphs whose underlying graph of groups structure is the JSJ decomposition of $G$.
\end{theorem}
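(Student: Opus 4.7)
The plan is to assemble the algorithm by chaining together the three constructions developed in Section~\ref{section_jsj_complex}, tracking the time complexity at each step and invoking the structural results that guarantee the output is a JSJ complex.

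First I would apply Theorem~\ref{thm_alg_constructing_X'}: starting from the input $X$, run the algorithm of \cref{thm_algo_G_hyp} to produce the finite list $\mathscr{C}$ of splitting cycles containing all universally elliptic cycles up to commensurability, and then iteratively open up along each cycle of $\mathscr{C}$ via \cref{procedure_construction_X'}, invoking \cref{thm_algorithm_X_C} at each step. The output is the Brady--Meier tubular graph of graphs $X'$, constructed in double exponential time, whose vertical hyperplanes generate all universally elliptic subgroups up to commensurability.

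Next I would perform the construction of $X''$ from $X'$ described in \cref{subsection_construction_X''}: identify every tube of $X'$ both of whose bounding vertex graphs are circles and remove it (collapsing the two circles along the boundary identification, which is legitimate because one of the two attaching maps is an isomorphism of graphs by \cref{lemma_strips_crushed_X'}), iterating until no such tube remains. Detecting whether a vertex graph is a circle is linear in its size, so this step adds at most polynomial overhead. By \cref{cor_T''_refinement}, the underlying Bass--Serre tree $T''$ of $X''$ is a refinement of $T_{jsj}$, so each edge stabiliser of $T_{jsj}$ already appears as an edge stabiliser of $T''$.

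Finally I would construct $X'''=X_{jsj}$ as in \cref{subsection_construction_X_jsj}. For this I use the surface-identification criterion of \cref{lemma_surface_graphs}: for every non-cyclic vertex graph $X_s$ of $X''$, form the double $D_s$ of $X_s$ along its incident edge cycles and test whether every edge of $D_s$ has thickness exactly two. This check is linear in the number of squares of $D_s$ and hence polynomial in the size of $X''$. With the surface graphs identified, remove every cyclic vertex graph of $X''$ that is adjacent to exactly two tubes, both of which attach to surface graphs on the other side. \cref{lemma_surface_edges_T''} and \cref{prop_T'''_is_jsj} then guarantee that the resulting $T'''$ is isomorphic to $T_{jsj}$ as a $G$-tree, so the graph of groups structure of $X'''$ is the JSJ decomposition of $G$.

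The main obstacle is really just the bookkeeping of time complexity: the first step dominates at double exponential cost (since the cycle-search of \cref{thm_algo_G_hyp} already produces a list of double-exponential size in the number of squares of $X$, each of which triggers an exponential-time opening-up via \cref{thm_algorithm_X_C}), whereas the second and third steps run in polynomial time in $|X'|$, which itself is double exponential in $|X|$. Aggregating, the total complexity remains double exponential, yielding the theorem.
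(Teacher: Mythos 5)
Your proposal follows the paper's own proof essentially step by step: invoke \cref{thm_alg_constructing_X'} to build $X'$, pass to $X''$ by collapsing tubes between circular vertex graphs, and obtain $X_{jsj}$ by identifying surface graphs via \cref{lemma_surface_graphs} and applying \cref{prop_T'''_is_jsj}, with the double-exponential cost concentrated in the first step. The only (minor, and correct) addition is your more careful observation that the later steps run in polynomial time in $|X'|$ rather than $|X|$, which still aggregates to double exponential overall.
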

\begin{proof}
Using \cref{thm_alg_constructing_X'}, we obtain the tubular graph of graphs $X'$ in double exponential time. Constructing $X''$ from $X'$ involves identifying which tubes are attached to only cyclic vertex graphs and takes at most polynomial time in the number of square of $X$. 
The construction of $X_{jsj}$ from $X''$ involves removing pairs of tubes adjacent to surface graphs. Detecting surface graphs involves constructing doubles of vertex graphs (\cref{lemma_surface_graphs}) and also takes at most polynomial time in the number of squares of $X$.
\end{proof}

\section{Relative JSJ decompositions} \label{section_relative_jsj}

Let $F$ be a finite rank free group and $\mathcal{H}$ be a finite family of maximal cyclic subgroups in $F$. Recall that $F$ \emph{splits relative to $\mathcal{H}$} if there exists a nontrivial splitting of $F$ in which each element of $\mathcal{H}$ is elliptic.
Similarly $F$ is \emph{freely indecomposable relative to $\mathcal{H}$} if $F$ does not split freely relative to $\mathcal{H}$.

\begin{defn}
A \emph{relative JSJ decomposition} of $(F,\mathcal{H})$ is a graph of groups splitting of $F$ relative to $\mathcal{H}$ which satisfies the conditions of a JSJ decomposition (\cref{defn_jsj}), with the constraints that vertex groups of type (3) are rigid relative to $\mathcal{H}$ and elements of $\mathcal{H}$ can intersect vertex groups of type (2) only in their peripheral subgroups.
\end{defn}

Recall that a subgroup $F'$ of $F$ is \emph{rigid relative to $\mathcal{H}$} if $F'$ is elliptic in every splitting of $F$ relative to $\mathcal{H}$.

\begin{theorem}[Theorem 4.25,\cite{cashen_relative_jsj}]
Given a finite rank free group $F$ and a finite family $\mathcal{H}$ of maximal cyclic subgroups of $F$ such that $F$ is freely indecomposable relative to $\mathcal{H}$, a relative JSJ decomposition of $(F,\mathcal{H})$ exists and is unique.
\end{theorem}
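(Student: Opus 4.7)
The plan is to reduce the existence and uniqueness of the relative JSJ decomposition of $(F,\mathcal{H})$ to Sela's theorem for torsion-free one-ended hyperbolic groups (as formulated in \cref{defn_jsj}), using the doubling construction introduced in \cref{defn_double}. Fix a finite connected graph $\Gamma_F$ realizing $F$ and, for each $H_i=\langle h_i\rangle\in\mathcal{H}$, choose an immersed cycle $c_i:S^1\to\Gamma_F$ representing $h_i$. Let $D=D(\Gamma_F,\{c_1,\dots,c_n\})$ be the double; its fundamental group is $G=F*_{\mathcal{H}}F$, equipped with a canonical $\mathbb{Z}/2$-action swapping the two copies of $F$.

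I would first verify that $G$ is torsion-free, one-ended and hyperbolic. Torsion-freeness is automatic since $F*_{\mathcal{H}}F$ is built from free groups amalgamated over cyclic subgroups. One-endedness follows from the assumption that $F$ is freely indecomposable relative to $\mathcal{H}$: a free splitting of $G$ can be averaged against the involution to yield a free splitting of $F$ relative to $\mathcal{H}$, contradicting the hypothesis. Hyperbolicity follows from the Bestvina--Feighn combination theorem, since the $H_i$ are maximal cyclic in $F$ and no two of them generate a Baumslag--Solitar subgroup (as $F$ is free).

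Next, apply Sela's theorem to obtain the unique JSJ decomposition $\mathcal{T}_{jsj}$ of $G$. By canonicity the $\mathbb{Z}/2$-action on $G$ extends to an action on $\mathcal{T}_{jsj}$. Passing to the quotient graph of groups (after subdividing any edge swapped by the involution) and then restricting vertex stabilisers to $F$ produces a graph of groups decomposition of $F$ with cyclic edge groups in which each $H_i$ is elliptic (since $H_i$ stabilises a tube-edge of $D$, hence conjugates into a vertex group of the quotient). Verifying that the vertex groups sort into the three permitted types — cyclic, maximal hanging surface relative to $\mathcal{H}$, or relatively rigid — and that the combinatorial conditions of \cref{defn_jsj} hold, gives the relative JSJ of $(F,\mathcal{H})$. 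Uniqueness is inherited directly from the uniqueness of $\mathcal{T}_{jsj}$.

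The main obstacle is the transfer step, namely showing that vertex types in $\mathcal{T}_{jsj}$ descend to the correct relative notions in $F$. A hanging surface vertex $\Sigma$ of $\mathcal{T}_{jsj}$ meets the $\mathbb{Z}/2$-invariant locus (the union of tubes) in a disjoint union of simple curves; cutting $\Sigma$ along these curves produces a surface with boundary whose fundamental group embeds in $F$ with peripheral subgroups conjugating into $\mathcal{H}$, but one must show this piece is \emph{maximal} among such hanging surfaces relative to $\mathcal{H}$ and that no cyclic splitting of $F$ rel $\mathcal{H}$ detects a further decomposition of a would-be rigid vertex. Dually, rigidity in $G$ must be shown to imply relative rigidity in $F$: any cyclic splitting of $F$ relative to $\mathcal{H}$ can be doubled to a cyclic splitting of $G$, so a relatively hyperbolic subgroup of $F$ would become hyperbolic in this doubled splitting, contradicting rigidity in $\mathcal{T}_{jsj}$.
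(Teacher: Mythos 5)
This statement is cited from Cashen's paper and the article you are reading does not supply a proof of it, so there is nothing in the paper to compare against directly. What the paper \emph{does} do, in \cref{section_relative_jsj}, is give an algorithmic version (\cref{thm_relative_jsj_free_group}) using a construction that is importantly different from yours, and the difference is exactly where your argument breaks.

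Your reduction relies on applying Sela's theorem to the double $G = F*_{\mathcal H} F$, but this group can be a closed surface group, which is expressly excluded from Sela's theorem. Take $F = F_2 = \langle a,b\rangle$ and $\mathcal H = \{\langle [a,b]\rangle\}$; then $F$ is freely indecomposable relative to $\mathcal H$ (it is $\pi_1$ of a once--punctured torus with $\mathcal H$ the boundary), and $F*_{\mathcal H} F \cong \pi_1(\Sigma_2)$, the genus two closed surface group. So the very first step of your plan --- invoke Sela's theorem for $G$ --- has nothing to work with. More generally, even when $G$ is not a closed surface group, the claim that ``each $H_i$ is elliptic (since $H_i$ stabilises a tube-edge of $D$, hence conjugates into a vertex group of the quotient)'' is unjustified: stabilising an edge of one particular cyclic splitting of $G$ (the double splitting) does not make $H_i$ universally elliptic, and in the double each $H_i$ separates $\wt X$ into only two half-spaces, so the criterion of \cref{prop_C_3_half-spaces_universal_elliptic} is unavailable. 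If some $H_i$ conjugates into a hanging surface vertex group of $T_{jsj}$ as a non-peripheral element, the quotient-and-restrict step does not return a decomposition of $F$ in which $H_i$ is elliptic, so the output need not be a splitting of $F$ relative to $\mathcal H$ at all.

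The construction in \cref{section_relative_jsj} avoids this by deliberately not doubling. There the central vertex carries $F$, each $H_i$ is attached by a tube to a cyclic vertex graph, and that cyclic vertex is further connected by two tubes to copies of a genus-two, one-boundary surface graph. The resulting cyclic vertex has valence three, so by \cref{prop_C_3_half-spaces_universal_elliptic} a subgroup of each $H_i$ is universally elliptic; the auxiliary surface pieces are themselves maximal hanging and cannot be absorbed. This forces the JSJ of the ambient group to refine the original decomposition by blowing up only the central $F$-vertex, which is what makes the restriction to $F$ well defined. Your argument would need an analogous padding step (or an explicit case analysis for the surface degeneracy, together with a proof that the $H_i$ are universally elliptic in the double) before the transfer from $G$ to $(F,\mathcal H)$ can go through.
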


In \cite{cashen_manning}, the authors implement an algorithm that returns the JSJ decomposition of $F$ relative to $\mathcal{H}$, though they do not give an estimate of its time-complexity. The main result of this section is the following.

\begin{theorem}\label{thm_relative_jsj_free_group}
There exists an algorithm of double exponential time complexity that takes a finite rank free group $F$ and a finite family of maximal cyclic subgroups $\mathcal{H}$ such that $F$ is freely indecomposable relative to $\mathcal{H}$ as input and returns the relative JSJ decomposition of $F$ relative to $\mathcal{H}$.
\end{theorem}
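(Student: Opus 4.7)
The plan is to reduce to \cref{thm_main_result_jsj_hyp} via a doubling construction. Given $(F,\mathcal{H})$, we realise $F$ as $\pi_1(\Gamma)$ for a finite simplicial graph $\Gamma$, subdivided so that a generator of each $H_i \in \mathcal{H}$ is represented by an immersed simplicial cycle $C_i \to \Gamma$ of a common length (this multiplies the size of $\Gamma$ only by a polynomial factor). We then form the tubular graph of graphs $X$ that is the double of $\Gamma$ along $\{C_i\}$ in the sense of \cref{defn_double}, so that $G := \pi_1(X) = F \ast_{\mathcal{H}} F$, the amalgamated double of $F$ over $\mathcal{H}$.

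Next, we verify that $G$ is one-ended and hyperbolic. One-endedness is a standard consequence of the hypothesis: any free splitting of $G$ would restrict, via Bass-Serre theory applied to the edge groups, to a free splitting of $F$ in which every $H_i$ is elliptic, contradicting free indecomposability. For hyperbolicity, note that maximal cyclic subgroups of free groups are malnormal and that two distinct maximal cyclic subgroups of a free group intersect trivially, so any two conjugates of edge groups of the double lying in a common vertex group $F$ meet trivially; the Bestvina--Feighn combination theorem then guarantees hyperbolicity of $G$. We put $X$ in Brady-Meier form via \cref{thm_partial_converse_brady_meier} and apply \cref{thm_main_result_jsj_hyp} to compute, in double exponential time, a tubular graph of graphs whose underlying graph of groups structure is the JSJ decomposition of $G$.

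The final step is to extract the relative JSJ of $(F,\mathcal{H})$ from the canonical JSJ of $G$. The order-two automorphism $\iota$ of $G$ swapping the two copies of $F$ and fixing each $H_i$ pointwise extends, by uniqueness of the JSJ, to an involution of the Bass-Serre tree $T_{jsj}$ of $G$. After an equivariant subdivision so that $\iota$ acts without edge inversions, we restrict the action to the subgroup $F$; the quotient $F \backslash T_{jsj}$ is a graph of groups decomposition of $F$ in which every $H_i$ is elliptic. Checking vertex type by vertex type, cyclic and rigid vertex groups of $G$ restrict to cyclic and relatively rigid vertex groups of $F$, while each maximal hanging surface subgroup $\Sigma$ of $G$ yields either a hanging surface subgroup $\Sigma$ of $F$ (when $\iota$ does not fix $\Sigma$ setwise) or the surface subgroup of a half-surface $\Sigma / \langle \iota \rangle$ with the correct induced peripheral structure.

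The main difficulty lies in this last paragraph: carefully matching the pieces of the canonical JSJ of $G$ with the pieces of the relative JSJ of $(F,\mathcal{H})$, and in particular pairing hanging surface subgroups of $G$ with hanging surface subgroups of $F$ via $\iota$ so that the peripheral data are correctly identified (compare with the discussion in \cite{cashen_relative_jsj}). This is a combinatorial Bass-Serre bookkeeping exercise and does not change the time bound, which is inherited entirely from \cref{thm_main_result_jsj_hyp}.
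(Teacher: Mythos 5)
Your plan—realise $(F,\mathcal{H})$ inside a compact tubular graph of graphs and read off the relative JSJ from the absolute JSJ of its fundamental group—is the same broad strategy as the paper's, but the choice of ambient space is genuinely different and, as stated, it does not work. The paper does \emph{not} use the double of $\Gamma$ along the $C_i$. Instead it builds $X_{F,\mathcal{H}}$ by attaching, to each cycle $C_i$ of the central vertex graph, a circular vertex graph $X_i$ of valence three whose two remaining tubes go off to two copies of a once-punctured genus-two surface graph. There are two concrete obstructions to the doubled complex that this padding is designed to avoid, and your ``Bass-Serre bookkeeping exercise'' cannot get around them.

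First, the double $G = F\ast_{\mathcal{H}} F$ can be the fundamental group of a closed surface: take $F$ a surface group with boundary and $\mathcal{H}$ its peripheral family (a case explicitly allowed by the hypotheses, since $F$ is freely indecomposable relative to $\mathcal{H}$ there). Sela's theorem, as quoted in \cref{defn_jsj} and the theorem following it, expressly excludes closed surface groups, so $T_{jsj}$ does not exist and your reduction has nothing to feed into \cref{thm_main_result_jsj_hyp}. The genus-two pieces in the paper's $X_{F,\mathcal{H}}$ give each edge cycle a valence-three cyclic vertex, which forces $G$ away from being a closed surface group.

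Second, and more structurally, the edge groups $H_i$ of the double need not be universally elliptic in $G$, so the Bass-Serre tree $T$ of the double need not be refined by $T_{jsj}$, and $F\backslash T_{jsj}$ need not be the relative JSJ. Concretely, if the relative JSJ of $(F,\mathcal{H})$ has a hanging surface vertex $\Sigma_1$ with $H_i$ among its peripheral subgroups, then in the double the two copies $\Sigma_1,\Sigma_2$ glue along $H_i$ into a strictly larger hanging surface $\Sigma$, the edge $H_i$ is absorbed into $\Sigma$, and no edge of $T_{jsj}$ carries $H_i$. Restricting to $F$ then produces a graph of groups for $F$ in which $\Sigma_1$ appears \emph{without} the peripheral edge corresponding to $H_i$; to recover the relative JSJ you would have to re-cut along $H_i$, which is not quotienting bookkeeping but a genuinely new step. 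The paper avoids this by the same padding: by \cref{prop_C_3_half-spaces_universal_elliptic}, each cyclic vertex group $\pi_1(X_i)$ stabilises a vertical line with three half-spaces and is therefore universally elliptic, so $T_{jsj}$ is a refinement of the Bass-Serre tree of $X_{F,\mathcal{H}}$ obtained by blowing up only the lifts of the central vertex, and the relative JSJ can then be read off directly. Your $\iota$-symmetry argument does not substitute for this: the symmetry acts on the surface $\Sigma$ and does not by itself reproduce $H_i$ as an edge of the quotient decomposition.

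In short, the reduction to \cref{thm_main_result_jsj_hyp} is the right idea and your one-endedness and hyperbolicity observations are fine, but the double is the wrong ambient complex; the padded complex $X_{F,\mathcal{H}}$ of the paper, with its valence-three cyclic vertices and genus-two surface caps, is precisely what makes the extraction of the relative JSJ a clean restriction.
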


We will construct a suitable tubular graph of graphs $X_{F,\mathcal{H}}$ to prove \cref{thm_relative_jsj_free_group}.
There exists a \emph{central vertex graph} $X_{s_c}$ in $X_{F,\mathcal{H}}$ such that $\pi_1(X_{s_c}) = F$. If $\mathcal{H} = \{H_1, \cdots, H_n\}$, then for each $H_i$ there exists an immersed cycle $\phi_i : C_i \to X_{s_c}$ such that $C_i$ induces a conjugate of the group $H_i$ in $\pi_1(X_{s_c}) = F$. Note that the word generated by $C_i$ is cyclically reduced in $F$ as $\phi_i$ is an immersion of graphs.
There exist exactly $n$ tubes in $X_{F,\mathcal{H}}$ that are attached to $X_{s_c}$ in the following way. The edge graph of the $i^{th}$ tube is isomorphic to $C_i$ and the attaching map is given by $\phi_i$. We subdivide $X_{s_c}$ and the $n$ edge graphs sufficiently to make all graphs simplicial.
The other end of the $i^{th}$ tube is attached by an isomorphism to a circular vertex graph $X_i$. There are exactly two other tubes attached to $X_i$, with both attaching maps being isomorphisms. Each of these two tubes connects $X_i$ to a copy of a surface graph whose fundamental group is the fundamental group of the oriented surface of genus two with exactly one boundary component. 
Thus the underlying graph of $X_{F,\mathcal{H}}$ is a tree with one `central' vertex $s_c$ of valence $n$, $n$ cyclic vertices adjacent to $s_c$, of valence three each, and $2n$ surface vertices of valence one each.

Let $G$ be the fundamental group of $X_{F,\mathcal{H}}$. Since each vertex group is freely indecomposable relative to its incident edge groups, $G$ is one-ended, by Theorem 18 of \cite{wilton_one-ended_groups}. Hence, we can assume that $X_{F,\mathcal{H}}$ is Brady-Meier, by \cref{thm_partial_converse_brady_meier}.

As a consequence of the Bestvina-Feighn Combination Theorem \cite{bestvina_feighn_combination}, we have
\begin{lemma}
$G$ is $\delta$-hyperbolic. \qed
\end{lemma}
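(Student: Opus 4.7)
The plan is to verify the hypotheses of the Bestvina--Feighn Combination Theorem for the graph of groups underlying $X_{F,\mathcal{H}}$. The vertex groups are: $F$ (free of finite rank, hence hyperbolic), $n$ cyclic vertex groups $\pi_1(X_i) \cong \mathbb{Z}$ (trivially hyperbolic), and $2n$ surface groups of once-punctured genus-two surfaces (each free of rank four, hence hyperbolic). All edge groups are infinite cyclic, and cyclic subgroups of any hyperbolic group are quasi-convex, so the hyperbolicity and quasi-convexity hypotheses are immediate.

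It remains to verify the annuli flare condition, which in this cyclic-edge setting is equivalent to the absence of a $\mathbb{Z}^2$ subgroup in $G$ (a graph of free groups with cyclic edge groups cannot contain $BS(1,n)$ for $n\geq 2$). I plan to do this by a centralizer analysis on the Bass--Serre tree $T$ of the decomposition, using the following structural features: (i) at the central vertex, the incident edge groups are the $H_i$, maximal cyclic in $F$ by hypothesis and hence malnormal; (ii) at each surface leaf, the unique incident edge group is a peripheral subgroup, hence maximal cyclic and malnormal in the surface group; (iii) at each cyclic vertex $X_i$, all three incident edge groups coincide with the full vertex group $\pi_1(X_i) \cong \mathbb{Z}$.

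Given an infinite-order $g \in G$, consider two cases. If $g$ is elliptic in $T$, then by (i), (ii) and the pairwise non-equality of the $H_i$'s as subgroups of $F$, the fixed set of $g$ is contained in the tripod formed by a single cyclic vertex and its three neighbors; any $h \in G$ commuting with $g$ preserves this bounded subtree and hence lies in the cyclic vertex stabilizer, giving a cyclic centralizer. If $g$ is hyperbolic in $T$, its axis alternates cyclic and non-cyclic vertices (since cyclic vertices are adjacent only to non-cyclic ones), and at every non-cyclic vertex on the axis the intersection of the two consecutive malnormal cyclic edge subgroups is trivial; so the pointwise stabilizer of the axis is trivial and the centralizer of $g$ is virtually cyclic. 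Hence $G$ contains no $\mathbb{Z}^2$, and the Combination Theorem yields that $G$ is word-hyperbolic, so $\delta$-hyperbolic for some $\delta \geq 0$.

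The main obstacle is precisely situation (iii): since all three edge stabilizers at each cyclic vertex coincide with the vertex stabilizer, segments of $T$ passing through a cyclic vertex carry non-trivial pointwise stabilizers, and a naive acylindricity argument fails. The point that resolves this is that cyclic vertices are mutually isolated in $T$, each being surrounded exclusively by non-cyclic vertices at which the malnormality in (i) and (ii) truncates both fixed subtrees of elliptic elements and pointwise stabilizers of axes of hyperbolic elements, keeping all centralizers virtually cyclic.
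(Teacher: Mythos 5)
Your approach is the same as the paper's, which simply invokes the Bestvina--Feighn Combination Theorem with no further verification; you supply the verification, namely that the vertex groups are hyperbolic, the cyclic edge groups are quasiconvex, and $G$ contains no $\mathbb{Z}^2$ via a centralizer analysis on the Bass--Serre tree, and this is correct. One small imprecision worth flagging: in the elliptic case, if $\mathrm{Fix}(g)$ is a single non-cyclic vertex $v$ (with $g$ fixing no incident edge), then $h$ commuting with $g$ fixes $v$ but lands in the \emph{free} vertex stabilizer $\mathrm{stab}(v)$, not in a cyclic vertex stabilizer as you assert -- the desired conclusion still holds because centralizers of nontrivial elements in free groups are cyclic, so the correct phrasing is that $h$ fixes the center of $\mathrm{Fix}(g)$ and hence $\langle g,h\rangle$ lies in a vertex stabilizer that is free or cyclic.
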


Let $G_s$ be a vertex group of the graph of groups structure of $G$ induced by $X_{F,\mathcal{H}}$. 

\begin{lemma}
Either $G_s = G_{s_c}$ or $G_s$ is a conjugate of either a cyclic vertex group of the JSJ decomposition of $G$ or a maximal hanging surface group.
\end{lemma}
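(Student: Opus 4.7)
The plan is to treat cyclic and surface vertices of $X_{F,\mathcal{H}}$ separately; the case $s = s_c$ is trivial. For the two non-central cases, I combine \cref{prop_C_3_half-spaces_universal_elliptic} with general properties of the JSJ (Bowditch's theory).

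Let $s$ be a cyclic vertex, with $X_s$ a circle and $G_s = \langle c_s\rangle$. The universal cover of $X_s$ lifts to a vertical line $L_s$ in $\wt{X}_{F,\mathcal{H}}$; since three tubes attach to $X_s$ (one via $\phi_i$, two via isomorphisms to surface graphs), three strips extend from $L_s$ and $L_s$ has exactly three half-spaces. By \cref{prop_C_3_half-spaces_universal_elliptic}, a finite-index subgroup of $G_s$ is universally elliptic; torsion-freeness of $G$ promotes this to $G_s$ itself being universally elliptic, so $G_s$ fixes a vertex $\bar v$ of $T_{jsj}$. I then verify that $G_s$ is maximal cyclic in $G$: if $c_s = b^k$ with $k \geq 2$, then $b$ is elliptic in $T_{F,\mathcal{H}}$ and lies (up to conjugation) in a vertex group adjacent to $s$---either $F$ (where $c_s$ is a generator of the maximal cyclic $H_i$) or a surface vertex group (where $c_s$ is a peripheral generator, hence primitive). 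Both cases force $k = 1$. Now $G_{\bar v}$ is cyclic, a maximal hanging surface group, or rigid (\cref{defn_jsj}), and contains $G_s$. The hanging surface and rigid possibilities are ruled out by end-counting: $\partial G \setminus \Lambda G_s$ has three components (one per half-space of $L_s$), whereas in a maximal hanging surface group any cyclic subgroup has at most two complementary ends in $\partial G$, and in a rigid vertex group three complementary ends would induce a cyclic splitting of the rigid vertex group, contradicting rigidity. Hence $G_{\bar v}$ is cyclic, and by fullness plus maximality, $G_{\bar v} = G_s$.

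For $s$ a surface vertex, $G_s$ is the fundamental group of a genus-$2$ surface with one boundary component, and the unique incident edge group of $X_{F,\mathcal{H}}$ is the peripheral subgroup of $G_s$. So $G_s$ is a hanging surface subgroup (\cref{defn_jsj}). By Proposition~5.30 of~\cite{bowditch_jsj}, $G_s$ is contained (up to conjugation) in a maximal hanging surface subgroup $S$ of $T_{jsj}$. The peripheral subgroup of $G_s$ coincides with the adjacent cyclic vertex group in $X_{F,\mathcal{H}}$, which by the cyclic case is a cyclic vertex group of the JSJ; hence it is a peripheral subgroup of $S$ as well, since the incident edge groups of a JSJ surface vertex are exactly its peripheral subgroups. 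Matching peripheral structures forces $G_s = S$, giving the required conjugacy.

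The main obstacle is the end-counting step in the cyclic case: rigorously translating ``three half-spaces of $L_s$ in $\wt{X}_{F,\mathcal{H}}$'' into ``three components of $\partial G \setminus \Lambda G_s$''. This is standard for a hyperbolic group acting geometrically on a $\cat$ cube complex, via the quasi-isometry between $G$ and $\wt{X}_{F,\mathcal{H}}$; once in hand, the identification of $G_s$ with a cyclic or maximal hanging surface vertex group of the JSJ is a bookkeeping check using the structural properties encoded in \cref{defn_jsj}.
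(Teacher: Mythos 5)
Your proof is correct in its conclusion and shares the paper's key lemma, \cref{prop_C_3_half-spaces_universal_elliptic}, for the cyclic case, but the overall route diverges noticeably, especially in the surface case.

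For cyclic vertices, the paper's own argument is extremely terse: once a subgroup of $G_s$ is known to be universally elliptic, it says ``the result then follows as $G_s$ is maximal cyclic,'' leaving implicit the standard chain (universally elliptic $\Rightarrow$ conjugate into an edge group of $T_{jsj}$ $\Rightarrow$ contained in the adjacent cyclic vertex group $\Rightarrow$ equal to it by maximality and fullness). You instead pass to the vertex $\bar v$ fixed by $G_s$ and eliminate the hanging-surface and rigid possibilities by end-counting. This is a reasonable alternative, but the rigid case as you have written it is not quite right: having three complementary components of $\Lambda G_s$ in $\partial G$ does not ``induce a cyclic splitting of the rigid vertex group,'' and rigidity is about $R$ being elliptic in splittings of $G$, not about $R$ itself being unsplittable. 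The clean justification for both eliminations is Bowditch's characterization (Theorem 5.28 of \cite{bowditch_jsj}) of cyclic vertex groups of the JSJ as precisely the stabilizers of local cut points with valence at least three; invoking that directly would repair the gap and would also absorb your maximality check. Your promotion from a finite-index universally elliptic subgroup to $G_s$ itself is fine, though it is a fact about finite cyclic groups acting on trees rather than about torsion-freeness per se.

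For surface vertices the difference is more substantial. The paper argues geometrically inside $\wt{X}_{F,\mathcal{H}}$: the vertical lines bounding $\wt{X}_s$ cannot be crossed (they have three or more half-spaces, so \cref{cor_cut_pairs_3_components_no_crossings} applies), hence any separating line that crosses a line of $\wt{X}_s$ must itself lie in $\wt{X}_s$, which is the geometric incarnation of ``$G_s$ is maximal hanging.'' You instead invoke Proposition 5.30 of \cite{bowditch_jsj} to embed $G_s$ into a maximal hanging surface subgroup $S$ and then argue by peripheral matching that $G_s = S$. This is a legitimate alternative, but the final step (``matching peripheral structures forces $G_s = S$'') is stated a bit too briskly: what one actually needs is that if a $\pi_1$-injective subsurface has all of its boundary curves among the boundary curves of the ambient surface, then it is the whole surface. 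You should also say a word about why the edge group from the adjacent cyclic JSJ vertex to $S$'s vertex is the full peripheral subgroup $P$ of $G_s$ and not a proper subgroup; the JSJ definition gives that the edge groups at a type (2) vertex are the peripherals of $S$, so the issue is really just checking that $P$ itself is one of those peripherals. Overall: same destination, different and somewhat heavier machinery (Bowditch's boundary theory) than the paper's self-contained half-space/crossing argument, with a couple of steps that need tightening.
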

\begin{proof}
Let $G_s$ be a cyclic vertex group adjacent to $G_{s_c}$, with $X_s$ the corresponding vertex graph. Note that $\wt{X}_s$ is a line and $\partial N(\wt{X}_s)$ contains three components as there are three tubes attached to $X_s$. By \cref{prop_C_3_half-spaces_universal_elliptic}, a subgroup of $G_s$ is universally elliptic. 
The result then follows as $G_s$ is maximal cyclic.

Now suppose that $G_s$ is a hanging surface group corresponding to the surface graph $X_s$. Note that no separating transversal line can meet $\wt{X}_s$ as such a line will have to first cross a vertical tree (line) adjacent to $\wt{X}_s$, which is not possible, as seen above. Hence any line which crosses a line contained in $\wt{X}_s$ is itself contained in $\wt{X}_s$. Thus $G_s$ is maximal hanging.
\end{proof}

\begin{cor}
If $T_{jsj}$ is the JSJ tree of $G$ and $T$ the underlying tree of $\wt{X}_{F,\mathcal{H}}$, then $T_{jsj}$ is a refinement of $T$ obtained by blowing up lifts of the central vertex $s_c$. \qed
\end{cor}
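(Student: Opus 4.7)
The plan is to construct an equivariant collapse $p : T_{jsj} \to T$ whose non-trivial fibers are exactly the lifts of the central vertex $s_c$, thereby realizing $T_{jsj}$ as a refinement of $T$ in the sense of \cref{def_refinement}. The preceding lemma does most of the work: every non-central vertex stabilizer of $T$ is already a canonical vertex stabilizer of $T_{jsj}$ (either a cyclic vertex group or a conjugate of a maximal hanging surface group), and since JSJ stabilizers are maximal in their respective conjugacy classes, this gives a canonical $G$-equivariant injection from the non-central vertices of $T$ into the vertices of $T_{jsj}$. Moreover, every edge stabilizer of $T$ is a cyclic subgroup of such a cyclic vertex group, hence elliptic in $T_{jsj}$.

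By the standard refinement criterion of Guirardel--Levitt \cite{guirardel_levitt_jsj}, the ellipticity of edge groups of $T$ in $T_{jsj}$ yields a canonical equivariant collapse $p : T_{jsj} \to T$. I would construct $p$ explicitly by sending each vertex of $T_{jsj}$ identified via the previous lemma with a non-central vertex of $T$ to that vertex, and collapsing each remaining connected subtree (necessarily stabilised by a conjugate of $F = G_{s_c}$ and representing the relative JSJ of $F$ with respect to $\mathcal{H}$) to the adjacent lift of $s_c$; then extend equivariantly and simplicially. Since the non-central vertex groups of $T$ are maximal JSJ vertex groups, the only non-trivial fibers of $p$ form a single $G$-orbit of subtrees, lying over $s_c$; no other vertex of $T$ is blown up.

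The only point requiring care is verifying that the attaching pattern matches at the blown-up vertices: each subtree above a lift $g \cdot s_c$ must be connected to the rest of $T_{jsj}$ by exactly $n$ edges, whose stabilisers are the peripheral subgroups $gH_1g^{-1}, \dots, gH_ng^{-1}$, matching the $n$ edges of $T$ at $g \cdot s_c$. This follows from \cref{lemma_edges_incident_cyclic_vertex}, which counts edges at a cyclic vertex of $T_{jsj}$ by the components of $\partial G$ minus the corresponding limit set; these components are in bijection with the peripheral data of $(F,\mathcal{H})$, so the $n$ edges incident to the blown-up subtree are precisely the $n$ edges of $T$ at $s_c$. The main obstacle is thus essentially bookkeeping, while the content is captured by the preceding lemma together with the canonicity of JSJ vertex groups.
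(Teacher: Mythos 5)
The corollary carries a \qed in the paper, so the author regards it as an immediate consequence of the preceding lemma, which establishes that every non-central vertex group of the graph of groups of $X_{F,\mathcal H}$ is already a vertex group of the JSJ. Your proposal takes essentially that observation and tries to make the collapse $p : T_{jsj}\to T$ explicit. The outline is sensible, but two points are stated as if they were automatic when they actually need care.

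First, the sentence ``the ellipticity of edge groups of $T$ in $T_{jsj}$ yields a canonical equivariant collapse $p : T_{jsj}\to T$'' has the logic backwards. Ellipticity of edge groups of $T$ in $T_{jsj}$ says that $T$ is a universally elliptic tree; what actually produces a $G$-equivariant map $T_{jsj}\to T$ is domination, i.e.\ that every vertex stabiliser of $T_{jsj}$ is elliptic in $T$, which holds because $T_{jsj}$ lies in the maximal universally elliptic deformation space and $T$ has universally elliptic edge stabilisers. Even then, domination only gives an equivariant map, not a priori a collapse; the fact that this map can be arranged to be a collapse, with non-trivial fibres only over lifts of $s_c$, is precisely what the preceding lemma supplies (the non-central vertex stabilisers and edge stabilisers of $T$ already occur in $T_{jsj}$, so no folding is needed outside the $s_c$-orbit). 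You should be quoting domination, not edge-ellipticity, as the source of $p$.

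Second, the claim that the remaining pieces of $T_{jsj}$ are ``connected subtrees necessarily stabilised by a conjugate of $F$'' is asserted without proof, and it is not an empty check. Removing the identified vertices from a tree does give components that are subtrees, but you must argue that all vertices of one such component are elliptic in $T$ at the same lift of $s_c$ (equivalently lie in the same conjugate of $F$). This follows because any JSJ vertex group not among the identified ones is elliptic in $T$, cannot fix a cyclic or surface vertex (its stabiliser would then be forced to coincide with a maximal group already accounted for), and two adjacent such vertices share a cyclic edge group, which pins them to a common lift of $s_c$. That step, together with the edge count via \cref{lemma_edges_incident_cyclic_vertex}, is what actually replaces the word ``bookkeeping''. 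With these two points repaired, the proposal recovers the same content the paper treats as immediate.
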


\begin{proof}[Proof of \cref{thm_relative_jsj_free_group}]
Given $(F,\mathcal{H})$, the tubular graph of graphs $X_{F,\mathcal{H}}$ can be constructed algorithmically in polynomial time in the rank of $F$ and the lengths of $\mathcal{H}$. 
Let $X_{jsj}$ be the tubular graph of graphs obtained from $X_{F,\mathcal{H}}$ in double exponential time by \cref{thm_main_result_jsj_hyp}. 
Let $\Gamma_{jsj}$ and $\Gamma$ be the underlying graphs of $X_{jsj}$ and $X_{F,\mathcal{H}}$ respectively. 
Note that since vertex graphs other than $X_{s_c}$ induce vertex groups of the JSJ, $\Gamma_{jsj}$ is obtained from $\Gamma$ by a blow-up of the vertex $s_c$. 
Let $Y$ be the subgraph of groups of $X_{jsj}$ with underlying graph $f^{-1}(s_c)$. Then it is straightforward to check that $Y$ is the relative JSJ of $(F,\mathcal{H})$. 
\end{proof}

\section{The case of graphs of free groups with cyclic edge groups}
We can now extend our result to general hyperbolic one-ended graphs of free groups with cyclic edge groups:

\begin{theorem}\label{thm_algorithm_gen_case}
There exists an algorithm of double exponential time complexity that takes a graph of free groups with cyclic edge groups with hyperbolic one-ended fundamental group as input and returns its JSJ decomposition.
\end{theorem}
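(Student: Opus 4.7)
The plan is to reduce to the relative JSJ algorithm for free groups, \cref{thm_relative_jsj_free_group}, applied vertex-by-vertex. Let $\Gamma$ denote the input graph of groups, with free vertex groups $\{F_v\}$, cyclic edge groups $\{H_e\}$, and one-ended hyperbolic $G = \pi_1(\Gamma)$. For each non-cyclic vertex $v$, let $\mathcal{H}_v$ denote the family of incident edge subgroups inside $F_v$. First I would observe that $F_v$ must be freely indecomposable relative to $\mathcal{H}_v$: any nontrivial free splitting $F_v = A \ast B$ rel $\mathcal{H}_v$ would refine $\Gamma$ to exhibit $G$ as a nontrivial free product, contradicting one-endedness. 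So I may invoke \cref{thm_relative_jsj_free_group} at each such $v$ to obtain the relative JSJ of $(F_v, \mathcal{H}_v)$, whose vertex groups are cyclic, hanging surface (with incident edges as peripheral subgroups), or rigid relative to $\mathcal{H}_v$.

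Next I would glue these local relative JSJs along the edges of $\Gamma$: by the relative JSJ property, each edge subgroup $H_e$ is contained in a unique vertex of the relative JSJ of $F_{v^+}$ and of $F_{v^-}$ (as a peripheral subgroup, if that vertex is a hanging surface), so I amalgamate along those identifications. The result is a refined graph of groups $\Gamma'$ of $G$ with cyclic edge groups and vertex groups of the three types above. I would then post-process $\Gamma'$ in complete analogy with the passage from $X''$ to $X_{jsj}$ in \cref{subsection_construction_X_jsj}: collapse adjacent cyclic vertex groups, and merge pairs of hanging surface vertices joined through a valence-two cyclic vertex into a single maximal hanging surface of $G$. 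This merging step corrects for those edge groups of the original $\Gamma$ that lie inside a hanging surface subgroup of the JSJ of $G$, and hence fail to be universally elliptic.

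The main obstacle is verifying correctness: that the rigid-relative-to-$\mathcal{H}_v$ vertex groups of $\Gamma'$ remain rigid with respect to \emph{all} cyclic splittings of $G$ (not merely those refining $\Gamma$), and that the surface-merging step is exhaustive. Both rely on the characterization already used in the tubular case (Proposition~5.30 of \cite{bowditch_jsj}, invoked via \cref{prop_crossing_cycle_not_JSJ_hyp}): any cyclic subgroup over which the one-ended hyperbolic $G$ splits and which is not universally elliptic must conjugate into a maximal hanging surface subgroup. Consequently, a cyclic splitting of $G$ not captured by the relative JSJs of the $F_v$ must have its edge group inside some hanging surface of $G$, which is precisely the scenario detected by the merging step. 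For complexity, \cref{thm_relative_jsj_free_group} runs in double exponential time on each vertex input and is applied at the finitely many vertices of $\Gamma$; gluing and post-processing add only polynomial cost, giving the claimed double exponential bound overall.
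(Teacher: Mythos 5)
Your proposal follows essentially the same route as the paper: apply \cref{thm_relative_jsj_free_group} to each non-cyclic vertex group with its incident edge groups, refine the original graph of groups accordingly, then post-process by removing edges between adjacent cyclic vertices and merging hanging surface vertices across valence-two cyclic vertices, justified via Bowditch's Proposition~5.30. The one detail you do not address is that \cref{thm_relative_jsj_free_group} requires its input family $\mathcal{H}$ to consist of \emph{maximal} cyclic subgroups, whereas the incident edge subgroups $\mathcal{H}_v$ inside $F_v$ need not be maximal; the paper deals with this by a preliminary normalisation step (following Cashen, Section~2.4 of \cite{cashen_relative_jsj}) that replaces each non-maximal edge group by a new cyclic vertex group carrying the maximal cyclic overgroup, and observes that in the torsion-free hyperbolic setting this never produces two adjacent new cyclic vertices. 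Adding this normalisation to your argument makes it complete and in full agreement with the paper's proof.
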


We recall that a graph of free groups with cyclic edge groups is the group-theoretic analogue of a graph of spaces (see \cref{defn_graph_spaces}) whose vertex and edge spaces are finite connected one dimensional CW complexes, with the additional restriction that all the edge spaces are circles. We will further assume that the fundamental group of the geometric realisation is one-ended and hyperbolic. Again, we will use the same notation $X$ to denote both the graph of groups under consideration and its geometric realisation. We caution the reader that in this section $X$ need not be a tubular graph of graphs.

\begin{remark}
Note that each vertex space with its incident edge spaces gives rise to a free group with a `marked' family of cyclic subgroups at the level of fundamental groups. This marked family may contain non-maximal cyclic subgroups, but this can be rectified by a normalisation process as done in Section 2.4 of \cite{cashen_relative_jsj}. The normalisation process introduces a new vertex group $H$ corresponding to each non-maximal cyclic subgroup $H'$ in the marked family such that $H$ is the maximal cyclic subgroup containing $H$. 
So without loss of generality, we will assume that we have a graph of free groups with cyclic edge groups such that whenever a vertex group is not cyclic, then the incident edge groups are all maximal cyclic. Since every cyclic subgroup of a (torsion-free) hyperbolic group is contained in a unique maximal cyclic subgroup (see Proposition 7.1 of \cite[chapitre 10]{coornaert_delzant_papadopoulos}), we note that any edge group injects into at most one of its vertex groups in a non-maximal cyclic subgroup. Thus, the normalisation procedure does not produce two new adjacent cyclic vertices.
\end{remark}

\begin{proof}[Proof of \cref{thm_algorithm_gen_case}] Since the fundamental group $G$ of the input graph of groups $X$ is one-ended, each non-cyclic vertex group with its incident edge groups gives a free group $F$ with a finite family of maximal cyclic subgroups $\mathcal{H}$ such that $F$ is freely indecomposable relative to $\mathcal{H}$. We now apply the algorithm of \cref{thm_relative_jsj_free_group} to obtain the relative JSJ decomposition of $(F,\mathcal{H})$. Replace the vertex group $F$ in $X$ with the graph of groups corresponding to the relative JSJ decomposition of $(F,\mathcal{H})$. Repeat the procedure at each non-cyclic vertex group of $X$ to obtain a new graph of groups decomposition $X'$, where each vertex group has been replaced by its relative JSJ decomposition. Observe that $X'$ is still a graph of free groups with cyclic edge groups. We now modify $X'$ to an intermediate graph of groups $X''$ by removing edges when the corresponding edge groups are attached to cyclic vertex groups on both sides, analogous to the procedure in \cref{subsection_construction_X''}. What remains is to identify surface graphs in $X''$ using \cref{lemma_surface_graphs} and gluing them together to obtain $X_{jsj}$, as done in \cref{subsection_construction_X_jsj}. The resulting graph of groups is the JSJ decomposition of $X$.
\end{proof}

\section{The isomorphism problem}

An important consequence of \cref{thm_algorithm_gen_case} is that the isomorphism problem for hyperbolic fundamental groups of graphs of free groups with cyclic edge groups is reduced to the Whitehead problem (\cite{whitehead}) and can be solved in double exponential time.

\begin{theorem}\label{thm_isomorphism}
There exists an algorithm of double exponential time complexity that takes two graphs of free groups with cyclic edge groups and hyperbolic fundamental group as input and decides whether they are isomorphic.
\end{theorem}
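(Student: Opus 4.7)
The plan is to reduce the isomorphism problem for $G_1$ and $G_2$ to checking isomorphism of their JSJ decompositions. First I would apply \cref{thm_algorithm_gen_case} to each input to produce the JSJ decompositions $\mathcal{J}_1$ and $\mathcal{J}_2$ in double exponential time. Since the JSJ decomposition of a torsion-free one-ended hyperbolic group is canonical, $G_1 \cong G_2$ if and only if $\mathcal{J}_1 \cong \mathcal{J}_2$ as labelled graphs of groups. The isomorphism problem thereby reduces to deciding isomorphism of two explicit graphs of groups, each of double exponential size.

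I would then enumerate bijections between the underlying graphs of $\mathcal{J}_1$ and $\mathcal{J}_2$ and, for each such bijection, try to match the vertex and edge data. Cyclic vertex groups are all copies of $\mathbb{Z}$ and pose no difficulty. Hanging surface vertex groups are classified, together with their peripheral structure, by the topological type of the underlying surface (genus, orientability, and bijection of boundary components induced by the matching of incident edges); this type is computable in polynomial time from the output of \cref{thm_algorithm_gen_case}. The remaining vertex groups are rigid. In the present setting each rigid vertex group $R$ of the JSJ arises from the relative JSJ of a free vertex group of the input and is itself free, and it comes equipped with a finite tuple of (conjugacy classes of) cyclic subgroups --- the incident edge groups --- which form its peripheral structure.

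The hard part is therefore to decide, for two such pairs $(R_1, \mathcal{H}_1)$ and $(R_2, \mathcal{H}_2)$, whether there is an isomorphism $R_1 \to R_2$ taking $\mathcal{H}_1$ to $\mathcal{H}_2$ in a way compatible with the chosen matching of incident edges. This is precisely the Whitehead problem for tuples of conjugacy classes in a free group, and is solved by Whitehead's algorithm \cite{whitehead} in at most double exponential time in the lengths of the input words. Once isomorphisms between matched vertex groups have been chosen, it remains to verify that the attaching maps of each edge group are compatible on both sides; since edge groups are cyclic, this reduces to conjugacy tests of finitely many elements in the (free or surface) vertex groups, which are again decidable within the same time bound. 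The groups $G_1$ and $G_2$ are isomorphic if and only if some bijection between the underlying graphs of $\mathcal{J}_1$ and $\mathcal{J}_2$ passes all of the above tests; the overall running time is double exponential, dominated by the JSJ computation and the Whitehead step.
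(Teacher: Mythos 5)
Your proposal captures the core one‑ended argument --- compute the JSJ via \cref{thm_algorithm_gen_case}, invoke JSJ uniqueness, match underlying graphs, and reduce vertex‑group matching to the Whitehead problem with peripheral structure plus edge‑compatibility checks --- and this agrees in substance with the paper's use of Proposition 4.4 of \cite{dahmani_touikan_jsj_algorithm} (extension adjustments). However, there is a genuine gap: \cref{thm_isomorphism} does \emph{not} assume $G_1$ and $G_2$ are one-ended, whereas \cref{thm_algorithm_gen_case} (which you invoke immediately) does. Your argument silently assumes one-endedness. The paper first tests one-endedness by checking, via Theorem 18 of \cite{wilton_one-ended_groups} and the polynomial-time algorithm of Corollary E of \cite{me_grushko}, whether each vertex group is freely indecomposable relative to its incident edge groups, and in the non-one-ended case passes to the Grushko decomposition and compares factors, using its uniqueness. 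Without this step your algorithm is not even well-defined on the full input class promised by the theorem.

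A second, smaller issue concerns the time bound. You assert that the Whitehead step runs ``in at most double exponential time in the lengths of the input words,'' and you have already noted the JSJ output can have double exponential size. Composing those estimates does not give a double exponential overall bound --- it would be strictly worse. The paper instead cites \cite{lee_whitehead}, \cite{kapovich_schupp_whitehead}, \cite{ventura_whitehead} for at most \emph{exponential}-time solutions to the Whitehead problem, and the vertex data fed into it (ranks of the free vertex groups and lengths of the peripheral cycles) is of at most exponential size, so the composite bound stays within double exponential. As written, your complexity accounting does not support the stated bound and needs to be tightened.
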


We refer the reader to Section 4 of \cite{dahmani_touikan_jsj_algorithm} for the relevant definitions. The main result that we invoke from their paper is Proposition 4.4 which states that given two graphs of groups $X_1$ and $X_2$ on the same underlying graph, and a collection of group isomorphisms between the corresponding vertex groups, $X_1$ is isomorphic to $X_2$ as graphs of groups if and only if there exists an `extension adjustment' \cite[Definition 4.3]{dahmani_touikan_jsj_algorithm}.

\begin{proof}[Proof of \cref{thm_isomorphism}]
Let $X_1$ and $X_2$ be the input graphs of free groups with cyclic edge groups. We will denote their (hyperbolic) fundamental groups by $G_1$ and $G_2$ respectively.

By Theorem 18 of \cite{wilton_one-ended_groups}, $G_i$ is one-ended if and only if each vertex group of $X_i$ is freely indecomposable relative to its incident edge groups. Corollary E of \cite{me_grushko} gives a polynomial time algorithm to detect whether a given vertex group is freely indecomposable relative to its incident edge groups. One then obtains the Grushko decomposition of $G_i$. Since the Grushko decomposition of a group is unique, $G_1$ and $G_2$ are isomorphic if and only if there is a one-to-one correspondence between the factors of their Grushko decompositions such that the corresponding factors are isomorphic as groups.

So assume that $G_1$ and $G_2$ are one-ended. Using \cref{thm_algorithm_gen_case}, we can construct the JSJ decompositions of $G_1$ and $G_2$. Then by the uniqueness of the JSJ decomposition and Proposition 4.4 of \cite{dahmani_touikan_jsj_algorithm}, $G_1$ and $G_2$ are isomorphic if and only if the following are satisfied:
\begin{enumerate}
\item their JSJ decompositions have a common underlying graph $\Gamma$,
\item for each vertex $v$ of $\Gamma$, there exists an isomorphism $\phi_v$ between the vertex group $G_{v,1}$ of $X_1$ and the vertex group $G_{v,2}$ of $X_2$, and
\item an extension adjustment.
\end{enumerate}
In our case, (3) boils down, for each vertex $v$, to $\phi_v$ taking the set of incident edge subgroups of $G_{v,1}$ to the set of incident edge subgroups of $G_{v,2}$. We refer the reader to Definition 4.3 and commutative diagram (8) of \cite{dahmani_touikan_jsj_algorithm} for a precise formulation.

Thus, the isomorphism problem is reduced to solving the Whitehead problem for each vertex group. There exist algorithms that provide solutions to the Whitehead problem in at most exponential time (see \cite{lee_whitehead}, \cite{kapovich_schupp_whitehead}, \cite{ventura_whitehead}). Combining the above gives us the required algorithm.
\end{proof}

\bibliographystyle{alpha}
\bibliography{Immersed_cycles_and_the_JSJ_decomposition}

\end{document}